\documentclass[reqno,11pt]{amsart}
\usepackage{amsmath,enumerate,amsfonts,amssymb,amsthm,mathrsfs}
\usepackage{graphicx}
\usepackage{bm}
\allowdisplaybreaks[4]
\usepackage{times}
\usepackage{comment}
\usepackage{float}
\usepackage[numbers,sort&compress]{natbib}
\usepackage{hyperref}
\usepackage{framed}
\usepackage[normalem]{ulem}
\usepackage{geometry}

\numberwithin{equation}{section}
\newtheorem{thm}{Theorem}[section]
\newtheorem{lem}[thm]{Lemma}
\newtheorem{prop}[thm]{Proposition}
\newtheorem{defn}[thm]{Definition}

\newtheorem{cor}[thm]{Corollary}
\newtheorem{rem}[thm]{Remark}
\theoremstyle{plain}

\renewcommand{\d}{\mathrm{d}}
\renewcommand{\H}{\mathbb{H}}
\newcommand{\B}{\mathcal{B}}
\newcommand{\R}{\mathbb{R}}
\newcommand{\Cn}{\C^n}

\renewcommand{\S}{\mathbb{S}}
\newcommand{\C}{\mathbb{C}}
\newcommand{\lam}{\lambda}
\newcommand{\pa}{\partial}
\newcommand{\M}{\mathscr{M}}
\newcommand{\var}{\varepsilon}
\newcommand{\be}{\begin{equation}}
\newcommand{\ee}{\end{equation}}
\newcommand{\Lam}{\Lambda}
\newcommand{\si}{\sigma}
\newcommand{\al}{\alpha}
\newcommand{\om}{\omega}
\newcommand{\Om}{\Omega}
\newcommand{\Hn}{\H ^{n}}

\def\Xint#1{\mathchoice
 {\XXint\displaystyle\textstyle{#1}}
 {\XXint\textstyle\scriptstyle{#1}}
 {\XXint\scriptstyle\scriptscriptstyle{#1}}
 {\XXint\scriptscriptstyle\scriptscriptstyle{#1}}\!\int}
\def\XXint#1#2#3{{\setbox0=\hbox{$#1{#2#3}{\int}$}\vcenter{\hbox{$#2#3$}}\kern-.5\wd0}}

\def\dashint{\Xint-}

\author{Jiechen Qiang}
\address{School of Mathematical Sciences, Laboratory of Mathematics and Complex Systems, MOE,
 Beijing Normal University, Beijing 100875, China}
\email{jcqiang@mail.bnu.edu.cn}

\author{Zhongwei Tang}
\address{School of Mathematical Sciences, Laboratory of Mathematics and Complex Systems, MOE,
 Beijing Normal University, Beijing 100875, China}
\email{tangzw@bnu.edu.cn}
\thanks{Z. Tang is supported by NSFC grants 12471099 and 12271223.}
\author{Heming Wang}
\address{State Key Laboratory of Mathematical Sciences, Academy of Mathematics and Systems Science,
 Chinese Academy of Sciences, Beijing 100190, China}
\email{hmwang@amss.ac.cn}
\thanks{H. Wang is supported by China Postdoctoral Science Foundation 2026M793418.}
\date{}
\title[CR Nirenberg problem, Part II]{The CR Nirenberg Problem, Part II: \\ Existence and Topological Degree}
\hypersetup{
 colorlinks,citecolor=blue,filecolor=blue,linkcolor=blue,urlcolor=black}
\keywords{CR Nirenberg problem, Webster scalar curvature, Leray-Schauder degree, perturbation method, existence}
\subjclass[2020]{35H20, 35A16, 47H11, 35B33}

\begin{document}
\begin{abstract}
This paper is a continuation of Part~I \cite{QTWPartI}, where blow-up analysis and compactness results for the CR Nirenberg problem were established. We prove some existence and degree results for the CR Nirenberg problem on the standard sphere. More precisely,  we first prove existence results under antipodal and, more generally, finite-group symmetries. Without symmetry, we derive a Leray--Schauder degree formula under natural flatness conditions, yielding existence results for positive prescribed curvatures with certain homogeneous expansions at critical points. We also obtain Aubin-type inequalities under a vanishing first-moment condition, uniformly near the critical exponent.
\end{abstract}
\maketitle
{\small\tableofcontents}

\section{Introduction}

Let $n\ge1$ and let $(\S^{2n+1},\theta_0)$ denote the
standard CR sphere. The CR Nirenberg problem asks whether a
given function can be realized as the Webster scalar curvature
of a contact form conformal to $\theta_0$. Under the conformal
change $\theta=v^{2/n}\theta_0$, with $v>0$, the curvature
transformation law, after normalization by the constant Webster
scalar curvature of $\theta_0$, leads to the equation studied
in Part~I \cite{QTWPartI}:
\begin{equation}\label{maineq}
L_{\theta_0}v=\frac{n^2}{4}Rv^{1+\frac2n},
\quad v>0\quad\text{ on }\,\S^{2n+1},
\end{equation}
where $R\in C^0(\S^{2n+1})$ is the normalized prescribed
Webster scalar curvature and
$L_{\theta_0}:=-\Delta_{\theta_0}+\frac{n^2}{4}$
is the conformal sub-Laplacian associated with $\theta_0$,
with $\Delta_{\theta_0}$ denoting the corresponding sub-Laplacian.

We set $Q=2n+2$ and $Q^*=2Q/(Q-2)$, where $Q^*$ is the
critical exponent for the Folland--Stein embedding on
$\S^{2n+1}$. In our normalization, the sharp Sobolev inequality
of Jerison--Lee \cite{JL1988} takes the form
\begin{equation}\label{eq:sharp-sphere-sobolev}
\Big(\dashint_{\S^{2n+1}}|v|^{Q^*}\,\d\si\Big)^{2/Q^*}
\leq \frac{4}{n^2}
\dashint_{\S^{2n+1}}vL_{\theta_0}v\,\d\si,
\quad v\in S_1^2(\S^{2n+1}),
\end{equation}
where $\dashint$ denotes the average with respect to the
Euclidean surface measure $\d\si$, and $S_1^2(\S^{2n+1})$
is the Folland--Stein space equipped with the norm
\eqref{sb norm} in Section~\ref{sec2.2}; see \cite{FS1974}. The positive extremals
of \eqref{eq:sharp-sphere-sobolev} are precisely the positive
multiples of conformal images of the constant function $1$.
The conformal images of $1$ solve \eqref{maineq} with
$R\equiv1$, and the noncompact CR conformal group generates
concentrating families of such solutions. The resulting lack of
compactness is a central difficulty in the variational
analysis of problem \eqref{maineq}.

Apart from the lack of compactness, the solvability of
\eqref{maineq} is subject to restrictions on the prescribed
curvature $R$. Testing \eqref{maineq} against $v$ shows that
$R$ must be positive somewhere. When
$R\in C^1(\S^{2n+1})$, the Kazdan--Warner identity
\eqref{KW 0} in Section~\ref{sec:2.1} gives a further obstruction even for positive
prescribed curvatures. For instance, if
$R(\zeta)=c+\operatorname{Re}\zeta_1$, where
$\zeta=(\zeta_1,\ldots,\zeta_{n+1})$ and $c>1$,
then \eqref{maineq} admits no positive solution.

Further geometric and analytic background is recalled in
Section~\ref{sec 2}; see also
\cite[Sections~1 and~2]{QTWPartI} and the references therein.

Perturbation and degree methods for the classical Nirenberg problem have been developed extensively; see, e.g.,
\cite{BC1991,CGY1993,CY1991,Li1995,SZ1996} and the references
therein. For the fractional Nirenberg problem, Jin--Li--Xiong established compactness theorems in \cite{JLX2014} and existence results in \cite{JLX2015}. These methods motivate the corresponding CR problem, where the different scaling of horizontal and Reeb derivatives requires flatness conditions adapted to the Heisenberg dilations.

The prescribed Webster scalar curvature problem has been studied
extensively by various methods. Among the many contributions,
Malchiodi--Uguzzoni \cite{MU2002} established perturbative
existence results on the CR sphere for curvatures close to a
positive constant. Felli--Uguzzoni \cite{FU2004} obtained existence
results under symmetry assumptions by variational and perturbation
methods. Topological approaches based on critical points at infinity
were developed by Chtioui--Ahmedou--Yacoub \cite{CAY2013}, while
existence results under $\beta$-flatness conditions were obtained
by Gamara--Hafassa \cite{GH2017}. Ho \cite{Ho20152} proved existence
results using the Webster scalar curvature flow. More recently,
Afeltra \cite{Afeltra2024-2} established compactness and a
Leray--Schauder degree formula for the prescribed Webster curvature
problem on three-dimensional CR manifolds.

In Part~I \cite{QTWPartI}, we established local estimates near isolated blow-up points, uniform lower bounds on the distances between distinct blow-up points, and energy bounds under weighted flatness conditions. Under additional nondegeneracy and integral nonvanishing conditions on the homogeneous terms at critical points, these estimates exclude blow-up along
\begin{equation}\label{eq:curvature-homotopy}
R_\mu=1+\mu(R-1), \quad \var \leq \mu \leq 1,
\end{equation}
for every fixed $\var>0$. In particular, positive solutions satisfy uniform upper and positive lower bounds on each such parameter interval. Part I \cite{QTWPartI} also establishes a single-point concentration alternative and the convergence of conformally normalized solutions to 1, which are used when $\mu$ tends to zero.

The purpose of this paper is to derive existence and degree results from the estimates of Part~I \cite{QTWPartI}. The degree criterion is expressed in terms of the averaged map $G_\lam$ in \eqref{0.10} below, where the CR automorphisms $\varphi_{P,\lam}$ are defined in \eqref{varphi new} in Section~\ref{sec2.2}. For explicit homogeneous expansions at the critical points of $R$, the degree is computed directly from the leading terms.

The first two results require symmetry and do not use the degree computation. We first specify the notation in their statements. In Heisenberg coordinates, write $\xi=(z,t)=(x,y,t)\in\Hn=\C^n\times\R$, where $z=x+\mathrm{i}y$ and $x,y\in\R^n$. We use the group product $\circ$ recalled in Section~\ref{sec2.2}, the homogeneous norm $|\xi|=\rho(\xi)=(|z|^4+t^2)^{1/4}$, and the dilations $\delta_\lam (z,t)=(\lam  z,\lam ^2t)$ for $\lam >0$. The centered Cayley chart $F_P$, defined by \eqref{eq:normalized-cayley} in Section~\ref{sec2.2}, satisfies $F_P(0)=P$; in a specified chart, $R(\xi)$ abbreviates $(R\circ F_P)(\xi)$.

Let $Z=(X_1,\ldots,X_n,Y_1,\ldots,Y_n)$ be the left-invariant horizontal fields in \eqref{vectorfields} of Section~\ref{sec2.2}, and let $T=\pa _t$. For $\al \in (0,1)$, the class $\mathcal A_\al $ consists of $C^1$ functions with bounded weak derivatives $Z_aZ_bR$ and with $TR$ bounded and $\al $-H\"older for the contact distance $d(\xi,\eta)=|\eta^{-1}\circ\xi|$, with the norm \eqref{eq:intrinsic-coefficient-norm} defined in Section~\ref{sec:coefficients}. On the sphere, the requirements are imposed in fixed centered charts, as specified there. 

Throughout the paper, we assume $n\geq1$ unless stated otherwise.
We first consider existence under antipodal symmetry. The flatness
condition is defined as follows:
\begin{defn}\label{def:value-flatness}
For $d>0$, we say that $R\in C^0(\S^{2n+1})$ has flatness order greater than $d$ at $\zeta$ if, in centered Cayley coordinates $\xi$ at $\zeta$, one has $R(F_\zeta(\xi))=R(\zeta)+o(|\xi|^d)$ as $\xi\to0$.
\end{defn}

\begin{thm}\label{thm:1}
Let $\al \in(0,1)$ and let $R\in\mathcal A_\al (\S^{2n+1})$ be antipodally symmetric and positive somewhere.  If $R$ has flatness order greater than $Q-2$ at one of its maximum points, then \eqref{maineq} admits a positive $C^2$ solution.
\end{thm}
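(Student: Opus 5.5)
This is the CR analogue of Escobar–Schoen's antipodal argument: minimize in the even class, then use the flat maximum point to push the energy below the compactness threshold. We restrict the Yamabe-type functional
\[
I(v)=\frac{\dashint_{\S^{2n+1}} vL_{\theta_0}v\,\d\si}{\big(\dashint_{\S^{2n+1}} R|v|^{Q^*}\d\si\big)^{2/Q^*}}
\]
to the closed subspace $S_{1,\mathrm{ev}}^2$ of antipodally even functions $v(-\zeta)=v(\zeta)$, on the set where $\dashint R|v|^{Q^*}>0$. This set is nonempty because $R$ is positive somewhere. Since $R$ and $L_{\theta_0}$ commute with the antipodal map, the principle of symmetric criticality shows that a minimizer in the even class is a critical point of the full functional. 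Replacing $v$ by $|v|$ and using the strong maximum principle for the sub-Laplacian then gives a positive weak solution after rescaling. Regularity for $R\in\mathcal A_\al$ (Folland–Stein/subelliptic Schauder theory, as recalled in Section~\ref{sec 2}) upgrades it to $C^2$.

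\textbf{Compactness threshold.} We set $S=\frac{n^2}{4}$, the best constant normalization in \eqref{eq:sharp-sphere-sobolev}, and $M=\max R>0$. A concentration–compactness argument for the Folland–Stein embedding, in the even class, shows the following. Bubbles of an even minimizing sequence come in antipodal pairs, so each concentration costs at least twice the single-bubble energy. Hence the infimum $\mu_{\mathrm{ev}}$ is attained provided
\[
\mu_{\mathrm{ev}}<2^{2/Q}\,S\,M^{-2/Q^*}.
\]
Here the factor $2^{2/Q}=2^{1-2/Q^*}$ is the usual doubling. One also has to exclude concentration at a point $P$ with $-P=P$, which is impossible on the sphere. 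This profile decomposition is standard, and I will cite it rather than redo it.

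\textbf{Test function.} This is the main obstacle. Let $P$ be the maximum point with flatness order greater than $Q-2$. Take the even test function
\[
v_\lam=u_{P,\lam}+u_{-P,\lam},
\]
where $u_{P,\lam}$ is the conformal image of $1$ under $\varphi_{P,\lam}$ from \eqref{varphi new}. We expand $I(v_\lam)$ as $\lam\to\infty$.

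1. The numerator equals $2\,\mathrm{(bubble)}+2\,\mathrm{(interaction)}$.
2. The interaction of two bubbles centered at antipodal points is of exact order $\lam^{-(Q-2)}$ with a positive coefficient. This uses $u_{P,\lam}\sim\lam^{-(Q-2)/2}$ away from $P$.
3. The denominator is $\dashint R(u_{P,\lam}+u_{-P,\lam})^{Q^*}$. It splits into two single-bubble integrals plus a cross term $Q^*\dashint R\,u_{P,\lam}^{Q^*-1}u_{-P,\lam}$ (doubled by symmetry). The cross term is also of order $\lam^{-(Q-2)}$, has a positive coefficient, and carries the weight $M$.
4. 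In the single-bubble integral we write $R=M+(R-M)$. In the Cayley chart at $P$, the flatness gives $|R-M|=o(|\xi|^{Q-2})$. Integrating against the rescaled bubble $\lam^{Q}(1+\lam^4|z|^4+\lam^4t^2)^{-Q/2}$ (up to constants) yields $o(\lam^{-(Q-2)})$. The far region contributes $O(\lam^{-Q})$.

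After normalization, the interaction terms combine as in Escobar–Schoen/Bahri. The $2/Q^*$ power applied to the cross term in the denominator beats the numerator interaction. This gives
\[
I(v_\lam)=2^{2/Q}S M^{-2/Q^*}\big(1-c\lam^{-(Q-2)}+o(\lam^{-(Q-2)})\big)
\]
with some $c>0$. Hence $\mu_{\mathrm{ev}}$ lies strictly below the threshold for $\lam$ large, and the minimizer from the previous step exists, which proves the theorem.

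The delicate point is the sign and exact constant of the combined $\lam^{-(Q-2)}$ term. I would verify it by first computing on $\Hn$ via the Cayley transform, where $-P$ corresponds to infinity. There $u_{-P,\lam}$ becomes approximately $\lam^{-(Q-2)/2}$ times the Green's function of $L_{\theta_0}$ at the far pole. The positivity then reduces to $\int_{\Hn}U^{Q^*-1}>0$ for the standard bubble $U$, exactly as in the Riemannian case.
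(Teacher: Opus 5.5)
Your proposal is correct and has the same architecture as the paper. You minimize in the antipodally even class, and you show that the level $2^{2/Q}\frac{n^2}{4}(\max R)^{-2/Q^*}$ cannot be reached by concentrating sequences; this is your averaged normalization of the threshold in Proposition~\ref{prop:2.1}. You then get strictly below that level with two antipodal bubbles.

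Your test-function computation is exactly Proposition~\ref{prop:2.2}. Set $I_{12}=\dashint u_{P,\lam}^{(Q+2)/(Q-2)}u_{-P,\lam}\,\d\si\sim\lam^{-(Q-2)}$. The numerator is $\frac{n^2}{2}(1+I_{12})$, and by flatness the denominator is at least $2M(1+Q^*I_{12})+o(\lam^{-(Q-2)})$. The quotient therefore carries the factor $1-I_{12}+o(I_{12})$. The sign you single out as delicate is automatic, since every function entering $I_{12}$ is positive.

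The genuine difference is in turning the strict inequality into existence. The paper never minimizes at the critical exponent. It takes positive subcritical minimizers $v_p$, and when their weak limit vanishes it rescales at maximum points. The Harnack inequality and $C^{2,\iota}$ estimates from Part~I then give an entire profile on $\H^n$ whose mass is doubled by antipodal symmetry. A separate subsolution estimate shows that $\{R<0\}$ carries no mass.

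You instead apply Lions-type concentration--compactness directly to an even minimizing sequence. This is shorter, uses only $R\in C^0$ before the final regularity step, and handles the sign of $R$ for free. Let $\nu_j$ and $\mu_j$ be the atoms at $x_j$ of the limits of $|v_k|^{Q^*}\,\d\si$ and of the energy density, so that $\mu_j\ge\frac{n^2}{4}\nu_j^{2/Q^*}$.
\begin{itemize}
\item Atoms with $R(x_j)\le0$ only lower the constraint.
\item Antipodal pairs with $R(x_j)>0$ are excluded by the strict inequality together with the subadditivity of $t\mapsto t^{2/Q^*}$.
\end{itemize}
Hence the weak limit satisfies $\dashint R|v|^{Q^*}\,\d\si\ge1$ and is a minimizer. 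This bookkeeping is the entire content of the step you cite as standard, so you should write it out.

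In exchange, the paper's route reuses the blow-up machinery of Part~I and never needs a concentration--compactness lemma for the Folland--Stein embedding on $\S^{2n+1}$.
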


The existence result can be extended to finite-group symmetries.
Let $G$ be a finite subgroup of the CR isometry group of
$\S^{2n+1}$. For $\zeta\in\S^{2n+1}$, write
$O_G(\zeta)=\{g(\zeta):g\in G\}$ for its orbit, and denote
its cardinality by $|O_G(\zeta)|$.
We say that $R$ is $G$-invariant if $R\circ g=R$ for every
$g\in G$, and that $G$ has no global fixed point if
$|O_G(\zeta)|\geq2$ for every $\zeta\in\S^{2n+1}$.
The stabilizers are allowed to be nontrivial.
\begin{thm}\label{thm:1-2}
Let $\al \in(0,1)$. Assume that $G$ has no global fixed point and that $R\in\mathcal A_\al (\S^{2n+1})$ is positive and $G$-invariant.  Suppose $R$ has flatness order greater than $Q-2$ at $\zeta_0$ and
\[
 \frac{R(\zeta_0)}{|O_G(\zeta_0)|^{2/(Q-2)}}
 \ge \frac{R(\zeta)}{|O_G(\zeta)|^{2/(Q-2)}},
 \quad\forall\,\zeta\in\S^{2n+1}.
\]
Then \eqref{maineq} admits a positive $G$-invariant $C^2$ solution.
\end{thm}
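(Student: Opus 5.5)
We will prove Theorem~\ref{thm:1-2} variationally: minimize the Sobolev quotient over $G$-invariant functions and use the flatness at $\zeta_0$ to place the minimum level strictly below the first $G$-invariant concentration level. Set
\[
I(v)=\frac{\dashint v L_{\theta_0}v\,\d\si}{\big(\dashint R|v|^{Q^*}\d\si\big)^{2/Q^*}},\qquad
S_G=\inf\Big\{I(v): v\in S_1^2(\S^{2n+1}) \text{ $G$-invariant},\ \dashint R|v|^{Q^*}>0\Big\}.
\]
By the principle of symmetric criticality (the group $G$ acts by CR isometries, which commute with $L_{\theta_0}$ and preserve $\d\si$), a $G$-invariant minimizer is a critical point of $I$ on the full space. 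After scaling, it solves \eqref{maineq} weakly. Replacing $v$ by $|v|$ gives a nonnegative minimizer. Regularity for $R\in\mathcal A_\al$ (Folland--Stein estimates together with subelliptic Schauder theory, as used in Part~I \cite{QTWPartI}) and the strong maximum principle for the sub-Laplacian then make $v$ a positive $C^2$ solution.

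\textbf{Compactness threshold.} By concentration-compactness for $G$-invariant minimizing sequences, if the sequence loses compactness, it must concentrate along a full orbit $O_G(\zeta)$, with one Jerison--Lee bubble at each orbit point. Each bubble contributes energy $\frac{n^2}{4}$ measured against $R(\zeta)$. This yields the threshold
\[
S_G^{*}=\min_{\zeta}\frac{n^2}{4}\,\frac{|O_G(\zeta)|^{2/Q}}{R(\zeta)^{2/Q^*}}.
\]
After raising to the appropriate power, this quantity corresponds exactly to maximizing $R(\zeta)/|O_G(\zeta)|^{2/(Q-2)}$. By hypothesis the threshold is attained at $\zeta_0$. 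Hence a minimizing sequence converges strongly whenever $S_G<S_G^*$; this is a Struwe/Lions-type argument, where positivity of $R$ rules out vanishing.

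\textbf{Strict inequality, the main obstacle.} Take the test function
\[
v_\lam=\sum_{g\in G/\mathrm{Stab}(\zeta_0)} u_\lam\circ g^{-1},
\]
where $u_\lam$ is the conformal image of $1$ concentrating at $\zeta_0$ with scale $\lam\to\infty$ under $\varphi_{\zeta_0,\lam}$, so that $v_\lam$ is a sum of bubbles at the orbit points. We then expand $I(v_\lam)$, which has two competing contributions.

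First, the interaction terms between distinct orbit points are positive and strictly lower the quotient. Since the bubble tails decay like $\lam^{-(Q-2)/2}|\xi|^{2-Q}$, these terms are of exact order $\lam^{-(Q-2)}$, with coefficient proportional to $\sum_{g\neq e}G(\zeta_0,g\zeta_0)>0$, where $G$ here is the Green's function. This is precisely where the absence of a global fixed point ensures that at least two bubbles are present.

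Second, the curvature error is $\dashint (R(\zeta_0)-R)u_\lam^{Q^*}$. Using $R=R(\zeta_0)+o(|\xi|^{Q-2})$ in the Cayley chart, together with the boundedness of $R$ away from $\zeta_0$, this error is $o(\lam^{-(Q-2)})$. The region $|\xi|\le\delta$ contributes $o(1)\lam^{-(Q-2)}$ after the rescaling $\xi\mapsto\delta_{1/\lam}\xi$. The required integrability holds because $\int|\eta|^{Q-2}(1+|\eta|)^{-2Q}\,d\eta<\infty$.

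The delicate points are therefore: computing the interaction term with the correct sign and order in the CR setting, where the bubble and Green's function are explicit via the Cayley transform; handling nontrivial stabilizers correctly, by summing over cosets; and, in the case where the orbit points are symmetric so that stabilizers force $u_\lam\circ g=u_\lam$, verifying that the flatness is exactly strong enough. Once these are established, $I(v_\lam)<S_G^*$ for large $\lam$, and the argument is complete.

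Theorem~\ref{thm:1} follows as the special case $G=\{\pm\mathrm{id}\}$, for which every orbit has size $2$. Because $R$ need not be positive there, the compactness step must be run with $\max R>0$ only, restricting the infimum to $\dashint R|v|^{Q^*}>0$.
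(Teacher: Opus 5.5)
Your argument is correct, and its architecture matches the paper's. Like the paper, you:
\begin{enumerate}[(i)]
\item minimize over $G$-invariant functions;
\item get compactness strictly below the orbit-weighted level $\frac{n^2}{4}|O_G(\zeta)|^{2/Q}R(\zeta)^{-2/Q^*}$. This is Proposition~\ref{prop:2.3}, up to the factor $\om_{2n+1}^{2/Q}$ that comes from unnormalized integrals;
\item prove the strict inequality with the same orbit sum $\sum_i v_{1,\beta}\circ g_i^{-1}$ of bubbles from \eqref{2.9}, as in Proposition~\ref{prop:2.4}. There the positive interaction of order $(\beta-1)^{(Q-2)/2}\sim\lam^{-(Q-2)}$ beats the $o(\lam^{-(Q-2)})$ flatness error, and $|O_G(\zeta_0)|\ge2$ is exactly what the no-fixed-point hypothesis supplies.
\end{enumerate}

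The real difference is the compactness step. The paper never handles critical minimizing sequences directly. As in Proposition~\ref{prop:2.1}, it minimizes at subcritical exponents and rescales the minimizers $v_p$ around their maximum points as $p\to\frac{Q+2}{Q-2}$. It then uses the Harnack inequality and the $C^{2,\iota}$ estimates from Part~I to extract an entire limit on $\Hn$, and counts the mass of that profile at each orbit point.

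Your Lions-type argument with $G$-invariant defect measures needs only the localized sharp inequality \eqref{eq:sharp-sphere-sobolev} and the strict concavity of $t\mapsto t^{2/Q^*}$. It bypasses the blow-up machinery and handles nontrivial stabilizers automatically: invariant atoms carry equal weights along an orbit, and no bubble profile is needed at each point. The paper's route has the advantage of reusing the Part~I blow-up framework verbatim and of working with smooth positive approximate solutions throughout.

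Two small remarks:
\begin{enumerate}[(i)]
\item The stabilizer worry you raise about the test function is vacuous. Since $v_{1,\beta}$ depends only on $\langle\zeta,\zeta_0\rangle$, it is invariant under every unitary map fixing $\zeta_0$. So the coset sum is well defined and $G$-invariant, and no extra flatness is needed.
\item The Green's-function sum in your interaction coefficient should run over the orbit points $\zeta_j\neq\zeta_0$, not over all $g\neq e$. Stabilizer elements would otherwise contribute the singular diagonal value.
\end{enumerate}
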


We next turn to the problem without symmetry assumptions.
Our existence criterion is based on the topological degree and
homogeneous expansions of $R$ at its critical points. We first
recall the notation and assumptions needed to state the results.

Let $\nabla_{\theta_0}$ denote the horizontal gradient and
$T_{\theta_0}$ the Reeb field. We write
\[
\operatorname{Crit}(R):=\{P\in\S^{2n+1}:\d R(P)=0\}
=\{P\in\S^{2n+1}:\nabla_{\theta_0}R(P)=0,\ T_{\theta_0}R(P)=0\}
\]for the set of full critical points of $R$.
In Heisenberg coordinates, the field $T=\pa_t$ acts on
pullbacks and is distinct from $T_{\theta_0}$.
Using the right-invariant vector fields defined in
\eqref{right-vectorfields} of Section~\ref{sec2.2}, set $\widetilde\nabla=(\overline X_1,\ldots,\overline X_n,\overline Y_1,\ldots,\overline Y_n)$ and $\overline\nabla=(\widetilde\nabla,T)$. We use Euclidean norms for these derivative arrays. The
weighted horizontal derivatives are $\nabla_\H=Z/2$ and $\nabla_\H^2=\tfrac14(Z_aZ_b)_{a,b=1}^{2n}$.

For $\beta>2$, we say that $R$ satisfies condition
$(*)'_\beta$ on a coordinate domain $U\subset\Hn$ if there
exist constants $L>0$, $C_*\geq1$, and a continuous function
$r:U\to[0,\infty)$, locally $C_*$-Lipschitz with respect to the contact
distance, such that
\begin{gather*}
C_*^{-1} L r^{\beta-1} \leq|\nabla_{\H } R|+r|T R| \leq C_* L r^{\beta-1}, \quad|\nabla_{\H }^2 R|+|T R| \leq C_* L r^{\beta-2}, \\
|\nabla_{\H }(T R)| \leq C_* L r^{\beta-3} \quad \text{ on }\,\{\xi\in U:r(\xi)>0\} .
\end{gather*}
The first inequality holds at every point of $U$.
The second horizontal derivatives are understood weakly
on $U$, whereas the mixed derivatives $Z_a(TR)$ are
understood weakly on $\{\xi\in U:r(\xi)>0\}$.
The second and third inequalities hold almost everywhere
on their respective domains, with respect to Haar measure.
For $\beta=2$, condition $(*)'_2$ reduces to
$|\nabla_\H^2R|+|TR|\leq C_*L$ almost everywhere in $U$,
without a lower bound on the first derivatives.
No bound on $T^2R$ is required.
For a sequence $\{R_i\}$, we use the notation
$L_i=L(\beta,i)$ and $r_i$ as in
\cite[Definition~1.2]{QTWPartI}. On the sphere, the
constant $C_*$ and the coordinate radius are required
to be uniform in $i$ and in the centered chart.

The integral condition below involves the standard bubble $U_{0,1}(z,t)=2^n((1+|z|^2)^2+t^2)^{-n/2}$ defined in \eqref{standard-bubble} of Section~\ref{sec2.2}.
We distinguish $(Df)(\eta\circ\xi)$, which denotes the
derivative evaluated at $\eta\circ\xi$, from
$D_\xi[f(\eta\circ\xi)]$, which denotes differentiation
of the composition. For $P\in\S^{2n+1}$ and $\lam\geq1$,
let
$\varphi_{P,\lam}
=F_P\circ\delta_{\lam^{-1}}\circ F_P^{-1}$,
extended across the omitted point, as in
\eqref{varphi new} of Section~\ref{sec2.2}.

Let $\mathbb B^{n+1}=\{b\in\C^{n+1}:|b|<1\}$. For a nonvanishing continuous map $g:\S^{2n+1}\to\C^{n+1}$, the notation $\deg(g,\mathbb B^{n+1},0)$ means the Brouwer degree of any continuous extension to the closed ball, equivalently the degree of $g/|g|:\S^{2n+1}\to\S^{2n+1}$. Here $\C^{n+1}$ is identified with $\R^{2n+2}$.
The degree in \eqref{0.11} is understood in this sense,
whereas degrees in function spaces are Leray--Schauder degrees.

The following theorem gives a degree-theoretic existence criterion for problem \eqref{maineq}.
\begin{thm}\label{thm:0.5}
Let $\al \in (0,1)$ and let  $R \in \mathcal{A}_\al (\S^{2 n+1})$ be positive. In every fixed contact chart assume that the weak derivatives $Z_a(T R)$ are locally essentially bounded away from $\operatorname{Crit}(R)$.  For each critical point $\zeta_0$ of $R$, let $F_{\zeta_0}$ be the centered Cayley chart in \eqref{eq:normalized-cayley}, with $F_{\zeta_0}(0)=\zeta_0$, and set $R_{\zeta_0}(\xi):=R(F_{\zeta_0}(\xi))$. Assume that there exist a small neighborhood $\mathscr{O}_{\zeta_0}$ of $0$ in $\H^n$,  a constant $L>0$, and $\beta=\beta(\zeta_0)\in(Q-2,Q)$ such that  $R_{\zeta_0}$ satisfies condition $(*)^\prime_\beta$ in $\mathscr{O}_{\zeta_0}$ for $L$, and
\[
R_{\zeta_0}(\xi)=R(\zeta_0)+Q_{\zeta_0}^{(\beta)}(\xi)+W_{\zeta_0}(\xi),\quad \xi\in\mathscr{O}_{\zeta_0},
\]
where $Q_{\zeta_0}^{(\beta)}\in C^1_{\mathrm{loc}}(\H^{n}\backslash \{0\})$ satisfies
    $Q_{\zeta_0}^{(\beta)}(\delta_{\lam}\xi)=\lam ^{\beta}Q_{\zeta_0}^{(\beta)}(\xi)$, $\forall\, \lam  >0$, $\xi\in \Hn $,
  \be\label{0.6}
	c_{\mathrm{nd}}|\xi|^{\beta-1}\leq |\widetilde{\nabla} Q_{\zeta_0}^{(\beta)}(\xi)|+|\xi||TQ_{\zeta_0}^{(\beta)}(\xi)|,
\ee
for some $c_{\mathrm{nd}}>0$, which may depend on $\zeta_0$, and
\be \label{0.8in jde}
	\begin{pmatrix}
	\displaystyle\int_{\Hn } \overline{\nabla} Q_{\zeta_0}^{(\beta)}(\eta\circ\xi)U_{0,1}^{Q^* } \, \d  \xi\\[2.5mm]
    \displaystyle\int_{\Hn } Q_{\zeta_0}^{(\beta)}(\eta\circ\xi)U_{0,1}^{Q^* } \, \d  \xi 
	\end{pmatrix} \neq 0, \quad \forall\,  \eta\in \Hn ,
	\ee 
    and $W_{\zeta_0}\in C^{1}(\mathscr{O}_{\zeta_0})$ satisfies $\lim _{\xi \to 0}(|W_{\zeta_0}(\xi)||\xi|^{-\beta}+|\widetilde{\nabla}W_{\zeta_0}(\xi)||\xi|^{1-\beta}+|TW_{\zeta_0}(\xi)||\xi|^{2-\beta})=0$.
Then, for all sufficiently large $\lam $,
\begin{equation}\label{0.10}
G_\lam (P):=\dashint_{\S^{2n+1}}R\circ\varphi_{P,\lam }(\zeta)\,\zeta\,\d\si \ne0,
\quad\forall\,P\in\S^{2n+1}.
\end{equation}
If
\begin{equation}\label{0.11}
\deg(G_\lam ,\mathbb B^{n+1},0)\ne0,
\end{equation}
then \eqref{maineq} admits a positive $C^2$ solution.
\end{thm}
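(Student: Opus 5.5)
We prove Theorem~\ref{thm:0.5} by the standard Leray--Schauder continuation along the homotopy \eqref{eq:curvature-homotopy}, combined with a Lyapunov--Schmidt type computation of the degree for small $\mu$.

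\textbf{Step 1: nonvanishing of $G_\lam$.} We argue by contradiction. Suppose there are $\lam_k\to\infty$ and $P_k$ with $G_{\lam_k}(P_k)=0$. Since $R$ is $C^1$ and positive, if $P_k$ stays away from $\operatorname{Crit}(R)$ we change variables $\zeta=\varphi_{P,\lam}^{-1}(\cdot)$. Then $G_\lam(P)$ is, up to a positive factor, $\int R\circ F_P(\delta_{\lam^{-1}}\xi)\,\Psi(\xi)U_{0,1}^{Q^*}\d\xi$, where $\Psi$ is the pullback of $\zeta$ under the Cayley map. Its component along $P$ tends to $R(P)\,c$ in a suitably rotated frame, and its tangential components are $\lam^{-1}\nabla R(P)\cdot c'+o(\lam^{-1})$, which is nonzero. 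Near a critical point $\zeta_0$ we write $P=F_{\zeta_0}(\eta_0)$ and rescale. Using the expansion $R(\zeta_0)+Q^{(\beta)}+W$, homogeneity, and the decay of $W$, the horizontal/Reeb components and the radial (``$\lam$-derivative'') component of $G_\lam - R(\zeta_0)\dashint\zeta$ become, after normalizing by $\lam^{-\beta}$ with $\eta=\delta_{\lam}\eta_0$, the vector in \eqref{0.8in jde} plus $o(1)$. This needs to hold uniformly in $\eta$. For $|\eta|\to\infty$ we instead use \eqref{0.6}, since $\overline\nabla Q$ dominates there. Because $\beta<Q$, the relevant integrals converge against $U_{0,1}^{Q^*}$. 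The contradiction follows, and the same estimates show that $\deg(G_\lam,\mathbb B^{n+1},0)$ is independent of large $\lam$.

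\textbf{Step 2: compactness for $\mu\in[\var,1]$.} By the compactness results of Part~I, which apply under $(*)'_\beta$ with $\beta\in(Q-2,Q)$, the nondegeneracy \eqref{0.6}, and \eqref{0.8in jde}, positive solutions of \eqref{maineq} with $R_\mu$ satisfy $1/C\le v\le C$ and $\|v\|_{C^{2,\al'}}\le C$ uniformly. Hence the Leray--Schauder degree $\deg(v-L_{\theta_0}^{-1}\tfrac{n^2}{4}R_\mu v^{1+2/n},\mathcal O,0)$ on a bounded set $\mathcal O$ is constant in $\mu\in[\var,1]$. It therefore suffices to show that this degree is nonzero for small $\mu$.

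\textbf{Step 3: degree for small $\mu$.} This is the main obstacle. For $\mu\to0$, Part~I gives the single-point concentration alternative and shows that conformally normalized solutions converge to $1$. We therefore work in the finite-dimensional reduction. We decompose $v=(w\circ\varphi_{P,\lam})|J|^{1/Q^*}$ with $w$ close to $1$, and parametrize the critical manifold of bubbles by $b=(1-\lam^{-1})P\in\mathbb B^{n+1}$. We solve the auxiliary equation orthogonal to the kernel, which is spanned by $\zeta_1,\dots,\zeta_{n+1}$ (the linearized operator at $1$ has kernel given by first spherical harmonics). The reduced equation is then $\mu\,G_\lam(P)+o(\mu)=0$. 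This uses Kazdan--Warner-type projections: projecting the equation onto $\zeta_j$ after composing with $\varphi_{P,\lam}$ gives $\mu\dashint R\circ\varphi_{P,\lam}\,\zeta\,\d\si$ plus errors that are quadratic in $\mu$.

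Step 1 makes $G_\lam$ nonvanishing on the boundary $\{\lam=\lam_0\}$, and Part~I ensures that solutions do not concentrate at very large $\lam$ (the $\lam\to\infty$ regime is excluded by the same expansion). A standard homotopy-invariance and product formula then yields the Leray--Schauder degree $=\pm\deg(G_\lam,\mathbb B^{n+1},0)$, up to a sign such as $(-1)^{n}$ coming from the positive directions of the linearized operator, plus the contribution from the remaining part, which has index $1$.

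By \eqref{0.11} this degree is nonzero, so a positive solution exists for $\mu=1$. Regularity to $C^2$ follows from $R\in\mathcal A_\al$ via the Folland--Stein Schauder estimates. The delicate points are the uniformity of the reduction in the region where $P$ approaches $\operatorname{Crit}(R)$ at rate $\lam^{-1}$, and matching the error $o(\mu)$ against $\mu\lam^{-\beta}$. We would handle the latter by fixing $\lam$ large first and then letting $\mu\to0$.
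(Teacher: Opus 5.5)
\textbf{There is a genuine gap in Step~3, in the regime of large concentration parameter $\lam$.}

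You control the reduced equation only as $\mu G_\lam(P)+o(\mu)$, with an absolute error. You therefore fix $\lam_0$ first and then take $\mu\le\mu(\lam_0)$. But the Leray--Schauder degree that Step~2 carries to $\mu=1$ is the degree on a domain containing \emph{all} solutions at that small $\mu$. Your local computation only counts solutions whose balanced representative $w$ is near $1$ \emph{and} whose parameter satisfies $|b|<1-\lam_0^{-1}$.

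Nothing you cite from Part~I excludes solutions with $w\approx1$ and scale $\lam>\lam_0$ uniformly as $\mu\to0$. Part~I gives only the single-point concentration alternative there. Compactness at a fixed $\mu>0$ gives some bound $\lam_*(\mu)$, which may blow up as $\mu\to0$. Since your $\mu$ depends on $\lam_0$, requiring $\lam_*(\mu)\le\lam_0$ is circular. Moreover, $|G_\lam|$ decays as $\lam\to\infty$, like $\lam^{-\beta}$ near $\operatorname{Crit}(R)$, so an $o(\mu)$ error cannot exclude zeros of the reduced map for $\lam>\lam_0$. The claim that this regime is ``excluded by the same expansion'' does not follow from what you wrote.

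\textbf{The missing ingredient is an error bound relative to $|G_\lam|$, uniform in $\lam$.}

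In the paper, \eqref{pe9} gives $\|w_p-1\|\le C\|R\circ\varphi_{P,\lam}-R(P)\|_{L^2}$, and \eqref{gradient estimate for wK} bounds the gradient of $w_p$. Consequently the Kazdan--Warner projected multiplier equation reads $A(P,\lam)=G_\lam(P)+O(\|R\circ\varphi_{P,\lam}-R(P)\|_{L^2}^2)$.

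This must be combined with the second half of Lemmas~\ref{lem condition for not degree}--\ref{lem:3.8}, namely $\|R\circ\varphi_{P,\lam}-R(P)\|_{L^2}^2=o(|G_\lam(P)|)$ uniformly in $P$. Together they give \eqref{condition for not degree}. Your Step~1 proves only the lower bound on $|G_\lam|$ and never bounds this $L^2$ oscillation; near critical points, that bound is where $\beta<Q$ is used.

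Since $R_\mu-R_\mu(P)=\mu(R-R(P))$ and $G^{R_\mu}_\lam=\mu G^R_\lam$, this relative estimate is uniform in $\mu$. It makes the identity of Theorem~\ref{thm:3.1} hold for \emph{every} sufficiently large $\lam$ at a fixed $\mu=\var_7$. The paper then uses the opposite order to yours: fix $\mu=\var_7$, use Part~I compactness at that fixed $\mu$ to put all solutions into $\mathcal N_3(\lam)$ for some finite large $\lam$, and finish by excision and homotopy. Alternatively, with the relative bound your order also works, because the reduced map then has no zeros for $\lam\ge\lam_1$ uniformly in small $\mu$.

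\textbf{Smaller points.}
\begin{itemize}
\item In Step~1, the component of $G_\lam(P)$ along $P$ does not tend to $R(P)c$. Since $\dashint\zeta\,\d\si=0$, the constant drops out. At critical points this component is of order $\lam^{-\beta}$ and is governed by the scalar entry of \eqref{0.8in jde}.
\item Away from $\operatorname{Crit}(R)$ you must handle the anisotropy. Where the horizontal gradient is small but the Reeb derivative is not, the horizontal moments are not of size $\lam^{-1}$, and you must use the central moment, of size $\lam^{-2}TR$.
\item The complement of the kernel contributes a multiplicative factor $-1$, from the single negative (amplitude) direction, so the degree is $-\deg(G_\lam,\mathbb B^{n+1},0)$. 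It is not ``plus'' an index $1$, which could cancel the finite-dimensional degree.
\end{itemize}
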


\begin{rem}
The map $G_\lam$ defined in \eqref{0.10} is the CR analogue
of the map introduced by Chang and Yang \cite{CY1987}
in their study of prescribed Gaussian curvature on $\S^2$.
For each fixed $\lam\geq 1$, it defines a continuous map
from $\S^{2n+1}$ into $\C^{n+1}\simeq\R^{2n+2}$.
\end{rem}

We next derive from Theorem~\ref{thm:0.5} an explicit
existence criterion for prescribed Webster curvatures
admitting a particular class of homogeneous expansions
at their critical points.
\begin{thm}\label{thm:2}
Let $\al\in(0,1)$ and let $R\in\mathcal A_\al(\S^{2n+1})$
be positive. Assume that, in every fixed contact chart, the weak
derivatives $Z_a(TR)$ are locally essentially bounded away from
$\operatorname{Crit}(R)$. At each $\zeta_0\in\operatorname{Crit}(R)$,
suppose that $R_{\zeta_0}=R\circ F_{\zeta_0}$ satisfies $(*)'_\beta$
in a neighborhood $\mathscr O_{\zeta_0}$ of $0$ for some
$\beta=\beta(\zeta_0)\in(Q-2,Q)$ and admits the expansion
\begin{equation}\label{eq:alln-model-expansion}
R_{\zeta_0}(\xi)
=R(\zeta_0)+\sum_{j=1}^n
(a_j|x_j|^\beta+b_j|y_j|^\beta)
+cJ_{\beta,\vartheta}(z,t)+W_{\zeta_0}(\xi),
\end{equation}
where $\xi=(z,t)=(x,y,t)\in\Hn$,
$J_{\beta,\vartheta}(z,t)=(t^2+\vartheta|z|^4)^{\beta/4}$
with $\vartheta>0$, and
$W_{\zeta_0}\in C^1(\mathscr O_{\zeta_0})$ satisfies the
remainder condition in Theorem~\ref{thm:0.5}.
Assume that $a_j,b_j,c$ are nonzero and satisfy
$0<|c|<\var(n,\beta,\vartheta)
\min_{1\le j\le n}\{|a_j|,|b_j|\}$
for a sufficiently small constant $\var(n,\beta,\vartheta)>0$.
Define
\begin{equation}\label{eq:alln-kappa}
\kappa_\vartheta=
\frac{\int_{\Hn}J_{\beta,\vartheta}(z,t)
((1+|z|^2)^2+t^2)^{-n-1}\,\d z\,\d t}
{\int_{\Hn}|x_1|^\beta
((1+|z|^2)^2+t^2)^{-n-1}\,\d z\,\d t}>0
\end{equation}
and, for each $\zeta\in\operatorname{Crit}(R)$, set
\[
\widetilde a(\zeta)
=(a_1,\ldots,a_n,b_1,\ldots,b_n,\kappa_\vartheta c),
\quad 
i(\zeta,R)
=\#\{j\in\{1,\ldots,2n+1\}:\widetilde a_j(\zeta)<0\},
\]
where $a_j,b_j,c$, $\beta$, $\vartheta$, and
$\kappa_\vartheta$ are associated with the local expansion
at $\zeta$, and $\#$ denotes cardinality.
Suppose further that
$\sum_{j=1}^{2n+1}\widetilde a_j(\zeta)\neq0$
at every critical point. If
\begin{equation}\label{sum index}
\sum_{\substack{\zeta\in\operatorname{Crit}(R)\\
\sum_{j=1}^{2n+1}\widetilde a_j(\zeta)<0}}
(-1)^{i(\zeta,R)}\neq-1,
\end{equation}
then \eqref{maineq} admits a positive $C^2$ solution.
\end{thm}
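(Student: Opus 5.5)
Theorem~\ref{thm:2} will be deduced from Theorem~\ref{thm:0.5}. Two things are needed: the hypotheses of that theorem hold for the model expansion \eqref{eq:alln-model-expansion}, and the degree \eqref{0.11} can be computed from \eqref{sum index}.

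\textbf{Hypotheses of Theorem~\ref{thm:0.5}.} Set $Q^{(\beta)}_{\zeta_0}=\sum_j(a_j|x_j|^\beta+b_j|y_j|^\beta)+cJ_{\beta,\vartheta}$. It is $\delta_\lambda$-homogeneous of degree $\beta$ and lies in $C^1_{\rm loc}(\Hn\setminus\{0\})$, since $\beta>2$.

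For the nondegeneracy \eqref{0.6}, I would treat $cJ_{\beta,\vartheta}$ as a perturbation. The right-invariant horizontal derivatives of $\sum_j(a_j|x_j|^\beta+b_j|y_j|^\beta)$ are $\beta a_j|x_j|^{\beta-2}x_j$ (and similarly for $y_j$), plus terms $\pm 2y_j\,\partial_t$ and $\pm 2x_j\,\partial_t$, which vanish on $t$-independent functions. Hence their size is comparable to $\min|a_j|\,|z|^{\beta-1}$. The derivative $T J_{\beta,\vartheta}$ is comparable to $|t||\xi|^{\beta-4}$, which gives control on the region $|t|\gtrsim|\xi|^2$. When $|z|\gtrsim|\xi|$, the smallness of $|c|$ relative to $\min\{|a_j|,|b_j|\}$ absorbs $|c||\widetilde\nabla J|$. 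When $|t|\gg|z|^2$, we use $|\xi||TQ|\ge |c|\beta|\xi|^{\beta-1}/4$.

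For the integral condition \eqref{0.8in jde}, I would show that the map
\[
\eta\mapsto\Big(\int\overline\nabla Q(\eta\circ\xi)U_{0,1}^{Q^*},\ \int Q(\eta\circ\xi)U_{0,1}^{Q^*}\Big)
\]
has no zero, as follows:
\begin{itemize}
\item For $|\eta|$ large, homogeneity gives $\overline\nabla Q(\eta\circ\xi)\approx\overline\nabla Q(\eta)\neq0$.
\item For bounded $\eta$, write $\eta=(x^0,y^0,t^0)$. The $x_j$-component is odd-and-monotone in $x^0_j$, because $s\mapsto|s|^{\beta-2}s$ is increasing. Averaging this component against the bubble, which is symmetric under $x_j\mapsto -x_j$, shows it vanishes only at $x^0_j=0$, up to the $c$-perturbation.
\item Similarly, the $T$-component vanishes only near $t^0=0$, using that $J$ is even in $t$ and $U_{0,1}$ is even in $t$.
\item At $\eta\approx0$, the last component equals $\big(\sum_j(a_j+b_j)+\kappa_\vartheta c\big)\int|x_1|^\beta U_{0,1}^{Q^*}$, by symmetry among coordinates. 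This equals $\sum_j\widetilde a_j(\zeta)$ times a positive constant, and it is nonzero by assumption.
\end{itemize}
Note that $U_{0,1}^{Q^*}$ is a multiple of $((1+|z|^2)^2+t^2)^{-n-1}$, which is why $\kappa_\vartheta$ appears. Making the bounded-$\eta$ step rigorous, uniformly under the small $c$-perturbation, is the main obstacle. I would first handle $c=0$ exactly, where the problem separates into one-variable monotonicity. I would then use continuity and compactness in $\eta$, together with the large-$\eta$ asymptotics, to absorb $c$.

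\textbf{Degree computation.} Next I compute $\deg(G_\lambda,\mathbb B^{n+1},0)$ for large $\lambda$. Away from $\operatorname{Crit}(R)$, $G_\lambda(P)$ is asymptotic to $c_n\lambda^{-1}\nabla R(P)$ up to a positive factor, via the first-moment expansion of $\varphi_{P,\lambda}$. Thus, in the Chang--Yang manner, $G_\lambda$ is homotopic through nonvanishing maps to a map built from $\nabla R$ on $\S^{2n+1}\setminus\bigcup\mathscr O_{\zeta}$, glued to local pieces near each critical point. Near $\zeta_0$, in the scaling $P=F_{\zeta_0}(\delta_{\lambda^{-1}}\eta)$, one has
\[
G_\lambda\approx\lambda^{-\beta}\Big(\int\overline\nabla Q(\eta\circ\xi)U^{Q^*},\ \int QU^{Q^*}\Big)
\]
in suitable components, with the last component playing the radial role.

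The standard count then gives
\[
\deg G_\lambda=-1+\sum_{\zeta:\ \sum\widetilde a_j<0}(-1)^{i(\zeta,R)}.
\]
The local contributions come from the sign pattern of $\widetilde a$: the degree of the map $\eta\mapsto(\text{sign-pattern gradient})$ is $(-1)^{i}$, and it counts only when the radial component points inward, that is, when $\sum\widetilde a_j<0$. The $-1$ is the Euler-characteristic correction, as in Li's and Jin--Li--Xiong's formulas. Hence \eqref{sum index} is equivalent to \eqref{0.11}, and Theorem~\ref{thm:0.5} yields the solution.
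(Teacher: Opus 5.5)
\textbf{The degree count has the wrong sign, so your last step does not follow.} You obtain $\deg G_\lam=-1+\sum_{\sum_j\widetilde a_j<0}(-1)^{i(\zeta,R)}$ and then assert that \eqref{sum index} is equivalent to \eqref{0.11}. With your formula, \eqref{0.11} would instead mean $\sum(-1)^{i(\zeta,R)}\neq 1$, and the case $\sum(-1)^{i(\zeta,R)}=1$, which the theorem covers, would have degree zero. The correct value, proved in Proposition~\ref{Cor new}, is $\deg(G_\lam,\mathbb B^{n+1},0)=1+\sum_{\sum_j\widetilde a_j<0}(-1)^{i(\zeta,R)}$. Write $G_\lam=V_\lam+f_\lam P$ with $V_\lam$ tangential and $f_\lam=\operatorname{Re}\langle G_\lam(P),P\rangle$. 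The boundary-index formula gives $\deg(G_\lam/|G_\lam|)=1-\sum_{V_\lam=0,\,f_\lam<0}\operatorname{ind}=1+\sum_{V_\lam=0,\,f_\lam>0}\operatorname{ind}$. The base term is $\deg(\pm\mathrm{Id})=1$ in $\R^{2n+2}$, and $\chi(\S^{2n+1})=0$ gives the second form. By the radial identity \eqref{5.45 new}, the radial moment at $P_0$ is $-\beta/Q$ times the value moment. Hence $\operatorname{sgn}f_\lam=-\operatorname{sgn}\sum_j\widetilde a_j$ near $P_0$. So the critical points with $\sum_j\widetilde a_j<0$ are exactly those where $G_\lam$ points \emph{outward}, not inward, and they enter with a plus sign. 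Your constant $-1$ belongs to the Leray--Schauder degree, which by \eqref{6.29 in jde} equals $-\deg G_\lam=-1-\sum(-1)^{i(\zeta,R)}$; it cannot be paired with $+\sum$.

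\textbf{Your plan for \eqref{0.8in jde} also breaks down at the step you flag.} Perturbing from $c=0$ cannot work. For $c=0$ the homogeneous term is independent of $t$, so the $T$-moment vanishes identically, and the horizontal moments vanish on the whole axis $\{w=0\}$ by parity. On that axis the value moment is $\sum_j(a_j+b_j)$ times a positive constant. The hypotheses allow $\sum_j(a_j+b_j)=0$ with $\kappa_\vartheta c\neq0$, so the unperturbed vector can vanish on an entire line. The nondegeneracy in the central direction must therefore come from $c\neq0$ itself. Evenness in $t$ only shows that the $T$-moment vanishes at $s=0$; it does not show that $s=0$ is its only zero.

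The paper's argument is exact. Let $H(\eta)$ be the value moment and $M(\eta)$ the $\overline\nabla$-moment, and write $\eta=(w,s)$.
\begin{enumerate}[(i)]
\item By \eqref{5.38 new}, $DH$ equals a unimodular triangular matrix applied to $M$.
\item One has $H(w,s)=H(w,-s)$ and $\frac1c\partial_{ss}H>0$, because $\partial_{tt}J_{\beta,\vartheta}>0$; see \eqref{5.39 new}. Hence every zero of $M$ has $s=0$.
\item At $s=0$, the one-variable monotonicity \eqref{5.41 new} and the smallness of $c$ force $w=0$.
\item Finally, $H(0)$ is a nonzero multiple of $\sum_j\widetilde a_j$.
\end{enumerate}
The same structure gives $DM(0)=D^2H(0)$, diagonal with the signs of $a_j,b_j,c$. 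This yields the local index $(-1)^{i(P_0,R)}$, which is the input your count needs.
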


We finally extend the degree criterion to the borderline
flatness order $Q-2$, allowing this order at at most one
critical point when $Q-2>2$.
\begin{thm}\label{thm:0.6}
Assume the hypotheses of Theorem~\ref{thm:0.5}, except that the local exponent is allowed to lie in $[Q-2,Q)\cap(2,\infty)$.  Let
\[
\mathscr K_\gamma=\{\zeta\in\operatorname{Crit}(R):\beta(\zeta)=\gamma\}.
\]
If $\#\mathscr K_{Q-2}\le1$, then \eqref{0.10} holds for all sufficiently large $\lam $.  If, in addition, condition \eqref{0.11} holds, then \eqref{maineq} admits a positive $C^2$ solution.
\end{thm}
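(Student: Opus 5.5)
The plan is to run the argument for Theorem~\ref{thm:0.5} again, changing only the two places where $\beta>Q-2$ is used: compactness along the homotopy \eqref{eq:curvature-homotopy}, and the nonvanishing of $G_\lam$ in \eqref{0.10}. The rest is unchanged. For $\mu$ small, the single-point concentration alternative and convergence to $1$ from Part~I reduce the Leray--Schauder degree of the solution operator on a suitable set to a finite-dimensional reduction over the conformal parameters $(P,\lam)$. This reduced degree equals $\pm\deg(G_\lam,\mathbb B^{n+1},0)$, up to the universal sign appearing in the proof of Theorem~\ref{thm:0.5}. Homotopy invariance then carries nonzero degree to $\mu=1$ and yields a positive $C^2$ solution.

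The first step is to extend the compactness of Part~I to the case where one critical point $\zeta_1$ has $\beta(\zeta_1)=Q-2$. In the energy estimate near an isolated simple blow-up point $P$, the Pohozaev-type balance compares the boundary term, of order $\lam^{-(Q-2)}$ (from the Green function interaction $h(P)$ at $P$ and the other blow-up points), with the curvature term of order $\lam^{-\beta}$.
\begin{itemize}
\item If $\beta>Q-2$, the curvature term is negligible. The interaction term then has a positive sign and excludes multiple blow-up, while \eqref{0.8in jde} excludes single blow-up.
\item If $\beta=Q-2$, both terms are of the same order. The first task is to show that blow-up can occur only at points of $\mathscr K_{Q-2}$; elsewhere the old argument applies verbatim.
\end{itemize}
Since $\#\mathscr K_{Q-2}\le1$, any blow-up set is then contained in a single point $\zeta_1$. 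The uniform distance lower bound between distinct blow-up points from Part~I then excludes two concentrating bubbles at $\zeta_1$, so the blow-up is single and simple. For such a single simple blow-up there is no interaction term from other bubbles: the Green function contribution comes only from the regular part at the point itself, which vanishes on the standard sphere in Cayley coordinates. The balance therefore reduces to the vanishing of the integral vector in \eqref{0.8in jde} at some limiting $\eta$, which is excluded by hypothesis. This is why at most one point of order $Q-2$ is allowed: with two such points, the cross term between bubbles at distinct points would have the same order $\lam^{-(Q-2)}$, and \eqref{0.8in jde} alone would not control it.

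The second step is to verify \eqref{0.10}. Away from $\operatorname{Crit}(R)$, $G_\lam(P)$ is dominated by the gradient of $R$ at $P$ and is nonzero, exactly as before. Near each $\zeta_0$, write $P=F_{\zeta_0}(\eta/\lam)$ and expand $G_\lam$. This gives $\lam^{-\beta}$ times the vector in \eqref{0.8in jde} (up to a nondegenerate linear change mapping the $\C^{n+1}$ components to the $(\overline\nabla,\text{value})$ components), plus $o(\lam^{-\beta})$ by the remainder condition on $W_{\zeta_0}$. The condition $\beta<Q$ guarantees integrability against $U_{0,1}^{Q^*}$; the value $\beta=Q-2$ changes nothing here. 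For $|\eta|\to\infty$, the nondegeneracy \eqref{0.6} gives the lower bound uniformly.

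I expect the main obstacle to be the first step, specifically the exactness of the Pohozaev balance at $\beta=Q-2$. The local blow-up estimates of Part~I must be sharp to order $\lam^{-(Q-2)}$ rather than merely $o(\lam^{-(Q-2)})$. The error terms, including the sphere-to-Heisenberg conformal factor and the contribution of $TR$ (which scales with weight $2$), must all be shown to be $o(\lam^{-(Q-2)})$. Here I would use the $(*)'_{Q-2}$ bounds together with $Q-2>2$, which gives a strict gain for the mixed term $|\nabla_\H(TR)|\lesssim r^{\beta-3}$.
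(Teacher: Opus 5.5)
Your proposal is correct and follows essentially the same route as the paper. The paper gets \eqref{0.10} directly from Lemma~\ref{lem:3.8}, which already covers all $\beta\in(2,Q)$. For $\mu\in[\var,1]$ it excludes multi-point blow-up by noting that some blow-up point must have $\beta>Q-2$ and running the positive-interaction Pohozaev contradiction there, using Harnack-comparable peak heights, and it excludes single-point blow-up through \eqref{0.6}--\eqref{0.8in jde} via Part~I. It then reuses the small-$\mu$ normalization and homotopy argument of Theorem~\ref{thm:0.5}. Your reordering, which first forces every blow-up point into $\mathscr K_{Q-2}$, is only a rephrasing of this case split; note that in the single-point case \eqref{0.6} is also needed to keep the rescaled offset $\eta$ bounded before \eqref{0.8in jde} can be invoked.
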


The existence result under antipodal symmetry has counterparts in the classical Nirenberg problem, notably the work of Moser \cite{M1973} on $\S^2$ and Escobar--Schoen \cite{ES1986} on higher-dimensional spheres. The degree criteria are closely related to the results of Bahri--Coron \cite{BC1991} in dimension three and Li \cite{Li1995} in higher dimensions. Li \cite{Li1996} also treated the case in which at most one critical point has the critical flatness order. Corresponding existence results under symmetry assumptions and degree conditions for the fractional Nirenberg problem were obtained by Jin--Li--Xiong \cite{JLX2015}.

We begin with the symmetric problem. For Theorems~\ref{thm:1} and \ref{thm:1-2}, we minimize the energy among functions with the prescribed symmetry. The flatness assumption allows us to construct test functions with energy below the concentration threshold. Subcritical approximation and blow-up analysis then exclude concentration under the strict energy inequality and yield a positive symmetric solution.

For Theorems~\ref{thm:0.5}, \ref{thm:2} and \ref{thm:0.6}, we first consider prescribed curvatures close to a positive constant. After separating the CR conformal directions, the constrained energy has a uniformly positive second variation near 1. We solve the constrained minimization problem and compare the remaining finite-dimensional equation with the averaged map in \eqref{0.10}. Theorem~\ref{thm:3.1} in Section~\ref{sec:degree-comparison} gives the degree comparison \eqref{eq: degree}.

In the CR setting, the horizontal and Reeb directions have different scaling under Heisenberg dilations. The local analysis of the averaged map therefore involves the homogeneous term and its right-invariant derivatives.
For \eqref{eq:alln-model-expansion}, the rescaled expansion \eqref{4.47 new} in Section~\ref{sec:homogeneous-degree} gives the local index \eqref{4.49 new}, from which Proposition~\ref{Cor new} follows.

To pass from the perturbative case to general prescribed curvatures, we use the homotopy \eqref{eq:curvature-homotopy}. By the results of Part I \cite{QTWPartI} and the conformal normalization in Section~\ref{sec:perturbation}, the normalized solutions converge to 1 in the energy space as $\mu$ tends to zero. After fixing a sufficiently small positive $\mu$, compactness along the remaining parameter interval allows us to continue the degree to $\mu=1$. For Theorem~\ref{thm:0.6}, any configuration of multiple blow-up points contains a point with $\beta>Q-2$, where the local Pohozaev identity excludes such a configuration. The remaining single-point blow-up is excluded by \eqref{0.6} and \eqref{0.8in jde}.

Together with Part~I \cite{QTWPartI}, these results give existence and degree criteria under the stated weighted flatness conditions in every CR dimension $n\ge1$, without restricting the prescribed curvature to a small perturbation of a constant. The comparison \eqref{eq: degree} and the continuation argument connect the local homogeneous expansions to the degree of the full equation. For \eqref{eq:alln-model-expansion}, the existence criterion reduces to the explicit condition \eqref{sum index}. The local degree is computed without assuming that the actual zeros of the associated tangential field are isolated or nondegenerate.

The paper is organized as follows. Section~\ref{sec 2} recalls the geometric and analytic preliminaries. Section~\ref{sec:3} proves the existence results under symmetry. In Section~\ref{sec:perturbation}, we establish the perturbation theorem and compute the degree for the explicit homogeneous expansions. Section~\ref{sec:main-existence} completes the proofs of the remaining main theorems using compactness from Part~I  \cite{QTWPartI} and homotopy invariance. Appendix~\ref{appendix} contains some technical lemmas and Aubin-type inequalities.

%\subsection*{Acknowledgments}
% We thank Professor YanYan Li for many helpful personal communications and valuable suggestions since the beginning of this project in May 2024. Z. Tang is supported by NSFC grants 12471099 and 12271223.  H. Wang is supported by China Postdoctoral Science Foundation 2026M793418.

\section{Preliminaries}\label{sec 2}
We collect the geometric conventions, function spaces, and conclusions of Part I \cite{QTWPartI} needed in the existence proofs. The distinction between horizontal, full, and conformal gradients is essential in the computations below. We retain the notation of the introduction and set $q=Q^*$ and $c_0=n^2/4$. All variational spaces consist of real-valued functions. Complex coordinates are used to express real vectors and operators on $\C^{n+1} \simeq \mathbb{R}^{2 n+2}$.

The letter $C>0$ denotes a constant that may change from line to line. Unless otherwise stated, constants may depend on $n$; further dependencies and uniformity are specified where needed. Constants with subscripts are fixed within the relevant statement, unless explicitly reselected.
\subsection{Conformal identities on the CR sphere}\label{sec:2.1}
 The standard CR sphere, defined by $|\zeta|=1$ in $\mathbb C^{n+1}$, carries the contact form
$$
\theta_0=\mathrm{i} \sum_{j=1}^{n+1}(\zeta_j \, \d  \bar{\zeta}_j-\bar{\zeta}_j \, \d  \zeta_j).
$$
Its horizontal distribution is $H=\operatorname{ker}\theta_0$, and its complex structure $J$ is the restriction of multiplication by $\mathrm{i}$ to $H$. The Levi form is $G_{\theta_0}(X, Y)=-\d  \theta_0(J X, Y)$ on $H$. The Reeb field is characterized by $\theta_0(T_{\theta_0})=1$ and $\d  \theta_0(T_{\theta_0}, \cdot)=0$; here $T_{\theta_0}(\zeta)=\mathrm{i} \zeta / 2$. The Webster metric extends the Levi form by declaring the Reeb field unit and orthogonal to $H$. On the sphere, $\operatorname{grad}_{g_{\theta_0}}$ denotes the full gradient, $\nabla_{\theta_0}$ the horizontal gradient, and $\Delta_{\theta_0}=\operatorname{div} \nabla_{\theta_0}$ the sub-Laplacian, as in Part~I \cite{QTWPartI}.

Under a conformal change $\widehat{\theta}=v^{2 / n} \theta_0$, the Webster scalar-curvature transformation gives \eqref{maineq} after normalizing the prescribed curvature by its constant value at $\theta_0$; see \cite{DT2006,JL1987}. The nonnegative operator $-\Delta_{\theta_0}$ has the following coordinate expression, applied to a smooth extension off the sphere:
\[
-\Delta_{\theta_0}=-\sum_{j=1}^{n+1}\frac{\pa ^2}{\pa \zeta_j\pa \bar\zeta_j}
+\sum_{j,k=1}^{n+1}\zeta_j\bar\zeta_k\frac{\pa ^2}{\pa \zeta_j\pa \bar\zeta_k}
+\frac n2\sum_{k=1}^{n+1}\Big(\zeta_k\frac\pa {\pa \zeta_k}
+\bar\zeta_k\frac\pa {\pa \bar\zeta_k}\Big),
\]
see, e.g., \cite{BFM2013,JL1987}.
Unless a measure is displayed, sphere integrals use the Euclidean surface measure $\d  \si $, and Heisenberg integrals use Lebesgue measure $\d\xi=\d z\,\d t$, where $\d z=\d x\,\d y$. Set $\om _{2 n+1}=|\S^{2 n+1}|$ and $\dashint_{\S^{2 n+1}} f\,\d\si =\om _{2 n+1}^{-1} \int_{\S^{2 n+1}} f \, \d  \si $. The contact volume is $\d  V_{\theta_0}=2^{2 n+1} n!\d  \si $; normalized averages are the same for either measure. Applying integration by parts, we obtain  $\int_{\S^{2n+1}} v L_{\theta_0} v\,\d\si =\int_{\S^{2n+1}}(|\nabla_{\theta_0} v|^2+c_0 v^2)\,\d\si $.

Distances in concentration estimates are measured by the contact distance
\begin{align}\label{eq:contact-distance}
d_{\theta_0}(P, P')=\sqrt{2|1-\langle P, P'\rangle|}, \quad \mathcal{B}(P, r)=\{P'\in\S^{2n+1}:d_{\theta_0}(P,P')<r\},
\end{align}
where $\langle\zeta,\eta\rangle=\sum_{j=1}^{n+1}\zeta_j\overline{\eta_j}$ and the bar denotes complex conjugation. In the degree computations, $a\cdot b=\operatorname{Re}\langle a,b\rangle$ is the real Euclidean pairing. The distance is locally comparable to the Carnot--Carath\'eodory distance. For $K\subset\S^{2n+1}$, set $d_{\theta_0}(P,K)=\inf_{P'\in K}d_{\theta_0}(P,P')$, with $d_{\theta_0}(P,\varnothing)=+\infty$. 
Geller \cite{G1981} showed that, for regular $f:\S^{2n+1}\to\C$,
\begin{equation}\label{Green}
L_{\theta_0}^{-1}(f)(\zeta)
=c_n\int_{\S^{2n+1}}\operatorname{dist}(\zeta,\eta)^{-2n}f(\eta)\,\d \si (\eta),
\end{equation}
where $c_n=\frac{2^{n-1}\Gamma(\frac n2)^2}{\pi^{n+1}}$, $\Gamma$ denotes the Euler gamma function, and
$\operatorname{dist}(\zeta,\eta)=d_{\theta_0}(\zeta,\eta)$ as in \eqref{eq:contact-distance}.

For $R\in C^1(\S^{2n+1})$, every positive solution of \eqref{maineq} satisfies the Kazdan--Warner identity \cite[Theorem~B]{Cheng}:
\begin{equation}\label{KW 0}
\int_{\S^{2n+1}}X(R)v^{Q^*}\,\d V_{\theta_0}=0,
\end{equation}
for every infinitesimal CR automorphism $X$ of $\S^{2n+1}$.
To apply \eqref{KW 0} to the conformal variations below, we distinguish a second gradient from the horizontal gradient $\nabla_{\theta_0}$. Let $g_{\rm rd}$ be the standard round metric, let $T_0(\zeta)=\mathrm{i}\zeta=2T_{\theta_0}(\zeta)$, and set $\eta_0=g_{\rm rd}(T_0,\cdot)$. Define
\[
\widehat g=g_{\rm rd}-\frac12\eta_0\otimes\eta_0,
\quad \widehat\nabla=\operatorname{grad}_{\widehat g},\quad \widehat\Delta=\operatorname{div}_{\widehat g}\widehat\nabla.
\]
For $f_a(\zeta)=\operatorname{Re}\langle\zeta,a\rangle$,
\[
\widehat\nabla f_a=a-\langle\zeta,a\rangle\zeta,
\quad -\widehat\Delta f_a=Qf_a.
\]
These gradients generate CR automorphisms, and the volume form of $\widehat g$ is a constant multiple of $\d V_{\theta_0}$. Thus every positive solution of \eqref{maineq} satisfies
\begin{equation}\label{KW new}
\int_{\S^{2n+1}}
\langle\widehat\nabla R,\widehat\nabla f_a\rangle_{\widehat g}
 v^{Q^*}\,\d V_{\theta_0}=0,
\quad a\in\C^{n+1}.
\end{equation}
Pairings involving two $\widehat\nabla$-gradients are taken with respect to $\widehat g$ and extended complex bilinearly when needed. Notice also that
\[
|\widehat\nabla f_a|_{\widehat g}^2
=|a|^2-\operatorname{Re}(\langle\zeta,a\rangle^2)\ge0,
\]
with strict inequality on a nonempty open set whenever $a\ne0$.

\subsection{Cayley coordinates and variational spaces}\label{sec2.2}

The Heisenberg group $\Hn$ is the Lie group $\Cn\times\R$ with group law
\[
(z,t)\circ(\hat z,\hat t)
=(z+\hat z,t+\hat t+2\operatorname{Im}(z\cdot\overline{\hat z})).
\]
Here $z\cdot\overline{\hat z}=\sum_{j=1}^n z_j\overline{\hat z_j}$, the identity is $0=(0,0)$, and $(z,t)^{-1}=(-z,-t)$. We write $\xi=(z,t)=(x,y,t)$, with $z=x+\mathrm{i} y$, and set
\[
|\xi|=\rho(\xi)=(|z|^4+t^2)^{1/4},\quad
 d(\xi,\xi_0)=|\xi_0^{-1}\circ\xi|.
\]
Left translations and Heisenberg dilations are denoted by
\begin{equation}\label{heisenberg-dilation}
\tau_{\xi_0}(\xi)=\xi_0\circ\xi,
\quad \delta_\lam (z,t)=(\lam  z,\lam ^2t).
\end{equation}
For $r>0$ we write $B_r(\xi_0)=\{\xi\in\Hn:d(\xi,\xi_0)<r\}$ and $B_r=B_r(0)$ when the center is the origin.
The standard left-invariant vector fields are
\begin{equation}\label{vectorfields}
X_j=\frac\pa {\pa  x_j}+2y_j\frac\pa {\pa  t},
\quad
Y_j=\frac\pa {\pa  y_j}-2x_j\frac\pa {\pa  t},
\quad T=\frac\pa {\pa  t},\quad 1\leq j\leq n,
\end{equation}
and
\[
\nabla_{\H}u=\frac12(X_1u,\ldots,X_nu,Y_1u,\ldots,Y_nu),
\quad
\Delta_{\H}=\frac14\sum_{j=1}^n(X_j^2+Y_j^2).
\]

The Cayley transform relates the conformal sub-Laplacian on the sphere to the Heisenberg sub-Laplacian. More precisely, it maps $\S^{2n+1}\backslash\{\mathcal S\}$ to $\Hn$ and is the inverse of
\be\label{Cayley}
	F: \Hn \to \S^{2n+1}\backslash\{\mathcal{S}\}, \quad (z,t) \mapsto\Big(\frac{2 z}{1+|z|^2-\mathrm{i} t}, \frac{1-|z|^2+\mathrm{i} t}{1+|z|^2-\mathrm{i} t}\Big),
\ee
where $\mathcal{S}=(0,\ldots,0,-1)$ is the south pole of $\S^{2n+1}$. For $P\in\S^{2n+1}$ choose a unitary matrix $U_P$ with $U_P(0,\ldots,0,1)=P$ and set
\begin{equation}\label{eq:normalized-cayley}
 F_P=U_P\circ F\circ\delta_{1/2},\quad F_P(0)=P.
\end{equation}
The choices of $U_P$ range over the compact unitary group; a global smooth choice is not required. The hypotheses on $R$ are understood in these centered Cayley charts on fixed small coordinate balls. Their contact distances are uniformly comparable with $d$, and their conformal factors equal $1$ at the origin. The usual pseudo-Hermitian normal coordinates on a general CR manifold and these exact sphere charts are not identified without a coordinate change. %Lemma~\ref{lem:wf-contact} below records the required derivative estimates for such changes.
Using \eqref{Green} and \eqref{Cayley}, we obtain
\be\label{1.7}
(L_{\theta_{0}} \phi) \circ F=-(2|J_F|)^{-(Q+2)/(2Q)} \Delta_{\H}((2|J_F|)^{(Q-2)/(2Q)}(\phi \circ F))\quad \text{ for }\,\phi \in C^{\infty}(\S^{2n+1}),
\ee
where  $$|J_F|=\frac{2^{2 n+1}}{((1+|z|^2)^2+t^2)^{n+1}}$$ is the surface Jacobian of $F$, characterized by $F^*(\d\si )=|J_F|\,\d\xi$, as in Part~I \cite{QTWPartI}. 

The transformation law \eqref{1.7} gives the corresponding equation on $\Hn$. Setting $u=(2|J_F|)^{(Q-2)/(2Q)}(v\circ F)$ and writing $R(\xi)=(R\circ F)(\xi)$, every positive solution of \eqref{maineq} satisfies
\be\label{maineq1}
-\Delta_{\H} u=\frac{n^2}{4}R(\xi) u^{\frac{Q+2}{Q-2}}, \quad u>0 \quad  \text{ in }\,\Hn.
\ee
Conversely, if $u$ is a positive $C^2$ solution of \eqref{maineq1}, then
\[
v=[(2|J_F|)^{-\frac{Q-2}{2 Q}} u] \circ F^{-1}
\]
solves \eqref{maineq} on $\S^{2n+1}\backslash\{\mathcal S\}$. It corresponds to a positive $C^2$ solution on the whole sphere if and only if $v$ extends to a positive $C^2$ function across $\mathcal S$. The extended function satisfies \eqref{maineq} at $\mathcal S$ by continuity. The equivalence between \eqref{maineq} and \eqref{maineq1} is understood for solutions with this extension property.

Let $E=\dot S_1^2(\Hn)$ be the completion of $C_c^\infty(\Hn)$ under the norm
\[
\|u\|_E=\Big(\int_{\Hn}|\nabla_{\H}u|^2\,\d\xi\Big)^{1/2}.
\]
The Folland--Stein embedding $E\hookrightarrow L^{Q^*}(\Hn)$ holds, and the sharp Sobolev inequality \cite{FS1974,JL1988} is
\be\label{Sobolevineq}
S_n \Big(\int_{\Hn} |u|^{Q^*}\,\d\xi\Big)^{1 / Q^*} \leq \Big(\int_{\Hn} |\nabla_{\H} u|^2\,\d\xi\Big)^{1 / 2},
\ee
where $S_{n}=\frac{ n \sqrt{\pi}}{(2^{2 n} n !)^{1 /(2(n+1))}}$ is the best constant. 
Set
\begin{equation}\label{standard-bubble}
\Lam (z,t):=((1+|z|^2)^2+t^2)^{(2-Q)/4},
\quad U_{0,1}:=2^n\Lam .
\end{equation}
As in Part~I \cite{QTWPartI}, the conformal factors of $F$ and $F_P$ are $H_F=2^n\Lam $ and $H_P=\Lam \circ\delta_{1/2}$, respectively; thus $H_F(0)=2^n$ and $H_P(0)=1$.
The translates and dilates of $U_{0,1}$ form the family
\begin{equation}\label{bubbles}
U_{a,\lam }(\xi)
=2^n\lam ^{(2-Q)/2}\Lam (\delta_{\lam ^{-1}}(a^{-1}\circ\xi)),
\quad a\in\Hn,\quad \lam >0.
\end{equation}
Each function in \eqref{bubbles} solves \eqref{maineq1} with $R\equiv1$, and the positive extremals of \eqref{Sobolevineq} are precisely their positive multiples.

Garofalo--Vassilev \cite[Theorem~1.5]{GV2001} classified finite-energy entire solutions under cylindrical symmetry on groups of Iwasawa type, including $\Hn$.
Catino--Li--Monticelli--Roncoroni \cite[Theorems~1.1 and~1.2]{CLMR2023} classified all positive solutions in $\H^1$ and positive solutions in $\Hn$, $n\geq2$, under a suitable decay condition at infinity. More recently, a complete classification of positive entire solutions to the critical CR Yamabe equation for $n\geq2$ has been obtained independently by Li--Liu--Ma \cite{LLM2026} and Li \cite{L2026}.

For a CR automorphism $\varphi$, let $J_{\varphi}=|\operatorname{det}\d\varphi|$ be its Jacobian relative to $\d\si $ and set $T_{\varphi}v=(v\circ\varphi)J_{\varphi}^{1/q}$, with $T_\varphi^{-1}=T_{\varphi^{-1}}$. Then $\varphi^* \theta_0=J_{\varphi}^{2 / Q} \theta_0$. Using \eqref{heisenberg-dilation}, we define the canonical maps and their ball parameters by
\begin{align}\label{varphi new}
\varphi_{P, \lam }=F_P \circ \delta_{\lam ^{-1}} \circ F_P^{-1}, \quad p=\frac{\lam -1}{\lam } P \in \mathbb{B}^{n+1}, \quad b=\frac{\lam ^2-1}{\lam ^2+1} P,
\end{align}
where $P \in \S^{2 n+1}$, $\lam  \geq 1$, and $\mathbb{B}^{n+1}$ is the unit ball in $\C^{n+1}$ and $\varphi_0=\mathrm{Id}$ (see \cite{CY1987,CY1991}). We write $\varphi_p$ and $\varphi_b$ for the map in \eqref{varphi new} parametrized by $p$ and $b$, respectively. We also write $\mathbb B_s^{n+1}=\{p\in\C^{n+1}:|p|<s\}$.

For later use, define the right-invariant vector fields by
\begin{equation}\label{right-vectorfields}
\overline X_i=\frac{\pa }{\pa  x_i}-2y_i\frac{\pa }{\pa  t},\quad
\overline Y_i=\frac{\pa }{\pa  y_i}+2x_i\frac{\pa }{\pa  t},\quad
T=\frac{\pa }{\pa  t}.
\end{equation}
Here $1\leq i\leq n$, $\overline X=(\overline X_1,\ldots,\overline X_n)$ and $\overline Y=(\overline Y_1,\ldots,\overline Y_n)$. Thus $\widetilde\nabla=(\overline X,\overline Y)$ and $\overline\nabla=(\widetilde\nabla,T)$ have $2n$ and $2n+1$ components, respectively, without the factor $1/2$ in $\nabla_\H$. We use Euclidean norms for these arrays. As in Part~I \cite{QTWPartI}, $\nabla$ in Heisenberg coordinates denotes the Euclidean gradient. The symbols $D$, $D_w$ and $D^2$ denote ordinary derivatives in the indicated real coordinates; for functionals they denote Fr\'echet derivatives.

We denote by $S_1^2(\S^{2n+1})$ the Folland--Stein space obtained as the completion of $C^\infty(\S^{2n+1})$ under the norm
\begin{align}\label{sb norm}
\|v\|_{S_1^2(\S^{2n+1})}:=\Big(\int_{\S^{2n+1}} v L_{\theta_0}v\,\d V_{\theta_0}\Big)^{1/2};
\end{align}
see \cite{FS1974}. For a solution $v$ of \eqref{maineq} with $R$ bounded, testing the equation against $v$ and using \eqref{eq:sharp-sphere-sobolev} show that a bound in $L^{Q^*}(\S^{2n+1})$ is equivalent to a bound in the norm \eqref{sb norm}.

\subsection{Intrinsic regularity and local flatness conditions}\label{sec:coefficients}
We now recall the intrinsic regularity and weighted flatness conditions used in Part~I \cite{QTWPartI}.

Using the array $Z$ introduced above, for each positive integer $k$ set
$$\mathscr B_k=\{Z_{a_1}\cdots Z_{a_j}:1\leq j\leq k,\ a_\ell\in\{1,\ldots,2n\},\ 1\leq\ell\leq j\}.$$

Let $\Om \subset\Hn$ be open. We recall the Folland--Stein spaces $\Gamma_\beta(\Om )$, following \cite{FS1974}. For $\beta\in(0,1)$, define
$$\Gamma_{\beta}(\Om)=\Big\{f\in L^{\infty}(\Om)\cap C^0(\Om):\sup_{\substack{\xi\in \Om, \zeta\in\H^n\backslash\{0\}\\ \xi\circ\zeta\in\Om }}\frac{|f(\xi)-f(\xi\circ\zeta)|}{|\zeta|^{\beta}}<\infty\Big\}.$$
If $\beta=1$, define $$\Gamma_{1}(\Om)=\Big\{f\in L^{\infty}(\Om)\cap C^0(\Om):\sup_{\substack{\xi\in \Om, \zeta\in\H^n\backslash\{0\}\\ \xi\circ\zeta, \xi\circ \zeta^{-1}\in\Om }}\frac{|f(\xi\circ\zeta)-2f(\xi)+f(\xi\circ\zeta^{-1})|}{|\zeta|}<\infty\Big\}.$$
If $\beta=k+\al$ with $k\in \mathbb{N}^+$  and $\al\in (0,1)$, define
$$\Gamma_{k+\al}(\Om)=\Big\{f\in L^{\infty}(\Om)\cap C^0(\Om):\mathscr{L}f\in \Gamma_{\al}(\Om)\,\text{ for $\mathscr{L}\in \mathscr{B}_k$} \Big\}.$$
All derivatives are understood in the distributional sense. For $0<\beta \leq 1$, the norm is the sum of the $L^{\infty}(\Om)$ norm and the corresponding supremum above. For higher orders, we set
$$
\|f\|_{\Gamma_{k+\al }(\Om )}=\|f\|_{L^{\infty}(\Om )}+\sum_{\mathscr{L} \in \mathscr{B}_k}\|\mathscr{L} f\|_{\Gamma_\al (\Om )} .
$$
%We say $u\in \Gamma_{\beta}(\Om,loc)$ if $\phi u\in \Gamma_{\beta}(\Om)$ for every $\phi\in C_{c}^{\infty}(\Om)$.

  On $\S^{2 n+1}$, these spaces and their norms are defined using a finite contact atlas and a subordinate smooth partition of unity; see \cite{FS1974}.
Using exponential coordinates, we identify $\Hn$ with $\R^{2n+1}$ and denote the Euclidean norm by $|\cdot|_{\mathrm{Eu}}$. The ordinary spaces $C^k$, $0\leq k\leq\infty$, refer to derivatives in these coordinates. See \cite{FS1974,F1975} for further details and regularity results.

To state the norm of $\mathcal A_\al$ from \cite[Definition~1.1]{QTWPartI}, write $Z^2u=(Z_aZ_bu)_{a,b=1}^{2n}$ and $ZTu=(Z_aTu)_{a=1}^{2n}$; thus $ZR=2\nabla_\H R$ and $Z^2R=4\nabla_\H^2R$.

Let $\al \in (0,1)$ and let $U\subset\Hn$ be a relatively compact open set. We write $R\in\mathcal A_\al (U)$ if $R\in C^1(U)$, every ordered derivative $Z_aZ_bR$ is represented by a bounded measurable function, and $TR\in\Gamma_\al (U)$. We also require finiteness of the norm
\be\label{eq:intrinsic-coefficient-norm}
 \|R\|_{\mathcal A_\al (U)}
=\|R\|_{L^\infty(U)}+\|ZR\|_{L^\infty(U)}+\sum_{a,b=1}^{2n}\|Z_aZ_bR\|_{L^\infty(U)}+\|TR\|_{\Gamma_\al (U)}.
\ee
On the sphere, the definition of $\mathcal A_\al$ and the norm \eqref{eq:intrinsic-coefficient-norm} are understood uniformly on fixed smaller balls in the centered charts \eqref{eq:normalized-cayley}. The local version is denoted by $\mathcal A_{\al,\mathrm{loc}}$. All distributional derivatives are taken along the indicated fields.

Both ordinary $C^{1,1}$ functions and functions in $\Gamma_{2+\al }$ belong locally to $\mathcal A_\al $; no bound on \(T^2R\) is required.

We use the condition $(*)'_\beta$ stated before Theorem~\ref{thm:0.5}, with the same uniform chart conventions as in \eqref{eq:intrinsic-coefficient-norm}.

For a homogeneous expansion at a full critical point, write $R \circ F_P=R(P)+Q_P^{(\beta)}+W_P$. The leading term is $C^1$ away from zero, is homogeneous under $\delta_r$, and satisfies
\be\label{2.7 new}
c|\xi|^{\beta-1} \leq |\widetilde{\nabla} Q_P^{(\beta)}|+|\xi||T Q_P^{(\beta)}| \leq C|\xi|^{\beta-1} \quad(\xi \neq 0) .
\ee
The first-derivative remainder required for the general homogeneous criterion is
\be\label{2.8 new}
|\xi|^{-\beta}|W_P|+|\xi|^{1-\beta}|\widetilde{\nabla} W_P|+|\xi|^{2-\beta}|T W_P| \to 0\quad\text{ as }\, \xi\to0.
\ee
%Under the coefficient assumptions of Theorem \ref{thm:2}, the following stronger remainder condition is sufficient for $(*)_\beta':$
%$$
%\rho^{-\beta}|W|+\rho^{1-\beta}|Z W|+\rho^{2-\beta}(|T W|+|Z^2 W|)+\rho^{3-\beta}|Z(T W)| \to  0, \quad \rho=|\xi| \to  0 .
%$$
The lower bound in \eqref{2.7 new}, together with \eqref{2.8 new}, isolates every full critical point. Hence $\operatorname{Crit}(R)$ is finite, and the local constants may be chosen uniformly.
%\begin{rem}
 %    These assumptions allow the radial example $R=R_0+c \rho^\beta, R_0>0, c \neq 0, \beta \geq 2+\al $. Homogeneity gives $|Z R| \leq C \rho^{\beta-1},|Z^2 R|+|T R| \leq C \rho^{\beta-2}$, and $|Z(T R)| \leq C \rho^{\beta-3}$ away from zero. The combined first derivative has the lower bound \eqref{2.7 new}, so $r=\rho$ verifies flatness. The size and horizontal derivative estimates for $T R$ give its contact Hölder regularity. When $2<\beta<4$, however, $T^2 R(0, t)$ is unbounded near zero. This explains the coefficient class used in CR dimension one. More generally, the full-critical homogeneous assumptions force $\beta \geq 2+\al $ : otherwise $T R(\delta_r \xi)=O(r^\al )$ and homogeneity imply $T Q_P^{(\beta)} \equiv 0$, contradicting \eqref{2.7 new} on the central axis.
%\end{rem}

\begin{rem}
The restrictions on the flatness exponent and on $c$ in
\eqref{eq:alln-model-expansion} have the following roles.
\begin{enumerate}[(i)]
\item For a nonzero $\beta$-homogeneous function $Q^{(\beta)}$,
the absolute convergence at infinity of
$\int_{\Hn}|Q^{(\beta)}|U_{0,1}^{Q^*}\,\d\xi$
is equivalent to
$\int_1^\infty r^{\beta-Q-1}\,\d r<\infty$,
and hence to $\beta<Q$. The restriction $\beta<Q$ is sharp
for these integrals; its necessity for solvability is not claimed.

\item The lower bound $\beta>Q-2$ arises from the interaction
estimates in the blow-up analysis. The condition $\beta>Q-2$
and the smallness assumption on $c$ in Theorem~\ref{thm:2}
are sufficient conditions. We do not claim
their necessity or the optimality of the smallness constant.

\item For fixed $\al\in(0,1)$, membership in $\mathcal A_\al$,
together with \eqref{2.7 new} and \eqref{2.8 new}, implies
$\beta\ge2+\al$. The inequality $\beta\ge2+\al$ holds automatically
when $n\ge2$ within the flatness ranges of the main theorems.
For $n=1$, Theorem~\ref{thm:0.6} requires $\beta>2$ and
therefore does not include the critical order $Q-2=2$.
\end{enumerate}
\end{rem}

%\medskip
%\noindent\emph{Remarks on the flatness assumptions.}
%For a nonzero $\beta$-homogeneous term, the absolute convergence at infinity %of
%$\int_{\Hn}|Q^{(\beta)}|U_{0,1}^{Q^*}\,\d\xi$ is equivalent to
%$\int_1^\infty r^{\beta-Q-1}\,\d r<\infty$, hence to $\beta<Q$.
%This threshold is sharp for the integrals used here, but is not asserted to %be necessary for solvability.
%The lower threshold $Q-2$ is the interaction order in the blow-up argument.
%Both $\beta>Q-2$ and the smallness assumption on the central coefficient in %Theorem~\ref{thm:2}
%are sufficient conditions; no general necessity or optimal smallness %constant is asserted.
%For a fixed $\al $, membership in $\mathcal A_\al $ and the full %nondegeneracy condition \eqref{2.7 new}, together with \eqref{2.8 new}, %imply $\beta\ge2+\al $.
%This restriction is automatic when $n\ge2$ within the flatness ranges of the %main theorems.

%%%%%%%%%%%%%%%%%%%%%%%%%%%%%%%%%%%%%%%%%%%%%%%%%%%%%%%%%%%%%%%%%%%%%%%%%%%%%%%%%%%%%%
\section{Proofs of Theorems~\ref{thm:1} and \ref{thm:1-2}}\label{sec:3}

We prove the existence results under symmetry assumptions by
minimizing the associated energy in the corresponding invariant
spaces. We first consider antipodal symmetry and then extend the
argument to finite-group symmetries.

Let $S_{\mathrm{as}}$ denote the space of antipodally symmetric
functions in $S_1^2(\S^{2n+1})$, and define
\[
\lam_{\mathrm{as}}(R)=\inf _{v \in S_{\mathrm{as}}}\Big\{\int_{\S^{2n+1}} v L_{\theta_0}(v)\,\d\si : \int_{\S^{2n+1}} R|v|^{Q^*}\,\d\si =1\Big\}.
\]
The following proposition shows that an energy level strictly
below the concentration threshold guarantees the existence
of a positive solution. We subsequently verify this strict
inequality under the flatness assumption of Theorem~\ref{thm:1}.

\begin{prop}\label{prop:2.1}
	Let $\al \in(0,1)$ and let $R\in\mathcal A_\al (\S^{2n+1})$ be antipodally symmetric and positive somewhere. If
	\be\label{2.1}
	\lam_{\mathrm{as}}(R)<\frac{\frac{n^2}{4}\om_{2n+1}^{\frac{2}{Q}} 2^{\frac{2}{Q}}}{(\max _{\S^{2n+1}} R)^{\frac{2}{Q^*}}},
	\ee
	then \eqref{maineq} admits a positive, antipodally
symmetric $C^2$ solution.
\end{prop}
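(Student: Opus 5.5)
The plan is the subcritical approximation scheme the introduction describes. For $p<Q^*$ close to $Q^*$, I would set
\[
\lam_p=\inf_{v\in S_{\mathrm{as}}}\Big\{\int_{\S^{2n+1}} vL_{\theta_0}v\,\d\si:\int_{\S^{2n+1}}R|v|^{p}\,\d\si=1\Big\}.
\]
Since the embedding $S_1^2\hookrightarrow L^p$ is compact for $p<Q^*$ and $R>0$ somewhere (so the constraint set is nonempty), $\lam_p$ is attained by some $v_p\in S_{\mathrm{as}}$. Replacing $v_p$ by $|v_p|$ keeps it antipodally symmetric and does not increase the energy. By the principle of symmetric criticality (antipodal symmetry is an isometric action commuting with $L_{\theta_0}$, and $R$ is invariant), $v_p$ is a critical point on the full space. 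It therefore solves $L_{\theta_0}v=\lam_p R v^{p-1}$ weakly. Folland--Stein regularity and the strong maximum principle for the sub-Laplacian then give $v_p>0$ and $v_p\in C^2$, using $R\in\mathcal A_\al$. After rescaling by a constant, $u_p=c_pv_p$ solves $L_{\theta_0}u_p=\frac{n^2}{4}Ru_p^{p-1}$.

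Next I would show that $\limsup_{p\to Q^*}\lam_p\le\lam_{\mathrm{as}}(R)$. Fix a near-minimizer $w$ for $\lam_{\mathrm{as}}(R)$, normalize it in the $p$-constraint, and use continuity of $p\mapsto\int R|w|^p$. Hence \eqref{2.1} holds for $\lam_p$ uniformly for $p$ near $Q^*$. Testing the equation against $u_p$ shows that $\|u_p\|_{S_1^2}$ is bounded, given a positive lower bound for $\lam_p$ from Sobolev and H\"older. It then remains to exclude blow-up of $u_p$ as $p\to Q^*$.

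The main step is excluding blow-up. Suppose $\max u_p\to\infty$ along a subsequence. By antipodal symmetry, if $P$ is a blow-up point then so is $-P$, so there are at least two blow-up points. The energy bound and the Part~I blow-up analysis (isolated simple blow-up points, bubble profiles, and the subcritical-to-critical compactness) show that each blow-up point carries at least one standard bubble's energy. After rescaling, this bubble solves $-\Delta_\H u=\frac{n^2}{4}R(P)u^{(Q+2)/(Q-2)}$. Two such points force
\[
\liminf_{p}\int R u_p^{p}\ge 2\,\Big(\tfrac{n^2}{4}\Big)^{Q/2}\om_{2n+1}\,(\max R)^{-(Q-2)/2}
\]
in the normalization of \eqref{eq:sharp-sphere-sobolev}. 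Equivalently, the energy quotient satisfies
\[
\liminf\lam_p\ge\frac{\frac{n^2}{4}\om_{2n+1}^{2/Q}2^{2/Q}}{(\max R)^{2/Q^*}},
\]
since $\lam=\|u\|^{2}/(\int Ru^p)^{2/p}$ and $\|u\|^2=\frac{n^2}4\int Ru^p$. This contradicts the strict inequality \eqref{2.1}. Only the lower-bound part of the energy quantization is needed here, a Struwe-type profile decomposition with each bubble weighted by $R(P)\le\max R$. This bookkeeping of constants is where I expect the care is needed. The step also needs $\int_{\S^{2n+1}}\frac{n^2}{4}\cdot 1$ to match the single-bubble energy, which is Jerison--Lee.

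Thus $u_p$ is uniformly bounded in $L^\infty$, and a Harnack inequality (again using the equation and boundedness) bounds it below away from zero. Folland--Stein estimates give $\Gamma_{2+\al'}$ bounds, so a subsequence converges in $C^2$ to a positive, antipodally symmetric solution of \eqref{maineq}. The only delicate point besides the quantization is handling $p$-dependence in the regularity constants, which is uniform for $p$ near $Q^*$.
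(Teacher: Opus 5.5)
Your argument is correct in substance, but it is organized differently from the paper's.

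\textbf{How the paper proceeds.} The paper never shows that the subcritical minimizers stay bounded. It takes a weak limit $v$ in $S_1^2(\S^{2n+1})$, and if $v\not\equiv0$ it simply passes to the limit in the weak Euler--Lagrange equation. Only when $v\equiv0$ does it rescale at a maximum point, double the mass by antipodal symmetry, and contradict \eqref{2.1} via the Sobolev inequality. Because it localizes $\int Rv_i^{p_i+1}\,\d\si$, it must show separately that the contribution of $\{R<0\}$ tends to zero. That step uses $v_i\to0$ in $L^2$, which is available only in the vanishing case.

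\textbf{What your route requires.} You exclude blow-up outright, so you cannot assume the weak limit vanishes. The doubling must therefore be run through the Dirichlet energy, which your identity $\int u_pL_{\theta_0}u_p\,\d\si=\frac{n^2}{4}\int Ru_p^p\,\d\si$ permits. The energy $\int u_pL_{\theta_0}u_p\,\d\si$ is bounded below by the horizontal energy in two small disjoint balls about the antipodal maxima. Each of these is asymptotically at least the energy of the rescaled limit $w$, for two reasons:
\begin{enumerate}[(i)]
\item the subcritical rescaling multiplies the horizontal energy by $m_p^{-(Q^*-p)(Q-2)/2}\le1$;
\item the conformal-factor error terms vanish on balls shrinking to a point.
\end{enumerate}
Read this way, or via a Struwe decomposition in which the weak limit contributes $\frac4{n^2}\int u_\infty L_{\theta_0}u_\infty\,\d\si\ge0$, your route handles sign-changing $R$ without the paper's estimate on $\{R<0\}$. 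If you instead localize $\int Ru_p^p\,\d\si$ itself, the argument has a hole exactly on $\{R<0\}$. In exchange, you need uniform-in-$p$ Harnack and Folland--Stein estimates to extract a positive $C^2$ limit from the $L^\infty$ bound, which the paper's first case avoids.

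\textbf{Two smaller points.}
\begin{enumerate}[(i)]
\item The isolated simple blow-up theory of Part~I is neither needed nor directly applicable here, since $R$ may change sign and no flatness is assumed. Rescaling at a global maximum point $\zeta_p$ and at $-\zeta_p$ already produces the two bubbles, because there the rescaled functions are bounded by their value at the origin up to the conformal factor. Testing the limit equation against $w$ then gives $R(\widehat\zeta)>0$.
\item Your displayed constant is off. The correct bound is $\liminf\int Ru_p^p\,\d\si\ge2\om_{2n+1}(\max R)^{-(Q-2)/2}$, without the factor $(n^2/4)^{Q/2}$. Combined with $\lam_p=\frac{n^2}{4}\big(\int Ru_p^p\,\d\si\big)^{1-2/p}$, this yields exactly the threshold in \eqref{2.1}.
\end{enumerate}
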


We prove Proposition~\ref{prop:2.1} by subcritical
approximation. For
$p\in(1,\frac{Q+2}{Q-2})$, set
\[
\lam_{\mathrm{as}, p}(R)=\inf _{v \in  S_{\mathrm{as}}}\Big\{\int_{\S^{2n+1}} vL_{\theta_{0}}(v)\,\d\si : \int_{\S^{2n+1}} R|v|^{p+1}\,\d\si =1\Big\} .
\]
The subcritical variational problem admits a positive
minimizer, as recorded in the following lemma.
\begin{lem}\label{lem:2.1}
 Let $\al \in(0,1)$ and let $R\in\mathcal A_\al (\S^{2n+1})$ be antipodally symmetric and positive somewhere. Then $\lam_{\mathrm{as},p}(R)$
is attained by a positive, antipodally symmetric function
$v_p\in C^2(\S^{2n+1})$ satisfying
 \be\label{2.3}
 L_{\theta_0}(v_p)=\lam_{\mathrm{as}, p}(R) R v_p^p \quad \text { and } \quad \int_{\S^{2n+1}} R v_p^{p+1}\,\d\si =1.
 \ee
\end{lem}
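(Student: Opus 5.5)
The plan is the direct method in the antipodally invariant subspace, using compactness of the subcritical Folland--Stein embedding. First, the admissible set in the definition of $\lam_{\mathrm{as},p}(R)$ is nonempty. Since $R$ is continuous, antipodally symmetric and positive somewhere, it is positive on an open set $V$ with $-V=V$. Take $\phi\in C^\infty$ supported in $V$, nonnegative and not identically zero, and symmetrize it: $\psi(\zeta)=\phi(\zeta)+\phi(-\zeta)$. A suitable multiple of $\psi$ then satisfies the constraint. The antipodal map $\zeta\mapsto-\zeta$ is a unitary CR isometry, so it commutes with $L_{\theta_0}$ and preserves $\d\si$. Hence $S_{\mathrm{as}}$ is a closed subspace of $S_1^2(\S^{2n+1})$, and both the energy and the constraint are invariant under it. This gives $0\le\lam_{\mathrm{as},p}(R)<\infty$.

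Next, take a minimizing sequence $v_k\in S_{\mathrm{as}}$. Replacing $v_k$ by $|v_k|$ keeps it in $S_{\mathrm{as}}$ and does not change the constraint. It also does not increase the energy, because $|\nabla_{\theta_0}|v||=|\nabla_{\theta_0}v|$ a.e. The norm \eqref{sb norm} is the energy itself, so $\{v_k\}$ is bounded in $S_1^2$. The exponent satisfies $p+1<Q^*$, so the embedding $S_1^2(\S^{2n+1})\hookrightarrow L^{p+1}$ is compact; this follows from Folland--Stein on the compact manifold. Passing to a subsequence, $v_k\rightharpoonup v_p$ weakly in $S_1^2$ and $v_k\to v_p$ strongly in $L^{p+1}$. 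Since $R$ is bounded, the constraint passes to the limit, $\int R v_p^{p+1}=1$, so $v_p\ne0$. Weak lower semicontinuity then shows that $v_p\ge0$ is a minimizer lying in $S_{\mathrm{as}}$.

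For the Euler--Lagrange equation, one might first expect a symmetric criticality principle to be needed. It is not: the constraint functional has derivative $(p+1)\int R v_p^{p}w$, which is nonzero when tested with $w=v_p$. Lagrange multipliers therefore apply for test functions $w\in S_{\mathrm{as}}$. For an arbitrary $w\in S_1^2$, split it into its even part $w_e=(w+w\circ(-\mathrm{id}))/2$ and odd part. Because $v_p$ is even, the odd part pairs to zero on both sides of the equation. So the weak equation holds against all of $S_1^2$, and testing with $v_p$ fixes the multiplier as $\lam_{\mathrm{as},p}(R)$. This yields \eqref{2.3} in the weak sense.

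The main obstacle is regularity and positivity. The equation is subcritical and $R\in L^\infty$, so I would run a Moser iteration, or a bootstrap using the Green representation \eqref{Green} and Folland--Stein estimates. This gives $v_p\in L^\infty$, then $v_p\in\Gamma_{2+\beta}$ locally because $R$ is bounded. The Folland--Stein estimates are needed here because $R\in\mathcal A_\al$ gives H\"older control along the fields, and the right-hand side $Rv_p^p$ must be shown to lie in $\Gamma_\al$ to reach $C^2$. For this step I would rely on $R\in C^1$ together with $v_p\in\Gamma_{1+\al}$. With $p\ge1$ the composition $v_p^p$ stays regular wherever $v_p>0$, and this is the delicate point, since $p$ may be close to $1$ and the nonlinearity is only H\"older at zeros.

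So positivity has to be proved first. Since $v_p\ge0$ and $v_p\not\equiv0$, write $L_{\theta_0}v_p+Mv_p=(\lam R v_p^{p-1}+M)v_p\ge0$, where $M$ is chosen with $M\ge\lam\|R\|_\infty\|v_p\|_\infty^{p-1}$. The operator $L_{\theta_0}+M$ is a Hörmander-type operator, so Bony's strong maximum principle, or equivalently positivity of its Green kernel, gives $v_p>0$. Then $v_p^p$ is as smooth as $v_p$, and Schauder bootstrap in the $\Gamma$ spaces yields $v_p\in C^2(\S^{2n+1})$.
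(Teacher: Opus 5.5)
\textbf{Summary.} Your route is the paper's. It uses the direct method on $S_{\mathrm{as}}$ with $|v|$, compactness of the subcritical embedding, and the multiplier rule; your even/odd splitting is the $\mathbb Z_2$ symmetric-criticality argument. It then bootstraps to $L^\infty$, proves strict positivity, and finishes with regularity. Using positivity of the Green kernel of $L_{\theta_0}+M$ instead of the Harnack inequality of \cite{CGL1993} is a harmless variant. Bony's principle itself needs a classical $C^2$ supersolution, which you do not yet have at that point, so it is the Green-kernel version that applies.

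\textbf{The gap: the final step.} Your claim that ``Schauder bootstrap in the $\Gamma$ spaces yields $v_p\in C^2$'' does not go through.
\begin{enumerate}[(i)]
\item Here $C^2$ means ordinary derivatives in a chart, and $\Gamma_{2+\al}$ does not give this. It controls $Z_aZ_bv$, and hence $Tv$ because $[X_j,Y_j]=-4T$, but it does not control $T^2v$.
\item For example, $|\xi|^{2+\al+\var}$ with small $\var>0$ lies in $\Gamma_{2+\al}$ near $0$. Yet $\pa_t^2$ of it is homogeneous of negative degree and is unbounded on the $t$-axis.
\item Pushing the $\Gamma$-bootstrap high enough to control $T^2v$ would require $Z_aZ_bR\in\Gamma_\al$. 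The class $\mathcal A_\al$ only gives boundedness of $Z_aZ_bR$.
\item Separately, a bounded right-hand side alone gives $\Gamma_\beta$ only for $\beta<2$, not $\Gamma_{2+\beta}$. Your next sentence repairs this by using $R\in C^1$.
\end{enumerate}

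\textbf{The missing idea.} You need to differentiate the equation in the Reeb direction; this is exactly why $\mathcal A_\al$ includes the hypothesis $TR\in\Gamma_\al$. In a Cayley chart, set $u=H_F\,(v_p\circ F)$. Then $u$ solves $-\Delta_{\H}u=\lam_{\mathrm{as},p}(R)(R\circ F)H_F^{\tau}u^{p}$ with $\tau=\frac{Q+2}{Q-2}-p$. Since $T=\pa_t$ commutes with $\Delta_{\H}$,
\[
-\Delta_{\H}(Tu)=\lam_{\mathrm{as},p}(R)\Big[T\big((R\circ F)H_F^{\tau}\big)u^{p}+p\,(R\circ F)H_F^{\tau}u^{p-1}\,Tu\Big].
\]
The right-hand side lies in $\Gamma_\al$ for the following reasons:
\begin{enumerate}[(i)]
\item $u>0$;
\item $u\in\Gamma_{2+\al}$, so $Tu\in\Gamma_\al$;
\item $TR\in\Gamma_\al$.
\end{enumerate}
Hence $Tu\in\Gamma_{2+\al}$, and so $Z_aTu$ and $T^2u$ are continuous. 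Writing $\pa_{x_j}=X_j-2y_jT$ and $\pa_{y_j}=Y_j+2x_jT$ then shows $u\in C^2$. This is the content the paper imports by citing \cite[Lemma~2.4]{QTWPartI}.
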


\begin{proof}
The direct method of the calculus of variations, together with
\[
\int_{\S^{2n+1}}|v|L_{\theta_0}(|v|)\,\d\si
\leq \int_{\S^{2n+1}}vL_{\theta_0}(v)\,\d\si,
\quad v\in S_1^2(\S^{2n+1}),
\]
yields a nonnegative minimizer $v_p$ satisfying \eqref{2.3}.
Subcritical bootstrap estimates \cite{JL1987} give
$v_p\in L^\infty(\S^{2n+1})$, and the Harnack inequality
\cite{CGL1993} implies $v_p>0$. Applying
\cite[Lemma~2.4]{QTWPartI} in Cayley charts then yields
$v_p\in C^2(\S^{2n+1})$.
\end{proof}

\begin{proof}[Proof of Proposition~\ref{prop:2.1}]
 We first establish
 \be\label{claim1}
 \limsup _{p \to  \frac{Q+2}{Q-2}} \lam_{\mathrm{as}, p}(R) \leq \lam_{\mathrm{as}}(R) .
 \ee
 Indeed, for any $\var >0$, there exists a nonnegative function $v \in S_{\mathrm{as}}$ such that
\[
 \int_{\S^{2n+1}} v L_{\theta_0}(v)\,\d\si <\lam_{\mathrm{as}}(R)+\var \quad \text { and } \quad \int_{\S^{2n+1}} R v^{Q^*}\,\d\si =1 .
 \]
 Let $V_p:=\int_{\S^{2n+1}} R v^{p+1}\,\d\si $. Since $\lim _{p \to  \frac{Q+2}{Q-2}} V_p=\int_{\S^{2n+1}} R v^{Q^*}\,\d\si =1$, we have, for $p$ sufficiently close to $\frac{Q+2}{Q-2}$,
 \[
 \lam_{\mathrm{as}, p}(R) \leq \int_{\S^{2n+1}} \frac{v}{V_p^{1 /(p+1)}} L_{\theta_0}\Big(\frac{v}{V_p^{1 /(p+1)}}\Big)\,\d\si  \leq \lam_{\mathrm{as}}(R)+2 \var  .
\]
Letting $\var\to0$ proves \eqref{claim1}.

Choose a sequence $p_i\to\frac{Q+2}{Q-2}$ such that $\lam_{\mathrm{as},p_i}(R)\to\lam\leq\lam_{\mathrm{as}}(R)$, and let $v_i$ be the corresponding minimizers given by Lemma~\ref{lem:2.1}. The sequence $\{v_i\}$ is bounded
in $S_1^2(\S^{2n+1})$. Passing to a subsequence, we may
assume that
$v_i\rightharpoonup v$ weakly in
$S_1^2(\S^{2n+1})$, where $v\geq0$.

We distinguish two cases.

\textbf{Case 1:}  $v\not\equiv 0$.  The normalization in \eqref{2.3} and the Sobolev inequality
imply $\lam>0$. Passing to the limit in the weak formulation
of \eqref{2.3}, we obtain
$$
L_{\theta_0} v=\lam  R v^{(Q+2) /(Q-2)}.
$$
Subelliptic bootstrap estimates and the Harnack inequality yield $v\in L^\infty(\S^{2n+1})$ and $v>0$; see, e.g., \cite{JL1987,CGL1993}. The regularity result \cite[Lemma 2.4]{QTWPartI} gives $v\in C^2(\S^{2n+1})$. Hence
$$
V=\Big(\frac{4 \lam }{n^2}\Big)^{n / 2} v
$$
is a positive, antipodally symmetric solution of \eqref{maineq}.

\textbf{Case 2:} $v\equiv0$.  We show that this case contradicts \eqref{2.1}.   The compact embedding into $L^2(\S^{2n+1})$ gives $v_i\to0$ in $L^2(\S^{2n+1})$. If $\|v_i\|_{L^\infty(\S^{2n+1})}\le M$ for some fixed $M\ge1$, then
\[
1=\int_{\S^{2n+1}}Rv_i^{p_i+1}\,\d\si 
\le \|R\|_{L^\infty(\S^{2n+1})}M^{p_i-1}
\int_{\S^{2n+1}}v_i^2\,\d\si \to0,
\]
a contradiction.

Consequently, after passing to a subsequence, there are
maximum points $\zeta_i\in\S^{2n+1}$ such that $v_i(\zeta_i)=\max_{\S^{2n+1}}v_i\to\infty$ and  $\zeta_i\to\widehat\zeta\in \S^{2n+1}$.
 Without loss of generality, we assume that $\widehat \zeta$ is the north pole. Via the inverse Cayley transform  \eqref{Cayley}, equation \eqref{2.3} becomes
 \be \label{eq:subcritical in plane1}
 -\Delta_{\H}u_{i}(\xi)=\lam_{\mathrm{as},p_i}(R)R\circ F(\xi)H_F^{\tau_i}(\xi)u_{i}^{p_i}(\xi),\quad \xi \in\Hn ,
 \ee
 where $u_{i}(\xi)=(2|J_F|)^{(Q-2)/(2Q)}v_{i}\circ F(\xi)$, $H_F(\xi)=(2|J_F|)^{(Q-2) / (2Q)}$ and $\tau_i=\frac{Q+2}{Q-2}-p_i$, as in Part~I \cite{QTWPartI}.

 Write $\xi_i=F^{-1}(\zeta_i)\to0$ and
$m_i=u_i(\xi_i)\to\infty$. For all sufficiently large
$i$, the maximality of $v_i(\zeta_i)$ and the conformal
factor give
$u_i(\xi)\leq2m_i$ for all $\xi\in\Hn$.
 Define
 \[
 \widetilde u_{i}(\xi)=m_{i}^{-1}u_{i}(\xi_{i}\circ\delta_{m_i^{(1-p_i)/2}}(\xi)).
 \]
 Rescaling \eqref{eq:subcritical in plane1} gives, for all $\xi\in\Hn$,
 \[
 -\Delta_{\H}\widetilde u_{i}(\xi)=\lam_{\mathrm{as},p_i}(R)R\circ F(\xi_{i}\circ\delta_{m_i^{(1-p_i)/2}}(\xi))H_F^{\tau_i}(\xi_{i}\circ\delta_{m_i^{(1-p_i)/2}}(\xi))\widetilde u_{i}^{p_i}(\xi).
 \]
 The rescaled functions multiplying $\widetilde u_i^{p_i}$ are locally uniformly bounded in $\mathcal A_\al $. Since $\widetilde u_i(0)=1$ and
$0<\widetilde u_i\leq2$, the local Harnack inequality
\cite[Lemma~3.3]{QTWPartI} gives positive lower bounds
on compact subsets of $\Hn$.
By \cite[Lemma~2.4]{QTWPartI},
$\{\widetilde u_i\}$ is bounded in
$C^{2,\iota}_{\mathrm{loc}}(\Hn)$ for some $\iota>0$.
Since $\{v_{i}\}$ is bounded in $S_1^2(\S^{2n+1})$, $\{\widetilde u_{i}\}$ is bounded in $\dot S_1^2(\Hn)$. Thus, after passing to a subsequence, there exists $u\in C^{2}(\Hn)\cap  \dot S_1^2(\Hn)$ such that $\widetilde u_{i}\to u$ in $C^{2}_{\mathrm{loc}}(\Hn)$, $\widetilde u_{i}\rightharpoonup u$ weakly in $\dot S_1^2(\Hn )$. The limit satisfies $u(0)=1$ and
 \be \label{eq:critical in plane1}
 -\Delta_{\H} u=\lam R(\widehat \zeta)u^{\frac{Q+2}{Q-2}}.
 \ee
 Here $\dot S_1^2(\Hn)=E$ is the homogeneous energy space of Section~\ref{sec2.2}.
 Testing \eqref{eq:critical in plane1} against $u\in E\setminus\{0\}$ yields $\lam R(\widehat\zeta)>0$. Since $\lam\ge0$, both $\lam$ and $R(\widehat\zeta)$ are positive.

For $S>0$, set $\Om_i:=F(B_{m_i^{(1-p_i)/2}S}(\xi_i))$. Then
 \begin{align*}	\int_{\Om_i}Rv_{i}^{p_i+1}\,\d\si =&\,\frac{1}{2}\int_{B_{m_i^{(1-p_i)/2}S}(\xi_i)}R\circ F(\xi) H_F^{\tau_i}u_{i}^{p_i+1}\,\d\xi\\
 =&\,\frac{1}{2}m_{i}^{n\tau_i}\int_{B_S}R\circ F(\xi_{i}\circ \delta_{m_i^{(1-p_i)/2}}\xi)H_F^{\tau_i}(\xi_{i}\circ\delta_{m_i^{(1-p_i)/2}}(\xi))\widetilde u_{i}^{p_i+1}(\xi)\,\d\xi\\
 \ge&\, \frac{1}{2}R(\widehat \zeta)\int_{B_S}u^{Q^*}\,\d\xi+o(1),
 \end{align*}
 as $\ p_i\to\frac{Q+2}{Q-2}$, where we used the facts that  $R$ is positive near $\widehat \zeta$, $\tau_i\to 0$ and $\widetilde u_{i}\to u$ in $C^{2}_{\mathrm{loc}}(\Hn)$.
 Since $R$ and $v_{i}$ are antipodally symmetric,
 we have, by taking $\delta$ small and $S$ sufficiently large,
 \be\label{norm less1}
 1=\int_{\S^{2n+1}}Rv_{i}^{p_i+1}\,\d\si \ge 2\int_{\mathcal{B}(\widehat{\zeta},\delta)}Rv_{i}^{p_i+1}\,\d\si +\int_{\{R<0\}}Rv_{i}^{p_i+1}\,\d\si ,
 \ee
where $\{R<0\}$ denotes the set $\{\zeta \in \S^{2n+1}:R(\zeta)<0\}$.

 We next show that
 \be\label{claim2}
 \int_{\{R<0\}}Rv_{i}^{p_i+1}\,\d\si \to 0\quad \text{ as }\, p_i\to\frac{Q+2}{Q-2}.
 \ee
 Indeed, set $\Om _a=\{\zeta\in\S^{2n+1}:R(\zeta)<-a\}$ for $a>0$. On $\Om _{\var/2}$, equation \eqref{2.3} gives
$$
-\Delta_{\theta_0} v_i=\lam _{\mathrm{as}, p_i}(R) R v_i^{p_i}-\frac{n^2}{4} v_i \leq 0 .
$$
Local subsolution estimates therefore yield
$$
\sup _{\Om _{\var}} v_i \leq C(\var)\|v_i\|_{L^2(\S^{2 n+1})} \to  0,
$$
where the last convergence follows from the compact Sobolev embedding.
 Thus when $p_i$ is sufficiently close to $\frac{Q+2}{Q-2}$,
 \[\int_{\Om_{\var}}|R|v_{i}^{p_i+1}\,\d\si <\var.
 \]

On the other hand, H\"older's inequality and the Sobolev inequality give
 \[
 \int_{\{\zeta\in \S^{2n+1}: R(\zeta)\in [-\var,0)\}}|R|v_{i}^{p_i+1}\,\d\si <C(n)\var\|v_{i}\|_{L^{Q^*}(\S^{2n+1})}^{p_i+1}\leq C(n,\lam_{\mathrm{as}})\var,
 \]
 which finishes the proof of \eqref{claim2}.

 Combining \eqref{norm less1} with \eqref{claim2} and letting $S\to\infty$, we obtain
 \be\label{norm less2}
 1\ge R(\widehat \zeta)\int_{\Hn}u^{Q^*}\,\d\xi.
 \ee
 The Sobolev inequality \eqref{Sobolevineq}, the equation \eqref{eq:critical in plane1}, and \eqref{norm less2} give
 \begin{align*}
2^{2/Q}\frac{n^2}{4}\om_{2n+1}^{\frac{2}{Q}}\leq \frac{\int_{\Hn}|\nabla_{\H}u|^2\,\d\xi}{(\int_{\Hn}
  u^{Q^*}\,\d\xi)^{2/Q^*}}=&\,\lam R(\widehat \zeta)\Big(\int_{\Hn}u^{Q^*}\,\d\xi\Big)^{(Q^*-2)/Q^*}
 \\
 \leq& \,\lam_{\mathrm{as}}(R)R(\widehat \zeta)^{1-(Q^*-2)/Q^*}\\
 \leq&\, \lam_{\mathrm{as}}(R)\Big(\max_{\S^{2n+1}} R\Big)^{2/Q^*},
 \end{align*}
 which contradicts \eqref{2.1}. Thus Case 2 cannot occur, and
Proposition~\ref{prop:2.1} follows.
\end{proof}

We now show that the flatness assumption in
Theorem~\ref{thm:1} ensures the strict energy
inequality \eqref{2.1}.
\begin{prop}\label{prop:2.2}
	Let $\al \in(0,1)$ and let $R\in\mathcal A_\al (\S^{2n+1})$ be antipodally symmetric and positive somewhere. If there exists a maximum point of $R$ at which $R$ has flatness order greater than $Q-2$, then \eqref{2.1} holds.
\end{prop}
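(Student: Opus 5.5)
We verify \eqref{2.1} with an antipodally symmetric test function built from two concentrating bubbles. Let $\zeta_0$ be a maximum point of $R$ at which $R$ has flatness order greater than $Q-2$, and set $M=R(\zeta_0)=\max R>0$. Since $R$ is antipodally symmetric, $-\zeta_0$ is also a maximum point with the same flatness. In the centered chart $F_{\zeta_0}$, let $w_\lam=T_{\varphi_{\zeta_0,\lam}^{-1}}1$ be the conformal image of $1$ concentrating at $\zeta_0$ as $\lam\to\infty$; in Heisenberg coordinates it is, up to the conformal factor, a multiple of $U_{0,\lam^{-1}}$. Take the test function
\[
v_\lam(\zeta)=w_\lam(\zeta)+w_\lam(-\zeta)\in S_{\mathrm{as}}.
\]
We expand the Rayleigh quotient $\int v_\lam L_{\theta_0}v_\lam\,\d\si\big/(\int R v_\lam^{Q^*}\,\d\si)^{2/Q^*}$ as $\lam\to\infty$ and show that it lies strictly below the threshold in \eqref{2.1}.

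\textbf{Step 1 (numerator).} Since each $w_\lam$ is an extremal of \eqref{eq:sharp-sphere-sobolev}, we have $\int w_\lam L_{\theta_0}w_\lam\,\d\si=\frac{n^2}{4}\int w_\lam^{Q^*}\,\d\si=:A$, independent of $\lam$. The cross term is
\[
2\int w_\lam(\cdot)L_{\theta_0}w_\lam(-\cdot)\,\d\si=\frac{n^2}{2}\int w_\lam(\zeta)^{Q^*-1}w_\lam(-\zeta)\,\d\si=:2I_\lam .
\]
The function $w_\lam(-\cdot)$ is of size $\lam^{-(Q-2)/2}$ near $\zeta_0$, and the mass of $w_\lam^{Q^*-1}$ near $\zeta_0$ is of order $\lam^{-(Q-2)/2}$. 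Hence $I_\lam=c_1\lam^{-(Q-2)}(1+o(1))$ with an explicit constant $c_1>0$. Thus the numerator equals $2A+2I_\lam$.

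\textbf{Step 2 (denominator).} We use the elementary inequality $(a+b)^{Q^*}\ge a^{Q^*}+b^{Q^*}+Q^*(a^{Q^*-1}b+ab^{Q^*-1})$, valid because $Q^*>2$. Together with symmetry it gives
\[
\int Rv_\lam^{Q^*}\ge 2\int Rw_\lam^{Q^*}+2Q^*\int R\,w_\lam^{Q^*-1}w_\lam(-\cdot)+(\text{symmetric term}),
\]
and after symmetrization the cross contribution is $2Q^*\cdot\frac{4}{n^2}M I_\lam(1+o(1))$. The cross terms are concentrated near $\pm\zeta_0$, where $R\to M$; where $R$ is far from $M$, the integrand is $O(\lam^{-Q})=o(\lam^{-(Q-2)})$. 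For the main term, the flatness hypothesis $R(F_{\zeta_0}(\xi))=M+o(|\xi|^{Q-2})$, combined with the substitution $\xi=\delta_{\lam^{-1}}\eta$ and $\int|\eta|^{Q-2}U_{0,1}^{Q^*}\,\d\eta<\infty$ (true since $Q-2<Q$), gives
\[
\int Rw_\lam^{Q^*}=M\int w_\lam^{Q^*}+o(\lam^{-(Q-2)}).
\]
Away from $\zeta_0$, $|R-M|w_\lam^{Q^*}\le C\lam^{-Q}$ contributes only $O(\lam^{-Q})$. The main obstacle is controlling this flatness remainder uniformly near $\zeta_0$. We handle it by splitting the integral into $|\xi|<\delta$ and $|\xi|\ge\delta$ and then letting $\delta\to0$ after $\lam\to\infty$.

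\textbf{Step 3 (comparison).} Set $B=\int w_\lam^{Q^*}$, so that $A=\frac{n^2}{4}B$. The quotient is then bounded by
\[
\frac{2A+2I_\lam}{\big(2MB+\tfrac{8Q^*}{n^2}MI_\lam+o(\lam^{-(Q-2)})\big)^{2/Q^*}}
=\frac{(2A)^{1-2/Q^*}}{M^{2/Q^*}}\Big(1+\frac{I_\lam}{A}-\frac{2}{Q^*}\cdot Q^*\frac{I_\lam}{A}(1+o(1))\Big).
\]
The bracket equals $1-\frac{I_\lam}{A}+o(\lam^{-(Q-2)})<1$ for large $\lam$. Finally, $(2A)^{2/Q}$, computed using $\dashint w_\lam^{Q^*}=1$, matches $\frac{n^2}{4}\om_{2n+1}^{2/Q}2^{2/Q}$ up to the normalization of $v$, which cancels in the quotient. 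This gives the strict inequality \eqref{2.1}. The gain comes from the positive interaction $I_\lam\sim\lam^{-(Q-2)}$, which dominates the flatness error precisely because the flatness order exceeds $Q-2$.
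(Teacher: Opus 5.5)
Your strategy is the paper's: two antipodal bubbles, a positive interaction of order $\lam^{-(Q-2)}$, and flatness of order greater than $Q-2$ to get $\int(R-M)w_\lam^{Q^*}\,\d\si=o(\lam^{-(Q-2)})$. In the paper the interaction appears as $(\beta-1)^{(Q-2)/2}$ for the spherical bubbles $v_{i,\beta}$ with $\beta\to1$. Step 1 and the flatness estimate in Step 2 are correct. However, the elementary inequality on which Step 2 rests is false in general.

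\textbf{The inequality in Step 2.} The bound $(a+b)^p\ge a^p+b^p+p(a^{p-1}b+ab^{p-1})$ does not follow from $p>2$. At $a=b$ it reads $2^p\ge2+2p$, which fails for $2<p<3$; for such $p$ the reverse strict inequality in fact holds for all $a,b>0$. Since $Q^*=2+2/n$, this covers every $n\ge3$.

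A second defect is that you multiply a pointwise lower bound by $R$. This is illegitimate where $R<0$, and $R$ is only assumed positive somewhere.

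\textbf{The repair.} Both defects are removed by the localization used in the paper. Take a small neighborhood $V_1$ of $\zeta_0$ on which $R>0$, and set $V_2=-V_1$.
\begin{enumerate}[(i)]
\item On $V_1$, with $a=w_\lam$ and $b=w_\lam(-\cdot)$, use only the convexity bound $(a+b)^{Q^*}\ge a^{Q^*}+Q^*a^{Q^*-1}b$, valid for every $Q^*\ge1$.
\item Antipodal symmetry gives the same contribution from $V_2$.
\item On $\S^{2n+1}\setminus(V_1\cup V_2)$ both bubbles are $O(\lam^{-(Q-2)/2})$, so this region contributes $O(\lam^{-Q})$ whatever the sign of $R$.
\end{enumerate}
The terms discarded this way are $w_\lam(-\cdot)^{Q^*}$ and $w_\lam w_\lam(-\cdot)^{Q^*-1}$ on $V_1$, and $w_\lam^{Q^*}$ and $w_\lam^{Q^*-1}w_\lam(-\cdot)$ off $V_1$. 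Each is $O(\lam^{-Q})=o(\lam^{-(Q-2)})$. You therefore still obtain
\[
\int_{\S^{2n+1}}Rv_\lam^{Q^*}\,\d\si\ge 2M\om_{2n+1}+2Q^*M\int_{\S^{2n+1}}w_\lam^{Q^*-1}w_\lam(-\cdot)\,\d\si+o(\lam^{-(Q-2)}),
\]
which is exactly what Step 3 needs.

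\textbf{A constant in Step 3.} The leading factor should be $2A/(2MB)^{2/Q^*}$, not $(2A)^{1-2/Q^*}M^{-2/Q^*}$; the two differ by $(n^2/4)^{2/Q^*}$. For a conformal image of $1$ one has $B=\om_{2n+1}$, and $A=\frac{n^2}{4}B$. Hence $2A/(2MB)^{2/Q^*}=\frac{n^2}{4}(2\om_{2n+1})^{2/Q}M^{-2/Q^*}$, which is precisely the threshold in \eqref{2.1}. This computation should replace the vague ``matches up to normalization''.
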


\begin{proof}
 Let $\zeta_1$ be a maximum point of $R$ at which $R$ has flatness order greater than $Q-2$. Suppose $\zeta_2$ is the antipodal point of $\zeta_1$.
 Here $\beta>1$ is a bubble parameter, unrelated to the flatness exponent. For $i=1,2$, define
 \be\label{2.9}
 v_{i,\beta}(\zeta)=\Big(\frac{\sqrt{\beta^2-1}}{|\beta-\langle \zeta,\zeta_i \rangle|}\Big)^{\frac{Q-2}{2}}.
 \ee
 The bubble formula \eqref{2.9} gives
$$
L_{\theta_0}(v_{i,\beta})=\frac{n^2}{4}v_{i,\beta}^{\frac{Q+2}{Q-2}}\quad\text{ and }\quad\int_{\S^{2n+1}}v_{i,\beta}^{Q^*}\,\d\si =\om_{2n+1}.
$$
 Let $v_{\beta}=v_{1,\beta}+v_{2,\beta}$,
 which is antipodally symmetric. Then
 \begin{align*}
 \int_{\S^{2n+1}}v_{\beta}L_{\theta_0}(v_{\beta})\,\d\si =&\,\frac{n^2}{4}\int_{\S^{2n+1}}\Big(\sum_{i=1}^2 v_{i,\beta}^{\frac{Q+2}{Q-2}}\sum_{j=1}^2 v_{j,\beta}\Big)\,\d\si \\
 =&\,\frac{n^2}{4}\int_{\S^{2n+1}}\Big(\sum_{i=1}^2 v_{i,\beta}^{Q^*}+2 v_{1,\beta}^{\frac{Q+2}{Q-2}}v_{2,\beta}\Big)\,\d\si \\
 =&\,\frac{n^2}{2}\om_{2n+1} \Big(1+\om_{2n+1}^{-1}\int_{\S^{2n+1}} v_{1,\beta}^{\frac{Q+2}{Q-2}}v_{2,\beta}\,\d\si \Big).
 \end{align*}
A change of variables gives
$$
 \int_{\S^{2n+1}}v_{1,\beta}^{\frac{Q+2}{Q-2}}v_{2,\beta}\,\d\si =A(\beta-1)^{\frac{Q-2}{2}}+o((\beta-1)^{\frac{Q-2}{2}}),
$$
for $\beta$ close to $1$, where $A$ is a positive constant.
 Choose a sufficiently small neighborhood $V_1$ of $\zeta_1$ and let $V_2=\{\zeta\in\S^{2n+1}: -\zeta\in V_1\}$. Then $R$ is positive in $V_1\cup V_2$ and
 \begin{align*}
 \int_{\S^{2n+1}}R v_{\beta}^{Q^*}\,\d\si 
 =&\, \int_{\cup_{i=1}^2 V_i}R(v_{1,\beta}+v_{2,\beta})^{Q^*}\,\d\si +\int_{\S^{2n+1}\backslash \cup_{i=1}^2 V_i}Rv_{\beta}^{Q^*}\,\d\si \\
 =&\, 2\int_{V_1}R(v_{1,\beta}+v_{2,\beta})^{Q^*}\,\d\si +\int_{\S^{2n+1}\backslash \cup_{i=1}^2 V_i}Rv_{\beta}^{Q^*}\,\d\si \\
 \geq&\,  2\int_{V_1}R(v_{1,\beta}^{Q^*}+Q^*v_{1,\beta}^{\frac{Q+2}{Q-2}} v_{2,\beta})\,\d\si +\int_{\S^{2n+1}\backslash \cup_{i=1}^2 V_i}Rv_{\beta}^{Q^*}\,\d\si .
 \end{align*}
In Cayley coordinates centered at $\zeta_1$, the flatness assumption in Definition~\ref{def:value-flatness} reads
 \[
 R(\xi)=R(\zeta_1)+o(|\xi|^{Q-2}).
\]
A direct computation also gives \[
\int_{\S^{2n+1}\backslash \cup_{i=1}^2 V_i}Rv_{\beta}^{Q^*}\,\d\si =o((\beta-1)^{\frac{Q-2}{2}}).
\]
 Thus,
 \begin{align*}
 \int_{\S^{2n+1}}Rv_{\beta}^{Q^*}\,\d\si \geq&\, 2R(\zeta_1)\int_{\S^{2n+1}}v_{1,\beta}^{Q^*}\,\d\si +2Q^* R(\zeta_1)A(\beta-1)^{\frac{Q-2}{2}}+o((\beta-1)^{\frac{Q-2}{2}})\\
 =&\,2R(\zeta_1)\om_{2n+1}(1+Q^*\om_{2n+1}^{-1}A(\beta-1)^{\frac{Q-2}{2}}+o((\beta-1)^{\frac{Q-2}{2}})),
 \end{align*}
 for $\beta$ close to $1$. Hence
\[
 \frac{\int_{\S^{2n+1}}v_{\beta}L_{\theta_0}(v_{\beta})\,\d\si }{(\int_{\S^{2n+1}}Rv_{\beta}^{Q^*}\,\d\si )^{\frac{2}{Q^*}}}\leq
 \frac{\frac{n^2}{4}\om_{2n+1}^{\frac{2}{Q}}2^{\frac{2}{Q}}}{R(\zeta_1)^{\frac{2}{Q^*}}}
 \Big(1-\frac{A}{\om_{2n+1}}(\beta-1)^{\frac{Q-2}{2}}+o((\beta-1)^{\frac{Q-2}{2}})\Big),
\]
 which proves \eqref{2.1}.
\end{proof}

\begin{proof}[Proof of Theorem~\ref{thm:1}]
	By Proposition~\ref{prop:2.2}, the strict inequality
\eqref{2.1} holds. The conclusion follows from
Proposition~\ref{prop:2.1}.
\end{proof}

We now prove Theorem~\ref{thm:1-2} by extending the preceding
variational argument to finite-group symmetries.
Let $S_G$ denote the subspace of $G$-invariant functions in
$S_1^2(\S^{2n+1})$, and define
\[
\lam_G(R)
=\inf_{v\in S_G}
\Big\{
\int_{\S^{2n+1}}vL_{\theta_0}(v)\,\d\si:
\int_{\S^{2n+1}}R|v|^{Q^*}\,\d\si=1
\Big\}.
\]
As in the antipodally symmetric case, we first establish an
existence criterion based on a strict energy inequality and
then verify this inequality using $G$-invariant test functions.

\begin{prop}\label{prop:2.3}
	Let $G$ be a finite subgroup of the CR isometry group of $\S^{2n+1}$. Let $\al \in(0,1)$ and let $R\in\mathcal A_\al (\S^{2n+1})$ be a positive and $G$-invariant function. If 
	\[
	\lam_{G}(R)<\frac{\frac{n^2}{4}\om_{2n+1}^{\frac{2}{Q}}|O_{G}(\zeta)|^{\frac{2}{Q}}}{R(\zeta)^{\frac{Q-2}{Q}}},\quad \forall\, \zeta\in\S^{2n+1},
	\]
then \eqref{maineq} admits a positive $G$-invariant
$C^2$ solution.
\end{prop}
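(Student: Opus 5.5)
The plan is to follow the proof of Proposition~\ref{prop:2.1} almost verbatim, with $S_{\mathrm{as}}$ replaced by $S_G$ and the factor $2$ coming from antipodal symmetry replaced by the orbit cardinality at the concentration point. For $p\in(1,\frac{Q+2}{Q-2})$ we define $\lam_{G,p}(R)$ analogously. The argument of Lemma~\ref{lem:2.1} gives a positive $G$-invariant minimizer $v_p\in C^2(\S^{2n+1})$ satisfying the analogue of \eqref{2.3}. This uses two facts: $|v|$ is $G$-invariant whenever $v$ is, and $G$ acts by CR isometries, so it commutes with $L_{\theta_0}$ and preserves $\d\si$. The proof of \eqref{claim1} carries over unchanged and gives $\limsup\lam_{G,p}(R)\le\lam_G(R)$. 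We then take $p_i\to\frac{Q+2}{Q-2}$ with $\lam_{G,p_i}(R)\to\lam\le\lam_G(R)$ and a weak limit $v\ge0$ in $S_1^2(\S^{2n+1})$. If $v\not\equiv0$, Case~1 of the earlier proof applies: the limit equation, bootstrap, Harnack and \cite[Lemma~2.4]{QTWPartI} produce a positive $G$-invariant $C^2$ solution after multiplying by $(4\lam/n^2)^{n/2}$. Weak limits of $G$-invariant functions remain $G$-invariant because $S_G$ is a closed subspace.

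It remains to exclude the case $v\equiv0$. As before, $\max v_i\to\infty$. We pick maximum points $\zeta_i\to\widehat\zeta$ and blow up in a Cayley chart centered at $\widehat\zeta$. This yields a limit $u\in E$ with $u(0)=1$ solving \eqref{eq:critical in plane1}, and hence $\lam R(\widehat\zeta)>0$. Since $R>0$ everywhere, the set $\{R<0\}$ is empty, so the estimate \eqref{claim2} is not needed. The new ingredient is the orbit: set $k=|O_G(\widehat\zeta)|$ and $O_G(\widehat\zeta)=\{g_1(\widehat\zeta),\ldots,g_k(\widehat\zeta)\}$. Choose $\delta>0$ so small that the balls $\mathcal B(g_j(\widehat\zeta),\delta)$ are pairwise disjoint. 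Each $g_j$ is an isometry for $d_{\theta_0}$, so it maps $\mathcal B(\widehat\zeta,\delta)$ onto $\mathcal B(g_j(\widehat\zeta),\delta)$. Because $R$ and $v_i$ are $G$-invariant and $g_j$ preserves $\d\si$, we get
\[
1=\int_{\S^{2n+1}}Rv_i^{p_i+1}\,\d\si\ge k\int_{\mathcal B(\widehat\zeta,\delta)}Rv_i^{p_i+1}\,\d\si\ge \frac{k}{2}R(\widehat\zeta)\int_{B_S}u^{Q^*}\,\d\xi+o(1).
\]
The last inequality comes from the rescaled computation of $\int_{\Om_i}$ in the proof of Proposition~\ref{prop:2.1}, since $\Om_i\subset\mathcal B(\widehat\zeta,\delta)$ for large $i$. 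Letting $S\to\infty$ gives $1\ge \frac{k}{2}R(\widehat\zeta)\int_{\Hn}u^{Q^*}\,\d\xi$.

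Next we combine this with the sharp Sobolev inequality \eqref{Sobolevineq}. In the normalization of the earlier proof it reads $2^{2/Q}\frac{n^2}{4}\om_{2n+1}^{2/Q}\le \lam R(\widehat\zeta)(\int u^{Q^*})^{2/Q}$. Inserting $\int u^{Q^*}\le 2/(kR(\widehat\zeta))$ gives
\[
2^{2/Q}\tfrac{n^2}{4}\om_{2n+1}^{2/Q}\le \lam_G(R)\,R(\widehat\zeta)^{1-2/Q}(2/k)^{2/Q}.
\]
Hence
\[
\lam_G(R)\ge \frac{\frac{n^2}{4}\om_{2n+1}^{2/Q}k^{2/Q}}{R(\widehat\zeta)^{(Q-2)/Q}},
\]
which contradicts the hypothesis evaluated at $\zeta=\widehat\zeta$. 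So the case $v\equiv0$ cannot occur.

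The main point needing care is the orbit-counting step. The stabilizer of $\widehat\zeta$ may be nontrivial, so we must sum over distinct orbit points rather than over group elements, which gives exactly the factor $k=|O_G(\widehat\zeta)|$. We also need the $\delta$-balls around the orbit points to be disjoint; this holds because the orbit is finite, and it is where finiteness of $G$ enters. The remaining steps are routine: uniform $\mathcal A_\al$ bounds on the rescaled coefficients, the local Harnack inequality, and $C^2_{\mathrm{loc}}$ convergence, all as in Proposition~\ref{prop:2.1}. The no-global-fixed-point assumption is not needed here; it is only used later, when verifying the strict inequality with $G$-invariant test functions.
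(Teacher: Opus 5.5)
Your proposal is correct and follows the paper's route. You rerun the subcritical approximation and blow-up argument of Proposition~\ref{prop:2.1} in $S_G$, replacing the antipodal factor $2$ by the orbit count $k=|O_G(\widehat\zeta)|$, and your constant bookkeeping yields exactly the threshold $\frac{n^2}{4}\omega_{2n+1}^{2/Q}k^{2/Q}R(\widehat\zeta)^{-(Q-2)/Q}$. You also supply details that the paper's brief proof leaves implicit: summing over distinct orbit points in disjoint balls, and observing that $\{R<0\}=\varnothing$ makes \eqref{claim2} unnecessary.
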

\begin{proof}
The subcritical approximation and blow-up argument in the
proof of Proposition~\ref{prop:2.1} extend to the
$G$-invariant subspace. Indeed, concentration at a point
$\zeta$ forces concentration at every point of its orbit
$O_G(\zeta)$, giving the corresponding energy threshold
with the factor $|O_G(\zeta)|^{2/Q}$.
The assumed strict inequality excludes concentration, and
the nonzero weak limit yields a positive $G$-invariant
solution of \eqref{maineq}.
\end{proof}

The flatness assumption provides the strict inequality
required in Proposition~\ref{prop:2.3}.
\begin{prop}\label{prop:2.4}
	Let $G$ be a finite subgroup of the CR isometry group of $\S^{2n+1}$ with no global fixed point.
	Let $\al \in(0,1)$ and let $R\in\mathcal A_\al (\S^{2n+1})$ be a positive and $G$-invariant function. If $R$ has flatness order greater than $Q-2$ at some $\zeta_1\in\S^{2n+1}$, then
\[
	\lam_{G}(R)<\frac{\frac{n^2}{4}\om_{2n+1}^{\frac{2}{Q}}|O_{G}(\zeta_1)|^{\frac{2}{Q}}}{R(\zeta_1)^{\frac{Q-2}{Q}}}.
\]
\end{prop}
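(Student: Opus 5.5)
The plan follows the proof of Proposition~\ref{prop:2.2}, with the antipodal pair replaced by the full orbit of $\zeta_1$. Write $O_G(\zeta_1)=\{\zeta^{(1)},\ldots,\zeta^{(k)}\}$ with $\zeta^{(1)}=\zeta_1$ and $k=|O_G(\zeta_1)|\ge2$. For a bubble parameter $\beta>1$, let $v_{j,\beta}$ be given by \eqref{2.9} with $\zeta_i$ replaced by $\zeta^{(j)}$, and set $v_\beta=\sum_{j=1}^k v_{j,\beta}$. Each $g\in G$ is a CR isometry, so it is unitary and preserves $\langle\cdot,\cdot\rangle$. Hence $v_{j,\beta}\circ g=v_{j',\beta}$, where $g^{-1}(\zeta^{(j)})=\zeta^{(j')}$, and $g$ permutes the orbit. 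Thus $v_\beta\in S_G$. The stabilizer of $\zeta_1$ may be nontrivial; this does not matter, because the sum runs over orbit points rather than group elements.

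\textbf{Numerator.} Using $L_{\theta_0}v_{j,\beta}=\frac{n^2}{4}v_{j,\beta}^{(Q+2)/(Q-2)}$, one gets
\[
\int vL_{\theta_0}v=\frac{n^2}{4}\Big(k\om_{2n+1}+\sum_{i\ne j}\int v_{i,\beta}^{\frac{Q+2}{Q-2}}v_{j,\beta}\Big).
\]
The orbit points are distinct, so for each ordered pair $i\ne j$ the interaction equals $A_{ij}(\beta-1)^{(Q-2)/2}+o((\beta-1)^{(Q-2)/2})$ with $A_{ij}>0$. This follows from the same change of variables as in the antipodal case: $v_{j,\beta}$ is bounded near $\zeta^{(i)}$ with size $(\beta-1)^{(Q-2)/4}$, and $\int v_{i,\beta}^{(Q+2)/(Q-2)}$ is of order $(\beta-1)^{(Q-2)/4}$. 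Set $A=\sum_{i\ne j}A_{ij}$.

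\textbf{Denominator.} Take small disjoint neighborhoods $V_j=g_j(V_1)$, where $g_j(\zeta_1)=\zeta^{(j)}$. By $G$-invariance of $R$ and of $v_\beta$, the integral over $\bigcup_j V_j$ equals $k\int_{V_1}R v_\beta^{Q^*}$. Positivity of $R$ and convexity give the lower bound
\[
\int_{V_1}Rv_\beta^{Q^*}\ge\int_{V_1}R\Big(v_{1,\beta}^{Q^*}+Q^*v_{1,\beta}^{\frac{Q+2}{Q-2}}\sum_{j\ge2}v_{j,\beta}\Big).
\]
The flatness $R(F_{\zeta_1}(\xi))=R(\zeta_1)+o(|\xi|^{Q-2})$ gives $\int_{V_1}Rv_{1,\beta}^{Q^*}=R(\zeta_1)\om_{2n+1}+o((\beta-1)^{(Q-2)/2})$. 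Indeed, after Cayley rescaling at scale $\sqrt{\beta-1}$, the error is bounded by $\int o(|\xi|^{Q-2})\lambda^{Q}(1+\lambda|\xi|)^{-2Q}$, which is $o(\lambda^{2-Q})$ because $Q-2<Q$. The cross term is $R(\zeta_1)\sum_{j\ge2}A_{1j}(\beta-1)^{(Q-2)/2}(1+o(1))$. The complement of $\bigcup_j V_j$ contributes $O((\beta-1)^{Q/2})$, which is negligible. Orbit symmetry gives $k\sum_{j\ge2}A_{1j}=A$, so
\[
\int Rv_\beta^{Q^*}\ge R(\zeta_1)\big(k\om_{2n+1}+Q^*A(\beta-1)^{\frac{Q-2}{2}}\big)+o\big((\beta-1)^{\frac{Q-2}{2}}\big).
\]

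\textbf{Conclusion and main obstacle.} The Rayleigh quotient is therefore at most
\[
\frac{\frac{n^2}{4}(k\om_{2n+1})^{2/Q}}{R(\zeta_1)^{2/Q^*}}\Big(1+\frac{(1-\frac{2}{Q^*}Q^*)A}{k\om_{2n+1}}(\beta-1)^{\frac{Q-2}{2}}+o\big((\beta-1)^{\frac{Q-2}{2}}\big)\Big),
\]
using $1-2/Q^*=2/Q$ and $2/Q^*=(Q-2)/Q$. The coefficient is $-A/(k\om_{2n+1})<0$, so taking $\beta$ close to $1$ gives the strict inequality.

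The main obstacle is the flatness error estimate. The remainder is only $o(|\xi|^{Q-2})$, with no sign or regularity information, so it must be shown to be $o((\beta-1)^{(Q-2)/2})$ uniformly while the tail region $|\xi|\gtrsim\delta$ is handled separately. This works only because $Q-2<Q$, which makes the weighted integral converge at infinity, and because $R(\zeta_1)$ enters as the reference value without requiring $\zeta_1$ to be a maximum point.
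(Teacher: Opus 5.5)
Your proposal is correct and is essentially the paper's argument: the paper proves this proposition by taking the same orbit-sum test function $v_\beta=\sum_{i=1}^m v_{1,\beta}\circ g_i^{-1}$ and deferring to the computation of Proposition~\ref{prop:2.2}. Your write-up supplies the details the paper leaves implicit: $v_\beta$ is well defined even with nontrivial stabilizers, the interactions satisfy $k\sum_{j\ge2}A_{1j}=A$, the flatness error is $o((\beta-1)^{(Q-2)/2})$ without assuming $\zeta_1$ is a maximum point, and $k\ge2$ is needed to make the coefficient $-A/(k\om_{2n+1})$ strictly negative.
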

\begin{proof}
Following the construction in Proposition~\ref{prop:2.2},
we consider the $G$-invariant test function
\[
v_{\beta}=\sum\limits_{i=1}^m v_{i,\beta},
\]
where $m=|O_G(\zeta_1)|$, $O_{G}(\zeta_1)=\{\zeta_1,\dots,\zeta_m\}$, $\zeta_i=g_i(\zeta_1)$ for some $g_i\in G$, $g_1=\operatorname{Id}$, $v_{i,\beta}:=v_{1,\beta}\circ g_i^{-1}$ and $v_{1,\beta}$ is as in \eqref{2.9}.
\end{proof}

\begin{proof}[Proof of Theorem~\ref{thm:1-2}]
The conclusion follows from Propositions~\ref{prop:2.3} and~\ref{prop:2.4}.
\end{proof}

\section{Perturbation and degree reduction}\label{sec:perturbation}
We now consider the CR Nirenberg problem without symmetry
assumptions. Following the perturbation methods of
\cite{CY1987,CY1991,Li1995}, we first construct a constrained
minimizer after separating the CR conformal directions.
We then establish the degree comparison in
Theorem~\ref{thm:3.1} and compute the degree associated
with the homogeneous expansions in
\eqref{eq:alln-model-expansion}.

From this section onward, we use the normalized energy norm
\begin{equation}\label{norm 4}
\|v\|^2
:=\dashint_{\S^{2n+1}}vL_{\theta_0}v\,\d\si.
\end{equation}
In view of \eqref{sb norm}, we have
\[
\|v\|_{S_1^2(\S^{2n+1})}^2
=2^{2n+1}n!\,\om_{2n+1}\|v\|^2.
\]
Norms of functional derivatives refer to the corresponding
dual or operator norms. Unless otherwise specified, Lebesgue
norms on $\S^{2n+1}$ are taken with respect to $\d\si$.

\subsection{Conformal normalization and constrained minimization}
We begin by recalling the conformal invariance of the energy
and the critical Lebesgue norm. Let $T_\varphi$ denote the
conformal action defined in Section~\ref{sec2.2}.

\begin{lem}\label{lempe1}
	For every CR automorphism $\varphi$ of $\S^{2n+1}$
and every $v\in S_1^2(\S^{2n+1})$, we have
	\[
	\int_{\S^{2n+1}}T_{\varphi}vL_{\theta_0}(T_{\varphi}v)\,\d V_{\theta_0}=\int_{\S^{2n+1}} vL_{\theta_0}(v)\,\d V_{\theta_0},
	\]
	and
	\[
	\int_{\S^{2n+1}}|T_{\varphi}v|^{Q^*}\,\d V_{\theta_0}=\int_{\S^{2n+1}}|v|^{Q^*}\,\d V_{\theta_0}.
	\]
\end{lem}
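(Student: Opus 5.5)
The plan is to derive both identities from the pointwise behaviour of $L_{\theta_0}$ under CR automorphisms, and then extend from smooth functions by density. First, by density of $C^\infty(\S^{2n+1})$ in $S_1^2(\S^{2n+1})$, it suffices to treat smooth $v$. For a CR automorphism $\varphi$, the map $T_\varphi$ is bounded on $S_1^2$ and on $L^{Q^*}$ because $J_\varphi$ is smooth and positive. Both sides of each identity are therefore continuous in $v$, and the smooth case extends.

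The Lebesgue identity is a direct change of variables. Since $Q^*/q=1$, we have $|T_\varphi v|^{Q^*}=|v\circ\varphi|^{Q^*}J_\varphi$. Because $J_\varphi=|\det\d\varphi|$ is the Jacobian relative to $\d\si$, and $\d V_{\theta_0}$ is a constant multiple of $\d\si$, the substitution $\zeta\mapsto\varphi(\zeta)$ gives the claim.

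For the energy identity, the key input is the conformal covariance of the CR Yamabe operator. Set $\theta=\varphi^*\theta_0=J_\varphi^{2/Q}\theta_0$, and write $J_\varphi^{2/Q}=w^{2/n}$ with $w=J_\varphi^{n/Q}=J_\varphi^{1/q}$; the exponents match since $(Q-2)/(2Q)=n/Q$. The transformation law of Jerison--Lee \cite{JL1987} gives
\[
L_\theta(f)=w^{-1-2/n}L_{\theta_0}(wf).
\]
Because $\varphi$ is a pseudo-Hermitian isometry from $(\S^{2n+1},\theta)$ onto $(\S^{2n+1},\theta_0)$, naturality gives $L_\theta(v\circ\varphi)=(L_{\theta_0}v)\circ\varphi$. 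Combining the two,
\[
L_{\theta_0}(T_\varphi v)=J_\varphi^{(Q+2)/(2Q)}\,(L_{\theta_0}v)\circ\varphi .
\]
Multiplying by $T_\varphi v=(v\circ\varphi)J_\varphi^{(Q-2)/(2Q)}$, the integrand becomes $\big(vL_{\theta_0}v\big)\circ\varphi\cdot J_\varphi$. The same change of variables as before then finishes the proof.

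The main obstacle is the naturality step together with the consistency of normalizations. The constant $c_0=n^2/4$ in $L_{\theta_0}$ must be exactly the CR Yamabe constant times the Webster curvature of $\theta_0$ in this normalization. The Jacobian also has to be taken relative to the right measure, which is harmless since $\d V_{\theta_0}$ and $\d\si$ differ only by a constant.

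As a cross-check, there is a Heisenberg route. Every CR automorphism is a composition of unitary maps, which are isometries and obviously preserve both sides, and maps of the form $F_P\circ(\text{translation or dilation})\circ F_P^{-1}$. Via \eqref{1.7}, the latter correspond to the invariance of $\int|\nabla_\H u|^2$ and $\int|u|^{Q^*}$ under $u\mapsto\lam^{(Q-2)/2}u(\delta_\lam(a\circ\xi))$. That invariance is immediate from the left-invariance of $Z$ and the homogeneity of $\delta_\lam$.
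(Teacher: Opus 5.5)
Your proposal is correct and follows essentially the paper's proof. Both arguments obtain the $L^{Q^*}$ identity by change of variables. For the energy identity, both combine naturality of $L_{\varphi^*\theta_0}$ under the pseudo-Hermitian map $\varphi$ with the conformal covariance $L_{w^{2/n}\theta_0}f=w^{-1-2/n}L_{\theta_0}(wf)$, where $w=J_\varphi^{(Q-2)/(2Q)}$, and then extend by density. The only difference is in presentation: you first derive the pointwise intertwining relation $L_{\theta_0}(T_\varphi v)=J_\varphi^{(Q+2)/(2Q)}(L_{\theta_0}v)\circ\varphi$ and then integrate, whereas the paper chains the same two steps at the level of integrals.
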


\begin{proof}
The second identity follows directly from the definition
of $T_\varphi$ and a change of variables. To prove the
first identity, recall that
$\varphi^*\theta_0=J_\varphi^{2/Q}\theta_0$,
where $J_\varphi=|\det\d\varphi|$.
Conformal covariance of the sub-Laplacian gives
\begin{align*}
	\int_{\S^{2n+1}} vL_{\theta_0}(v)\,\d V_{\theta_0}=&\, \int_{\S^{2n+1}} v \circ \varphi L_{\varphi^*\theta_{0}}(v\circ \varphi)\,\d V_{\varphi^*\theta_{0}}\\
	=&\,\int_{\S^{2n+1}} v \circ \varphi L_{|\det \d\varphi|^{2/Q}\theta_{0}}(v\circ \varphi)|\det \d\varphi|\,\d V_{\theta_0}\\
	=&\,\int_{\S^{2n+1}} v \circ \varphi |\det \d\varphi|^{-\frac{Q+2}{2Q}} L_{\theta_0}(v\circ \varphi |\det \d\varphi|^{\frac{Q-2}{2Q}})|\det \d\varphi|\,\d V_{\theta_0}\\
	=&\,  \int_{\S^{2n+1}}T_{\varphi}vL_{\theta_0}(T_{\varphi}v)\,\d V_{\theta_0}.
	\end{align*}
The identities extend from smooth functions to
$S_1^2(\S^{2n+1})$ by density.
\end{proof}
For later use, we record an explicit formula for the
conformal maps $\varphi_{P,\lam}$ defined in \eqref{varphi new}.
\begin{lem}\label{varphi}
    For any $\zeta\in\S^{2n+1}$, $P\in\S^{2n+1}$, $\lam >0$, we have
   \[
    \varphi_{P,\lam }(\zeta)=D^{-1}\lbrace 2\lam (\zeta-\langle \zeta,P\rangle P)+[\lam ^2(1+\langle \zeta,P\rangle)-(1-\langle \zeta,P\rangle)]P\rbrace,
    \]
    where \[
    D=\lam ^2(1+\langle \zeta,P \rangle)+(1-\langle \zeta,P \rangle).
  \]
\end{lem}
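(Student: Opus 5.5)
The plan is to reduce to the north pole by unitary equivariance, and then compute directly with the explicit Cayley transform \eqref{Cayley}. Write $N=(0,\ldots,0,1)$.

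\emph{Reduction to $P=N$.} By \eqref{eq:normalized-cayley}, $F_P=U_P\circ F_N$ with $F_N=F\circ\delta_{1/2}$. Hence
\[
\varphi_{P,\lam}=U_P\circ\varphi_{N,\lam}\circ U_P^{-1}.
\]
The right-hand side of the claimed formula is built only from $\zeta$, $P$ and $\langle\zeta,P\rangle$. Since $\langle U_P\zeta,U_PN\rangle=\langle\zeta,N\rangle$ and $U_P$ is complex linear, that expression is equivariant under $(\zeta,P)\mapsto(U_P\zeta,U_PP)$. So it suffices to treat $P=N$. This also shows that the formula does not depend on the choice of $U_P$.

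\emph{Removing the factor $\delta_{1/2}$.} The dilations form a commutative group, so
\[
\varphi_{N,\lam}=F\circ\delta_{1/2}\circ\delta_{\lam^{-1}}\circ\delta_{2}\circ F^{-1}=F\circ\delta_{\lam^{-1}}\circ F^{-1}.
\]

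\emph{Inverting $F$.} Write $\zeta=(\zeta',\zeta_{n+1})$ and introduce the complex quantity $w=|z|^2-\mathrm{i}t$. From \eqref{Cayley},
\[
1+\zeta_{n+1}=\frac{2}{1+w},
\]
which gives
\[
z=\frac{\zeta'}{1+\zeta_{n+1}},\qquad w=\frac{1-\zeta_{n+1}}{1+\zeta_{n+1}}.
\]
These formulas are valid for $\zeta\neq\mathcal S$.

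\emph{Applying the dilation.} The map $\delta_{\lam^{-1}}$ sends $z\mapsto z/\lam$ and $w\mapsto w/\lam^2$. Applying $F$ again gives
\[
\varphi_{N,\lam}(\zeta)=\Big(\frac{2\lam z}{\lam^2+w},\ \frac{\lam^2-w}{\lam^2+w}\Big).
\]
Substituting $z$ and $w$, and using $\langle\zeta,N\rangle=\zeta_{n+1}$, we get
\[
\lam^2+w=\frac{D}{1+\zeta_{n+1}},\qquad \lam^2-w=\frac{\lam^2(1+\zeta_{n+1})-(1-\zeta_{n+1})}{1+\zeta_{n+1}}.
\]
The first component then becomes $2\lam\zeta'/D$. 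Since $\zeta-\langle\zeta,N\rangle N=(\zeta',0)$, this is the horizontal part of the claimed formula. The last component becomes $[\lam^2(1+\zeta_{n+1})-(1-\zeta_{n+1})]/D$, which is the coefficient of $N$.

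\emph{The omitted point.} It remains to check $\zeta=\mathcal S$, i.e.\ $\zeta=-P$ in the general case. There $D=2\neq0$, and the formula gives $-P$. This agrees with the continuous extension of $\varphi_{P,\lam}$ fixing the omitted point. Equivalently, both sides are continuous on the sphere and agree on a dense set.

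\emph{Main obstacle.} The only delicate point is bookkeeping. The hermitian pairing $\langle\zeta,P\rangle$ is complex, which is exactly why the complex variable $w=|z|^2-\mathrm{i}t$ linearizes the computation. One must also confirm that $D\neq0$ on the whole sphere. Indeed $\operatorname{Re}(1\pm\langle\zeta,P\rangle)\ge0$, so $\operatorname{Re}D\ge\min(1,\lam^2)\,\operatorname{Re}\big[(1+\langle\zeta,P\rangle)+(1-\langle\zeta,P\rangle)\big]=2\min(1,\lam^2)>0$. Hence the formula is well defined for every $\lam>0$.
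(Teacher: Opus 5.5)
Your proposal is correct and takes the same route as the paper: the paper's proof simply substitutes the Cayley transform \eqref{Cayley} into \eqref{varphi new} and simplifies. Your write-up supplies the details the paper leaves implicit, namely the unitary reduction to the north pole (which also shows the formula does not depend on the choice of $U_P$), the cancellation of $\delta_{1/2}$, the use of the complex variable $w=|z|^2-\mathrm{i}t$ to invert $F$, and the checks that $D\neq0$ and that the formula returns $-P$ at the omitted point.
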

\begin{proof}
  The formula follows by substituting the Cayley transform
\eqref{Cayley} into \eqref{varphi new} and simplifying.
\end{proof}

We next impose a normalization to separate the conformal
directions. Define
\begin{equation}\label{eq:balanced-spaces}
\begin{aligned}
\M=&\, \Big\{v \in S_1^2(\S^{2n+1}):\dashint_{\S^{2n+1}}|v|^{Q^*}\,\d\si =1\Big\},\\
\M_0=&\, \Big\{v\in \M:\dashint_{\S^{2n+1}} \zeta|v|^{Q^*}\,\d\si =0\Big\}.
\end{aligned}
\end{equation}
For $w\in\M_0$ and $p\in\mathbb B^{n+1}$, set
\[
\varpi(w,p)=T_{\varphi_p}^{-1}w.
\]
The following lemma shows that each function in $\M$ admits
a unique representation of this form.

\begin{lem}\label{lem:balanced-decomposition}
The map	$\varpi: \M_0\times \mathbb{B}^{n+1}\to \M$ is a homeomorphism with respect to the $S_1^2(\S^{2n+1})$ topology.
\end{lem}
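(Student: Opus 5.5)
The plan is to build an explicit inverse. That is, for each $v\in\M$ we find a unique $p\in\mathbb B^{n+1}$ with $T_{\varphi_p}v\in\M_0$, and we check that $p$ depends continuously on $v$. Lemma~\ref{lempe1} shows that $T_\varphi$ preserves $\dashint|v|^{Q^*}\,\d\si$; the $\d V_{\theta_0}$ normalization differs only by a constant. So $\varpi$ maps $\M_0\times\mathbb B^{n+1}$ into $\M$. Continuity of $\varpi$ follows from two facts. First, $T_\varphi$ is an isometry of $S_1^2$. Second, $p\mapsto T_{\varphi_p}^{-1}w$ is continuous: this holds for smooth $w$ because $\varphi_p$ depends smoothly on $p$, and it passes to general $w$ by density together with the uniform isometry bound.

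For the inverse, fix $v\in\M$ and let $\mu_v=|v|^{Q^*}\d\si/\om_{2n+1}$, a probability measure with no atoms, since it is absolutely continuous. By the change of variables $\zeta\mapsto\varphi_p(\zeta)$, the condition $T_{\varphi_p}v\in\M_0$ reads
\[
\Phi_v(p):=\int_{\S^{2n+1}}\varphi_p^{-1}(\zeta)\,\d\mu_v(\zeta)=0 .
\]
Here $\varphi_p^{-1}$ is again a map of the family, centered at $P$ with the inverse dilation; with the convention of \eqref{varphi new} we reparametrize it as $\varphi_{-p}$ or an analogous map. The task is therefore the CR version of the Chang--Yang center-of-mass lemma \cite{CY1987}.

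\emph{Existence.} By Lemma~\ref{varphi}, as $|p|\to1$ with $p/|p|\to P$, the map $\varphi^{-1}_p$ pushes all mass not at $-P$ toward one pole. Since $\mu_v$ has no atoms, $\Phi_v(p)$ tends uniformly to $\pm P$; I expect $-P$, depending on the orientation convention. Then $\Phi_v$ is homotopic on spheres $|p|=s$ close to $1$ to a map of degree $\pm1$, so $\Phi_v$ has a zero.

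\emph{Uniqueness} is the hardest step. I would argue via the group structure: if $T_{\varphi_p}v$ and $T_{\varphi_{p'}}v$ both lie in $\M_0$, then $w=T_{\varphi_p}v\in\M_0$ and $T_\psi w\in\M_0$ with $\psi=\varphi_p^{-1}\circ\varphi_{p'}$. Modulo unitary rotations, every CR automorphism is some $\varphi_q$ composed with a unitary map, and unitary maps preserve $\M_0$. So it suffices to show that $w\in\M_0$ and $\int\varphi_q^{-1}\,\d\mu_w=0$ force $q=0$. For this, differentiate $t\mapsto\int\operatorname{Re}\langle\varphi_{tP}^{-1}(\zeta),P\rangle\,\d\mu_w$. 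The derivative is the integral of $\widehat\nabla f_P$-type expressions, of the form $1-|\langle\cdot,P\rangle|^2$ evaluated along the flow, and these are positive off the set $\{\zeta=\pm P\}$, which has measure zero. Along the one-parameter group $t\mapsto\varphi_{tP}$ (in the hyperbolic parametrization $b$) this quantity is therefore strictly monotone. It vanishes at $t=0$, hence it is nonzero for $t\neq0$. The same monotonicity gives a nondegenerate Jacobian of $\Phi_v$ at its zero, which yields continuity of $v\mapsto p(v)$ by the implicit function theorem. Alternatively, continuity follows from uniqueness, the properness near $|p|=1$, and continuity of $\Phi_v$ in $v$ in the $L^{Q^*}$ topology, which is implied by $S_1^2$ convergence. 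I must also check that $\varphi_q$ is a monotone flow in $t$ along the complex line through $P$; the Reeb component makes this less obvious than in the Riemannian case, and that is where I expect the main work.

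Finally, the inverse is $v\mapsto(T_{\varphi_{p(v)}}v,p(v))$. This map is continuous because $p(v)$ is continuous and because of the joint continuity established in the first step.
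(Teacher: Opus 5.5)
Your route is the paper's. Existence comes from the degree of $p\mapsto\int\varphi_p^{-1}\,\mathrm{d}\mu_v$ on spheres $|p|=r$ near $1$. Injectivity comes from writing $\varphi_p^{-1}\circ\varphi_{p'}$ as a unitary map times a single $\varphi_{P,\lambda}$ and using strict monotonicity of $\operatorname{Re}\langle\cdot,P\rangle$. Continuity of the inverse comes from uniqueness together with $\sup_p|\Phi_{v_j}(p)-\Phi_v(p)|\le\dashint\big||v_j|^{Q^*}-|v|^{Q^*}\big|\,\mathrm{d}\sigma$, which makes properness near $|p|=1$ uniform. On the existence step: mass off $P$ is pushed to $-P$, so the limit is $-P$, and $\deg(-\mathrm{Id})=(-1)^{2n+2}=1$; there is no orientation ambiguity. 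Two points still need to be supplied.

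\emph{The monotonicity you defer.} You call this ``the main work'' and attribute the difficulty to the Reeb direction, but it is immediate. Since $\zeta-\langle\zeta,P\rangle P$ is orthogonal to $P$, Lemma~\ref{varphi} gives
\[
\langle\varphi_{P,\lambda}(\zeta),P\rangle=\frac{\lambda^2(1+w)-(1-w)}{\lambda^2(1+w)+(1-w)},\qquad w=\langle\zeta,P\rangle ,
\]
so only one complex variable is involved. Set $u=(1-w)/(1+w)$, which ranges over $\{\operatorname{Re}u\ge0\}$. Then $\varphi_{P,\lambda}^{-1}=\varphi_{P,1/\lambda}$ acts by $u\mapsto\lambda^2u$. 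Moreover, $\operatorname{Re}w=(1-|u|^2)/|1+u|^2$ is strictly decreasing in $|u|$ along every ray of the closed right half-plane. This is exactly the paper's pointwise inequality $\operatorname{Re}\langle\varphi_{P,\lambda}^{-1}(\zeta),P\rangle<\operatorname{Re}\langle\zeta,P\rangle$ for $\lambda>1$, $\zeta\neq\pm P$. The paper simply integrates it against $\mu_w$, so no differentiation in $t$ is needed.

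If you keep your infinitesimal version, you need the following facts. The family $\lambda\mapsto\varphi_{P,\lambda}$ is a one-parameter group in $\log\lambda$. Its generator is $\frac{\mathrm{d}}{\mathrm{d}\lambda}\varphi_{P,\lambda}(\zeta)|_{\lambda=1}=P-\langle\zeta,P\rangle\zeta=\widehat\nabla f_P$. Hence $f_P$ increases along the flow at rate $|\widehat\nabla f_P|^2_{\widehat g}=1-\operatorname{Re}(\langle\zeta,P\rangle^2)$, not $1-|\langle\zeta,P\rangle|^2$, and this vanishes only at $\zeta=\pm P$.

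\emph{The final injectivity step is missing.} Your reduction ends once you show $q=0$. That only gives $\varphi_{p'}=\varphi_p\circ U$ for some unitary $U$. You must still conclude $p'=p$, after which $w$ is determined by $v$. The paper does this by extending the maps to the ball and evaluating at the origin. Since $U(0)=0$, this gives $\varphi_{p'}(0)=\varphi_p(0)$. Also $\varphi_{P,\lambda}(0)=\frac{\lambda^2-1}{\lambda^2+1}P$ determines $(P,\lambda)$, hence $p$.
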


\begin{proof}
For $v \in \M$ and $p \in \mathbb B^{n+1}$,  define
$$
F(v, p)=\dashint_{\S^{2 n+1}} \varphi_p^{-1}(\zeta)|v(\zeta)|^{Q^*}\,\d\si =\dashint_{\S^{2 n+1}} \zeta|T_{\varphi_p} v|^{Q^*}\,\d\si  .
$$
We first show that $F(v,p)=0$ for some
$p\in\mathbb B^{n+1}$.
By Lemma~\ref{varphi} and the absolute continuity
of the integral,
$$
\sup _{P \in \S^{2 n+1}}|F(v, r P)+P|=o(1),
$$
as $r$ tends to 1. Indeed, the convergence of $\varphi_{r P}^{-1}$ to $-P$ is uniform outside any fixed cap about $P$, and the integrals of $|v|^{Q^*}$ over such caps are uniformly small when their radii are small. Hence, for $r<1$ sufficiently close to 1,
$$
\operatorname{deg}(F(v,\cdot),\mathbb B_r^{n+1},0)=\operatorname{deg}(-\mathrm{Id},\mathbb B_r^{n+1},0)=1.
$$
The nonzero degree gives $F(v,p)=0$ for some $p\in\mathbb B^{n+1}$. The constraints \eqref{eq:balanced-spaces} then give $T_{\varphi_p}v\in\M_0$, so $\varpi$ is onto.

Next, suppose that
$$
\varpi(w_1, p_1)=\varpi(w_2, p_2) .
$$
The decomposition of CR automorphisms of the ball gives $U \in U(n+1), P \in \S^{2 n+1}$, and $\lam  \geq 1$ such that
$$
\varphi_{p_1}^{-1} \circ \varphi_{p_2}=U \circ \varphi_{P, \lam }, \quad w_2=T_{\varphi_{P, \lam }} T_U w_1 .
$$
Since unitary transformations preserve $\M_0$, both $T_U w_1$ and $w_2$ belong to $\M_0$. The formula in Lemma~\ref{varphi} gives
$$
\operatorname{Re}\langle\varphi_{P, \lam }^{-1}(\zeta), P\rangle<\operatorname{Re}\langle\zeta, P\rangle, \quad \lam >1, \quad \zeta \neq \pm P .
$$

If $\lam >1$, a change of variables yields
\begin{align*}
0 & =\dashint_{\S^{2 n+1}} \operatorname{Re}\langle\zeta, P\rangle|w_2|^{Q^*}\,\d\si  \\
& =\dashint_{\S^{2 n+1}} \operatorname{Re}\langle\varphi_{P, \lam }^{-1}(\zeta), P\rangle|T_U w_1|^{Q^*}\,\d\si  \\
& <\dashint_{\S^{2 n+1}} \operatorname{Re}\langle\zeta, P\rangle|T_U w_1|^{Q^*}\,\d\si =0,
\end{align*}
a contradiction. Thus $\lam=1$, and consequently $\varphi_{p_2}=\varphi_{p_1} \circ U$. The value at the origin of the ball is
$$
\varphi_{P, \lam }(0)=\frac{\lam ^2-1}{\lam ^2+1} P
$$
so evaluating $\varphi_{p_2}=\varphi_{p_1}\circ U$ at $0$ gives $p_1=p_2$. Hence $w_1=w_2$, and $\varpi$ is one-to-one.

Finally, let $v_j, v \in \M$ satisfy $\|v_j-v\|_{S_1^2(\S^{2n+1})}=o(1)$, and let
$$
F(v_j, p_j)=F(v, p)=0 .
$$
The Sobolev embedding gives
$$
\sup _{p' \in \mathbb B^{n+1}}|F(v_j, p')-F(v, p')| \leq \dashint_{\S^{2 n+1}}|\,|v_j|^{Q^*}-|v|^{Q^*}|\,\d\si =o(1) .
$$
The uniform limit $F(v,rP)=-P+o(1)$ keeps $p_j$ in a compact subball. Uniqueness then gives $p_j\to p$. The same argument proves that $p$ depends continuously on the normalized density in $L^1(\S^{2n+1})$.
Lemma~\ref{lempe1} and approximation by smooth functions give
\[
\|T_{\varphi_{p_j}}v_j-T_{\varphi_p}v\|_{S_1^2(\S^{2n+1})}
\leq\|v_j-v\|_{S_1^2(\S^{2n+1})}+\|T_{\varphi_{p_j}}v-T_{\varphi_p}v\|_{S_1^2(\S^{2n+1})}
=o(1).
\]
Thus $\varpi^{-1}$ is continuous. The continuity of $\varpi$ follows in the same way from $T_{\varphi_p}^{-1}=T_{\varphi_p^{-1}}$.
\end{proof}

The decomposition in Lemma~\ref{lem:balanced-decomposition}
allows us to study the energy in terms of the normalized
function and the conformal parameters. For a positive function
$R$ and $v\in S_1^2(\S^{2n+1})\setminus\{0\}$, define
\begin{equation}\label{eq:energy-quotient}
E_{R}(v)=\frac{\dashint_{\S^{2n+1}}vL_{\theta_0}(v)\,\d\si }{(\dashint_{\S^{2n+1}}R|v|^{Q^*}\,\d\si )^{\frac{Q-2}{Q}}}.
\end{equation}
By the sharp Sobolev inequality
\eqref{eq:sharp-sphere-sobolev}, $E_R$ is bounded below
when $R$ is positive and bounded. If $R$ is constant,
the conformal invariance in Lemma~\ref{lempe1} gives
$E_R(\varpi(w,p))=E_R(w)$.
Moreover, the classification of extremals for
\eqref{eq:sharp-sphere-sobolev}, together with
Lemma~\ref{lem:balanced-decomposition}, yields $\min_{w\in\M_0}E_1=L_{\theta_0}(1)$,  with equality attained precisely at $w=\pm1$.

The constraints in \eqref{eq:balanced-spaces} define
$C^2$ submanifolds $\M$ and $\M_0$ of the Hilbert space
$S_1^2(\S^{2n+1})$.
To study the second variation of $E_1$ at $1$, we recall
the spherical harmonic decomposition on the CR sphere;
see, e.g., \cite{BFM2013}.

The space $L^2(\S^{2n+1})$, equipped with the inner product
\[
(F,G)=\int_{\S^{2n+1}}F\overline G\,\d\si,
\]
admits the orthogonal decomposition
\[
L^2(\S^{2n+1})
=\bigoplus_{j,k\geq0}\mathcal H_{j,k},
\]
where $\mathcal H_{j,k}$ consists of the restrictions
to $\S^{2n+1}$ of harmonic polynomials on $\C^{n+1}$
that are homogeneous of degrees $j$ and $k$ in
$\zeta$ and $\overline\zeta$, respectively.
Its dimension is
\[
\dim\mathcal H_{j,k}
=m_{j,k}
:=\frac{(j+n-1)!(k+n-1)!(j+k+n)}
{n!(n-1)!j!k!}.
\]
In particular, $\mathcal H_{0,0}$ consists of the
constant functions. Throughout the following tangent-space
identities, orthogonal complements are taken in
$L^2(\S^{2n+1})$ within the real energy space, and
sums of complex harmonic spaces are understood through
their real-valued elements.
\begin{lem}\label{lempe2}
	The tangent space of $\M$ at $v=1$ is
\[
	T_1\M=\Big\{\phi\in S_1^2(\S^{2n+1}):\int_{\S^{2n+1}}\phi\,\d\si =0\Big\}={\mathcal{H}_{0,0}}^\perp.
\]
\end{lem}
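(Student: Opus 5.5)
The plan is to compute the tangent space as the kernel of the differential of the constraint functional
\[
N(v)=\dashint_{\S^{2n+1}}|v|^{Q^*}\,\d\si ,
\]
since $\M=N^{-1}(1)$. First I will check that $N$ is $C^1$ on $S_1^2(\S^{2n+1})$. This follows from the Folland--Stein embedding $S_1^2(\S^{2n+1})\hookrightarrow L^{Q^*}(\S^{2n+1})$ together with the standard $C^1$ property of $u\mapsto\int|u|^{Q^*}$ on $L^{Q^*}$, which holds because $Q^*>2$. Its derivative is
\[
DN(v)[\phi]=Q^*\dashint_{\S^{2n+1}}|v|^{Q^*-2}v\,\phi\,\d\si .
\]

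At $v=1$ this gives $DN(1)[\phi]=Q^*\dashint\phi\,\d\si $. This functional is nonzero; for instance $DN(1)[1]=Q^*\neq0$. Hence $1$ is a regular point of $N$. By the implicit function theorem in the Hilbert space $S_1^2(\S^{2n+1})$, $\M$ is a $C^1$ submanifold near $1$, and
\[
T_1\M=\ker DN(1)=\Big\{\phi\in S_1^2(\S^{2n+1}):\int_{\S^{2n+1}}\phi\,\d\si =0\Big\}.
\]
This is the first equality in the statement.

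For the second equality, $\mathcal H_{0,0}$ consists of the constants. A real $\phi\in S_1^2\subset L^2$ is $L^2$-orthogonal to the constants exactly when $(\phi,1)=\int\phi\,\d\si =0$. With the convention that orthogonal complements are taken in $L^2$ within the real energy space, this gives $T_1\M=\mathcal H_{0,0}^\perp$. Equivalently, in terms of the spherical harmonic expansion, $\phi$ has no $\mathcal H_{0,0}$ component.

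I expect no real obstacle. The only point needing care is the regularity of $N$ on the energy space, so that the tangent space is genuinely the kernel of the differential. This regularity is supplied by the Sobolev embedding, as already used to assert that $\M$ is a $C^2$ submanifold.
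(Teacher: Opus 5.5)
Your proposal is correct and takes the same route as the paper, which differentiates the constraint $\dashint_{\S^{2n+1}}|v|^{Q^*}\,\d\si=1$ at $v=1$ and identifies the resulting mean-zero condition with $\mathcal H_{0,0}^\perp$ through the spherical harmonic decomposition. Your check that $1$ is a regular point of the constraint, since $DN(1)[1]=Q^*\neq0$, supplies the justification that the paper leaves implicit.
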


\begin{lem}\label{lempe3}
	The tangent space of $\M_0$ at $v=1$ is
	\[
	T_1\M_0=( \mathcal{H}_{0,0} \oplus \mathcal{H}_{1,0} \oplus \mathcal{H}_{0,1}) ^{\perp} .
	\]
\end{lem}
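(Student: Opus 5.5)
The constraint defining $\M_0$ inside $\M$ is the map $\Phi:\M\to\C^{n+1}\simeq\R^{2n+2}$,
\[
\Phi(v)=\dashint_{\S^{2n+1}}\zeta|v|^{Q^*}\,\d\si.
\]
We have $T_1\M_0=\{\phi\in T_1\M:\d\Phi(1)\phi=0\}$, provided $\d\Phi(1)$ restricted to $T_1\M$ is surjective; this surjectivity is exactly what makes $\M_0$ a $C^2$ submanifold near $1$. The plan is therefore to compute $\d\Phi(1)$, intersect its kernel with the space given by Lemma~\ref{lempe2}, and identify the result through the harmonic decomposition.

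Since $Q^*>2$, the map $v\mapsto|v|^{Q^*}$ is $C^1$ from $L^{Q^*}$ to $L^1$, with derivative $\phi\mapsto Q^*|v|^{Q^*-2}v\phi$. At $v=1$ this gives
\[
\d\Phi(1)\phi=Q^*\dashint_{\S^{2n+1}}\zeta\,\phi\,\d\si .
\]
Hence a real $\phi\in T_1\M$ lies in the kernel if and only if $\int\zeta_j\phi\,\d\si=0$ for every $j=1,\ldots,n+1$. Since $\phi$ is real, this is equivalent to $\int\phi\,\overline{\zeta_j}\,\d\si=0$ and $\int\phi\,\zeta_j\,\d\si=0$, i.e. $\phi\perp\zeta_j$ and $\phi\perp\overline{\zeta_j}$ in $L^2$. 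Now $\mathcal H_{1,0}$ is spanned by the restrictions of $\zeta_1,\ldots,\zeta_{n+1}$, and $\mathcal H_{0,1}$ by their conjugates; this is consistent with $m_{1,0}=m_{0,1}=n+1$. Combined with Lemma~\ref{lempe2}, which gives $\phi\perp\mathcal H_{0,0}$, we obtain the kernel
\[
(\mathcal H_{0,0}\oplus\mathcal H_{1,0}\oplus\mathcal H_{0,1})^\perp ,
\]
understood through real-valued elements as stipulated before the lemma.

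It remains to check surjectivity. The real test functions $\operatorname{Re}\zeta_j$ and $\operatorname{Im}\zeta_j$ lie in $T_1\M$, since they have mean zero. Their images under $\d\Phi(1)$ are $Q^*\dashint\zeta\operatorname{Re}\zeta_j\,\d\si$ and $Q^*\dashint\zeta\operatorname{Im}\zeta_j\,\d\si$. By the orthogonality relations $\dashint\zeta_k\overline{\zeta_j}\,\d\si=\delta_{jk}/(n+1)$ and $\dashint\zeta_k\zeta_j\,\d\si=0$, these images are nonzero multiples of $e_j$ and $\mathrm{i}e_j$. They therefore span $\R^{2n+2}$, so $\d\Phi(1)|_{T_1\M}$ is onto, $\M_0$ is a submanifold near $1$ (note $\Phi(1)=0$), and its tangent space is this kernel. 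This identifies $T_1\M_0$ as claimed.

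The only delicate point is bookkeeping. One must make sure that the real orthogonality conditions on a real $\phi$ coincide with $L^2$-orthogonality to the complex spaces $\mathcal H_{1,0}$ and $\mathcal H_{0,1}$. It also has to be checked that the differentiability of $\Phi$ holds in the $S_1^2$ topology; this follows from the Folland--Stein embedding $S_1^2\hookrightarrow L^{Q^*}$.
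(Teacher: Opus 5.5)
Your proposal is correct and takes the same route as the paper. The paper's proof consists of one sentence: differentiate the two constraints in \eqref{eq:balanced-spaces} at $1$, and identify the resulting conditions $\int\phi\,\d\si=0$ and $\int\zeta\phi\,\d\si=0$ with orthogonality to $\mathcal H_{0,0}$, $\mathcal H_{1,0}$ and $\mathcal H_{0,1}$. Your surjectivity check with $\operatorname{Re}\zeta_j$ and $\operatorname{Im}\zeta_j$ supplies the transversality that the paper only asserts when it states that the constraints define $C^2$ submanifolds.
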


Both identities follow by differentiating the constraints
in \eqref{eq:balanced-spaces} at $1$ and identifying the
corresponding directions through the spherical harmonic
decomposition.

The implicit function theorem provides local coordinates
on $\M_0$ near $1$, as in \cite[Lemma~6.4]{Li1995}.
\begin{lem}\label{lempe4}
            	For $\widetilde w\in T_1\M_0$ sufficiently close to $0$,
there exist $C^2$ functions
$\mu(\widetilde w)\in\R$ and
$\eta(\widetilde w)\in\C^{n+1}$ such that
	\be\label{pe2}
	\dashint_{\S^{2n+1}}|1+\widetilde{w}+\mu+2\operatorname{Re}\langle\eta, \zeta\rangle|^{Q^* }\,\d\si =1,
	\ee
	and
	\[
	\int_{\S^{2n+1}}|1+\widetilde{w}+\mu+2\operatorname{Re}\langle\eta, \zeta\rangle|^{Q^* }\zeta\,\d\si =0.
	\]
Moreover, $\mu(0)=0$, $\eta(0)=0$, $D\mu(0)=0$ and $D\eta(0)=0$.
\end{lem}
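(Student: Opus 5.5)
We prove Lemma~\ref{lempe4} with the implicit function theorem applied to the map
\[
\Phi(\widetilde w,\mu,\eta)
=\Big(\dashint_{\S^{2n+1}}|1+\widetilde w+\mu+2\operatorname{Re}\langle\eta,\zeta\rangle|^{Q^*}\,\d\si-1,\ \dashint_{\S^{2n+1}}|1+\widetilde w+\mu+2\operatorname{Re}\langle\eta,\zeta\rangle|^{Q^*}\zeta\,\d\si\Big),
\]
defined on a neighborhood of $(0,0,0)$ in $T_1\M_0\times\R\times\C^{n+1}$ and valued in $\R\times\C^{n+1}\simeq\R^{2n+3}$.

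\textbf{Regularity of $\Phi$.} Since $Q^*>2$, the map $u\mapsto|u|^{Q^*}$ is $C^2$ from $L^{Q^*}(\S^{2n+1})$ to $L^1(\S^{2n+1})$, and its derivatives are $Q^*|u|^{Q^*-2}u$ and $Q^*(Q^*-1)|u|^{Q^*-2}$. Composed with the Folland--Stein embedding $S_1^2\hookrightarrow L^{Q^*}$, this shows that $\Phi$ is $C^2$. Multiplying by the bounded function $\zeta$ keeps the second component $C^2$. Moreover, $\Phi(0,0,0)=0$, because $\dashint 1\,\d\si=1$ and $\int\zeta\,\d\si=0$.

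\textbf{Invertibility of the partial derivative.} We compute $D_{(\mu,\eta)}\Phi$ at the origin. For a real direction $h=\nu+2\operatorname{Re}\langle\eta,\zeta\rangle$, the derivative of the first component is $Q^*\dashint h\,\d\si=Q^*\nu$. The derivative of the second component is
\[
Q^*\dashint h\,\zeta\,\d\si=2Q^*\dashint\operatorname{Re}\langle\eta,\zeta\rangle\,\zeta\,\d\si .
\]
Writing $2\operatorname{Re}\langle\eta,\zeta\rangle=\sum_j(\zeta_j\overline{\eta_j}+\overline{\zeta_j}\eta_j)$ and using $\dashint\zeta_k\overline{\zeta_j}\,\d\si=\delta_{jk}/(n+1)$ together with $\dashint\zeta_j\zeta_k\,\d\si=0$, this equals $\frac{Q^*}{n+1}\eta$. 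So the partial derivative is the block-diagonal map $(\nu,\eta)\mapsto(Q^*\nu,\frac{Q^*}{n+1}\eta)$, which is an isomorphism of $\R\times\C^{n+1}$. Hence the implicit function theorem gives $C^2$ functions $\mu(\widetilde w)$ and $\eta(\widetilde w)$ near $0$, with $\mu(0)=0$, $\eta(0)=0$, and $\Phi(\widetilde w,\mu(\widetilde w),\eta(\widetilde w))=0$. These are exactly the two identities in the statement, after multiplying the second by $\om_{2n+1}$.

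\textbf{Vanishing of the derivatives.} The implicit function theorem also gives
\[
(D\mu(0),D\eta(0))=-\big(D_{(\mu,\eta)}\Phi\big)^{-1}D_{\widetilde w}\Phi(0,0,0).
\]
Thus it suffices to show $D_{\widetilde w}\Phi(0)=0$. For $\phi\in T_1\M_0$, this derivative is $\big(Q^*\dashint\phi\,\d\si,\ Q^*\dashint\phi\,\zeta\,\d\si\big)$. By Lemma~\ref{lempe3}, $\phi$ is $L^2$-orthogonal to $\mathcal H_{0,0}$, which contains the constants, and to $\mathcal H_{1,0}\oplus\mathcal H_{0,1}$, which contains the coordinate functions $\zeta_j$ and $\overline{\zeta_j}$. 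Since $\phi$ is real, both integrals vanish, so $D\mu(0)=0$ and $D\eta(0)=0$.

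The only step needing care is the $C^2$ regularity of $u\mapsto\int|u|^{Q^*}$ on $S_1^2$. This holds since $Q^*>2$ and because near $u=1$ no sign issues arise beyond the standard Nemytskii estimates.
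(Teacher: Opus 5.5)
Your proof is correct and follows the paper's route: the paper only says that the implicit function theorem gives these coordinates, as in \cite[Lemma~6.4]{Li1995}, and you supply the details. Your partial derivative $(\nu,\eta)\mapsto(Q^*\nu,\frac{Q^*}{n+1}\eta)$ matches the one the paper records later in the proof of Proposition~\ref{prop:cr-aubin-fixed-mass}. One small point: your closing remark about "no sign issues near $u=1$" is unnecessary. Functions near $1$ in $S_1^2(\S^{2n+1})$ need not be positive, but the $C^2$ regularity of $u\mapsto|u|^{Q^*}$ from $L^{Q^*}$ to $L^1$ holds globally because $Q^*>2$.
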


We use $\widetilde w\in T_1\M_0$ as local coordinates for $w\in\M_0$ near $1$, as in Lemma~\ref{lempe4}; $\widetilde w=0$ corresponds to $w=1$. Here $\mu(\widetilde w)$ and $\eta(\widetilde w)$ are the scalar and vector corrections in Lemma~\ref{lempe4}, not homotopy or translation parameters.

Let
\[
\widetilde{E}(\widetilde{w})=E_1(w)=\dashint_{\S^{2n+1}}wL_{\theta_0}(w)\,\d\si ,
\]
where $\widetilde{w}\in T_1\M_0$ and $w=1+\widetilde{w}+\mu(\widetilde{w})+2\operatorname{Re}\langle\eta(\widetilde{w}), \zeta\rangle$ as in Lemma~\ref{lempe4}.
The eigenvalue of $L_{\theta_0}$ on $\mathcal H_{j,k}$ is
\begin{equation}\label{eq:spherical-eigenvalues}
\lam_{j,k}=\Big(j+\frac n2\Big)\Big(k+\frac n2\Big),\quad j,k\geq0,
\end{equation}
where $\dim\mathcal H_{j,k}=m_{j,k}$; see \cite[formula~(1.11)]{BFM2013}. Note that $\lam _{0,0}=L_{\theta_0}(1)$.
The orthogonality in Lemmas~\ref{lempe2} and~\ref{lempe3}, together with the coordinates in Lemma~\ref{lempe4}, gives
\[
\widetilde{E}(\widetilde{w})=L_{\theta_0}(1)(1+2\mu(\widetilde{w}))+\dashint_{\S^{2n+1}}\widetilde{w}L_{\theta_0}(\widetilde{w})\,\d\si +o(\|\widetilde{w}\|^2).
\]
Expanding \eqref{pe2} at $\widetilde w=0$ gives
\[
\mu(\widetilde{w})=\frac12 D^2\mu(0)(\widetilde{w},\widetilde{w})+o(\|\widetilde{w}\|^2)
=-\frac{1}{2}\cdot\frac{Q+2}{Q-2} \dashint_{\S^{2n+1}}\widetilde{w}^2\,\d\si +o(\|\widetilde{w}\|^2).
\]
Since $\lam_{1,0}=\frac{Q+2}{Q-2}L_{\theta_0}(1)$ by \eqref{eq:spherical-eigenvalues}, substitution yields
\be\label{pe4}
\widetilde{E}(\widetilde{w})=L_{\theta_0}(1)+\dashint_{\S^{2n+1}}(\widetilde{w}L_{\theta_0}(\widetilde{w})-\lam _{1,0}\widetilde{w}^2)\,\d\si 
+o(\|\widetilde{w}\|^2).
\ee
Set $\mathfrak Q(\widetilde{w}):=\dashint_{\S^{2n+1}}(\widetilde{w}L_{\theta_0}(\widetilde{w})-\lam _{1,0}\widetilde{w}^2)\,\d\si $.
Lemma~\ref{lempe3} and \eqref{eq:spherical-eigenvalues} give, for $\widetilde w,\widetilde v\in T_1\M_0$,
\be \label{pe5}
	D^2\mathfrak Q(\widetilde{w})(\widetilde{v},\widetilde{v})=2\dashint_{\S^{2n+1}}(\widetilde{v}L_{\theta_0}(\widetilde{v})-
	\lam _{1,0}\widetilde{v}^2)\,\d\si 	\geq 2\Big(1-\frac{\lam _{1,0}}{\lam _{2,0}}\Big)\|\widetilde{v}\|^2.
\ee
Thus $\mathfrak Q$ is positive definite on $T_1\M_0$. The coordinates in Lemma~\ref{lempe4} also give
for some $\var_1>0$,
\be\label{pe6}
\|E_R|_{\M_0}-E_1|_{\M_0}\|_{C^2(B_{\var_1}(1))}= O(\var),
\ee
provided $\|R-1\|_{L^\infty(\S^{2n+1})}\leq \var$. Here $B_{\var_1}(1)$ denotes the ball in $\M_0$ of
radius $\var_1$ centered at $1$, with the $C^2$ norm computed in the coordinates of Lemma~\ref{lempe4}.

For $\widetilde w\in T_1\M_0$ and any constant $c$, differentiation gives
\[
\langle DE_R|_{\M_0}(1),\widetilde{w}\rangle=-2L_{\theta_0}(1)\Big(\dashint_{\S^{2n+1}}R\,\d\si \Big)^{(2-2Q)/Q}\dashint_{\S^{2n+1}}(R-c)\widetilde{w}\,\d\si .
\]
H\"older's inequality gives
\begin{align*}
| \langle DE_R|_{\M_0}(1),\widetilde{w}\rangle|\leq&\,  C\|R-c\|_{L^{2Q/(Q+2)}(\S^{2n+1})}\|\widetilde{w}\|_{L^{Q^* }(\S^{2n+1})}\\
\leq&\,  C\|R-c\|_{L^{2Q/(Q+2)}(\S^{2n+1})}\|\widetilde{w}\|.
\end{align*}
Therefore,
\be\label{pe7}
\| DE_R|_{\M_0}(1)\|\leq C\|R-c\|_{L^{2Q/(Q+2)}(\S^{2n+1})}.
\ee

The positivity of the second variation in \eqref{pe5},
together with the perturbation estimates \eqref{pe6}
and \eqref{pe7}, yields a unique local minimizer of
$E_R$ on $\M_0$ when $R$ is sufficiently close to $1$.
\begin{lem}\label{lempe5}
	Let $R\in C^1(\S^{2n+1})$. There exist
$\var_2,\var_3>0$ such that, if
	$\|R-1\|_{L^\infty(\S^{2n+1})}\leq \var \leq \var_2$,  the minimization problem
	\[
	\min_{w\in \M_0,\|w-1\|\leq \var_3} E_R(w)
	\]
	admits a unique minimizer $w_R$. Furthermore, $D^2E_R|_{\M_0}(w_R)$ is positive definite and
	\be\label{pe8}
	w_R>0\quad \text{ on }\, \S^{2n+1},
	\ee
	\be\label{pe9}
	\|w_R-1\|\leq C\inf_{c\in \R}\|R-c\|_{L^{2Q/(Q+2)}(\S^{2n+1})},
	\ee
    and 
	\be\label{pe10}
	\|w_R-1\|_{L^{\infty}(\S^{2n+1})}+\|L_{\theta_0}(w_R-1)\|_{L^{\infty}(\S^{2n+1})}\leq o_{\var}(1),
	\ee
	where $o_{\var}(1)$ denotes some quantity which tends to $0$ as $\var\to 0$.
	Moreover, there exists $C(\var_2)>0$ such that
	\be\label{gradient estimate for wK}
	\|\operatorname{grad}_{g_{\theta_0}}w_R\|_{L^2(\S^{2n+1})}\leq C(\var_2) \inf_{\widetilde c\in [1/2,2]}\|R-\widetilde c\|_{L^2(\S^{2n+1})}.
	\ee
\end{lem}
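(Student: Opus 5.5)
The plan is to work in the local coordinates $\widetilde w\in T_1\M_0$ of Lemma~\ref{lempe4}. There we apply a quantitative inverse-function (strict convexity) argument to the perturbed functional $\widetilde E_R(\widetilde w):=E_R(w)$. By \eqref{pe4}--\eqref{pe5}, $D^2\widetilde E_1(\widetilde w)\ge 2(1-\lam_{1,0}/\lam_{2,0})\|\cdot\|^2-o(1)$ for $\|\widetilde w\|$ small. By \eqref{pe6}, the same lower bound with constant $c_0'=(1-\lam_{1,0}/\lam_{2,0})$ holds for $D^2\widetilde E_R$ on a fixed ball $B_{\var_1}$ once $\var\le\var_2$ is small. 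Thus $\widetilde E_R$ is uniformly convex on that ball. We fix $\var_3<\var_1$ comparable in the $w$- and $\widetilde w$-norms. By \eqref{pe7}, with $c$ chosen optimally, $\|D\widetilde E_R(0)\|\le C\inf_c\|R-c\|_{L^{2Q/(Q+2)}}\le C\var$. Uniform convexity then gives a unique critical point $\widetilde w_R$ with $\|\widetilde w_R\|\le C\|D\widetilde E_R(0)\|/c_0'$. For small $\var$ it lies in the interior of the ball and is the unique minimizer there; this proves \eqref{pe9}, and also yields the positive definiteness of $D^2E_R|_{\M_0}(w_R)$. Existence of the minimizer also follows directly by weak lower semicontinuity on the closed convex ball, since convexity makes the functional weakly l.s.c. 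Uniqueness is immediate from strict convexity.

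Next, we derive the Euler--Lagrange equation. The constraints of $\M_0$ contribute Lagrange multipliers in the span of $\mathcal H_{0,0}\oplus\mathcal H_{1,0}\oplus\mathcal H_{0,1}$, weighted by $|w|^{Q^*-2}w$. This gives
\[
L_{\theta_0}w_R=\big(\nu R+\textstyle\sum_a \nu_a f_a\big)|w_R|^{Q^*-2}w_R,\qquad f_a=\operatorname{Re}\langle\zeta,a\rangle .
\]
Testing against $w_R$ and against $\widehat\nabla$-type conformal directions, in the spirit of \eqref{KW new}, shows $\nu\to L_{\theta_0}(1)$ and $\nu_a=O(\var)$. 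The precise size is not needed, only smallness.

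For \eqref{pe10}, we write $\phi=w_R-1$. Then $L_{\theta_0}\phi=(\nu R+\sum\nu_af_a)|w_R|^{Q^*-2}w_R-L_{\theta_0}(1)$. The right side is small in $L^{Q^*/(Q^*-1)}$ and controlled polynomially by $|\phi|$. Since $\|\phi\|\to0$ by \eqref{pe9}, a Moser/Brezis--Kato bootstrap with the Folland--Stein estimates \cite{FS1974,JL1987} gives $\|\phi\|_{L^\infty}=o_\var(1)$. This bootstrap is the main obstacle: the nonlinearity is critical, so one has to use the smallness of $\|\phi\|_{L^{Q^*}}$ to absorb the critical term, which is the standard $\var$-regularity trick. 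Once $\phi$ is bounded and small, the right side of the equation is pointwise $o_\var(1)$, giving the $L_{\theta_0}$ bound in \eqref{pe10}. Positivity \eqref{pe8} then follows from $\|w_R-1\|_{L^\infty}<1$.

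Finally, for \eqref{gradient estimate for wK}, we test the equation for $\phi$ against $\phi$ itself. We linearize: the right side equals $(Q^*-1)L_{\theta_0}(1)\phi+L_{\theta_0}(1)(R-\widetilde c)+\text{multiplier terms}+O(|\phi|^2)$. We then project off $\mathcal H_{0,0}\oplus\mathcal H_{1,0}\oplus\mathcal H_{0,1}$ and use the spectral gap $\lam_{2,0}>\lam_{1,0}$ as in \eqref{pe5}. This yields $\|\phi\|_{S^2_1}\le C\|R-\widetilde c\|_{L^2}$ uniformly for $\widetilde c\in[1/2,2]$; the constant absorbs the mismatch between $\widetilde c$ and $1$ via the normalization in $\M_0$. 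The Reeb component of $\operatorname{grad}_{g_{\theta_0}}w_R$ is not controlled by the horizontal energy alone. We therefore use the commutation of $T_{\theta_0}$ with $L_{\theta_0}$ to apply the same energy argument to $T_{\theta_0}\phi$ in the $S^{-1}$ pairing, or equivalently an $L^2$ Folland--Stein estimate $\|T_{\theta_0}\phi\|_{L^2}\le C\|L_{\theta_0}\phi\|_{L^2}$, whose right side is again bounded by $C(\|R-\widetilde c\|_{L^2}+\|\phi\|_{L^2})$ using \eqref{pe10}.
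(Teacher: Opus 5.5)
Your outline matches the paper's proof. It uses strict convexity in the chart of Lemma~\ref{lempe4} via \eqref{pe4}--\eqref{pe7} for existence, uniqueness, positive definiteness and \eqref{pe9}; then the Euler--Lagrange equation with multipliers; then an $\var$-regularity bootstrap for \eqref{pe10} and hence \eqref{pe8}; and finally $\|\operatorname{grad}_{g_{\theta_0}}\phi\|_{L^2}\lesssim\|L_{\theta_0}\phi\|_{L^2}$.

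The step that would not work as described is controlling the multipliers ``in the spirit of \eqref{KW new}''. A Kazdan--Warner/Pohozaev identity for the Euler--Lagrange equation expresses $\nu_a$ through $\nu\int\langle\widehat\nabla R,\widehat\nabla f_b\rangle_{\widehat g}|w_R|^{Q^*}$. But $\widehat\nabla R$ is not controlled by $\|R-1\|_{L^\infty}\le\var$. This matters because the lemma is later applied to $R\circ\varphi_p$, whose $C^1$ norm blows up as $|p|\to1$, so all constants must be independent of $\|R\|_{C^1}$. Testing against $Xw_R$ also needs regularity of $w_R$ that you only get after the bootstrap. Use instead the elementary test of \eqref{estimate for Lda}: pair the equation with $f_b=\operatorname{Re}\langle\zeta,b\rangle$ and use self-adjointness, $L_{\theta_0}f_b=\lam_{1,0}f_b$, and the balancing condition. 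This gives $|\nu_a|\le C(\var+\|w_R-1\|)$ with $C$ depending only on $n$. Here $\nu,\nu_a$ correspond to the paper's $\lam_R,\Lam_R$.

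Your last step tacitly needs the same test. The bound $\|L_{\theta_0}\phi\|_{L^2}\le C(\|R-\widetilde c\|_{L^2}+\|\phi\|_{L^2})$ does not follow from \eqref{pe10} alone. The right-hand side of the equation for $\phi$ contains the constant $\nu\widetilde c-L_{\theta_0}(1)$ and the first harmonics $\sum_a\nu_af_a$, and these must be controlled linearly, not merely shown to be small. Testing against $1$ and $f_b$ gives $|\nu\widetilde c-L_{\theta_0}(1)|+|\nu_a|\le C(\|R-\widetilde c\|_{L^2}+\|\phi\|_{L^2})$. Equivalently, the $\mathcal H_{0,0}\oplus\mathcal H_{1,0}\oplus\mathcal H_{0,1}$ part of $L_{\theta_0}\phi$ equals $L_{\theta_0}$ of the corresponding part of $\phi$. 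These are exactly the paper's estimates for $|\lam_R-L_{\theta_0}(1)/\widetilde c|$ and $|\Lam_R|$.

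With that in place, your separate linearized energy estimate with the spectral gap is unnecessary. Estimate \eqref{pe9} already gives $\|\phi\|\le C\|R-\widetilde c\|_{L^{2Q/(Q+2)}}\le C\|R-\widetilde c\|_{L^2}$ for every $\widetilde c$. The paper then concludes with the single spectral inequality $\|\operatorname{grad}_{g_{\theta_0}}\phi\|^2_{L^2}\le C\|L_{\theta_0}\phi\|^2_{L^2}$ from \eqref{eq:spherical-eigenvalues}, which handles the horizontal and Reeb directions at once.
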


\begin{proof}
The expansion \eqref{pe4} and the coercivity estimate
\eqref{pe5} show that $1$ is a strict local minimizer
of $E_1$ on $\M_0$. By the $C^2$ perturbation
estimate \eqref{pe6}, this strict convexity persists
in a sufficiently small neighborhood of $1$ when
$\var$ is small. Consequently, $E_R$ admits a unique
constrained minimizer $w_R$ in that neighborhood, and
$D^2E_R|_{\M_0}(w_R)$ is positive definite.
The estimates \eqref{pe5}--\eqref{pe7} further yield
\eqref{pe9}.

	Since $w_R$ is a constrained local minimum, $w_R$ satisfies the Euler--Lagrange equation for some
	Lagrange multiplier $\Lam_R\in \C^{n+1}$:
	\be\label{pe11}
	L_{\theta_0}(w_R)=(\lam _R R-2\operatorname{Re}\langle\Lam_R, \zeta\rangle)|w_R|^{4/(Q-2)}w_R\quad \text{ on }\, \S^{2n+1},
	\ee
	where
	\[
	\lam_R=\frac{\dashint_{\S^{2n+1}}w_RL_{\theta_0}(w_R)\,\d V_{\theta_0}}{
		\dashint_{\S^{2n+1}}R|w_R|^{Q^*}\,\d V_{\theta_0}
	}.
	\]
	The definition of $\lam_R$ and \eqref{pe9} give $|\lam_R-\frac{n^2}{4}|=O(\var)$. Since $L_{\theta_0}$ is a self-adjoint operator and $L_{\theta_0}(2\operatorname{Re}\langle\Lam_R, \zeta\rangle)=2\lam _{1,0}\operatorname{Re}\langle\Lam_R, \zeta\rangle$,
multiplying \eqref{pe11} by $2\operatorname{Re}\langle\Lam_R,\zeta\rangle$ and integrating gives
\begin{align}
\lam _{1,0}\int_{\S^{2n+1}}w_R2\operatorname{Re}\langle\Lam_R,\zeta\rangle\,\d\si 
&=\lam _R\int_{\S^{2n+1}}R|w_R|^{\frac{4}{Q-2}}w_R2\operatorname{Re}\langle\Lam_R,\zeta\rangle\,\d\si\nonumber \\
&\quad-\int_{\S^{2n+1}}(2\operatorname{Re}\langle\Lam_R,\zeta\rangle)^2|w_R|^{\frac{4}{Q-2}}w_R\,\d\si .\label{estimate for Lda}
\end{align}
	Combining \eqref{estimate for Lda} with $\|R-1\|_{L^\infty(\S^{2n+1})}\leq\var$ and \eqref{pe9}, we obtain
	\be\label{eq:lda bound}
	|\Lam_R|=O(\var).
	\ee
	Set $w_R=w_R^+-w_R^-$, where $w_R^\pm=\max\{\pm w_R,0\}$. Note that $\int_{\S^{2n+1}}|w_R^-|^{Q^* }\,\d\si \leq\int_{\S^{2n+1}}|w_R-1|^{Q^* }\,\d\si = o_\var(1)$. Testing \eqref{pe11} against $-w_R^-$ gives
\begin{align*}
C\int_{\S^{2n+1}}(w_R^-)^{Q^*}\,\d\si 
&\geq\int_{\S^{2n+1}}w_R^-L_{\theta_0}(-w_R)\,\d\si \\
&\geq\int_{\S^{2n+1}}w_R^-L_{\theta_0}(w_R^-)\,\d\si \\
&\geq 2^{-2/Q}S_n^2\Big(\int_{\S^{2n+1}}(w_R^-)^{Q^*}\,\d\si \Big)^{\frac{Q-2}{Q}},
\end{align*}
    where we used \eqref{eq:sharp-sphere-sobolev} and the value of $S_n$ in \eqref{Sobolevineq}.
The preceding Sobolev bound and \eqref{pe9} give $w_R^-=0$ for sufficiently small $\var$.
    
    Estimate \eqref{pe9} and the Folland--Stein embedding give $w_R\to1$ in $L^{Q^*}(\S^{2n+1})$. 
Together with $|\lam_R-\frac{n^2}{4}|=O(\var)$ and \eqref{eq:lda bound}, the strong convergence gives
$$
\Big\|(\lam _R R-2 \operatorname{Re}\langle\Lam _R, \zeta\rangle) w_R^{\frac{4}{Q-2}}-\frac{n^2}{4}\Big\|_{L^{Q / 2}(\S^{2n+1})}=o_{\var}(1) .
$$

    Fix $p>\max \{Q^*,(Q^*-1) Q / 2\}$. Taking $\var_2$ sufficiently small, we can absorb the potential term at each iteration. Since $p$ is fixed, finitely many applications of the Folland--Stein inequality then give $\|w_R\|_{L^p(\S^{2n+1})} \leq C$, uniformly in $R$. The right-hand side of \eqref{pe11} is therefore uniformly bounded in $L^{p /(Q^*-1)}(\S^{2n+1})$, with $p /(Q^*-1)>Q / 2$. Subelliptic estimates and compact embedding give precompactness in $C^0(\S^{2 n+1})$. The strong convergence in $L^{Q^*}(\S^{2n+1})$ then gives
    \be\label{3.15}
	\|w_R-1\|_{L^{\infty}(\S^{2n+1})} = o_{\var}(1).
	\ee
    In particular, $w_R \geq 1 / 2$ for sufficiently small $\var$, proving \eqref{pe8}. Substituting \eqref{3.15} into \eqref{pe11} gives the second estimate in \eqref{pe10}.

    %Then \eqref{pe8} follows from the Bony's maximum principle.
%    By the Folland-Stein embedding, \eqref{pe9} yields $w_R \to  1$ in $L^{Q^* }(\S^{2n+1})$. Applying the standard local Moser iteration to \eqref{pe11}, together with \eqref{eq:lda bound}, we obtain a uniform $L^{\infty}$-bound for $w_R$. Standard sub-elliptic regularity then implies that $\{w_R\}$ is precompact in $C^0(\S^{2n+1})$. Since $w_R \to  1$ in $L^{Q^* }(\S^{2n+1})$, every $C^0$-limit equals 1. Hence, we have
%	\be\label{3.15}
%	\|w_R-1\|_{L^{\infty}(\S^{2n+1})} = o_{\var}(1),
%	\ee
%	which, together with \eqref{pe11}, leads to
%	\begin{align*}
%	\|L_{\theta_0}(w_R-1)\|_{L^{\infty}(\S^{2n+1})} = o_{\var}(1).
%	\end{align*}
%	Then \eqref{pe10} follows immediately.

Finally, \eqref{pe9} and \eqref{3.15} imply that, for every $\widetilde c\in[1/2,2]$,
	\begin{align*}
	|\lam_R-L_{\theta_0}(1)/\widetilde c|\leq&\,  C(\var_2)(\|w_R-1\|+\|w_R-1\|_{L^1(\S^{2n+1})}+\|R-\widetilde c\|_{L^1(\S^{2n+1})})\\
	\leq&\,  C(\var_2)\|R-\widetilde c\|_{L^{2Q/(Q+2)}(\S^{2n+1})}.
	\end{align*}
	The balance condition \eqref{eq:balanced-spaces}, positivity \eqref{pe8}, and estimates \eqref{estimate for Lda} and \eqref{3.15} give
\[
	|\Lam_R|^2\leq C(n)|\Lam_R|\Big(\int_{\S^{2n+1}}|R-\widetilde c|\,\d\si +\int_{\S^{2n+1}}|w_R-1|\,\d\si \Big).
\]
	Estimate \eqref{pe9} and H\"older's inequality therefore give
\[
	|\Lam_R|\leq C(n)\|R-\widetilde c\|_{L^{2Q/(Q+2)}(\S^{2n+1})}.
\]
	Thus
	$$\|(\lam_R R-2\operatorname{Re}\langle\Lam_R, \zeta\rangle)w_R^{\frac{Q+2}{Q-2}}-L_{\theta_0}(1)\|_{L^2(\S^{2n+1})}\le C(\var_2)\|R-\widetilde c\|_{L^2(\S^{2n+1})}.
	$$
	Since
\[
	L_{\theta_0}(w_R-1)=(\lam_R R-2\operatorname{Re}\langle\Lam_R, \zeta\rangle)w_R^{\frac{Q+2}{Q-2}}-L_{\theta_0}(1),
\]
the spherical harmonic expansion with eigenvalues \eqref{eq:spherical-eigenvalues} gives
\[
    \|\operatorname{grad}_{g_{\theta_0}}(w_R-1)\|^2_{L^2(\S^{2n+1})} \leq C\|(L_{\theta_0}(w_R-1))\|^2_{L^2(\S^{2n+1})} \le C(\var_2)\|R-\widetilde c\|^2_{L^2(\S^{2n+1})}.
\]
	Hence \eqref{gradient estimate for wK} holds.
\end{proof}

\subsection{The reduced equation and degree comparison}\label{sec:degree-comparison}

We now express the variational problem in the conformal
coordinates given by Lemma~\ref{lem:balanced-decomposition}.
Write $v=\varpi(w,p)=T_{\varphi_p}^{-1}w$, where $w\in\M_0$ and $p=(\lam-1)P/\lam$.
By the conformal invariance in Lemma~\ref{lempe1},
\begin{equation}\label{eq:reduced-energy}
I(w,p):=E_R(v)=E_{R\circ\varphi_p}(w).
\end{equation}
For each $p\in\mathbb B^{n+1}$, consider the
constrained minimization problem
\[
\min_{w\in\M_0, \|w-1\|\leq \var_3}I(w,p)=\min_{w\in\M_0, \|w-1\|\leq \var_3}E_{R\circ\varphi_{p}}(w).
\]
If $\|R-1\|_{L^\infty(\S^{2n+1})}\leq\var\leq\var_2$, Lemma~\ref{lempe5} gives a unique minimizer $w_p$. Set $v_p=T_{\varphi_p}^{-1}w_p$. By the positivity in \eqref{pe8} and the Euler--Lagrange
equation \eqref{pe11}, there exists
$\Lam_p\in\C^{n+1}$ such that
\be\label{eq:lag-mul}
L_{\theta_0}(w_p)=(\lam _p R\circ\varphi_p-2\operatorname{Re}\langle\Lam_p, \zeta\rangle)w_p^{\frac{Q+2}{Q-2}}\quad \text{ on }\, \S^{2n+1},
\ee
where $\lam _p$ is the scalar normalization, distinct from the dilation parameter $\lam $, and
\be\label{lam p}
\lam _p=\frac{\dashint_{\S^{2n+1}}w_p L_{\theta_0}(w_p)\,\d\si }{
	\dashint_{\S^{2n+1}}R\circ\varphi_p w_p^{Q^*}\,\d\si 
}.
\ee 

We next establish the regularity needed to differentiate
along the CR conformal directions. For each fixed $p\in\mathbb B^{n+1}$,
equation \eqref{eq:lag-mul}, Lemma~\ref{lempe5},
and the subelliptic estimates in \cite{FS1974}
give $w_p\in\Gamma_{2+\gamma}(\S^{2n+1})$
for some $\gamma\in(0,1)$.
Since $T_0$ commutes with $L_{\theta_0}$,
differentiating \eqref{eq:lag-mul} in the
distributional sense yields
\begin{align*}
L_{\theta_0}(T_0 w_p)= & \, T_0(\lam _p R \circ \varphi_p-2 \operatorname{Re}\langle\Lam _p, \zeta\rangle) w_p^{Q^*-1} \\
& \, +(Q^*-1)(\lam _p R \circ \varphi_p-2 \operatorname{Re}\langle\Lam _p, \zeta\rangle) w_p^{Q^*-2} T_0 w_p \in L^2(\S^{2 n+1}) .
\end{align*}
Consequently,
$T_0w_p\in S_1^2(\S^{2n+1})$.
The first horizontal derivatives of $w_p$ also
belong to $S_1^2(\S^{2n+1})$.
Thus every infinitesimal CR variation of $w_p$
lies in the energy space.
Integrating these derivatives along CR flows
shows that, for each fixed $p_0\in\mathbb B^{n+1}$,
the map
$p\mapsto T_{\varphi_p}^{-1}w_{p_0}$
is $C^1$ from a neighborhood of $p_0$ into
$S_1^2(\S^{2n+1})$.

To identify the finite-dimensional equation associated with
the constrained minimizer, we apply the Kazdan--Warner
identity \eqref{KW new} to \eqref{eq:lag-mul}.
For each $1\leq i\leq n+1$, this gives
\begin{align}
&\, \sum_{j=1}^{n+1}\Lam_p^j\int_{\S^{2n+1}}\langle \widehat{\nabla} \bar{\zeta}_j,\widehat{\nabla} \zeta_i\rangle_{\widehat{g}} w_p^{Q^*}\,\d\si +\sum_{j=1}^{n+1}\bar{\Lam}_p^j\int_{\S^{2n+1}}\langle \widehat{\nabla} \zeta_j,\widehat{\nabla} \zeta_i\rangle_{\widehat{g}} w_p^{Q^*}\,\d\si \notag\\
=&\,\lam _p\int_{\S^{2n+1}}\langle \widehat{\nabla} (R\circ\varphi_p),\widehat{\nabla} \zeta_i\rangle_{\widehat{g}} w_p^{Q^*}\,\d\si \quad \text{ for }\, 1\leq i\leq n+1.\label{eq:lag-mul-2}
\end{align}

For $\Lam\in\C^{n+1}$ and $1\leq i\leq n+1$, define $$(\mathscr{Q}_p\Lam)_i:=\int_{\S^{2n+1}}\langle \widehat{\nabla} (2\operatorname{Re}\langle \Lam,\zeta\rangle),\widehat{\nabla} \zeta_i\rangle_{\widehat{g}} w_p^{Q^*}\,\d\si .$$
The implicit function theorem gives $C^1$ dependence of $w_p$ on $p$.
Under the real identification $\C^{n+1}\simeq\R^{2n+2}$, the operator $\mathscr{Q}_p:\C^{n+1}\to \C^{n+1}$ is symmetric and positive definite, hence invertible.
The formula \eqref{lam p} gives the $C^1$-dependence of $\lam_p$ on $p$. Testing \eqref{eq:lag-mul} against the real and imaginary parts of the coordinate functions gives a linear system for $\Lam _p$ whose matrix is positive definite and whose entries and right-hand side depend $C^1$ on $p$. Thus $\Lam _p$ also depends $C^1$ on $p$. For the reduced energy \eqref{eq:reduced-energy}, the derivative $\pa_p I(w_p,p)$ is taken with $w$ fixed at $w_p$; since $D_wI(w_p,p)=0$ on the constraint, it also equals $D_p[I(w_p,p)]$.

To specify the domains for the degree comparison, set
\begin{align*}
\mathcal{N}_1=&\,\{w\in \M_0: \|w-1\|< \var_3\},\\
\mathcal{N}_2(\widetilde \lam)=&\{v\in \M: v=\varpi (w,p)  \text{ for some  $w\in\mathcal{N}_1$ and $p=sP$,} \\
& \quad \quad P\in \S^{2n+1}, s=(\lam-1)/\lam, 1 \le \lam <\widetilde \lam\},\\
\mathcal{N}_2=&\,\mathcal{N}_2(\infty),\\
\mathcal{N}_3(\widetilde \lam)=&\, \Big\{v\in S_1^2(\S^{2n+1})\setminus\{0\}: \frac{v}{(\dashint_{\S^{2n+1}}|v|^{Q^*}\,\d\si )^{1/Q^*}} \in \mathcal{N}_2(\widetilde \lam) \Big\},\\
\mathcal{N}_3=&\, \mathcal{N}_3(\infty).
\end{align*}
Choose $0<\al '<\al <1$ and set $\mathcal X=\Gamma_{2+\al '}(\S^{2n+1})$. For a cutoff $C>0$, define
\begin{equation}\label{eq:degree-domain}
    \mathcal U(\lam )= \mathcal N_3(\lam )\cap
 \Big\{v\in \mathcal{X}:\|v\|_{\mathcal{X}}<C,\ \min_{\S^{2n+1}}v>C^{-1}\Big\}.
\end{equation}
The dependence of $\mathcal U(\lam)$ on $C$ is suppressed; its closure and boundary are taken in $\mathcal X$.

The preceding construction leads to a comparison
between the Leray--Schauder degree of the CR Nirenberg
equation \eqref{maineq} and the Brouwer degree of the map
$G_\lam$ defined in \eqref{0.10}.
\begin{thm}\label{thm:3.1}
 There exists some constant $\var_4\in (0,\var_2)$ with the following property.
	Let $\iota>0$, and let $\om:[1,\infty)\to(0,\infty)$ be a nonincreasing positive continuous function with $\lim_{\lam\to\infty}\om(\lam)=0$. Suppose that a nonconstant function $R\in C^1(\S^{2n+1})$ satisfies
	\[
	\|R-1\|_{L^{\infty}(\S^{2n+1})}\leq \var_4,
	\]
    and 
	\be\label{condition for not degree}
	\|R\circ\varphi_{P,\lam}-R(P)\|^2_{L^2(\S^{2n+1})}\leq \om(\lam)\Big|\dashint_{\S^{2n+1}}R\circ\varphi_{P,\lam}(\zeta)\zeta\,\d\si \Big|,
	\ee
	for all $P\in\S^{2n+1}$ and $\lam\geq\max\{1,\iota\}$.
    For $0<\al^\prime<\al<1$ and each fixed sufficiently large $\lam$, there exists $C_2>0$ depending on $n,\al,\al^\prime,\iota,\om$ and $\|R\|_{\Gamma_\al(\S^{2n+1})}$ such that, for every $C\geq C_2$, both degrees below are well-defined and 
\begin{align}
		&\,\deg\Big(v-\frac{n^2}{4}(L_{\theta_0})^{-1}(R|v|^{\frac{4}{Q-2}}v), \mathcal{U}(\lam)
        ,0\Big)=-\deg\Big(\dashint_{\S^{2n+1}}R\circ\varphi_{P,\lam}(\zeta)\zeta\,\d\si ,\mathbb{B}^{n+1},0\Big) .\label{eq: degree}
\end{align}
If, in addition,    
    \[
	\deg\Big(\dashint_{\S^{2n+1}}R\circ\varphi_{P,\lam}(\zeta)\zeta\,\d\si ,\mathbb{B}^{n+1},0\Big) \neq 0,
	\]
	then \eqref{maineq} has at least one positive solution.
\end{thm}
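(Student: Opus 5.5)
The plan follows the Chang--Yang/Li reduction. The degree on $\mathcal U(\lam)$ is computed by deforming the equation to a finite-dimensional problem on the conformal parameter $p\in\mathbb B^{n+1}$. The finite-dimensional vector field is then identified with $G_\lam$.

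\textbf{Step 1 (a priori localization).} We first show that, for $\|R-1\|_{L^\infty}\le\var_4$ and $C\ge C_2$, every solution of $v=\frac{n^2}{4}L_{\theta_0}^{-1}(R|v|^{4/(Q-2)}v)$ in $\overline{\mathcal U(\lam)}$ avoids $\partial\mathcal U(\lam)$. Write $v$ normalized as $\varpi(w,p)$ using Lemma~\ref{lem:balanced-decomposition}. The solution is a critical point of $E_R$, so $w$ is a critical point of $E_{R\circ\varphi_p}$ on $\M_0$ with $\Lam=0$. By the uniqueness in Lemma~\ref{lempe5}, $w=w_p$. The bounds $\|v\|_{\mathcal X}<C$ and $\min v>C^{-1}$ then hold for fixed $\lam$ by \eqref{pe10} and Schauder-type estimates in $\Gamma_{2+\al'}$; this is where $\|R\|_{\Gamma_\al}$ enters. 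It remains to rule out solutions with $|p|=(\lam-1)/\lam$ on the outer boundary, i.e. to show $\Lam_p\neq0$ there.

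Using \eqref{eq:lag-mul-2}, we have $\Lam_p=\lam_p\mathscr Q_p^{-1}(\cdots)$, where the right side is $\int\langle\widehat\nabla(R\circ\varphi_p),\widehat\nabla\zeta_i\rangle w_p^{Q^*}$. Replace $w_p$ by $1$, with error controlled by \eqref{gradient estimate for wK} and \eqref{pe9} through $\|R\circ\varphi_p-R(P)\|_{L^2}^2$. Integrating by parts with $-\widehat\Delta f_a=Qf_a$, the main term becomes $Q\dashint (R\circ\varphi_p)\zeta\,\d\si=Q\,G_\lam(P)$, up to a positive constant and conjugation. The error is $O(\|R\circ\varphi_{P,\lam}-R(P)\|_{L^2}^2)$, which by \eqref{condition for not degree} is at most $\om(\lam)|G_\lam(P)|$. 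Hence for $\lam$ large, $\Lam_p=c\,\mathscr Q_p^{-1}G_\lam(P)+o(|G_\lam(P)|)$, and the condition implies $G_\lam\ne0$ because $R$ is nonconstant. The same estimate shows that $\Lam_p$ and $G_\lam$ are homotopic through nonvanishing maps on $|p|=s$, since $\mathscr Q_p$ is positive definite.

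\textbf{Step 2 (deformation to the reduced problem).} On $\mathcal N_3(\lam)$ we deform the Leray--Schauder map. First scale out the normalization. Then deform the $w$-component to the gradient flow of $E_{R\circ\varphi_p}$ on $\M_0$, which is a convex problem near $1$ by \eqref{pe5}--\eqref{pe6}, and the $p$-component to $\Lam_p$. Step 1 keeps zeros off the boundary throughout. By the product property the degree factors as (degree of a nondegenerate minimum) $\times$ (Brouwer degree of $p\mapsto\Lam_p$ on $\mathbb B_s^{n+1}$). The minimum is strictly positive definite by Lemma~\ref{lempe5}, so the first factor is $\pm1$. I expect the sign $-1$ to come from the radial scaling direction: at the Sobolev level, $v\mapsto v-\frac{n^2}{4}L^{-1}(R|v|^{4/(Q-2)}v)$ has exactly one negative eigendirection, namely $v$ itself with eigenvalue $1-(Q^*-1)<0$. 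The conformal directions are the $p$-variables, and the remaining directions on $T_1\M_0$ are positive by \eqref{pe5}. Combined with Step 1, $\deg(\Lam_\cdot)=\deg(G_\lam,\mathbb B^{n+1},0)$, which yields \eqref{eq: degree}. Nonzero degree then gives a zero, i.e. a solution $v$. It is positive since $\min v>C^{-1}$ in $\mathcal U$, and rescaling gives a solution of \eqref{maineq}.

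\textbf{Main obstacle.} The hardest part is making the deformation in Step 2 rigorous in the Banach space $\mathcal X$ rather than in $S_1^2$. The map $p\mapsto T_{\varphi_p}^{-1}w$ is only $C^1$ into $S_1^2$, so compactness of the perturbation and the absence of boundary zeros must be checked in $\Gamma_{2+\al'}$. I would handle this by working with solutions of the homotopy equations and obtaining uniform $\Gamma_{2+\al}$ bounds for fixed $\lam$, using compact embedding into $\Gamma_{2+\al'}$ and the $C^1$ dependence of $(w_p,\lam_p,\Lam_p)$ established above. I would also keep the careful error estimate in Step 1 uniform in $P$.
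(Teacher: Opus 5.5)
Your Step 1 is the paper's argument. The paper sets $A(P,\lam)=\frac1Q\dashint\langle\widehat\nabla(R\circ\varphi_{P,\lam}),\widehat\nabla\zeta\rangle w_p^{Q^*}\,\d\si$ and subtracts $R(P)$ using $\dashint\zeta w_p^{Q^*}\,\d\si=0$. It obtains $A=G_\lam(P)+\mathcal E_1+\mathcal E_2$ with $|\mathcal E_1|+|\mathcal E_2|\le C\om(\lam)|G_\lam(P)|$ from \eqref{pe9}, \eqref{pe10} and \eqref{gradient estimate for wK}. It then uses \eqref{eq:lag-mul-2}, with $\mathscr Q_p$ positive definite and $\lam_p>0$, to get $\deg(\Lam_p,\mathbb B^{n+1}_s,0)=\deg(G_\lam,\mathbb B^{n+1},0)$.

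Two small corrections to Step 1:
\begin{itemize}
\item There is no conjugation. With $w_p$ replaced by $1$, the left side of \eqref{eq:lag-mul-2} is a positive multiple of $\Lam_p$ and the right side a positive multiple of $G_\lam(P)$. A conjugation would cost a factor $(-1)^{n+1}$ and would break the positive-definiteness homotopy.
\item Lemma~\ref{lempe5} only gives uniqueness of the minimizer. Identifying an arbitrary constrained critical point in $\mathcal N_1$ with $w_p$ requires the strict convexity \eqref{A1 new} on the whole coordinate ball.
\end{itemize}

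The gap is in Step 2, at exactly the point that fixes the sign in \eqref{eq: degree}. In the chart $v=aT_{\varphi_p}^{-1}w$, the conformal component of the gradient map is the reduced derivative $D_pI(w_p,p)$, not $\Lam_p$. By \eqref{eq:reduced-first-variation}, the two differ by a negative scalar and by the matrix $\pa_p\dashint(\varphi_{p_0}^{-1}\circ\varphi_p)w_{p_0}^{Q^*}\,\d\si|_{p=p_0}$. If you instead read off $\Lam_p$ directly, $F_R$ along the reduced family takes values in the multiplier directions $L_{\theta_0}^{-1}(\operatorname{Re}\langle\Lam,\varphi_p^{-1}\rangle u_p^{Q^*-1})$. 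Their orientation must still be compared with that of $\pa_pu_p$, and the same matrix appears. Asserting that the $p$-component ``deforms to $\Lam_p$'' skips this comparison, so as written you only get $\deg F_R=\pm\deg G_\lam$.

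The missing ingredient is Lemma~\ref{positive}. Since $\dashint\zeta w_{p_0}^{Q^*}\,\d\si=0$, that matrix (in the $b$-parametrization) factors as $MB_b$. Here $B_b$ is positive definite, and $\operatorname{Re}\langle Mv,v\rangle=\int(|v|^2-\operatorname{Re}(\langle\zeta,v\rangle^2))\,\d\nu>0$ for $\d\nu\propto w_{p_0}^{Q^*}\d\si$. So its determinant is positive for every $p_0$, and the change $p\mapsto b$ preserves orientation. The negative scalar is harmless because $2n+2$ is even. Your count of the amplitude direction ($DF_R(u)u=(2-Q^*)u$) and the positivity of the normal directions are correct, and they match \eqref{A2 new}, \eqref{A4 new} and \eqref{A6 new}.

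Separately, the product formula is not directly available in the nonlinear chart $(a,w,p)\mapsto aT_{\varphi_p}^{-1}w$. Leray--Schauder degree is not obviously invariant under this change of variables, and $p\mapsto T_{\varphi_p}^{-1}w$ is only controlled in $S_1^2$. The paper avoids any global deformation. In Lemma~\ref{prop A2} it computes the index at each nondegenerate zero as $(-1)^{m(J_R,u_p)}=-\operatorname{sgn}\det D_p\Lam_p$, using $m(J_R,u_p)=1+m(D_b^2[I(w_b,b)])$. It then removes degeneracy by a finite-dimensional Sard perturbation of $R$. Adopting that route together with Lemma~\ref{positive} would close your argument.
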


\begin{proof}
	For $P\in\S^{2n+1}$ and $\lam\geq1$, set
$s=(\lam-1)/\lam$ and $p=sP$. Define
	\begin{align*}
	A(P,\lam)=&\,\frac{1}{Q}\dashint_{\S^{2n+1}}\langle \widehat{\nabla}(R\circ\varphi_{P,\lam}),\widehat{\nabla} \zeta\rangle w_{p}^{Q^*}\,\d\si ,\\
	G(P,\lam)=&\,\dashint_{\S^{2n+1}}R\circ\varphi_{P,\lam}(\zeta)\zeta\,\d\si =G_\lam(P).
	\end{align*}
Condition~\eqref{condition for not degree} and the nonconstancy of $R$ imply that $G(P,\lam)\neq0$ for all $P\in\S^{2n+1}$ and $\lam>\max\{1,\iota\}$. Integration by parts, together with the balancing
condition in \eqref{eq:balanced-spaces}, gives
\[
	A(P,\lam)=G(P,\lam)+\mathcal E_1+\mathcal E_2,
\]
	where
	\begin{align*}
	\mathcal E_1=&\, \dashint_{\S^{2n+1}}(R\circ\varphi_{P,\lam}-R(P))\zeta(w_p^{Q^*}-1)\,\d\si ,\\
	\mathcal E_2=&\, -\frac{1}{Q}\dashint_{\S^{2n+1}}(R\circ\varphi_{P,\lam}-R(P))\langle \widehat{\nabla} \zeta, \widehat{\nabla}(w_p^{Q^*})\rangle\,\d\si .
	\end{align*}
	By the Cauchy--Schwarz inequality and
\eqref{pe9}--\eqref{pe10}, we have
\begin{align*}
|\mathcal E_1|
&\leq
C\|R\circ\varphi_{P,\lam}-R(P)\|_{L^2(\S^{2n+1})}
\|w_p^{Q^*}-1\|_{L^2(\S^{2n+1})}\\
&\leq
C\|R\circ\varphi_{P,\lam}-R(P)\|_{L^2(\S^{2n+1})}
\|R\circ\varphi_{P,\lam}-R(P)\|_{L^{2Q/(Q+2)}(\S^{2n+1})}\\
&\leq
C\om(\lam)|G(P,\lam)|.
\end{align*}
Similarly, \eqref{pe10} and
\eqref{gradient estimate for wK} yield
\begin{align*}
|\mathcal E_2|
&\leq
C\|R\circ\varphi_{P,\lam}-R(P)\|_{L^2(\S^{2n+1})}
\|\widehat\nabla(w_p^{Q^*})\|_{L^2(\S^{2n+1})}\\
&\leq
C\|R\circ\varphi_{P,\lam}-R(P)\|_{L^2(\S^{2n+1})}^2\\
&\leq
C\om(\lam)|G(P,\lam)|.
\end{align*}

     Consequently, for all sufficiently large $\lam$,
\[
	A(P,\lam)\cdot G(P,\lam)\geq (1-C\om(\lam))|G(P,\lam)|^2>0
	\]
	uniformly in $P\in\S^{2n+1}$.
Thus $A(P,\lam)$ and $G(P,\lam)$ are homotopic
through nonvanishing maps, and
	\[
	\deg(A(P,\lam),\mathbb{B}^{n+1},0)=\deg(G(P,\lam),\mathbb{B}^{n+1},0).
\]
	By \eqref{eq:lag-mul-2}, the positive definiteness
of $\mathscr Q_p$, and $\lam_p>0$, we further obtain
	\be\label{eq:degree equal}
	\deg(\Lam_p, \mathbb{B}^{n+1}_s,0)=\deg(A(P,\lam),\mathbb{B}^{n+1},0)=\deg(G(P,\lam),\mathbb{B}^{n+1},0),
	\ee
	where $s=(\lam-1)/\lam$ is sufficiently close to $1$.
	%It follows from \eqref{eq:degree equal} and our hypothesis that for $s$ sufficiently close to 1,
	%\begin{align*}
	%\deg(\Lam_p, \mathbb{B}^{n+1}_s,0)\neq 0.
	%\end{align*}
	%Therefore $\Lam_p$ has to have a zero inside $\mathbb{B}^{n+1}$ which immediately implies that \eqref{maineq} has at least one positive solution after scaling.

We next relate the multiplier $\Lam_p$ to the
derivative of the reduced energy \eqref{eq:reduced-energy}.
Fix $p_0=(\lam_0-1)P_0/\lam_0$, where
$P_0\in\S^{2n+1}$ and $\lam_0\geq1$.
We compute the partial derivative
$\pa_p I(w_{p_0},p)$ at $p=p_0$, keeping
$w_{p_0}$ fixed.

Set $v_{p_0}=T_{\varphi_{p_0}}^{-1}w_{p_0}$.
By conformal covariance,
equation \eqref{eq:lag-mul} becomes
\[
	L_{\theta_0}(v_{p_0})=(\lam _{p_0}R-2\operatorname{Re}\langle\Lam_{p_0}, \varphi^{-1}_{p_0}\rangle)v_{p_0}^{\frac{Q+2}{Q-2}}\quad \text{ on }\, \S^{2n+1}.
\]
It follows from the definition of $E_R$ that,
for any  $\psi\in S_1^2(\S^{2n+1})$,
\[
DE_R(v_{p_0})[\psi]
=-4\Big(\dashint_{\S^{2n+1}}Rv_{p_0}^{Q^*}\,\d\si\Big)^{-2/Q^*}
\dashint_{\S^{2n+1}}
\operatorname{Re}\langle\Lam_{p_0},\varphi_{p_0}^{-1}\rangle
v_{p_0}^{Q^*-1}\psi\,\d\si.
\]
The differentiability established after \eqref{lam p} allows us to take
$\psi=D_p(T_{\varphi_p}^{-1}w_{p_0})|_{p=p_0}$ componentwise.
Differentiating the critical power and changing variables then yield
\begin{equation}\label{eq:reduced-first-variation}
D_p I(w_{p_0},p)\big|_{p=p_0}
=-\frac{4}{Q^*}
\Big(\dashint_{\S^{2n+1}}Rv_{p_0}^{Q^*}\,\d\si\Big)^{-2/Q^*} D_p\Big[
\dashint_{\S^{2n+1}}
\operatorname{Re}\langle\Lam_{p_0},
\varphi_{p_0}^{-1}\circ\varphi_p\rangle
w_{p_0}^{Q^*}\,\d\si\Big]\Big|_{p=p_0}.
\end{equation}
	
To determine the degree of the reduced gradient,
we use the parameter relation in \eqref{varphi new}.
The corresponding ball coordinate $b$ satisfies
 \[
	b=\frac{2-|p|}{2-2|p|+|p|^2} p, \quad D b(0)=I, \quad \operatorname{det} D b(p)>0 \quad(|p|<1) .    
\]
   Thus $p\mapsto b$ is an orientation-preserving
$C^1$ diffeomorphism.
Applying Lemma~\ref{positive} at every
$p_0\in\mathbb B^{n+1}$, including $p_0=0$,
we find that the matrix
\[
	\pa_p\Big(\dashint_{\S^{2n+1}} (\varphi^{-1}_{p_0}\circ\varphi_p)(\zeta)  w_{p_0}^{Q^*}\,\d\si \Big)\Big|_{p=p_0}
\]
	is invertible and has positive determinant.  By \eqref{eq:reduced-first-variation}, the
derivative of the reduced energy is therefore
related to $\Lam_p$ by an invertible linear
transformation with positive determinant,
apart from a negative scalar factor.
Since the real dimension is $2n+2$, this
negative factor does not change the degree.
Consequently, for sufficiently large $\lam$,
\[
	\deg(\Lam_{p}, \mathbb B_s^{n+1}, 0)=\deg(\pa_p I(w_p,p), \mathbb B_s^{n+1}, 0).
	\]
  
    Finally, applying Lemma~\ref{prop A2} and
the degree identity \eqref{eq:degree equal},
we obtain, for all sufficiently large $C$,
    \begin{align*} &\, \operatorname{deg}(v-\frac{n^2}{4} L_{\theta_0}^{-1}(R|v|^{4 /(Q-2)} v), \mathcal{U}(\lam ), 0) =-\operatorname{deg}(\Lam _p, \mathbb{B}_s^{n+1}, 0) \\ 
  =&\,-\operatorname{deg}\Big(\dashint_{\S^{2 n+1}} R \circ \varphi_{P, \lam }(\zeta) \zeta\,\d\si , \mathbb{B}^{n+1}, 0\Big),
    \end{align*}
    which proves \eqref{eq: degree}.

   If the Brouwer degree on the right-hand side is
nonzero, the Leray--Schauder degree is also nonzero.
Hence the corresponding operator has a fixed point
in $\mathcal U(\lam)$, which gives a positive
solution of \eqref{maineq}.
\end{proof}

\subsection{Homogeneous expansions and degree computation}\label{sec:homogeneous-degree}
We next establish condition \eqref{condition for not degree}
using the homogeneous expansions of $R$ at its full critical
points. We follow the notation for the critical set and
gradients introduced in Section~\ref{sec:2.1}.

For the integral estimates, we use the unscaled Cayley charts $\mathcal F_P=U_P\circ F=F_P\circ\delta_2$.
Since $\delta_2$ commutes with $\delta_{\lam^{-1}}$,
these charts give the same conformal maps
$\varphi_{P,\lam}$ as in \eqref{varphi new}.
Under this change of coordinates, a homogeneous term of
degree $\beta$ is multiplied by $2^\beta$, while the
remainder is composed with $\delta_2$.
We retain the same notation for the resulting terms.
The scaling lemma in Part~I \cite{QTWPartI} gives the
corresponding change in the flatness data.
In particular, the zero sets of the integral conditions
and the signs entering the index formula are unchanged,
since the homogeneous terms are multiplied by the same
positive factor $2^\beta$.

We first establish the required estimate away from the
critical set at the scale $\lam^{-1}$. The corresponding
estimate near the critical points will be given in
Lemma~\ref{lem:3.8}. In both lemmas, $b_0$ denotes a
positive lower bound for $R$, while $c_{\mathrm{nd}}$
and $C_{\mathrm{nd}}$ denote uniform lower and upper
bounds for the derivatives of the homogeneous terms.
\begin{lem}\label{lem condition for not degree}
	Suppose $R\in\mathcal A_\al (\S^{2n+1})$ satisfies for some constant $b_0>0$, $R(P)\geq b_0$ for all $P\in\S^{2n+1}$, and there exists some constant $\var_1\in (0,1)$, such that for each critical point $P_0\in\S^{2n+1}$ of $R$, there exists some $\beta=\beta(P_0)\in(2,Q)$ such that in some Cayley coordinates $\xi=(x,y,t)$ centered at $P_0$,
\[
R(\xi)=R(0)+Q^{(\beta)}(\xi)+W(\xi)\quad \text{ for }\, |\xi|<\var_1,
\]
	where $Q^{(\beta)}(\delta_{\lam}\xi)=\lam ^{\beta}Q^{(\beta)}(\xi)$ for any $\lam >0$, $\xi\in\Hn $ and
\[
	c_{\mathrm{nd}}|\xi|^{\beta-1}\leq |\widetilde{\nabla} Q^{(\beta)}(\xi)|+|\xi||TQ^{(\beta)}(\xi)| \leq C_{\mathrm{nd}}|\xi|^{\beta-1},
\]
	for some positive constants $c_{\mathrm{nd}}, C_{\mathrm{nd}}$. Here $W(\xi)$ satisfies
    $$|W(\xi)||\xi|^{-\beta}+|\widetilde{\nabla} W(\xi)||\xi|^{1-\beta}+|TW(\xi)||\xi|^{2-\beta} \leq \om _W(|\xi|),$$
    for some continuous function $\om _W$ with $\lim_{r\to0^+}\om _W(r)=0$. Suppose also that for some constant $d>0$, $|\operatorname{grad}_{g_{\theta_0}}R(P)|\geq d$ for all $P\in\S^{2n+1}$ with $d_{\theta_0}(P,\operatorname{Crit}(R))\ge \var_1/20$. Then there exists some positive constant $C_3$ depending on $n,\al $, $\|R\|_{\mathcal A_\al(\S^{2n+1})}$, $b_0$, $c_{\mathrm{nd}}$, $C_{\mathrm{nd}}$, $d$, $\var_1$, $\min_{P_0\in\operatorname{Crit}(R)}\min\{\beta(P_0)-1,Q-\beta(P_0)\}$, $\om _W$ and the modulus of continuity of $\operatorname{grad}_{g_{\theta_0}}R$, such that for $P\in\S^{2n+1}$, $d_{\theta_0}(P,\operatorname{Crit}(R))\ge C_3/\lam$, we have
	\be\label{eq:lem condition for not degree}
	\|R\circ\varphi_{P,\lam}-R(P)\|^2_{L^2(\S^{2n+1})}= o\Big(\Big|\dashint_{\S^{2n+1}}R\circ\varphi_{P,\lam}(\zeta)\zeta\,\d\si \Big|\Big),
	\ee
	uniformly for such $P$ as $\lam\to\infty$.
\end{lem}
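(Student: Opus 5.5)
The plan is to pull everything back to the Heisenberg group in a centered chart and compare two quantities at each $P$. The first is the $L^2$ oscillation $N(P,\lam):=\|R\circ\varphi_{P,\lam}-R(P)\|^2_{L^2(\S^{2n+1})}$. The second is the first moment $G(P,\lam)$. Since $\varphi_{P,\lam}=F_P\circ\delta_{\lam^{-1}}\circ F_P^{-1}$, the change of variables $\zeta=F_P(\xi)$ gives
\[
N(P,\lam)=\int_{\Hn}\big(R(F_P(\delta_{\lam^{-1}}\xi))-R(P)\big)^2\,|J_{F_P}|(\xi)\,\d\xi .
\]
Likewise, using $\dashint\zeta\,\d\si=0$ to subtract $R(P)$, $G(P,\lam)$ is the same type of integral with $(R(F_P(\delta_{\lam^{-1}}\xi))-R(P))F_P(\xi)$ in place of the square. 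The weight satisfies $|J_{F_P}|\sim(1+|\xi|)^{-2Q}$. In every case I split the integral at $|\xi|\sim\kappa\lam$, with $\kappa$ small. On the outer region $R$ is merely bounded, and that piece costs $O((\kappa\lam)^{-Q})$. On the inner region the chart hypotheses are available.

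\emph{Case 1: $d_{\theta_0}(P,\operatorname{Crit}(R))\ge\var_1/20$.} Here I Taylor expand in the chart at $P$, using $R\in\mathcal A_\al$:
\[
R(\delta_{\lam^{-1}}\xi)-R(0)=\lam^{-1}\,Z R(0)\cdot(x,y)/2+\lam^{-2}\,TR(0)\,t+\text{error}.
\]
The error is $o(\lam^{-1}|\xi|)\,|ZR(0)|+O(\lam^{-2}|z|^2)+O(\lam^{-2-\al}|\xi|^{2+\al})$; it is controlled by the modulus of continuity of the gradient and by the $\Gamma_\al$ bound on $TR$. Pairing the linear part with the components $2z/(1+|z|^2-\mathrm it)$ and $(1-|z|^2+\mathrm it)/(1+|z|^2-\mathrm it)$ of $F$, parity and explicit integrals give
\[
|G|\ge c\big(\lam^{-1}|\nabla_{\theta_0}R(P)|+\lam^{-2}|T_{\theta_0}R(P)|\big)-o(\lam^{-2}).
\]
Since $|\operatorname{grad}R(P)|\ge d$, this yields $|G|\gtrsim\lam^{-2}$. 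On the other hand,
\[
N\lesssim\lam^{-2}|\nabla_{\theta_0}R(P)|^2+\lam^{-4}+\lam^{-Q}.
\]
The ratio $N/|G|$ is then $O(\lam^{-1})+o(1)$. The second-order terms in $G$ must be checked to vanish by symmetry up to $O(\lam^{-2})$ with a small constant; I would use that the $z$-quadratic terms pair to zero with the $t$-component after integration.

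\emph{Case 2: $C_3/\lam\le d_{\theta_0}(P,P_0)\le\var_1/20$ for a critical point $P_0$.} I work in the chart at $P_0$, writing $P=F_{P_0}(\eta)$ with $r=|\eta|$ and $\eta=\delta_r\hat\eta$, $|\hat\eta|=1$. Up to a unitary change of chart, $\varphi_{P,\lam}$ acts as $\eta\circ\delta_{\lam^{-1}}\xi$ modulo a conformal factor that is harmless on $|\xi|\le\kappa\lam$. Homogeneity then gives
\[
R(\eta\circ\delta_{\lam^{-1}}\xi)-R(\eta)=r^\beta\big[Q^{(\beta)}(\hat\eta\circ\delta_s\xi)-Q^{(\beta)}(\hat\eta)\big]+W\text{-terms},\qquad s=\frac{1}{\lam r}\le C_3^{-1}.
\]
Because the dependence on $\xi$ is through left translation by $\hat\eta$, the first-order Taylor coefficients are $\widetilde\nabla Q^{(\beta)}(\hat\eta)$ at horizontal order $s$ and $TQ^{(\beta)}(\hat\eta)$ at order $s^2$. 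This is why the right-invariant fields appear. The nondegeneracy bound applied at the unit point $\hat\eta$, together with the remainder bound through $\om_W(r)$, gives
\[
|G|\gtrsim r^\beta\big(s|\widetilde\nabla Q^{(\beta)}(\hat\eta)|+s^2|TQ^{(\beta)}(\hat\eta)|\big)\ge c\,r^\beta s^2,
\]
with $C_3$ large so that higher Taylor terms, of relative size $s$, are absorbed. The same bound gives
\[
N\lesssim r^{2\beta}\big(s^2|\widetilde\nabla Q^{(\beta)}|^2+s^4\big)+\text{tails}.
\]
Hence the main part of $N/|G|$ is $\lesssim r^\beta s=r^{\beta-1}\lam^{-1}\to0$, since $\beta>1$ and $r\le\var_1$.

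\emph{Main obstacle: the tail in Case 2.} I expect the main difficulty to be the region $1/s\lesssim|\xi|\lesssim\kappa\lam$. There $\delta_s\xi$ is no longer small relative to $\hat\eta$, and $|Q^{(\beta)}(\hat\eta\circ\delta_s\xi)|\sim(s|\xi|)^\beta$. The resulting contribution to $N$ is roughly $r^{2\beta}s^{\min(2\beta,Q)}$, up to logarithms, plus $\lam^{-Q}$ from beyond the chart. Comparing with $|G|\gtrsim r^\beta s^2$, the first piece has ratio $r^\beta s^{\min(2\beta,Q)-2}$, which tends to $0$ because $\beta>1$, $Q>2$ and $r^\beta s\le r^{\beta-1}/\lam$. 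The second piece has ratio $\lam^{-Q}/(r^\beta s^2)=\lam^{2-Q}r^{2-\beta}$. This tends to $0$ since $r\ge C_3/\lam$ and $\beta<Q$, giving at worst $\lam^{\beta-Q}C_3^{2-\beta}$. The dependence of $C_3$ on $\min\{\beta-1,Q-\beta\}$ enters exactly here. The same bookkeeping shows that the tail corrections to $G$ are of lower order than $r^\beta s^2$. Finally, uniformity in $P$ follows because every constant depends only on the listed data and on the finitely many critical points.
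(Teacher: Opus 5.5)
Your overall architecture matches the paper's. It has a near-critical regime $r\ge C_3/\lambda$ with target $|G|\gtrsim r^\beta s^2=\lambda^{-2}r^{\beta-2}$, a regime where the full gradient is bounded below, and tail bookkeeping ending in $\lambda^{2-Q}r^{2-\beta}\le C_3^{2-\beta}\lambda^{\beta-Q}$. The gap is in the lower bound for $|G|$ when the horizontal derivative is small.

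The term $s^2\,TQ^{(\beta)}(\hat\eta)\,t$ has homogeneous degree two, the same as the second-order horizontal Taylor terms. So ``absorbing higher Taylor terms of relative size $s$ by taking $C_3$ large'' absorbs the $T$-term as well. When $|\widetilde\nabla Q^{(\beta)}(\hat\eta)|\lesssim s$, you need the central moment to precision $o(r^\beta s^2)$. The hypotheses do not give this. $C^1$ regularity of $Q^{(\beta)}$ and $W$ leaves a horizontal remainder $o(r^\beta s)$. Boundedness of $Z_aZ_bR$, which does not scale like $r^{\beta-2}$, gives $O(\lambda^{-2})=O(r^2s^2)\gg r^\beta s^2$. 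In Case 1 the same problem appears: an $O(\lambda^{-2}|z|^2)$ remainder sits against a central main term of order $\lambda^{-2}$. The symmetry cancellation of ``$z$-quadratic terms'' you invoke presupposes a second-order expansion that is not available.

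The missing device is the paper's integration by parts. The weights $\hat\zeta_i|J_F|$, $\hat\zeta_{i+n}|J_F|$ and $\hat\zeta_{2n+2}|J_F|$ are constant multiples of $\overline X_i$-, $\overline Y_i$- and $T$-derivatives of $((1+|z|^2)^2+t^2)^{-n-1}$. Hence, up to tails, the horizontal moments are averages of $\lambda^{-1}\widetilde\nabla R_P(\delta_{\lambda^{-1}}\xi)$, and the central moment is an average of $\lambda^{-2}TR_P(\delta_{\lambda^{-1}}\xi)$, against a positive weight. Only near-constancy of first derivatives on $|\eta|\le r/A$ is then needed; this comes from homogeneity and uniform continuity of $\overline\nabla Q^{(\beta)}$ on the unit sphere, with $C_3=A^2$. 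By nondegeneracy, $(G_1,\dots,G_{2n},\tfrac{\lambda r}{2}G_{2n+2})$ is bounded below by $c\lambda^{-1}r^{\beta-1}$, which gives $|G|\ge c\lambda^{-2}r^{\beta-2}$. Equivalently, split the increment into a function of $z$ alone, which pairs exactly to zero with the $t$-odd weight, plus a vertical increment controlled by $TR$ alone.

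Second, your bound $N\lesssim r^{2\beta}(s^2|\widetilde\nabla Q^{(\beta)}|^2+s^4)$, and with it $N/|G|\lesssim r^\beta s$, also needs bounded second derivatives of $Q^{(\beta)}$ near the unit sphere. First-derivative bounds give only $N\lesssim\lambda^{-2}r^{2\beta-2}$ plus tails, hence $N/|G|\lesssim r^\beta$. That is not $o(1)$ when $r$ is comparable to $\varepsilon_1/20$. Moreover, the nondegeneracy of $Q^{(\beta)}$ dominates the $W$-terms only where $\omega_W(r)$ is small. So Case 2 must be restricted to $r\le\delta$, where it yields $\limsup_{\lambda\to\infty}N/|G|\le C\delta^\beta$. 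The set $d_{\theta_0}(P,\operatorname{Crit}(R))\ge\delta$ is handled by your Case 1 argument, since the gradient is bounded below on this compact set. There, use the horizontal moments if $|\nabla_{\mathbb H}R_P(0)|\ge M/\lambda$ with $M$ large, and the central moment otherwise. One then lets $\lambda\to\infty$ and then $\delta\to0$, as the paper does.

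Two smaller points:
\begin{enumerate}
\item Differentiating $\xi\mapsto Q^{(\beta)}(\hat\eta\circ\xi)$ at $0$ gives left-invariant derivatives at $\hat\eta$, not right-invariant ones. This is harmless, because at $|\hat\eta|=1$ the weighted quantities from either family are comparable; in the paper the right-invariant fields come from the integration by parts.
\item $\mathcal F_0^{-1}\circ\mathcal F_P$ is not a left translation up to a unitary map and a harmless conformal factor, since it sends $\infty$ to a point at distance of order $r^{-1}$. You therefore need to compare the $(\widetilde\nabla,rT)$-derivatives of $Q^{(\beta)}\circ\Psi$ at $0$ with those of $Q^{(\beta)}$ at $\eta$.
\end{enumerate}
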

\begin{proof}
	We first consider points $P$ close to
$\operatorname{Crit}(R)$ but at a distance at least
$C_3/\lam$ from it. The estimate away from a fixed
neighborhood of the critical set follows from Taylor
expansion and the symmetry of the integral kernel.

For $s>0$, set $D_{s}:=(\widetilde{\nabla},sT)$.
In the Cayley coordinates centered at $P_0$,
for $A>20/\var_1$, $0<|\xi|<\var_1/2$, and $d(\eta,\xi)<|\xi|/A$, homogeneity gives
\begin{align} D_{|\xi|} R(\eta)  =&\, D_{|\xi|} Q^{(\beta)}(\eta)+o(|\xi|^{\beta-1})\notag \\
 =&\, |\xi|^{\beta-1}( \widetilde{\nabla} Q^{(\beta)}(\delta_{|\xi|^{-1}} \eta),TQ^{(\beta)}(\delta_{|\xi|^{-1}} \eta))+o(|\xi|^{\beta-1})\notag \\
 =&\, |\xi|^{\beta-1}( \widetilde{\nabla} Q^{(\beta)}(\delta_{|\xi|^{-1}} \xi)+o_A(1),TQ^{(\beta)}(\delta_{|\xi|^{-1}} \xi)+o_A(1))+o(|\xi|^{\beta-1})\notag \\
 =&\, D_{|\xi|}Q^{(\beta)}(\xi)+o_A(1)|\xi|^{\beta-1}+o(|\xi|^{\beta-1}),\label{6.16 in JDE}
\end{align}
where we have used $d(\delta_{|\xi|^{-1}} \eta,\delta_{|\xi|^{-1}} \xi)<1 / A$ and $|\delta_{|\xi|^{-1}} \xi|=1$ in the third equality, and $o_A(1)$ denotes some quantity which tends to 0 as $A$ tends to $\infty$.

Fix $P$ with
$r:=d_{\theta_0}(P,P_0)\leq\var_1/10$,
and use the unscaled chart $\mathcal F_P$
introduced above. After identifying $P$ with the
north pole, write   $R_P=R\circ\mathcal F_P$. Then $\mathcal F_P(0)=P$ and
\[
\varphi_{P,\lam}=\mathcal F_P\circ \delta_{\lam^{-1}} \circ \mathcal F_P^{-1}.
\]
We use Heisenberg coordinates
$\xi=(z,t)=(x,y,t)$, where $x,y\in\R^n$.
For the moment integrals, write
$\hat\zeta_1,\ldots,\hat\zeta_{2n+2}$
for the real coordinates of $\zeta$, ordered as
$$
(\operatorname{Re} \zeta_1, \ldots, \operatorname{Re} \zeta_n, \operatorname{Im} \zeta_1, \ldots, \operatorname{Im} \zeta_n, \operatorname{Re} \zeta_{n+1}, \operatorname{Im} \zeta_{n+1}).
$$
By the Cayley formula \eqref{Cayley},
for $i=1,\ldots,n$,
\begin{gather*}
\hat{\zeta}_i=\frac{2x_i(1+|z|^2)-2y_it}{t^2+(1+|z|^2)^2}, \quad \hat{\zeta}_{i+n}=\frac{2y_i(1+|z|^2)+2x_it}{t^2+(1+|z|^2)^2},\\
\hat{\zeta}_{2n+1}=\frac{1-|z|^4-t^2}{t^2+(1+|z|^2)^2},\quad \hat{\zeta}_{2n+2}=\frac{2t}{t^2+(1+|z|^2)^2}.
\end{gather*}

Let $\mathcal F_0$ be the fixed chart at $P_0$, set $\Psi=\mathcal F_0^{-1}\circ \mathcal F_P$, and define $\widehat Q_P^{(\beta)}=Q^{(\beta)}\circ\Psi$. The notation $\widehat Q_P^{(\beta)}$
distinguishes this pullback from the homogeneous
term in the coordinates centered at $P_0$.
For $D_r=(\widetilde\nabla,rT)$,
the corresponding derivative transformation
and its inverse are uniformly bounded for
small $r$. Indeed, the difference between the
left- and right-invariant horizontal fields
contributes $O(r)T$ near $\Psi(0)$, while the
central-to-horizontal block is multiplied by $r$.
The homogeneous derivative bounds consequently yield
\be\label{Dr}
c r^{\beta-1} \leq |D_r \widehat Q_P^{(\beta)}(0)| \leq C r^{\beta-1}.
\ee
Moreover, \eqref{6.16 in JDE} and the smooth
dependence of the derivative transformations give
$$
D_r R_P(\eta)=D_r \widehat Q_P^{(\beta)}(0)+(o_A(1)+o_r(1)) r^{\beta-1},\quad |\eta|\leq r/A,
$$
where $o_r(1)\to0$ as $r\to0$.

Fix a sufficiently small localization radius $0<\var<\var_1/10$. We next obtain a lower bound for $G_\lam(P)$ in \eqref{0.10}.
We first consider the horizontal components
$\hat\zeta_i$, $1\leq i\leq2n$.
For $1\leq i\leq n$, the Cayley formula and
integration by parts give
\begin{align*}
&\,\dashint_{\S^{2n+1}}[R \circ \varphi_{P, \lam }(\zeta)-R(P)] \hat{\zeta}_i\,\d\si  \\
= &\, |\S^{2n+1}|^{-1} \int_{|\xi| \leq \var  \lam }[R_P(\delta_{\lam ^{-1}} \xi)-R_P(0)] \frac{2x_i(1+|z|^2)-2y_it}{t^2+(1+|z|^2)^2} \frac{2^{2n+1} \, \d z \,\d t}{(t^2+(1+|z|^2)^2)^{n+1}}+O(C(\var ) \lam ^{-2n-2}) \\
= & \,\frac{2^{2n}}{n+1}|\S^{2n+1}|^{-1} \int_{|\xi| \leq \var  \lam } \lam ^{-1} \widetilde{\nabla}_i R_P(\delta_{\lam ^{-1}} \xi) \frac{\d z\,\d t}{(t^2+(1+|z|^2)^2)^{n+1}}+O(C(\var ) \lam ^{-2n-2}) \\
= &\, \frac{2^{2n}}{n+1}|\S^{2n+1}|^{-1} \int_{|\xi| \leq  A} \lam ^{-1} \widetilde{\nabla}_i R_P(\delta_{\lam ^{-1}} \xi) \frac{\d z\,\d t}{(t^2+(1+|z|^2)^2)^{n+1}} \\
& \,+\frac{2^{2n}}{n+1}|\S^{2n+1}|^{-1} \int_{A \leq |\xi| \leq  \var  \lam } \lam ^{-1} \widetilde{\nabla}_i R_P(\delta_{\lam ^{-1}} \xi) \frac{\d z\,\d t}{(t^2+(1+|z|^2)^2)^{n+1}}+O(C(\var ) \lam ^{-2n-2}).
\end{align*}
The same identity holds for $n+1\leq i\leq2n$, by integration by parts with the corresponding right-invariant fields.

For the central component, integration by parts
in $t$ gives
\begin{align*}
 & \,\dashint_{\S^{2n+1}}[R \circ \varphi_{P, \lam }(\zeta)-R(P)] \hat{\zeta}_{2n+2}\,\d\si  \\
= &\, \frac{2^{2n+1}}{n+1}|\S^{2n+1}|^{-1} \int_{|\xi| \leq  A} \lam ^{-2} T R_P(\delta_{\lam ^{-1}} \xi) \frac{\d z\,\d t}{(t^2+(1+|z|^2)^2)^{n+1}} \\
&\, +\frac{2^{2n+1}}{n+1}|\S^{2n+1}|^{-1} \int_{A \leq |\xi| \leq  \var  \lam } \lam ^{-2} T R_P(\delta_{\lam ^{-1}} \xi) \frac{\d z\,\d t}{(t^2+(1+|z|^2)^2)^{n+1}}+O(C(\var ) \lam ^{-2n-2}).
\end{align*}

Set $$
G_i=\dashint_{\S^{2n+1}}[R \circ \varphi_{P, \lam }(\zeta)-R(P)] \hat \zeta_i\,\d\si  \quad  \text{ and } \quad  \mathcal{I}_r=\Big(G_1, \cdots, G_{2n}, \frac{\lam r}{2}G_{2n+2}\Big).
$$
Choose $C_3=A^2$, with $A$ sufficiently large.
For $r\geq C_3/\lam$, the preceding integral
identities and \eqref{6.16 in JDE} show 
\begin{align*}
\mathcal{I}_r
=&\,  \frac{2^{2n}}{n+1}|\S^{2n+1}|^{-1} \int_{|\xi| \leq A} \lam ^{-1}\{D_{r} \widehat Q_P^{(\beta)}(0)+(o_A(1)+o(1))r^{\beta-1}\} \frac{\d z\,\d t}{(t^2+(1+|z|^2)^2)^{n+1}} \\
&\,  +O(\lam ^{-\beta}) \int_{|\xi| \geq A}|\xi|^{\beta-1} \frac{\d z\,\d t}{(t^2+(1+|z|^2)^2)^{n+1}} \\
&\,  +O(\lam ^{-1}r^{\beta-1}) \int_{|\xi| \geq A} \frac{\d z\,\d t}{(t^2+(1+|z|^2)^2)^{n+1}} \\
&\,  +o(r^{\beta-1} \lam ^{-1})+O(C(\var ) \lam ^{-2n-1}r) \\
=&\,  \frac{2^{2n}}{n+1}\lam ^{-1} D_r \widehat Q_P^{(\beta)}(0) |\S^{2n+1}|^{-1} \int_{\Hn} \frac{\d z\,\d t}{(t^2+(1+|z|^2)^2)^{n+1}}+o_A(1) \lam ^{-1}r^{\beta-1} \\
&\,  +o_r(1) \lam ^{-1}r^{\beta-1}+O(C(\var ) \lam ^{-2n-1}r).
\end{align*}
Moreover,
$$
\frac{r \lam ^{-2n-1}}{\lam ^{-1} r^{\beta-1}}=\lam ^{2-Q} r^{2-\beta} \leq C \lam ^{\beta-Q} \to  0 .
$$
Thus, by \eqref{Dr}, first fixing $A$ sufficiently large and then restricting $r$ to a sufficiently small neighborhood, we obtain, for all sufficiently large $\lam $,
\begin{equation}\label{eq:scaled-moment-lower}
|\mathcal{I}_r| \geq c \lam ^{-1} r^{\beta-1}.
\end{equation}
Since $|\mathcal I_r|\leq C\lam r|G_\lam(P)|$, estimate \eqref{eq:scaled-moment-lower} gives
\be\label{Gr}
\Big|\dashint_{\S^{2n+1}}[R \circ \varphi_{P, \lam }(\zeta)-R(P)] \zeta\,\d\si \Big| \geq c \lam ^{-2} r^{\beta-2} .
\ee

We now estimate the squared $L^2$ error.
Using the local expansion of $R$ and the bound
$|R_P(\eta)-R_P(0)|\leq C|\eta|(r+|\eta|)^{\beta-1}$,
we obtain
\begin{align}
\dashint_{\S^{2n+1}}|R \circ \varphi_{P, \lam }(\zeta)-R(P)|^2\,\d\si  \leq &\, C \int_{|\xi| \leq \var \lam }|R_P(\delta_{\lam ^{-1}} \xi)-R_P(0)|^2 \frac{\d z\,\d t}{(t^2+(1+|z|^2)^2)^{n+1}}\notag\\
&\,+C(\var) \lam ^{-2n-2} \notag\\
\leq &\, C \lam ^{-2} r^{2 \beta-2} \int_{\Hn }|\xi|^2 \frac{\d z\,\d t}{(t^2+(1+|z|^2)^2)^{n+1}}\notag \\
& \,+C \lam ^{-2 \beta} \int_{|\xi| \leq \var \lam }|\xi|^{2 \beta} \frac{\d z\,\d t}{(t^2+(1+|z|^2)^2)^{n+1}}+C(\var) \lam ^{-2n-2} \notag\\
\leq &\, C \lam ^{-2} r^{2 \beta-2}+C(\var) \lam ^{-\min \{2 \beta, 2n+2\}}(1+\log \lam ) \label{Nr}.
\end{align}

Combining \eqref{Gr} and \eqref{Nr}, and using $r \geq A^2 / \lam $ and $\beta \geq 2$, gives
$$
\mathcal{G}:=\frac{\dashint_{\S^{2n+1}}|R \circ \varphi_{P, \lam }(\zeta)-R(P)|^2\,\d\si }{|\dashint_{\S^{2n+1}}[R \circ \varphi_{P, \lam }(\zeta)-R(P)] \zeta\,\d\si |} \leq C r^\beta+C(\var) \lam ^{-\min \{\beta, Q-\beta\}}(1+\log \lam ) .
$$
Fix $\delta>0$ small. The ratio estimate obtained from \eqref{Gr} and \eqref{Nr} gives $\limsup_{\lam \to\infty}\mathcal G\le C\delta^\beta$
uniformly for $C_3/\lam \le r\le\delta$. Outside the $\delta$-neighborhood of the critical set, the full gradient is bounded away from zero. Taylor expansion and kernel symmetry then give $\mathcal{G} \to  0$ uniformly. Here, we use the horizontal component when $|\nabla_\H R_P(0)| \geq M / \lam $ and the central component otherwise, where $M$ is fixed and sufficiently large. Letting first $\lam  \to  \infty$ and then $\delta \to  0$, we obtain \eqref{eq:lem condition for not degree}. 
\end{proof}

We next consider conformal centers within a distance of order $\lam^{-1}$ from the critical set. The nonvanishing condition on the homogeneous terms provides a lower bound for the averaged map in this region.
\begin{lem}\label{lem:3.8}
	Suppose $R\in\mathcal A_\al (\S^{2n+1})$ satisfies for some constant $b_0>0$, $R(P)\geq b_0$ for all $P\in\S^{2n+1}$, and there exists some constant $\var_1\in (0,1)$, such that for each critical point $P_0\in\S^{2n+1}$ of $R$, there exists some $\beta=\beta(P_0)\in(2,Q)$ such that in some Cayley coordinate system $\xi=(x,y,t)$ centered at $P_0$,
	\[
R(\xi)=R(0)+Q^{(\beta)}(\xi)+W(\xi)\quad \text{ for }|\xi|<\var_1,
	\]
	where $Q^{(\beta)}(\delta_{\lam}\xi)=\lam ^{\beta}Q^{(\beta)}(\xi)$ for any $\lam >0$, $\xi\in\Hn $ and
	\[
	c_{\mathrm{nd}}|\xi|^{\beta-1}\leq |\widetilde{\nabla} Q^{(\beta)}(\xi)|+|\xi||TQ^{(\beta)}(\xi)| \leq C_{\mathrm{nd}}|\xi|^{\beta-1},
	\]
	for some positive constants $c_{\mathrm{nd}},C_{\mathrm{nd}}$. The remainder $W$ satisfies
\[
 W(\xi)=o(|\xi|^\beta),\quad
 |\widetilde\nabla W(\xi)|=o(|\xi|^{\beta-1}),\quad
 |TW(\xi)|=o(|\xi|^{\beta-2}).
\] Suppose also that for some constant $d>0$, $|\operatorname{grad}_{g_{\theta_0}}R(P)|\geq d$ for all $P\in\S^{2n+1}$ with $d_{\theta_0}(P,\operatorname{Crit}(R))\allowbreak\ge \var_1/20$, and
	\[
	\begin{pmatrix}
	\displaystyle\int_{\Hn } \overline{\nabla} Q^{(\beta)}(\eta\circ\xi)\frac{\d z\,\d t}{(t^2+(1+|z|^2)^2)^{n+1}}\\[3mm]
	\displaystyle\int_{\Hn } Q^{(\beta)}(\eta\circ\xi)\frac{1-|z|^4-t^2}{t^2+(1+|z|^2)^2}
    \frac{\d z\,\d t}{(t^2+(1+|z|^2)^2)^{n+1}}
	\end{pmatrix} \neq 0, \quad \forall\,  \eta\in \Hn .
	\]
	Then \eqref{eq:lem condition for not degree} holds. In particular, if $R$ is not identically equal to a constant, we have
	\[
	\dashint_{\S^{2n+1}}R\circ\varphi_{P,\lam}(\zeta)\zeta\,\d\si \neq 0,
	\]
	for large $\lam$.
\end{lem}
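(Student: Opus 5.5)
Lemma~\ref{lem condition for not degree} already gives \eqref{eq:lem condition for not degree} uniformly for $d_{\theta_0}(P,\operatorname{Crit}(R))\ge C_3/\lam$. Since $\operatorname{Crit}(R)$ is finite, it remains to treat centers $P$ with $d_{\theta_0}(P,P_0)<C_3/\lam$ for some critical point $P_0$. In the fixed chart at $P_0$ we write $P=\mathcal F_0(\delta_{\lam^{-1}}\eta_\lam)$ with $|\eta_\lam|\le C$. We argue by contradiction: suppose there are $\lam_k\to\infty$ and such $P_k$ violating the estimate. After passing to a subsequence, $\eta_k\to\eta\in\Hn$.

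\textbf{Step 1: upper bound.} At the scale $\lam^{-1}$, the chart $\mathcal F_P$ composed with $\delta_{\lam^{-1}}$ agrees with $\mathcal F_0\circ\tau_{\delta_{\lam^{-1}}\eta_\lam}\circ\delta_{\lam^{-1}}$ up to errors that vanish after rescaling; this uses the description of $\Psi=\mathcal F_0^{-1}\circ\mathcal F_P$ from the previous proof, which is close to a left translation for $P$ near $P_0$. Hence $R\circ\varphi_{P,\lam}(\mathcal F_P(\xi))-R(P)=\lam^{-\beta}\big[Q^{(\beta)}(\eta_\lam\circ\xi)-Q^{(\beta)}(\eta_\lam)\big]+o(\lam^{-\beta}|\xi|_*^\beta)$ on $|\xi|\le\var\lam$, where $|\xi|_*=1+|\xi|$. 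Splitting the $L^2$ integral into $|\xi|\le\var\lam$ and its complement, exactly as in \eqref{Nr}, gives
\[
\|R\circ\varphi_{P,\lam}-R(P)\|_{L^2}^2\le C\lam^{-2\beta}(1+\log\lam)+C(\var)\lam^{-2n-2}.
\]

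\textbf{Step 2: lower bound.} We rescale $G_\lam(P)$ by $\lam^{\beta}$ and pass to the limit. The weights $\hat\zeta_i$, $i\le 2n$, and $\hat\zeta_{2n+2}$ are handled by the same integration by parts as in the previous proof; they become $\lam^{-1}\widetilde\nabla R_P(\delta_{\lam^{-1}}\xi)$ and $\lam^{-2}TR_P(\delta_{\lam^{-1}}\xi)$ against the kernel $(t^2+(1+|z|^2)^2)^{-n-1}$. By homogeneity, these are $\lam^{-\beta}\widetilde\nabla Q^{(\beta)}(\eta_\lam\circ\xi)$ and $\lam^{-\beta}TQ^{(\beta)}(\eta_\lam\circ\xi)$, plus $o(\lam^{-\beta})$ errors. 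The $\hat\zeta_{2n+1}$ component is kept unintegrated, with the kernel $\frac{1-|z|^4-t^2}{t^2+(1+|z|^2)^2}$. Here the constant $R(P)$ drops out because this weight has zero mean; note that $Q^{(\beta)}(\eta_\lam)$ also contributes nothing by this zero mean. Dominated convergence is justified by $|\overline\nabla Q^{(\beta)}(\eta\circ\xi)|\le C|\xi|_*^{\beta-1}$, $|Q^{(\beta)}|\le C|\xi|_*^\beta$ and $\beta<Q$. The tail and remainder contributions are $o(\lam^{-\beta})$ by the remainder hypothesis on $W$ and by $\beta<Q$. We conclude that $\lam^\beta G_\lam(P_k)$ converges to a fixed positive multiple of the vector in the hypothesis, evaluated at $\eta$. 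That vector is nonzero, so $|G_\lam(P_k)|\ge c\lam_k^{-\beta}$.

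\textbf{Step 3: conclusion.} Combining the two steps,
\[
\frac{\|R\circ\varphi_{P,\lam}-R(P)\|_{L^2}^2}{|G_\lam(P)|}\le C\lam^{-\beta}(1+\log\lam)+C\lam^{\beta-Q}\to0,
\]
using $2\le\beta<Q$ (note $2n+2=Q$). This contradicts the choice of $P_k$ and proves \eqref{eq:lem condition for not degree} near the critical points. Together with Lemma~\ref{lem condition for not degree}, the estimate holds on all of $\S^{2n+1}$. Nonvanishing of $G_\lam$ then follows. Away from the critical points it comes from the lower bound \eqref{Gr} and the gradient bound $d$; near them it is Step~2. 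Observe also that $G_\lam(P)=\dashint(R\circ\varphi_{P,\lam}-R(P))\zeta\,\d\si$, since $\dashint\zeta\,\d\si=0$; so the estimate would otherwise force $R\circ\varphi_{P,\lam}\equiv R(P)$, which is impossible since $R$ is nonconstant.

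\textbf{Main obstacle.} The main obstacle is the chart comparison in Step~2. The horizontal components involve right-invariant derivatives evaluated at $\eta_\lam\circ\xi$. We must check that $\Psi$ changes them only by $o(1)$ after the scaling $D_{\lam^{-1}}=(\widetilde\nabla,\lam^{-1}T)$. The central-to-horizontal mixing enters with a factor $\lam^{-1}|\eta|\cdot\lam^{-\beta+2}$, which is of lower order. I would verify this using the explicit unitary/Cayley formulas for $\Psi$ near $0$.
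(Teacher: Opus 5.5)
\textbf{The gap is in Step~2, at the point you flag as the main obstacle: the central-to-horizontal mixing is not of lower order.}

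Right-invariant fields commute with right translations, not with left ones. For $\widetilde P=(\widetilde P_0,\widetilde P_{n+1})$ one has the exact identities
\[
\overline X_j\big[f(\widetilde P\circ\omega)\big]=\big(\overline X_jf+4\operatorname{Im}\widetilde P_{0,j}\,Tf\big)(\widetilde P\circ\omega),\qquad
\overline Y_j\big[f(\widetilde P\circ\omega)\big]=\big(\overline Y_jf-4\operatorname{Re}\widetilde P_{0,j}\,Tf\big)(\widetilde P\circ\omega).
\]
Take $\widetilde P=\delta_{\lam^{-1}}\eta_\lam$ with $\eta_\lam=(a_\lam,b_\lam,s_\lam)$.
\begin{itemize}
\item The mixing coefficient is $4\lam^{-1}b_{\lam,j}$ (resp.\ $-4\lam^{-1}a_{\lam,j}$).
\item $TQ^{(\beta)}$ at scale $\lam^{-1}$ has size $\lam^{2-\beta}$.
\item Your own count, $\lam^{-1}|\eta|\cdot\lam^{2-\beta}=|\eta|\lam^{1-\beta}$, is exactly the size of $\widetilde\nabla Q^{(\beta)}$ at that scale.
\item In the frame $D_{\lam^{-1}}=(\widetilde\nabla,\lam^{-1}T)$ the mixing block is $4(b_\lam,-a_\lam)=O(|\eta|)$, not $o(1)$.
\end{itemize}
Write $w_0=(t^2+(1+|z|^2)^2)^{-n-1}$. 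After the prefactor $\lam^{-1}$, the horizontal components of $\lam^\beta G_\lam(P_k)$ converge to positive multiples of
\[
\int_{\Hn}\big(\overline X_iQ^{(\beta)}+4b_iTQ^{(\beta)}\big)(\eta\circ\xi)\,w_0\,\d\xi
\quad\text{and}\quad
\int_{\Hn}\big(\overline Y_iQ^{(\beta)}-4a_iTQ^{(\beta)}\big)(\eta\circ\xi)\,w_0\,\d\xi .
\]
These are moments of $D_\xi[Q^{(\beta)}(\eta\circ\xi)]$, not of $(\overline\nabla Q^{(\beta)})(\eta\circ\xi)$. This is exactly the distinction the paper draws in its introduction. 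So your claim that $\lam^\beta G_\lam(P_k)$ converges to a positive multiple of the hypothesis vector at $\eta$ is false whenever the horizontal part of $\eta$ is nonzero. As written, the lower bound $|G_\lam(P_k)|\ge c\lam_k^{-\beta}$ is not justified.

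\textbf{The repair is short but necessary.} The true limit is the hypothesis vector multiplied by a matrix, depending continuously on $\eta$, that is upper triangular with positive diagonal entries:
\begin{itemize}
\item the only off-diagonal entries are $\pm4a$, $\pm4b$, multiplying the central entry;
\item the central component still converges to a positive multiple of $\int TQ^{(\beta)}(\eta\circ\xi)\,w_0\,\d\xi$, since $T$ is central;
\item the radial component involves no derivative, so it is unchanged.
\end{itemize}
This matrix is invertible, so the limit is nonzero for every $\eta$ and your compactness argument closes. The paper records exactly this: the transformation from the derivative moments to the tangential components is triangular and uniformly invertible for bounded $\eta_\lam$.

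\textbf{How the paper avoids your $C^1$ chart comparison.} It works in the fixed chart at $P_0$ and writes $\widetilde\varphi=\delta_{\lam^{-1}}(V\circ E)$ with $V=\eta_\lam\circ\xi$. It controls $E$ only at the level of values (Lemma~\ref{Prop A1}). It then replaces $Q^{(\beta)}(\widetilde\varphi)$ by $\lam^{-\beta}Q^{(\beta)}(\eta_\lam\circ\xi)$ using only the first-derivative bound on $Q^{(\beta)}$, and integrates by parts on that explicit function. The mixing term then appears directly from the identity above, with no derivatives of $\Psi$ needed.

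Your route, by contrast, still requires $C^1$ control of $\Psi$ in the scaled right-invariant frame. It also requires choosing $U_P$ close to $U_{P_0}$, which is legitimate because $\varphi_{P,\lam}$ is independent of that choice.

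Steps~1 and~3, and your final nonvanishing argument, are fine.
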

\begin{proof}
By Lemma~\ref{lem condition for not degree}, it remains to
establish \eqref{eq:lem condition for not degree} when
$d_{\theta_0}(P,P_0)\leq C_3/\lam$ for some
$P_0\in\operatorname{Crit}(R)$. We estimate the averaged
map from below and the squared $L^2(\S^{2n+1})$ error
from above.

After a unitary change of coordinates, we may take $P_0$
to be the north pole. Let $F$ be the inverse Cayley
transform with $F(0)=P_0$, and set
$R_0=R\circ F$ and
$\widetilde\varphi=F^{-1}\circ\varphi_{P,\lam}\circ F$.
The local expansion becomes
$R_0(\xi)=R(P_0)+Q^{(\beta)}(\xi)+W(\xi)$.

Write $\varphi=\varphi_{P,\lam}$, $\zeta=F(\xi)$, and $\varphi(\zeta)=(\varphi_1(\zeta),\ldots,\varphi_{n+1}(\zeta))$. The components of $\widetilde\varphi(\xi)=F^{-1}(\varphi(\zeta))$ are given by the inverse of \eqref{Cayley}; the arguments are suppressed below:
    \[
    \widetilde{\varphi}_i=\frac{\varphi_i}{1+\varphi_{n+1}}\quad  \text{ for } \,i=1, \ldots, n, \quad \text{ and } \quad \widetilde{\varphi}_{n+1}=\operatorname{Re}\Big(\mathrm{i}\frac{1-\varphi_{n+1}}{1+\varphi_{n+1}}\Big),
    \]
    and
        \begin{align*}
\varphi_i=&\, \frac{1}{\lam }\cdot\frac{(1+\langle \zeta,P\rangle )\lam ^2}{\lam ^2(1+\langle \zeta,P\rangle)+(1-\langle \zeta,P\rangle)}\cdot V_i,\\
V_i=&\, \frac{2\lam (\zeta_i-\langle \zeta,P\rangle P_i)+[\lam ^2(1+\langle \zeta,P\rangle)-(1-\langle \zeta,P\rangle)]P_i}{(1+\langle \zeta,P\rangle)\lam }.
        \end{align*}
    Write $\widetilde\varphi=(\widetilde\varphi_0,\widetilde\varphi_{n+1})$, where $\widetilde\varphi_0=(\widetilde\varphi_1,\ldots,\widetilde\varphi_n)\in\C^n$ and $\widetilde\varphi_{n+1}\in\R$. Here $\xi=(z,t)=(x,y,t)$ and $P=(P_1,\ldots,P_{n+1})$. The quantities $V_i$ in the preceding formula are auxiliary scalar expressions, not the components of the Heisenberg point $V$ introduced below.

Fix $\var>0$ sufficiently small, and then take
$\lam$ large depending on $\var$. When
$|\xi|\leq\var\lam$ and
$d_{\theta_0}(P,P_0)\leq C_3/\lam$,
we claim the following estimates:
  \begin{gather}  
\sum^n_{i=1}|P_i|  \leq  C \lam ^{-1}, \quad|1-P_{n+1}| \leq C \lam ^{-2},\label{6.17 in JDE} \\
|1+\langle \zeta, P \rangle|  \geq  \var ^{-2} \lam ^{-2},\label{6.18 in JDE} \\
\Big|\frac{1}{1+\langle \zeta, P\rangle}-\frac{1}{1+\zeta_{n+1}}\Big|  \leq  o_\var (1)\Big|\frac{1}{1+\zeta_{n+1}}\Big|,\label{6.19 in JDE} \\
\sum_{i=1}^n\Big|\frac{\langle \zeta, P\rangle P_i}{1+\langle \zeta, P\rangle}\Big|  \leq  C \lam ^{-1}(|t+\mathrm{i}(1+|z|^2)|), \label{6.20 in JDE}\\
\frac{1}{2}  \leq  \Big|\frac{(1+\langle \zeta, P\rangle) \lam ^2}{\lam ^2(1+\langle \zeta, P\rangle)+(1-\langle \zeta, P\rangle)}\Big| \leq 2, \label{6.21 in JDE}\\
1  \leq  |1+\varphi_{n+1}(\zeta)| \leq 3, \label{6.22 in JDE}
  \end{gather}
where $C>0$ denotes some constant independent of $\var >0$ and $\lam $, $o_{\var }(1)$ denotes some quantity which tends to 0 as $\var $ tends to $0^{+}$. For $P_0=(0,\ldots,0,1)$, \eqref{eq:contact-distance} gives $|1-P_{n+1}|=\tfrac12d_{\theta_0}(P,P_0)^2$, and $\sum_{j=1}^n|P_j|^2=1-|P_{n+1}|^2\le2|1-P_{n+1}|$. The two coordinate bounds give \eqref{6.17 in JDE}. For sufficiently small $\var$, depending on $C_3$, the contact-distance formula gives \eqref{6.18 in JDE}. To prove \eqref{6.19 in JDE}, we estimate
\begin{align*}
\, \Big|\frac{1}{1+\langle \zeta, P\rangle}-\frac{1}{1+\zeta_{n+1}}\Big|
=&\,\Big|\frac{\langle \zeta, P\rangle-\zeta_{n+1}}{(1+\langle \zeta, P\rangle)( 1+\zeta_{n+1}) }\Big| \\
\leq&\, \sum^n_{i=1}\Big|\frac{ \zeta_i \bar{P}_i}{(1+\langle \zeta, P\rangle)( 1+\zeta_{n+1}) }\Big|+\Big|\frac{\zeta_{n+1}(\bar{P}_{n+1}-1)}{(1+\langle \zeta, P\rangle)( 1+\zeta_{n+1}) }\Big| \\
\leq &\, C \lam ^{-1}|z|\Big|\frac{1}{1+\langle \zeta, P\rangle}\Big|+C \lam ^{-2}\Big|\frac{\zeta_{n+1}}{(1+\langle \zeta, P\rangle)( 1+\zeta_{n+1}) }\Big| \\
 \leq &\, o_{\var }(1)\Big|\frac{1}{1+\langle \zeta, P\rangle}\Big|+o_{\var }(1)\Big|\frac{1}{1+\zeta_{n+1}}\Big|.
\end{align*}

Combining \eqref{6.17 in JDE} and \eqref{6.19 in JDE},
we obtain
\[
\sum_{i=1}^n\Big|\frac{\langle \zeta, P\rangle P_i}{1+\langle \zeta, P\rangle}\Big| \leq C \lam ^{-1}\Big|\frac{1}{1+\zeta_{n+1}}\Big| \leq C \lam ^{-1}|t+\mathrm{i}(1+|z|^2)| .
\]
This proves \eqref{6.20 in JDE}. Estimate
\eqref{6.18 in JDE} implies \eqref{6.21 in JDE},
and substitution into the formula for
$\varphi_{n+1}$ gives \eqref{6.22 in JDE}.

The integral nonvanishing condition and continuity give $\var_6>0$ such that, for every $\eta\in\Hn$ with $|\eta|\leq CC_3$,
\[
\Big|\int_{\Hn } \frac{(\overline{\nabla}Q^{(\beta)})(\eta\circ\xi)}{(t^2+(1+|z|^2)^2)^{n+1}}\,\d z\,\d t\Big| +\Big|\int_{\Hn } Q^{(\beta)}(\eta \circ\xi) \frac{1-|z|^4-t^2}{t^2+(1+|z|^2)^2} \frac{\d z\,\d t}{(t^2+(1+|z|^2)^2)^{n+1}}\Big| \geq  \var _6.
\]

We now estimate $G_\lam(P)$ in \eqref{0.10} from below.
\begin{align*}
&\, \dashint_{\S^{2n+1}}  R \circ \varphi_{P, \lam }(\zeta) \zeta\,\d\si  \\
= &\, \dashint_{\S^{2n+1}} [R \circ \varphi_{P, \lam }(\zeta)-R(P_0)] \zeta\,\d\si  \\
= &\, \frac{2^{2n+1}}{|\S^{2n+1}|} \int_{|\xi| \leq \var  \lam }[R \circ \varphi_{P, \lam }(\zeta)-R(P_0)] \zeta \frac{\d z\,\d t}{(t^2+(1+|z|^2)^2)^{n+1}}+O(C(\var ) \lam ^{-2n-2}) \\
= &\, \frac{2^{2n+1}}{|\S^{2n+1}|} \int_{|\xi| \leq  \var  \lam } Q^{(\beta)}(\widetilde{\varphi}) \zeta \frac{\d z\,\d t}{(t^2+(1+|z|^2)^2)^{n+1}} \\
&\,+o_{\var }(1) \int_{|\xi| \leq  \var  \lam }|\widetilde{\varphi}|^\beta \frac{\d z\,\d t}{(t^2+(1+|z|^2)^2)^{n+1}}+O(C(\var ) \lam ^{-2n-2}).
\end{align*}

Estimates \eqref{6.17 in JDE}--\eqref{6.22 in JDE} give
\begin{align*}
\sum^n_{i=1}|\varphi_i|  \leq &\, C \lam ^{-1}  \sum^n_{i=1}\Big( \Big|\frac{\zeta_i}{1+\langle \zeta, P\rangle}\Big|+\Big|\frac{\langle \zeta, P\rangle P_i}{1+\langle \zeta, P\rangle}\Big|+\lam |P_i|+\Big|\frac{P_i}{(1+\langle \zeta, P\rangle) \lam }\Big|\Big)  \\
 \leq  &\, C \lam ^{-1} \sum^n_{i=1}\Big( \Big|\frac{\zeta_i}{1+\zeta_{n+1}}
\Big|+\lam ^{-1}(1+|\xi|^2)+1+\var ^2\Big)  \\
 \leq  &\, C \lam ^{-1}(1+|z|+\lam ^{-1}|\xi|^2).
\end{align*}
The central component satisfies
\[
|1-\varphi_{n+1}| \leq C \lam ^{-2}(1+|\xi|^2).
\]
The horizontal and central component bounds, together with \eqref{6.22 in JDE}, give
\[
|\widetilde{\varphi}| \leq C\Big(\sum^n_{i=1}|\varphi_i|+|1-\varphi_{n+1}|^{1/2}\Big)\leq C\lam ^{-1}(1+|\xi|+\lam ^{-1}|\xi|^2).
\]
Since $\beta<Q$, integration against the bubble weight yields
\[
\int_{|\xi| \leq \var  \lam } \frac{|\widetilde{\varphi}|^\beta\, \d z\,\d t}{(t^2+(1+|z|^2)^2)^{n+1}}  \leq C \lam ^{-\beta} \int_{|\xi| \leq  \var  \lam } \frac{( 1+|\xi|+\lam ^{-1}|\xi|^2) ^\beta\,\d z\,\d t}{(t^2+(1+|z|^2)^2)^{n+1}} \leq C \lam ^{-\beta}.
\]

To compare $\widetilde\varphi$ with a translated dilation, write
$$
\widetilde{\varphi}=\delta_{\lam ^{-1}}(V\circ E),
$$
where
$$V=\delta_\lam  \widetilde{P}\circ  \xi=(z+ \lam  \widetilde{P}_0,t+\lam ^2 \widetilde{P}_{n+1}-2\lam  \operatorname{Im}(z\bar{\widetilde{P}}_0)),$$ for
$\widetilde{P}:=F^{-1}(P):=(\widetilde{P}_0,\widetilde{P}_{n+1})=(\widetilde{P}_1,\ldots,\widetilde{P}_{n+1})$
and, writing $E=(E_1,E_2)\in\C^n\times\R$,
    \begin{align*}
        E=&\, V^{-1}\circ \delta_\lam  \widetilde{\varphi}\\
        =&\, (\lam \widetilde{\varphi}_0-z-\lam  \widetilde{P}_0,\lam ^2\widetilde{\varphi}_{n+1}-t-\lam ^2 \widetilde{P}_{n+1}+2\lam  \operatorname{Im}(z\bar{\widetilde{P}}_0)-2\lam  \operatorname{Im}((z+\lam  \widetilde{P}_0)\bar{\widetilde{\varphi}}_0)).
    \end{align*}

Lemma~\ref{Prop A1} gives
\be \label{E 1}
    |E_1|=|\lam \widetilde{\varphi}_0-z-\lam  \widetilde{P}_0|\leq o_\var (1)(1+|\xi|)+ C\lam ^{-1}(1+|\xi|^2),
    \ee
and, denoting the uncorrected central coordinate residual by $e_2$,
\be \label{E 2}
     |e_2|=|\lam ^2\widetilde{\varphi}_{n+1}-t-\lam ^2 \widetilde{P}_{n+1}+2\lam \operatorname{Im}(z\bar{\widetilde{P}}_0)|\leq  C\lam ^{-1}(1+|\xi|^3).
\ee
The definition of $E_1$ gives
$$
\lam  \operatorname{Im}((z+\lam  \widetilde{P}_0)\bar{\widetilde{\varphi}}_0)=\operatorname{Im}(z\bar{E}_1)+\lam  \operatorname{Im}(\widetilde{P}_0\bar{E}_1).
$$
Combining \eqref{E 1}, \eqref{E 2}, and \eqref{6.17 in JDE}, we obtain
\[
        |E|\leq C(|E_1|+(|e_2|+|z||E_1|+\lam  |\widetilde{P}_0||E_1|)^{1/2}) \leq (o_\var (1)+C\lam^{-1/2})(1+|\xi|).
\]

The homogeneity of $Q^{(\beta)}$, together with
its first-derivative bounds, now gives
\begin{align*}
Q^{(\beta)}(\widetilde{\varphi})=&\, Q^{(\beta)}(\delta_{\lam ^{-1}}(V\circ E))\\
=&\, \lam ^{-\beta} Q^{(\beta)}(V\circ E)\\
=&\, \lam ^{-\beta} Q^{(\beta)}(V)+\lam ^{-\beta} O(|V|^{\beta-1}+|E|^{\beta-1})|E|\\
=&\,   \lam ^{-\beta}Q^{(\beta)}(z+\lam  \widetilde{P}_0,t+\lam ^2 \widetilde{P}_{n+1}-2\lam \operatorname{Im}(z\bar{\widetilde{P}}_0))\\
&\, +\lam ^{-\beta}O(  (|\xi|+1)^{\beta-1}[(o_\var (1)+C\lam^{-1/2})(1+|\xi|)] \\
& \, +[(o_\var (1)+C\lam^{-1/2})(1+|\xi|)]^\beta) \\
=&\,   \lam ^{-\beta}Q^{(\beta)}(z+\lam  \widetilde{P}_0,t+\lam ^2 \widetilde{P}_{n+1}-2\lam \operatorname{Im}(z\bar{\widetilde{P}}_0))\\
&\, +O\big(\lam^{-\beta}(o_\var(1)+\lam^{-1/2})(1+|\xi|)^\beta\big),
\end{align*}
where we have used the estimate $Q^{(\beta)}(\xi\circ\eta)=Q^{(\beta)}(\xi)+O( |\xi|^{\beta-1}+|\eta|^{\beta-1}) |\eta|$ in the third equality.

Integrating the expansion against the spherical coordinate vector gives
\begin{align*}
&\, |\S^{2n+1}|^{-1} \int_{|\xi| \leq \var  \lam } Q^{(\beta)}(\widetilde{\varphi}) \zeta \frac{\d z\,\d t}{(t^2+(1+|z|^2)^2)^{n+1}} \\
=&\,|\S^{2n+1}|^{-1} \int_{|\xi| \leq \var  \lam }\lam ^{-\beta}Q^{(\beta)}(z+\lam  \widetilde{P}_0,t+\lam ^2 \widetilde{P}_{n+1}-2\lam \operatorname{Im}(z\bar{\widetilde{P}}_0)) \zeta \frac{\d z\,\d t}{(t^2+(1+|z|^2)^2)^{n+1}}\\
&\,+o_{\var }(1) \lam ^{-\beta}+o(\lam ^{-\beta}).
\end{align*}

We compute the leading moments in the real-coordinate order fixed in Lemma~\ref{lem condition for not degree}. Write
$\eta_\lam=\delta_\lam\widetilde P=(a_\lam,b_\lam,s_\lam)$, where
$a_\lam,b_\lam\in\R^n$ and $s_\lam\in\R$.
By \eqref{6.17 in JDE}, the points $\eta_\lam$ remain in a fixed compact subset of $\Hn$.
For each fixed localization radius $\var$, the error in the rescaled moment expansion is
$o_\var(1)+o_\lam(1)$, where the second term tends to zero as $\lam\to\infty$.
Taking first $\lam\to\infty$ and then $\var\to0$ gives a uniform $o(1)$ remainder below.
Integration by parts gives, for $1\leq i\leq n$,
\begin{align*}
&\lam^\beta\dashint_{\S^{2n+1}}
R\circ\varphi_{P,\lam}(\zeta)\hat\zeta_i\,\d\si\\
&\quad=\frac{2^{2n}}{(n+1)|\S^{2n+1}|}
\int_{\Hn}
\big(\overline\nabla_iQ^{(\beta)}+4b_{\lam,i}TQ^{(\beta)}\big)(\eta_\lam\circ\xi)
\frac{\d z\,\d t}{((1+|z|^2)^2+t^2)^{n+1}}+o(1).
\end{align*}
For $n+1\leq i\leq2n$, the same computation gives
\begin{align*}
&\lam^\beta\dashint_{\S^{2n+1}}
R\circ\varphi_{P,\lam}(\zeta)\hat\zeta_i\,\d\si\\
&\quad=\frac{2^{2n}}{(n+1)|\S^{2n+1}|}
\int_{\Hn}
\big(\overline\nabla_iQ^{(\beta)}-4a_{\lam,i-n}TQ^{(\beta)}\big)(\eta_\lam\circ\xi)
\frac{\d z\,\d t}{((1+|z|^2)^2+t^2)^{n+1}}+o(1).
\end{align*}
The central and radial components satisfy, respectively,
\begin{align*}
&\lam^\beta\dashint_{\S^{2n+1}}
R\circ\varphi_{P,\lam}(\zeta)\hat\zeta_{2n+2}\,\d\si\\
&\quad=\frac{2^{2n+1}}{(n+1)|\S^{2n+1}|}
\int_{\Hn}TQ^{(\beta)}(\eta_\lam\circ\xi)
\frac{\d z\,\d t}{((1+|z|^2)^2+t^2)^{n+1}}+o(1),\\
&\lam^\beta\dashint_{\S^{2n+1}}
R\circ\varphi_{P,\lam}(\zeta)\hat\zeta_{2n+1}\,\d\si\\
&\quad=\frac{2^{2n+1}}{|\S^{2n+1}|}
\int_{\Hn}Q^{(\beta)}(\eta_\lam\circ\xi)
\frac{1-|z|^4-t^2}{((1+|z|^2)^2+t^2)^{n+2}}\,\d z\,\d t+o(1).
\end{align*}
The remainders are uniform for $d_{\theta_0}(P,P_0)\leq C_3/\lam$:
\eqref{E 1} and \eqref{E 2} control the local errors, while $\beta<Q$ controls the integrable tails.
The transformation from the derivative moments to the tangential components is triangular and uniformly invertible for bounded $\eta_\lam$.
The lower bound defining $\var_6$ therefore yields, for all sufficiently large $\lam$,
\begin{equation}\label{eq:critical-moment-lower}
\Big|\dashint_{\S^{2n+1}} R \circ \varphi_{P, \lam }(\zeta) \zeta\,\d\si \Big| \geq C^{-1}\var _6 \lam ^{-\beta},
\end{equation}
for some $C>0$ independent of $P$ and $\lam$.

It remains to estimate the squared $L^2$ error.
In the same region
$d_{\theta_0}(P,P_0)\leq C_3/\lam$, we have
\begin{align*}
&\,  \dashint_{\S^{2n+1}}|R \circ \varphi_{P, \lam }(\zeta)-R(P)|^2\,\d\si  \\
 \leq&\, \frac{1}{|\S^{2n+1}|}\int_{|\xi| \leq \var  \lam }|R_0(\widetilde{\varphi}(\xi))-R_0(\widetilde{P})|^2 \frac{2^{2n+1}\,\d z\,\d t}{(t^2+(1+|z|^2)^2)^{n+1}}+C(\var ) \lam ^{-2n-2} \\
 \leq &\,  C \lam^{-2\beta} \int_{|\xi| \leq \var  \lam }(1+|\xi|)^{2 \beta} \frac{\d z\,\d t}{(t^2+(1+|z|^2)^2)^{n+1}}+C(\var ) \lam ^{-2n-2} \\
 =&\,  o(\lam ^{-\beta}).
\end{align*}
Comparing this estimate with
\eqref{eq:critical-moment-lower} proves
\eqref{eq:lem condition for not degree}, as required.
\end{proof}

Combining \eqref{eq:lem condition for not degree} with Theorem~\ref{thm:3.1} gives the following perturbative existence criterion.
\begin{cor}\label{cor1 of existence}
	Suppose $R\in\mathcal A_\al (\S^{2n+1})$ is some positive function satisfying for each critical point $P_0\in\S^{2n+1}$ of $R$, there exists some $\beta=\beta(P_0)\in(2,Q)$ such that in some Cayley coordinate system centered at $P_0$,
	\[
R(\xi)=R(0)+Q^{(\beta)}(\xi)+W(\xi)\quad \text{ for all }\,\xi\text{ close to 0},
	\]
	where $Q^{(\beta)}(\delta_{\lam}\xi)=\lam ^{\beta}Q^{(\beta)}(\xi)$ for any $\lam >0$, $\xi\in\Hn$, and
    	the remainder $W$ satisfies
\[
 W(\xi)=o(|\xi|^\beta),\quad
 |\widetilde\nabla W(\xi)|=o(|\xi|^{\beta-1}),\quad
 |TW(\xi)|=o(|\xi|^{\beta-2}).
\] Suppose also that
	\[
	c|\xi|^{\beta-1} \leq |\widetilde{\nabla} Q^{(\beta)}|+|\xi||T Q^{(\beta)}| \leq C|\xi|^{\beta-1}  \quad \text{ for all  $\xi$ close to 0},
	\]
	\begin{equation}\label{6.26 in jde}
\begin{pmatrix}
\displaystyle \int_{\Hn} \overline{\nabla} Q^{(\beta)}(\eta\circ\xi)
\frac{\d z\,\d t}{(t^2+(1+|z|^2)^2)^{n+1}} \\[3mm]
\displaystyle \int_{\Hn} Q^{(\beta)}(\eta\circ\xi)
\frac{1-|z|^4-t^2}{(t^2+(1+|z|^2)^2)^{n+2}}
\d z\,\d t
\end{pmatrix}
\neq 0,
\quad \forall\ \eta\in \Hn .
\end{equation}
	Then for $\lam$ large enough,
	\[
	\dashint_{\S^{2n+1}}R\circ\varphi_{P,\lam}(\zeta)\zeta\,\d\si \neq 0\quad\text{ for all } \, P\in\S^{2n+1}.
	\]
	If we further assume that
	\[
	\|R-1\|_{L^{\infty}(\S^{2n+1})}\leq \var_4,
	\]
	and
	\[
	\deg\Big(\dashint_{\S^{2n+1}}R\circ\varphi_{P,\lam}(\zeta)\zeta\,\d\si ,\mathbb{B}^{n+1},0\Big) \neq 0,
	\]
	then \eqref{maineq} has at least one positive $C^2$ solution $v$. The infinite-dimensional comparison in Theorem~\ref{thm:3.1} also gives the following identity. For each fixed sufficiently large $\lam $ and all sufficiently large $C$, we have
\[
	\deg\Big(v-\frac{n^2}{4}L_{\theta_0}^{-1}(R|v|^{4/(Q-2)}v),\mathcal{U}(\lam)
    ,0\Big)=-\deg\Big(\dashint_{\S^{2n+1}}R\circ\varphi_{P,\lam}(\zeta)\zeta\,\d\si ,\mathbb{B}^{n+1},0\Big) .
\]
\end{cor}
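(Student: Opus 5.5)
The corollary combines the two localization lemmas with the degree comparison of Theorem~\ref{thm:3.1}. The plan is to verify the hypotheses of Lemmas~\ref{lem condition for not degree} and~\ref{lem:3.8} from the assumptions of the corollary. Together these give \eqref{eq:lem condition for not degree} uniformly in $P\in\S^{2n+1}$, which is exactly \eqref{condition for not degree} with some modulus $\om$.

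\emph{Step 1 (verifying the hypotheses).} Positivity and continuity of $R$ on the compact sphere give a lower bound $b_0>0$. The two-sided bound on $|\widetilde\nabla Q^{(\beta)}|+|\xi||TQ^{(\beta)}|$, combined with the $o$-bounds on $W$, shows that each full critical point is isolated. Hence $\operatorname{Crit}(R)$ is finite, and a common radius $\var_1$ can be chosen for all charts. Since $\operatorname{grad}_{g_{\theta_0}}R$ is continuous and vanishes only on $\operatorname{Crit}(R)$, compactness gives $d>0$ with $|\operatorname{grad}_{g_{\theta_0}}R|\ge d$ at distance at least $\var_1/20$ from the critical set. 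The remainder condition provides a modulus $\om_W$. Condition \eqref{6.26 in jde} is precisely the integral nonvanishing hypothesis of Lemma~\ref{lem:3.8}.

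\emph{Step 2 (nonvanishing of $G_\lam$).} Lemma~\ref{lem condition for not degree} treats the centers with $d_{\theta_0}(P,\operatorname{Crit}(R))\ge C_3/\lam$, and Lemma~\ref{lem:3.8} treats the complementary region. Together they give
\[
\|R\circ\varphi_{P,\lam}-R(P)\|_{L^2(\S^{2n+1})}^2\le \om(\lam)\,|G_\lam(P)|
\]
uniformly in $P$, for a function $\om(\lam)\to0$. Replacing $\om$ by $\sup_{\lam'\ge\lam}\om(\lam')$ plus a small decreasing positive term makes it nonincreasing, positive and continuous.

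Suppose $G_\lam(P)=0$. The displayed bound then forces $R\circ\varphi_{P,\lam}\equiv R(P)$. Since $\varphi_{P,\lam}$ is a bijection, this makes $R$ constant. A constant $R$ has every point critical, contradicting finiteness of $\operatorname{Crit}(R)$. The lower bound \eqref{eq:critical-moment-lower}, or \eqref{Gr} away from the critical set, gives the same conclusion directly. Hence $G_\lam\neq0$ on $\S^{2n+1}$ for all large $\lam$.

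\emph{Step 3 (degree and existence).} Assume in addition $\|R-1\|_{L^\infty}\le\var_4$. Theorem~\ref{thm:3.1} applies with the $\om$ from Step 2 and some $\iota$. It gives the degree identity \eqref{eq: degree} on $\mathcal U(\lam)$ for large $C$, which is the final displayed formula of the corollary. It also shows that a nonzero Brouwer degree produces a fixed point $v\in\mathcal U(\lam)$. Such $v$ are positive, since $\min v>C^{-1}$, and lie in $\Gamma_{2+\al'}$. By the regularity lemma of Part~I, $v\in C^2$.

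The main point needing care is Step 1's uniformity. The constants $C_3$ and the $o(1)$ terms in both lemmas must be uniform over the finitely many critical points, and one must check that the chart normalization ($\mathcal F_P$ versus $F_P$, a factor $2^\beta$) does not affect the integral condition. The paper already notes that these zero sets are unchanged under this scaling.
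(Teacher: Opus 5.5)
Your proposal is correct and follows the paper's proof. The paper likewise gets the nonvanishing of $G_\lam$ and the estimate \eqref{eq:lem condition for not degree} from Lemma~\ref{lem:3.8}, which already incorporates Lemma~\ref{lem condition for not degree} for centers away from the critical set; it then applies Theorem~\ref{thm:3.1} under the smallness assumption and takes the $C^2$ regularity from \cite[Lemma~2.4]{QTWPartI}. Your extra bookkeeping (finiteness of $\operatorname{Crit}(R)$, the constant $d$, and making $\om$ monotone and continuous) only makes explicit what the paper leaves implicit.
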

\begin{proof}
	Lemma~\ref{lem:3.8} gives the nonvanishing of
$G_\lam$ and the estimate
\eqref{eq:lem condition for not degree}.
Under the additional smallness assumption,
Theorem~\ref{thm:3.1} yields the stated degree
identity. If the Brouwer degree is nonzero,
the Leray--Schauder degree is also nonzero,
and hence \eqref{maineq} admits a positive
solution. The $C^2$ regularity follows from
\cite[Lemma~2.4]{QTWPartI}.
\end{proof}

\begin{lem}
     The nonvanishing conditions \eqref{0.8in jde} and \eqref{6.26 in jde} are equivalent.
\end{lem}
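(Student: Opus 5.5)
The plan is to show that the two vectors in \eqref{0.8in jde} and \eqref{6.26 in jde} differ by an invertible linear map, depending continuously on $\eta$, whose block structure shows that at every fixed $\eta$ one vector vanishes if and only if the other does. Set
\[
w(\xi)=((1+|z|^2)^2+t^2)^{-n-1}.
\]
From \eqref{standard-bubble}, the exponent of $U_{0,1}^{Q^*}$ is $\frac{2-Q}{4}\cdot\frac{2Q}{Q-2}=-\frac Q2=-(n+1)$. Hence $U_{0,1}^{Q^*}=2^{nQ^*}w$, and the gradient blocks of the two conditions agree up to the positive factor $2^{nQ^*}$. It therefore remains to compare the scalar entries
\[
f(\eta)=\int_{\Hn}Q^{(\beta)}(\eta\circ\xi)\,w(\xi)\,\d\xi
\]
and
\[
h(\eta)=\int_{\Hn}Q^{(\beta)}(\eta\circ\xi)\,\frac{1-|z|^4-t^2}{t^2+(1+|z|^2)^2}\,w(\xi)\,\d\xi .
\]

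\emph{Step 1: gradient moments are derivatives of $f$.} Differentiating $\eta\mapsto\eta\circ\xi$ in $\eta$ at fixed $\xi$ produces the right-invariant fields \eqref{right-vectorfields} acting on $Q^{(\beta)}$ at $\eta\circ\xi$. This gives
\[
\overline\nabla f(\eta)=\int_{\Hn}(\overline\nabla Q^{(\beta)})(\eta\circ\xi)\,w(\xi)\,\d\xi .
\]
Differentiation under the integral is justified by several facts: $Q^{(\beta)}\in C^1$ away from $0$; its derivatives grow at most like $|\cdot|^{\beta-1}$ and $|\cdot|^{\beta-2}$; the singularity at $\xi=\eta^{-1}$ is locally integrable because $\beta>2$; and the tail is integrable because $w\sim|\xi|^{-2Q}$ and $\beta<Q$.

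\emph{Step 2: the radial weight is a dilation derivative.} A direct computation gives
\[
\frac{\d}{\d s}\Big[s^{-Q}w(\delta_{s^{-1}}\xi)\Big]_{s=1}=-2(n+1)\,\frac{1-|z|^4-t^2}{t^2+(1+|z|^2)^2}\,w(\xi).
\]
The changes of variables $\xi\mapsto\delta_s\xi$ and $\eta\circ\delta_s\xi=\delta_s(\delta_{s^{-1}}\eta\circ\xi)$, together with the homogeneity of $Q^{(\beta)}$, give
\[
\int Q^{(\beta)}(\eta\circ\xi)\,s^{-Q}w(\delta_{s^{-1}}\xi)\,\d\xi=s^\beta f(\delta_{s^{-1}}\eta).
\]
Differentiating at $s=1$ yields
\[
-2(n+1)h(\eta)=\beta f(\eta)-\mathcal D f(\eta),\qquad \mathcal D=\sum_i(a_i\partial_{a_i}+b_i\partial_{b_i})+2s\,\partial_s,
\]
where $\eta=(a,b,s)$. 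Writing $\overline X_i=\partial_{a_i}-2b_i\partial_s$ and $\overline Y_i=\partial_{b_i}+2a_i\partial_s$, the $\partial_s$ terms cancel in $\sum_i(a_i\overline X_i+b_i\overline Y_i)$. Hence
\[
\mathcal D=\sum_i(a_i\overline X_i+b_i\overline Y_i)+2sT .
\]

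\emph{Step 3: conclusion.} Steps 1 and 2 give
\[
h(\eta)=-\frac{1}{2(n+1)}\Big(\beta f(\eta)-(a,b,2s)\cdot\overline\nabla f(\eta)\Big).
\]
Thus the map $(\overline\nabla f,f)\mapsto(\overline\nabla f,h)$ is block lower triangular, with identity block on the gradient and the nonzero scalar $-\beta/(2(n+1))$ on the last entry. It is invertible at every $\eta$, so the two vectors vanish simultaneously, and the two conditions are equivalent for all $\eta\in\Hn$.

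The main obstacle is justifying the $s$-derivative under the integral in Step 2. I would use the derivative bound \eqref{2.7 new} for $Q^{(\beta)}$, the bound $|Q^{(\beta)}(\zeta)|\le C|\zeta|^\beta$, and dominated convergence on $s\in[1/2,2]$, with integrability again coming from $2<\beta<Q$. The Euclidean form of the dilation generator $\mathcal D$ requires $f\in C^1$ in ordinary coordinates, which follows from Step 1 since the right-invariant fields together with $T$ span the tangent space.
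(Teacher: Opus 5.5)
Your proof is correct and follows essentially the same route as the paper. Your Step 2 relation $Qh=-\beta f+\mathcal D f$, where $Q=2(n+1)$, is exactly the identity \eqref{eq:moment-condition-identity}; the paper obtains it by integrating by parts along the dilation field $x\cdot\partial_x+y\cdot\partial_y+2t\partial_t$, which is equivalent to your differentiation in the dilation parameter, and it then concludes using $\beta>0$ just as your triangular-map argument does.
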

\begin{proof}
By \eqref{standard-bubble},
\[
U_{0,1}^{Q^*}(\xi)
=\frac{2^Q}{((1+|z|^2)^2+t^2)^{n+1}}.
\]
Let $\eta=(a,b,s)\in\Hn$. Using the homogeneity
of $Q^{(\beta)}$ and integrating by parts with
respect to the dilation vector field
$x\cdot\pa_x+y\cdot\pa_y+2t\pa_t$, we obtain
\begin{align}
& \, Q \int_{\Hn } Q^{(\beta)}(\eta \circ \xi) \frac{1-|z|^4-t^2}{((1+|z|^2)^2+t^2)^{n+2}} \, \d  \xi\notag \\ =&\,-\beta \int_{\Hn } \frac{Q^{(\beta)}(\eta \circ \xi)}{((1+|z|^2)^2+t^2)^{n+1}} \, \d  \xi\notag\\
&\,+\int_{\Hn } \frac{[\sum_{j=1}^n(a_j \overline{X}_j Q^{(\beta)}+b_j \overline{Y}_j Q^{(\beta)})+2 s T Q^{(\beta)}](\eta \circ \xi)}{((1+|z|^2)^2+t^2)^{n+1}} \, \d  \xi .\label{eq:moment-condition-identity}
\end{align}
The boundary term vanishes since $\beta<Q$. When the vector integral in \eqref{6.26 in jde} vanishes, the derivative term in \eqref{eq:moment-condition-identity} is zero. Since $\beta>0$, identity \eqref{eq:moment-condition-identity} shows that the scalar integrals in \eqref{0.8in jde} and \eqref{6.26 in jde} vanish simultaneously. Thus the two conditions are equivalent. 
\end{proof}

For the homogeneous expansions in
\eqref{eq:alln-model-expansion}, we obtain an explicit
formula for the Brouwer degree in
Corollary~\ref{cor1 of existence}.
\begin{prop}\label{Cor new}
     Assume the hypotheses of Theorem~\ref{thm:2}, except for the condition \eqref{sum index} and the assumption that $R \circ F_{\zeta_0}$ satisfies $(*)_\beta'$. Then for all sufficiently large $\lam $,
\begin{equation}\label{6.27 in jde}
 \dashint_{\S^{2n+1}}R\circ\varphi_{P,\lam }(\zeta)\zeta\,\d\si \ne0
       \quad\text{ for all } \, P\in\S^{2n+1},
\end{equation}
and
\begin{equation}\label{6.28 in jde}
 \deg \Big(\dashint_{\S^{2n+1}}R\circ\varphi_{P,\lam }(\zeta)\zeta\,\d\si ,
                 \mathbb B^{n+1},0\Big)
 =1+\sum_{\substack{P\in\operatorname{Crit}(R)\\
                         \sum_{j=1}^{2n+1}\widetilde a_j(P)<0}}(-1)^{i(P,R)}.
\end{equation}
If also $\|R-1\|_{L^\infty(\S^{2n+1})}\le\var_4$, with the admissible cutoffs and parameter
radius of Theorem~\ref{thm:3.1}, one has
\be\label{6.29 in jde}
 \deg\Big(v-\frac{n^2}{4}L_{\theta_0}^{-1}
                   (R|v|^{4/(Q-2)}v),\mathcal U(\lam ),0\Big)
 =-1-\sum_{\substack{P\in\operatorname{Crit}(R)\\
                         \sum_{j=1}^{2n+1}\widetilde a_j(P)<0}}(-1)^{i(P,R)}.
\ee
Under $\|R-1\|_{L^\infty(\S^{2n+1})}\le\var_4$, if \be\label{5.37 new}
    \sum_{\substack{P\in\operatorname{Crit}(R)\\
                         \sum_{j=1}^{2n+1}\widetilde a_j(P)<0}}(-1)^{i(P,R)}\neq -1,
\ee
equation \eqref{maineq} admits a positive $C^2$ solution.
\end{prop}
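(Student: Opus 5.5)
The plan has three steps: verify the hypotheses of Lemma~\ref{lem:3.8} to get \eqref{6.27 in jde}, compute the degree \eqref{6.28 in jde} by localizing at the critical points, and then read off \eqref{6.29 in jde} and the existence claim from Corollary~\ref{cor1 of existence}.

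\textbf{Step 1: nonvanishing.} The model term $Q^{(\beta)}=\sum_j(a_j|x_j|^\beta+b_j|y_j|^\beta)+cJ_{\beta,\vartheta}$ is $\beta$-homogeneous and $C^1$ away from $0$. I will check the two-sided bound \eqref{2.7 new} directly. The horizontal part controls $|\widetilde\nabla Q^{(\beta)}|$ from below by $C|z|^{\beta-1}$, up to $T$-contributions of size $|c||z||t|^{\beta/2-1}$. The term $|\xi||TQ^{(\beta)}|$ is bounded below by $c|\xi||t||\xi|^{\beta-4}$, which dominates near the $t$-axis. Since $|c|$ is small relative to $\min\{|a_j|,|b_j|\}$, the cross terms are absorbed.

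Next I verify the integral condition \eqref{6.26 in jde} for every $\eta=(a,b,s)$. Write $\eta$ in the form $\delta_{\lam_\infty}\widetilde P$. The first $2n$ components of the vector are, up to the small $c$-perturbation, $\int\beta a_j|x_j+a_j'|^{\beta-2}(x_j+a_j')\cdots$. These vanish only near $(a,b)=0$, since each is strictly monotone in the corresponding coordinate of $\eta$; the mixed $y_j\,\partial_t$ parts of $\overline X_j$ only enter through $c$. For $(a,b)$ near $0$, the $T$-component is $c\int TJ(\eta\circ\xi)\,\d\mu$, which is odd and strictly monotone in $s$ and vanishes only at $s=0$. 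At $\eta=0$, the scalar moment is proportional to $\sum_j(a_j+b_j)+\kappa_\vartheta c=\sum\widetilde a_j\neq0$, using the symmetry of the bubble weight and the normalization \eqref{eq:alln-kappa}. Here I convert via \eqref{eq:moment-condition-identity}, or use the $U^{Q^*}$-weighted form \eqref{0.8in jde}. Lemma~\ref{lem:3.8} then gives \eqref{6.27 in jde}.

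\textbf{Step 2: degree computation.} For large $\lam$, the map $G_\lam$ is homotopic through nonvanishing maps to the following picture. Away from $\operatorname{Crit}(R)$, $G_\lam(P)$ points essentially along the tangential gradient $\widehat\nabla R(P)$, by the Taylor expansion in the proof of Lemma~\ref{lem condition for not degree}. Near each $P_0$, in the rescaled variable $\eta=\delta_\lam F^{-1}(P)$, $G_\lam$ is $\lam^{-\beta}$ times the vector map $\eta\mapsto\Phi_{P_0}(\eta)$ given by the moment integrals in the proof of Lemma~\ref{lem:3.8} (the expansion \eqref{4.47 new}). Following Li's argument, I extend $G_\lam/|G_\lam|$ from $\S^{2n+1}=\partial\mathbb B^{n+1}$ into the ball, via $p=sP$. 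The degree then equals the degree of a vector field that behaves like $-P$ (radially inward) outside a neighborhood of the critical points, plus local contributions from each critical point.

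\textbf{Local index and main obstacle.} The local contribution at $P_0$ is computed on the $(2n+2)$-dimensional region (rescaled $\eta$, together with the radial parameter). It equals the index of $\Phi_{P_0}$ combined with the sign of the radial moment. I deform $\Phi_{P_0}$ to the decoupled map with components $\big(\operatorname{sgn}(a_j)\eta_j,\ \operatorname{sgn}(b_j)\eta_{n+j},\ \operatorname{sgn}(c)s\big)$ and radial component $\sum\widetilde a_j$. This deformation is allowed by the smallness of $c$ and the monotonicity from Step 1, which shows the zero set stays compact throughout; this gives \eqref{4.49 new}. The resulting index is $(-1)^{i(P_0,R)}$ if $\sum\widetilde a_j<0$ (radial moment pointing outward) and $0$ otherwise, while the ambient inward field contributes $1$ (the degree of $-\mathrm{Id}$ in even dimension). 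This yields \eqref{6.28 in jde}.

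The main obstacle is this homotopy. I must show the deformation of $\Phi_{P_0}$ keeps it nonvanishing on large $\eta$-spheres, where the horizontal components dominate uniformly and the mixed $T$-contributions from $\overline X_j$ stay small because $|c|$ is small. I must also show it matches consistently with the outer gradient region.

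\textbf{Step 3: conclusions.} Finally, \eqref{6.29 in jde} follows from Corollary~\ref{cor1 of existence}. Under \eqref{5.37 new}, the Brouwer degree is nonzero, so existence follows, and the $C^2$ regularity comes from \cite[Lemma~2.4]{QTWPartI}.
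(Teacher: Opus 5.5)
Your architecture matches the paper's: nonvanishing from Lemma~\ref{lem:3.8}; the split $G_\lam=V_\lam+f_\lam P$, with the radial part pushed to $-P$ wherever $V_\lam\neq0$, so that $\deg=1+\sum_{f_\lam>0}\operatorname{ind}V_\lam$; local indices from the rescaled moment map \eqref{4.47 new}; and \eqref{6.29 in jde} from \eqref{eq: degree}. The gap is in confining the zeros of the moment map. You need this both for \eqref{6.26 in jde} in Step~1 and for the admissibility of your homotopy in Step~2, and the reason you give is false.

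Write $\eta=(w,s)$ and $M(\eta)=\int_{\Hn}\overline\nabla Q^{(\beta)}(\eta\circ\xi)\,\d\mu$, where $\d\mu=((1+|z|^2)^2+t^2)^{-n-1}\d\xi$. The $c$-parts of the horizontal components contain $-2y_j\,TJ_{\beta,\vartheta}$ and $2x_j\,TJ_{\beta,\vartheta}$. Evaluated at $\eta\circ\xi$ with $|s|\gg|w|^2$, these have size about $|c|\,|s|^{\beta/2-1}|w|$. The main terms are at most of size $|w|(1+|w|)^{\beta-2}$. So smallness of $c$ does not control the mixed terms uniformly in $s$.

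Concretely, the horizontal components of $M$ vanish identically on the axis $w=0$, because the integrands are odd in $z$. Hence they cannot dominate on large $\eta$-spheres. Moreover, if some coefficient product $a_jb_j<0$, one checks that they also have common zeros with $|w|\approx|c|^{1/(\beta-2)}|s|^{1/2}$ as $|s|\to\infty$. These zeros are far from $w=0$, however small $c$ is. In all these regions only the $T$-component rules out a zero, and you controlled its sign only for $w$ near $0$.

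The missing step is the order of the argument in the paper. Set $H(\eta)=\int_{\Hn}Q^{(\beta)}(\eta\circ\xi)\,\d\mu$. Identity \eqref{5.38 new} gives $DH=A(w)M$ with $A(w)$ unipotent triangular, so $M_T(w,s)=\partial_sH(w,s)$ for every $w$. Since $H(w,s)=H(w,-s)$ and $c^{-1}\partial_{ss}H>0$ (because $\partial_{tt}J_{\beta,\vartheta}>0$; see \eqref{5.39 new}), $M_T$ has the sign of $cs$ and vanishes exactly on $\{s=0\}$, for all $w$.

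Only after this reduction is the smallness of $c$ usable. On $s=0$ one has $M=(D_wH(w,0),0)$, and the mixed terms are $O(|c|\,|w|(1+|w|)^{\beta-2})$ by \eqref{5.40 new}. Choosing a coordinate of $w$ of maximal modulus gives \eqref{5.42 new}, hence $M^{-1}(0)=\{0\}$, which is \eqref{4.42}. This yields \eqref{6.26 in jde} together with \eqref{5.45 new}. It also excludes zeros of $V_\lam$ in $\rho/\lam\le d_{\theta_0}(P,P_0)\le C_3/\lam$.

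You need the same fact to deform the radial part. By \eqref{4.48 new}, its sign is fixed only on $\B(P_0,\rho/\lam)$, and it is the sign of $-\sum_j\widetilde a_j$. Your parenthetical has this right; "radial component $\sum_j\widetilde a_j$" does not.

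Once $M^{-1}(0)=\{0\}$ is established, your linear homotopy to $(\operatorname{sgn}(a_j)\eta_j,\operatorname{sgn}(b_j)\eta_{n+j},\operatorname{sgn}(c)s)$ is admissible. The $T$-components of both ends have the sign of $cs$, so any zero lies on $s=0$. There, both ends agree in sign in a maximal coordinate of $w$. This gives $\operatorname{ind}(M,0)=(-1)^{i(P_0,R)}$ without the paper's computation of $DM(0)=D^2H(0)$, which is diagonal by symmetry with entries of the signs of $a_j,b_j,c$.
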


\begin{proof}
We first prove \eqref{6.27 in jde}.
   Fix $P_0 \in \operatorname{Crit}(R)$. Set
$$
H(\eta)=\int_{\Hn } \frac{Q_{P_0}^{(\beta)}(\eta \circ \xi)}{((1+|z|^2)^2+t^2)^{n+1}} \, \d  \xi, \quad M(\eta)=\int_{\Hn } \frac{\overline{\nabla} Q_{P_0}^{(\beta)}(\eta \circ \xi)}{((1+|z|^2)^2+t^2)^{n+1}} \, \d  \xi.
$$
Here $\overline\nabla$ is the derivative array associated with \eqref{right-vectorfields}. We write $D_\eta H$ for the ordinary real gradient and $\operatorname{ind}(M,0)$ for the local Brouwer degree of $M$ at an isolated zero. For $\eta=(w,s)\in\C^n\times\R$, the group law and \eqref{right-vectorfields} give
\begin{align}\label{5.38 new}
D H(\eta)=\begin{pmatrix}
I_n & 0 & 2 \operatorname{Im} w \\
0 & I_n & -2 \operatorname{Re} w \\
0 & 0 & 1
\end{pmatrix} M(\eta).
\end{align}
The matrix in \eqref{5.38 new} has determinant $1$, and $I_n$ is the $n\times n$ identity matrix. Since $\beta<Q$ and $\int_{\Hn}|\xi|^\beta(1+|\xi|)^{-2Q}\,\d\xi<\infty$, changing variables transfers differentiation to the weight and gives $H\in C^2$. The same argument applies when $Q_{P_0}^{(\beta)}$ is replaced by $J_{\beta,\vartheta}$.
Unitary invariance of the averaged $J_{\beta,\vartheta}$ term and the change $(z,t)\mapsto(\bar z,-t)$ give
$$
H(w, s)=H(w,-s) .
$$

Direct differentiation gives
$$
\pa _{t t} J_{\beta, \vartheta}=\frac{\beta}{2}(t^2+\vartheta|z|^4)^{\beta / 4-2}(\vartheta|z|^4+\frac{\beta-2}{2} t^2)>0, \quad \xi \neq 0.
$$
Consequently, \eqref{eq:alln-model-expansion} gives
\be\label{5.39 new}
\pa _s H(w, 0)=0, \quad \frac{1}{c} \pa _{s s} H(w, s)>0.
\ee
For the horizontal variables, \eqref{5.38 new} and the homogeneity of $J_{\beta,\vartheta}$ yield
\begin{align}
 \Big|D_w \int_{\Hn } \frac{J_{\beta, \vartheta}((w, 0) \circ \xi)}{((1+|z|^2)^2+t^2)^{n+1}}\,\d\xi\Big|
&\leq\begin{cases}
C|w|, & |w|\leq1,\\
C|w|^{\beta-1}, & |w|\geq1,
\end{cases}\nonumber\\
&  \leq C|w|(1+|w|)^{\beta-2} . \label{5.40 new}
\end{align}
The first case follows from bounded second derivatives on $|w| \leq 1$ and symmetry at $w=0$.
For $r\neq0$, symmetry and the fundamental theorem of calculus give
\begin{align}
 \frac{\beta}{r} \int_{\Hn } \frac{(r+x_1)|r+x_1|^{\beta-2}}{((1+|z|^2)^2+t^2)^{n+1}} \, \d  \xi &=\beta(\beta-1) \int_0^1 \int_{\Hn } \frac{|\tau r+x_1|^{\beta-2}}{((1+|z|^2)^2+t^2)^{n+1}} \, \d  \xi \, \d  \tau \notag\\
&  \geq C^{-1}(1+|r|)^{\beta-2} .\label{5.41 new}
\end{align}
For $r>0$, restrict to $1 / 2 \leq \tau \leq 1$, $1 \leq x_1 \leq 2$; the case $r<0$ follows by symmetry.

Choosing a coordinate with absolute value at least $|w|/\sqrt{2n}$ and combining \eqref{eq:alln-model-expansion}, \eqref{5.40 new}, and \eqref{5.41 new}, we obtain
\be\label{5.42 new}
|D_w H(w, 0)| \geq\Big(C^{-1} \min_{1\leq j\leq n}\{|a_j|,|b_j|\}-C|c|\Big)|w|(1+|w|)^{\beta-2}.
\ee
For sufficiently small $\var(n,\beta,\vartheta)$, the factor in parentheses in \eqref{5.42 new} is positive. If $M(w,s)=0$, equation \eqref{5.38 new} gives 
$$
\pa_s H(w,s)=0,\quad D_w H(w,s)=0.
$$
Equations \eqref{5.39 new} and \eqref{5.42 new} then give $s=0$ and $w=0$. Since $M(0)=0$ by symmetry, we obtain
\be\label{4.42}
M^{-1}(0)=\{0\}.
\ee

Symmetry makes $D^2H(0)$ diagonal, with
\begin{align}
D^2 H(0)= &\, \beta(\beta-1)\Big(\int_{\Hn } \frac{|x_1|^{\beta-2}}{((1+|z|^2)^2+t^2)^{n+1}} \, \d  \xi\Big) \operatorname{diag}(a_1, \ldots, a_n, b_1, \ldots, b_n, 0) \notag\\
&\, +c D_\eta^2\Big(\int_{\Hn } \frac{J_{\beta, \vartheta}(\eta \circ \xi)}{((1+|z|^2)^2+t^2)^{n+1}} \, \d  \xi\Big)\Big|_{\eta=0} .\label{new 1}
\end{align}
All horizontal second derivatives in \eqref{new 1} are finite, while \eqref{5.39 new} gives
$$
\frac{1}{c} \pa _{s s} H(0)>0 .
$$ Decreasing $\var$, the diagonal entries of $D^2 H(0)$ therefore have the signs of
$$
a_1, \ldots, a_n, b_1, \ldots, b_n, c .
$$
Combining \eqref{5.38 new}, \eqref{4.42}, and \eqref{new 1}, we obtain
\be\label{5.43 new}
D M(0)=D^2 H(0), \quad \operatorname{ind}(M, 0)=\operatorname{sgn}(c \prod_{j=1}^n a_j b_j)=(-1)^{i(P_0, R)} .
\ee

The same maximal-coordinate argument applies to $Q_{P_0}^{(\beta)}$ when $|z| \geq|\xi| / 2$. When $|z|<|\xi|/2$, the homogeneous norm and \eqref{eq:alln-model-expansion} give
$$
|t| \geq \frac{\sqrt{15}}{4}|\xi|^2, \quad T Q_{P_0}^{(\beta)}=\frac{\beta c}{2} t(t^2+\vartheta|z|^4)^{\beta / 4-1} .
$$
Hence, for sufficiently small $\var$,
\be\label{5.44 new}
C^{-1}|\xi|^{\beta-1} \leq|\widetilde{\nabla} Q_{P_0}^{(\beta)}(\xi)|+|\xi||T Q_{P_0}^{(\beta)}(\xi)| \leq C|\xi|^{\beta-1}, \quad \xi \neq 0,
\ee 
where $C>1$ is independent of $\xi$.
Estimates \eqref{5.44 new} and \eqref{2.8 new} show that the critical points of $R$ are isolated and hence finite.
By homogeneity,
$$
(x\cdot \pa_x+y\cdot \pa_y+2 t \pa _t) Q_{P_0}^{(\beta)}=\beta Q_{P_0}^{(\beta)} .
$$
Integration by parts, as in \eqref{eq:moment-condition-identity}, together with \eqref{eq:alln-model-expansion} and \eqref{eq:alln-kappa}, gives
\begin{align}
&\, \int_{\Hn } \frac{Q_{P_0}^{(\beta)}(\xi)(1-|z|^4-t^2)}{((1+|z|^2)^2+t^2)^{n+2}} \, \d  \xi \notag\\
=&\,-\frac{\beta}{Q} \int_{\Hn } \frac{Q_{P_0}^{(\beta)}(\xi)}{((1+|z|^2)^2+t^2)^{n+1}} \, \d  \xi \notag\\
=&\,-\frac{\beta}{Q}\Big(\int_{\Hn } \frac{|x_1|^\beta}{((1+|z|^2)^2+t^2)^{n+1}} \, \d  \xi\Big)\Big(\sum_{j=1}^n(a_j+b_j)+\kappa_{\vartheta} c\Big) \neq 0 .\label{5.45 new}
\end{align}
The identities \eqref{4.42} and \eqref{5.45 new} give \eqref{6.26 in jde}. Together with \eqref{5.44 new}, Lemma~\ref{lem:3.8} then yields \eqref{6.27 in jde}.

We next prove \eqref{6.28 in jde}. For $G_\lam $ in \eqref{0.10}, set
$$
f_\lam (P)=\operatorname{Re}\langle G_\lam (P), P\rangle, \quad V_\lam (P)=G_\lam (P)-f_\lam (P) P .
$$
Estimate \eqref{eq:scaled-moment-lower} and the gradient lower bound away from $\operatorname{Crit}(R)$ give, for sufficiently large $\lam$,
\be\label{4.46 new}
V_\lam (P) \neq 0 \quad \text { for all } P \in \S^{2 n+1} \text { satisfying } \min _{P_0 \in \operatorname{Crit}(R)} d_{\theta_0}(P, P_0) \geq \frac{C_3}{\lam } .
\ee
It therefore remains to consider
$$
 d_{\theta_0}(P, P_0) \leq C_3 / \lam  \quad \text { for some }P_0 \in \operatorname{Crit}(R) .
$$
Take $P_0$ as the north pole and use the unscaled chart $\mathcal F_0$ from Lemma~\ref{lem condition for not degree}. Write $\eta=(a,b,s)$ and $M=(M_X,M_Y,M_T)$. After letting $\var\to0$, the componentwise limits in the proof of Lemma~\ref{lem:3.8}, together with \eqref{6.17 in JDE}, give
\begin{align}
& \,\lam ^\beta(V_{\lam , 1}, \ldots, V_{\lam , 2 n}, V_{\lam , 2 n+2})(\mathcal F_0(\delta_{\lam ^{-1}} \eta))\notag \\
=&\,\frac{2^{2 n}}{(n+1)\om_{2n+1}}(M_X(\eta)+4 b M_T(\eta), M_Y(\eta)-4 a M_T(\eta), 2 M_T(\eta))+o(1),\label{4.47 new}
\end{align}
where the components of $V_\lam $ are taken in the same real-coordinate order as in the proof of Lemma~\ref{lem:3.8}, and $o(1)$ is uniform for $\eta$ in bounded sets. The triangular transformation in \eqref{4.47 new} has determinant $2>0$ and is homotopic to the identity through invertible matrices.
The radial expansion and \eqref{5.45 new} give
$$
\lam ^\beta f_\lam (P_0)=-\frac{2^{2 n+1} \beta}{Q\om_{2n+1}} H(0)+o(1), \quad H(0) \neq 0 .
$$
Since the leading radial term is continuous in $\eta$, we may fix $\rho>0$ sufficiently small so that, for all sufficiently large $\lam $,
\be\label{4.48 new}
\operatorname{sgn} f_\lam (P)=-\operatorname{sgn}\Big(\sum_{j=1}^{2n+1} \tilde{a}_j(P_0)\Big), \quad P \in \overline{\B(P_0, \rho / \lam )} .
\ee
Equations \eqref{4.42} and \eqref{4.47 new} exclude zeros of $V_\lam$ in
$$
\frac{\rho}{\lam } \leq d_{\theta_0}(P, P_0) \leq \frac{C_3}{\lam } .
$$
Together with \eqref{4.46 new}, the exclusion places all zeros of $V_\lam$ in the disjoint balls $\B(P_0,\rho/\lam)$. Local degree stability, \eqref{5.43 new}, and \eqref{4.47 new} give
\be\label{4.49 new}
\operatorname{ind}(V_\lam , \B(P_0, \rho / \lam ))=\operatorname{ind}(M, 0)=(-1)^{i(P_0, R)},
\ee 
where $\operatorname{ind}(V_\lam,\B(P_0,\rho/\lam))$ denotes the local Brouwer degree computed in Cayley coordinates.
If necessary, make a sufficiently small tangential perturbation supported in these balls so that the zeros are nondegenerate, retaining the notation. The sign condition \eqref{4.48 new} keeps $G_\lam=V_\lam+f_\lam P$ nonzero during the perturbation, preserving its degree and the indices \eqref{4.49 new}. The boundary-index formula and $\chi(\S^{2n+1})=0$, where $\chi$ is the Euler characteristic, give
\begin{align*}
\operatorname{deg}\Big(\frac{G_\lam }{|G_\lam |}\Big)  =1-\sum_{\substack{P\in\S^{2n+1},\ V_\lam(P)=0 \\
f_\lam (P)<0}} \operatorname{ind}(V_\lam , P) =&\,1+\sum_{\substack{P\in\S^{2n+1},\ V_\lam(P)=0 \\
f_\lam (P)>0}} \operatorname{ind}(V_\lam , P) \\
 =&\,1+\sum_{\substack{P_0 \in \operatorname{Crit}(R) \\
\sum_{j=1}^{2n+1} \tilde{a}_j(P_0)<0}}(-1)^{i(P_0, R)},
\end{align*}
where the last equality follows from \eqref{4.48 new} and \eqref{4.49 new}. Hence \eqref{6.28 in jde} holds.

If $\|R-1\|_{L^\infty(\S^{2n+1})} \leq \var_4$, combining \eqref{eq: degree} with \eqref{6.28 in jde} gives \eqref{6.29 in jde}. Under \eqref{5.37 new}, the degree in \eqref{6.29 in jde} is nonzero, and \eqref{maineq} admits a positive $C^2$ solution. 
\end{proof}

\section{Proofs of Theorems~\ref{thm:0.5}, \ref{thm:2} and \ref{thm:0.6}}\label{sec:main-existence}
We now combine the degree comparison established in
Section~\ref{sec:perturbation} with the compactness results
of Part~I \cite{QTWPartI}. We first prove
Theorem~\ref{thm:0.5} by extending the degree computation
from small perturbations of a constant curvature to
the prescribed Webster curvature. We then establish
Theorem~\ref{thm:2} using the explicit degree formula,
and conclude with the borderline flatness case
in Theorem~\ref{thm:0.6}.

\begin{proof}[Proof of Theorem~\ref{thm:0.5}]
The nonvanishing conclusion \eqref{0.10} follows from Lemma~\ref{lem:3.8}. We prove existence by continuing the degree along the curvature homotopy.

Choose $Q-2<\gamma<\min_{P\in\operatorname{Crit}(R)}\beta(P)$.
The globalization result \cite[Proposition~2.9]{QTWPartI}, applied with
exponent $\gamma$, gives $(*)'_\gamma$ globally for $R_\mu$, with
$L_\mu=\mu$ and auxiliary functions and structural constants independent
of $\mu\in(0,1]$.
    
Set $a(v):=(\dashint_{\S^{2n+1}}v^{Q^*}\,\d\si )^{1/Q^*}$. We claim that there exists $\var_7>0$ such that $\|R_\mu-1\|_{L^\infty(\S^{2n+1})}<\var_4$ for $\mu\in[0,\var_7]$, and every solution $v$ of \eqref{maineq} with $R=R_\mu=1+\mu(R-1)$ has
$\varpi^{-1}(v/a(v))=(w,p)\in\M_0\times\mathbb B^{n+1}$ with $w\in\mathcal N_1$.
    
Suppose, to the contrary, that along a sequence $\mu\to0$ there are solutions $v_\mu$ for which $\|w_\mu-1\|\geq\var_3$, where $\varpi^{-1}(v_\mu/a(v_\mu))=(w_\mu,p_\mu)\in\M_0\times\mathbb B^{n+1}$. By \cite[Theorem~4.4]{QTWPartI}, after passing to a subsequence, either $\{v_\mu\}$ is bounded in $L^\infty(\S^{2n+1})$ or it has precisely one isolated simple blow-up point.

Write $z_\mu:=a(v_\mu)w_\mu=T_{\varphi_\mu}v_\mu$, where $\varphi_\mu=\varphi_{p_\mu}$ as in \eqref{varphi new}. Then
    \be\label{eq:wmu}
    L_{\theta_0}(z_\mu)=\frac{n^2}{4}(R_\mu\circ\varphi_\mu)z_\mu^{\frac{Q+2}{Q-2}}.
    \ee
   The classification for $R\equiv1$ in the bounded case \cite{JL1988} and the normalization argument in \cite[proof of Theorem~1.6]{QTWPartI} give CR automorphisms $\psi_\mu$ such that
$$
\|T_{\psi_\mu} v_\mu-1\|_{L^{\infty}(\S^{2n+1})}=o(1) .
$$
The continuity with respect to the normalized density in $L^1(\S^{2n+1})$, established in Lemma~\ref{lem:balanced-decomposition}, gives $\tilde p_\mu=o(1)$ such that
$$
\frac{T_{\varphi_{\tilde{p}_\mu}}(T_{\psi_\mu} v_\mu)}{a(v_\mu)} \in \M_0 .
$$
The decomposition of CR automorphisms and uniqueness in Lemma~\ref{lem:balanced-decomposition} give, for some $U_\mu\in U(n+1)$,
$$
T_{U_\mu} z_\mu=T_{\varphi_{\tilde{p}_\mu}}(T_{\psi_\mu} v_\mu)=1+o(1) \quad \text { uniformly }.
$$
Hence $z_\mu=1+o(1)$ uniformly.
    Testing \eqref{eq:wmu} against $z_\mu-1$ and using \eqref{norm 4}, we obtain
\begin{align}
\|z_\mu-1\|^2 & =\dashint_{\S^{2 n+1}}(z_\mu-1) L_{\theta_0}(z_\mu-1) \, \d  \si \nonumber \\
& =\frac{n^2}{4} \dashint_{\S^{2 n+1}}(z_\mu-1)[(R_\mu \circ \varphi_\mu) z_\mu^{\frac{Q+2}{Q-2}}-1] \, \d  \si \nonumber \\
& \leq C \dashint_{\S^{2 n+1}}|z_\mu-1| \, \d  \si  \to  0 .\label{eq:homotopy-energy-convergence}
\end{align}
 Thus, $a(v_\mu)^{Q^*}=\dashint_{\S^{2n+1}} z_\mu^{Q^*}\,\d\si \to  1$ as $\mu\to  0$. Together with \eqref{eq:homotopy-energy-convergence}, this gives $w_\mu\to1$ strongly in $S_1^2(\S^{2n+1})$, contradicting $\|w_\mu-1\|\geq\var_3$ and proving the claim.

    The linear dependence in \eqref{eq:curvature-homotopy} and Lemma~\ref{lem:3.8} give
$$
\|R_\mu \circ \varphi_{P, \lam }-R_\mu(P)\|_{L^2(\S^{2n+1})}^2 \leq \mu \om (\lam )\Big|\dashint_{\S^{2 n+1}} R_\mu \circ \varphi_{P, \lam }(\zeta) \zeta\,\d\si \Big| .
$$
Thus $R_{\var_7}$ satisfies \eqref{condition for not degree}.
    
We next consider the remaining parameter interval. The assumptions on $R_{\zeta_0}$ imply that $R_\mu\circ F_{\zeta_0}$ satisfies $(*)_\beta'$ with $L$ replaced by $\mu L$, while $r$ and $C_*$ are unchanged. Since
$$
\var_7 L \leq \mu L \leq L, \quad \mu \in[\var_7, 1],
$$
\cite[Theorem~1.3]{QTWPartI} and subelliptic regularity \cite{FS1974} give uniform positive lower bounds and uniform bounds in the space $\mathcal X$ defined in Section~\ref{sec:degree-comparison}. Choose $M$ sufficiently large and $m>0$ sufficiently small so that every such solution lies in
$$
\mathcal{U}=\{v \in \mathcal{X}:\|v\|_\mathcal{X}<M, \min_{\S^{2n+1}}v>m\} .
$$
    %it follows from Theorem \ref{thm:5.4}, Theorem \ref{thm:5.5}  and the Harnack inequality that there exists some constant $C^*>1$ such that for all $\mu\in [\var_7,1]$,
   %\[
    %    1/C^* \leq v_\mu \le C^*,
%\]
 %   where $v_\mu$ is any solution of \eqref{maineq} with $R=R_\mu$.

 For the fixed parameter $\mu=\var_7$, the positive solution set is compact. The claim and continuity of $\varpi^{-1}$ place every solution in $\mathcal N_3(\lam)$ for a sufficiently large finite $\lam$. Choose the norm and positivity cutoffs sufficiently large and small, respectively. Excision then identifies the degree for $R_{\var_7}$ on $\mathcal U$ with the degree on $\mathcal U(\lam)$ defined in \eqref{eq:degree-domain}.
    
    Homotopy invariance and Corollary~\ref{cor1 of existence} now give
    \begin{align*}
        \deg\Big(v-\frac{n^2}{4}L_{\theta_0}^{-1}(Rv^{\frac{Q+2}{Q-2}}), \mathcal{U},0\Big)
        =&\,\deg\Big(v-\frac{n^2}{4}L_{\theta_0}^{-1}(R_{\var_7}v^{\frac{Q+2}{Q-2}}), \mathcal{U},0\Big)\\
        =&\,-\deg\Big(\dashint_{\S^{2n+1}}R_{\var_7}\circ\varphi_{P,\lam}(\zeta)\zeta\,\d\si ,\mathbb{B}^{n+1},0\Big)\\
        =&\,-\deg\Big(\dashint_{\S^{2n+1}}R\circ\varphi_{P,\lam}(\zeta)\zeta\,\d\si ,\mathbb{B}^{n+1},0\Big)
        \neq0.
    \end{align*}
    The nonzero degree yields a positive solution of \eqref{maineq}.
\end{proof}

\begin{proof}[Proof of Theorem~\ref{thm:2}]
By Proposition~\ref{Cor new}, the Brouwer degree
of $G_\lam$ is given by \eqref{6.28 in jde}.
The index condition \eqref{sum index} therefore implies
\eqref{0.11}. Since the remaining hypotheses of
Theorem~\ref{thm:0.5} are satisfied, the existence
of a positive $C^2$ solution follows.
%For the model \eqref{eq:alln-model-expansion}, the weighted derivative condition and homogeneous nondegeneracy follow from the model verification in Part~I.  Lemma~\ref{lem:alln-averaged-model} computes the translated moment vector.  Under the stated smallness of $c$, its zero set is controlled by the horizontal coefficients, while the value moment contributes the coefficient $\kappa_\vartheta c$.  The local degree at a critical point with negative total coefficient is therefore $(-1)^{i(\zeta,R)}$.  Summing the local contributions and comparing with the degree at the constant coefficient gives
%\[
%\deg(\mathcal G_\lam ,\mathbb B^{n+1},0) =1+\sum_{\substack{\zeta\in\operatorname{Crit}(R)\\
%\sum_j\widetilde a_j(\zeta)<0}}(-1)^{i(\zeta,R)}.
%\]
%The hypothesis of Theorem~\ref{thm:2} makes this degree nonzero.  Theorem~\ref{thm:0.5} then gives a positive solution.  The $C^2$ regularity follows from the intrinsic regularity result of Part~I.
\end{proof}

\begin{proof}[Proof of Theorem~\ref{thm:0.6}]
The nonvanishing assertion \eqref{0.10} again follows
from Lemma~\ref{lem:3.8}. We adapt the compactness
argument in the proof of Theorem~\ref{thm:0.5}
to allow the critical flatness order $Q-2$ at
at most one critical point.

By \cite[Proposition~2.9]{QTWPartI}, the curvature
homotopy \eqref{eq:curvature-homotopy} satisfies
$(*)'_{Q-2}$ with $L_\mu=\mu$.
Fix $\var>0$ and suppose, by contradiction,
that a sequence of positive solutions corresponding
to $\mu_i\in[\var,1]$ blows up.
By \cite[Theorem~4.3]{QTWPartI}, the blow-up
points are isolated, simple, and uniformly separated.

If at least two blow-up points occur, then at least
one of them, say $P$, satisfies $\beta(P)>Q-2$.
The local estimates
\cite[Propositions~3.6 and~3.9]{QTWPartI},
together with the Harnack inequality, give
comparable peak heights. The contribution from
another blow-up point then produces a strictly
positive regular part at $P$.
The corresponding Pohozaev boundary term contradicts
\cite[Corollary~3.14 and Lemma~2.2]{QTWPartI},
as in the proof of \cite[Theorem~4.4]{QTWPartI}.

A single blow-up point is excluded by
\cite[Theorem~4.5]{QTWPartI}.
Consequently, the Harnack inequality and subelliptic
estimates yield compactness of the positive
solution family in $\mathcal X$ for
$\mu\in[\var,1]$.

It remains to consider the limit $\mu\to0$.
In this regime, $L(Q-2,i)=o(1)$, so
\cite[Theorem~4.4]{QTWPartI} applies.
The normalization argument in the proof of
\cite[Theorem~1.6]{QTWPartI} then gives the
same convergence as in
\eqref{eq:homotopy-energy-convergence}.
The excision and homotopy invariance arguments
used in the proof of Theorem~\ref{thm:0.5}
therefore apply without further changes.
Together with \eqref{0.11}, they give a nonzero
Leray--Schauder degree and hence a positive
$C^2$ solution of \eqref{maineq}.
\end{proof}

\appendix
\section{Technical lemmas and Aubin-type inequalities}\label{appendix}
We first justify the orientation and degree comparison used in Theorem~\ref{thm:3.1}, and then prove the coordinate estimates needed in Lemma~\ref{lem:3.8}. Propositions~\ref{prop:cr-aubin} and~\ref{prop:cr-aubin-fixed-mass} give the Aubin-type inequalities announced in the introduction.

\begin{lem}\label{positive}
 Let $\d  \nu$ be a positive probability measure on $\S^{2 n+1}$, with positive continuous density and $\int_{\S^{2 n+1}} \zeta \, \d  \nu=0$. For $b \in \mathbb{B}^{n+1}$, define
$$
\varphi_b(\zeta)=\frac{c_b \zeta+b+(1+c_b)^{-1}\langle\zeta, b\rangle b}{1+\langle\zeta, b\rangle}, \quad c_b=\sqrt{1-|b|^2} .
$$
Then, for every $b_0 \in \mathbb{B}^{n+1}$,
$$
\operatorname{det}\Big[D_b \int_{\S^{2 n+1}}(\varphi_{b_0}^{-1} \circ \varphi_b)(\zeta) \, \d  \nu\Big|_{b=b_0}\Big]>0,
$$
where $D_b$ denotes differentiation with respect to the real coordinates of $b \in \C^{n+1} \cong \mathbb{R}^{2 n+2}$.
\end{lem}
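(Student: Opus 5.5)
We first reduce to the case $b_0=0$ by composing with automorphisms. The maps $\varphi_b$ are the CR (holomorphic) automorphisms of the ball $\mathbb B^{n+1}$, normalized so that $\varphi_b(0)=b$. For $b$ near $b_0$, write
\[
\varphi_{b_0}^{-1}\circ\varphi_b=U(b)\circ\varphi_{c(b)},
\]
where $c(b)=\varphi_{b_0}^{-1}(b)$ and $U(b)\in U(n+1)$ depends smoothly on $b$ with $U(b_0)=I$. Here $b\mapsto c(b)$ is a local diffeomorphism with $c(b_0)=0$. Its differential at $b_0$ is the differential of a holomorphic map, hence complex linear, so its real determinant is $|\det_{\C}|^2>0$. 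Differentiating at $b=b_0$ gives
\[
D_b\int(\varphi_{b_0}^{-1}\circ\varphi_b)\,\d\nu\Big|_{b_0}
=\Big(D_c\int\varphi_c\,\d\nu\Big|_{c=0}\Big)\,Dc(b_0)
+\Big(D_bU(b_0)\Big)\int\zeta\,\d\nu .
\]
The last term vanishes because $\int\zeta\,\d\nu=0$. Therefore it suffices to show $\det D_c\big[\int\varphi_c\,\d\nu\big]_{c=0}>0$ for every such balanced measure $\nu$.

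Next we compute the derivative at $c=0$. Since $c_b=1-O(|b|^2)$, the formula gives
\[
\varphi_b(\zeta)=\zeta+b-\langle\zeta,b\rangle\zeta+O(|b|^2).
\]
Hence the real linear map is
\[
A(h)=\int\big(h-\langle\zeta,h\rangle\zeta\big)\,\d\nu,\qquad h\in\C^{n+1}.
\]
For the quadratic form we use the real pairing $h\cdot k=\operatorname{Re}\langle h,k\rangle$:
\[
h\cdot A(h)=|h|^2-\int\operatorname{Re}\big(\langle\zeta,h\rangle\langle h,\zeta\rangle\big)\,\d\nu
=|h|^2-\int|\langle\zeta,h\rangle|^2\,\d\nu .
\]
By Cauchy--Schwarz, $|\langle\zeta,h\rangle|\le|h|$ on the sphere, with equality only on the circle $\zeta\in\S^1 h/|h|$. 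That circle has $\nu$-measure zero, since $\nu$ has a continuous density. Thus $h\cdot A(h)>0$ for $h\ne0$.

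The symmetric part of $A$ is therefore positive definite, and such a real matrix has positive determinant. To see this, note that $A_t=(1-t)I+tA$ has positive definite symmetric part for every $t\in[0,1]$, so each $A_t$ is invertible. Hence $\det A_t$ never vanishes on the path from $\det I=1$ to $\det A$, and $\det A>0$. This proves the claim.

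The main obstacle is the first step: verifying the composition identity for $\varphi_{b_0}^{-1}\circ\varphi_b$ with smooth unitary factor $U(b)$ and the orientation of $c(b)$. If that bookkeeping is messy, there is a direct route that avoids it. Substitute $\mu=(\varphi_{b_0})^{-1}_*$-type pushforward and note that the derivative in the direction of $b$ equals $A_{\nu}$ composed with the differential of $b\mapsto\varphi_{b_0}^{-1}(b)$, using the group structure of $\operatorname{Aut}(\mathbb B^{n+1})$. The unitary error again drops out by the balance condition.
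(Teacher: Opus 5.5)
Your proposal is correct and is essentially the paper's proof: your $Dc(b_0)=D(\varphi_{b_0}^{-1})(b_0)$ is exactly the paper's positive definite matrix $B_{b_0}=c_{b_0}^{-1}I+b_0b_0^*/(c_{b_0}^2(1+c_{b_0}))$, your unitary term is the paper's $A_h\zeta$, which is killed by $\int\zeta\,\d\nu=0$, and $\det M>0$ comes from the same homotopy $(1-t)I+tM$. There are two harmless slips: since $\langle\cdot,\cdot\rangle$ is complex linear in its first slot, $\operatorname{Re}\langle Ah,h\rangle=|h|^2-\int\operatorname{Re}(\langle\zeta,h\rangle^2)\,\d\nu$ rather than $|h|^2-\int|\langle\zeta,h\rangle|^2\,\d\nu$, which is still positive because $\operatorname{Re}(w^2)\le|w|^2$; and with $U(b)$ on the left you must take $c(b)=U(b)^{-1}\varphi_{b_0}^{-1}(b)$ (or put the unitary on the right), which has the same differential at $b_0$.
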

\begin{proof}
     Fix $b=b_0$. Differentiation in the direction $h\in\C^{n+1}$ gives
$$
\frac{\d}{\d \var}(\varphi_b^{-1} \circ \varphi_{b+\var h})(\zeta)\Big|_{\var=0}=B_b h-\langle\zeta, B_b h\rangle \zeta+A_h \zeta,
$$
where $A_h$ is a matrix independent of $\zeta$, $I$ is the identity, $b^*$ is the conjugate transpose of the column vector $b$, and
$$
B_b=c_b^{-1} I+\frac{b b^*}{c_b^2(1+c_b)}>0 .
$$
Since $\int_{\S^{2n+1}} \zeta \, \d  \nu=0$, the term $A_h\zeta$ integrates to zero. Thus the required differential is $M B_b$, where
$$
M v=\int_{\S^{2n+1}}(v-\langle\zeta, v\rangle \zeta) \, \d  \nu.
$$
For $v \neq 0$, positivity of the density implies
$$
\operatorname{Re}\langle M v, v\rangle=\int_{\S^{2n+1}}(|v|^2-\operatorname{Re}(\langle\zeta, v\rangle^2)) \, \d  \nu>0.
$$
Therefore $(1-t) I+t M$ is invertible for $0 \leq t \leq 1$, giving $\operatorname{det} M>0$. Since $B_b$ is positive definite, $\operatorname{det}(M B_b)>0$, as required.
\end{proof}

The next lemma relates the Leray--Schauder degree to the finite-dimensional equation for the Lagrange multipliers.

\begin{lem}\label{prop A2}
   Let $0<\al '<\al <1$, and suppose that
$$
R \in C^1(\S^{2 n+1}), \quad\|R-1\|_{L^\infty(\S^{2n+1})} \leq \var_4,
$$
where $\var_4>0$ is sufficiently small. Use the notation of Section~\ref{sec:degree-comparison}, and fix $\lam >1$, with $s=(\lam -1) / \lam $. If
$$
\Lam _p \neq 0 \quad \text { for }|p|=s,
$$
then, for all sufficiently large cutoffs $C$, the degree on $\mathcal{U}(\lam )$ is well defined and
$$
\operatorname{deg}(F_R, \mathcal{U}(\lam ), 0)=-\operatorname{deg}(\Lam _p, \mathbb{B}_s^{n+1}, 0),
$$
where
$$
F_R(v)=v-c_0 L_{\theta_0}^{-1}(R|v|^{q-2} v), \quad q=Q^*, \quad c_0=\frac{n^2}{4} .
$$
\end{lem}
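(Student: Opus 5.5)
The plan is the finite-dimensional reduction argument of \cite[Appendix]{Li1995}, carried out in the conformal coordinates $v=a\,T_{\varphi_p}^{-1}w$, with $a>0$, $w\in\M_0$ and $p\in\mathbb B^{n+1}$, supplied by Lemma~\ref{lem:balanced-decomposition}. The domain $\mathcal U(\lam)$ is parametrized by $(a,w,p)$ with $w\in\mathcal N_1$ and $|p|<s$. The steps are as follows.

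\textbf{Step 1: a priori localization of zeros.} Every zero of $F_R$ in $\overline{\mathcal U(\lam)}$ is a positive solution. By Kazdan--Warner \eqref{KW new} and the uniqueness in Lemma~\ref{lempe5}, it must have $w=w_p$, $\Lam_p=0$, and $a$ fixed by $\lam_p$. The hypothesis $\Lam_p\neq0$ on $|p|=s$ excludes zeros on the $p$-boundary. The uniqueness and strict minimality in Lemma~\ref{lempe5} exclude zeros with $\|w-1\|=\var_3$. For the cutoff $C$ large, the bounds \eqref{pe10} and $w_p\ge1/2$ put every solution strictly inside the $\mathcal X$-ball and above the positivity floor. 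Hence $0\notin F_R(\partial\mathcal U(\lam))$, and the degree is well defined.

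\textbf{Step 2: homotopy to a gradient-like map.} The Leray--Schauder degree of $F_R$ equals the degree of $\nabla E_R$, suitably normalized. I plan to homotope $F_R$ to the map
\[
\Phi(v)=v-L_{\theta_0}^{-1}\bigl(\lam(v)R|v|^{q-2}v\bigr),
\]
which is homogeneous of degree one in $a$. Along this homotopy the scaling direction contributes the factor $+1$: the map $a\mapsto a-a^{q-1}$ type behaviour near the positive root has negative derivative. I will track this sign carefully, since it is one source of the overall minus sign.

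\textbf{Step 3: splitting the remaining directions.} In the $w$-directions, the positive definiteness of $D^2E_R|_{\M_0}(w_p)$ makes the linearization $I-K$ with $K$ compact. Its negative-eigenvalue count is fixed by the constant eigenvalue structure \eqref{eq:spherical-eigenvalues}. Specifically, the only negative directions of $I-c_0(q-1)L_{\theta_0}^{-1}$ at $v=1$ are $\mathcal H_{0,0}$, which has dimension $1$ and is the $a$-direction. The directions $\mathcal H_{1,0}\oplus\mathcal H_{0,1}$ are the conformal ones, of real dimension $2n+2$, where the linearization vanishes. So on $T_1\M_0$ the degree is $+1$. I will perform a homotopy, using the product property of degree, to the map
\[
(a,w,p)\mapsto\bigl(a\text{-part},\ w-w_p,\ \partial_pI(w_p,p)\bigr),
\]
which is admissible because zeros stay in the interior by Step 1. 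This yields
\[
\deg(F_R)=(-1)^{1}\cdot 1\cdot\deg\bigl(\partial_pI(w_p,p),\mathbb B_s^{n+1},0\bigr).
\]
Here $(-1)^1$ comes from $\mathcal H_{0,0}$, the single negative direction.

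\textbf{Step 4: identification with $\Lam_p$.} The formula \eqref{eq:reduced-first-variation}, combined with Lemma~\ref{positive}, shows that $\partial_pI(w_p,p)$ is obtained from $\Lam_p$ by a negative scalar times a matrix of positive determinant. Since the real dimension $2n+2$ is even, the negative scalar does not change the degree, so the degree of $\partial_pI$ equals $\deg(\Lam_p,\mathbb B_s^{n+1},0)$. This gives $-\deg(\Lam_p,\mathbb B_s^{n+1},0)$.

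The main obstacle is Step 3. The difficulty is justifying rigorously, in $\mathcal X=\Gamma_{2+\al'}$, the homotopy from $F_R$ to the split map through compact perturbations of the identity without creating boundary zeros. The fibration $\varpi$ is only a homeomorphism, not linear, so the plan there is to work locally near the compact solution set, $\{(a_p,w_p,p)\}$, and use excision.
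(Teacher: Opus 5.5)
Your outline has the right ingredients: the localization of zeros, a single negative amplitude direction, coercivity in the normal directions, and the identification of $\partial_pI$ with $\Lam_p$ via \eqref{eq:reduced-first-variation}, Lemma~\ref{positive} and $\dim_\R\C^{n+1}=2n+2$. But the step that actually carries the lemma is not supplied, namely the passage from the Leray--Schauder degree of $F_R$ to a finite-dimensional degree.

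\textbf{Step 2 fails as stated.} If $\lam(v)$ is normalized so that $\Phi(av)=a\Phi(v)$, then along the interpolation between $F_R$ and $\Phi$ nothing happens for $t<1$: testing against $v$ forces $\lam(v)=c_0$, so the zeros are those of $F_R$. At $t=1$, however, the zeros of $\Phi$ are whole rays $\{au_p:a>0\}$. Since $\mathcal N_3(\lam)$ is a cone, these rays leave $\mathcal U(\lam)$ through the cutoffs $\|v\|_{\mathcal X}=C$ and $\min v=C^{-1}$. Hence $\deg(\Phi,\mathcal U(\lam),0)$ is undefined. Also $D\Phi(u_p)[u_p]=0$, so no sign can be read off in the amplitude direction. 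Your sign bookkeeping is inconsistent as well: Step~2 assigns $+1$ to the scaling direction, while Step~3 assigns $(-1)^1$ to the same $\mathcal H_{0,0}$ direction. The latter is the correct one, cf.\ \eqref{A2 new}.

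\textbf{Step 3 is asserted rather than proved.} You give no admissible homotopy to the split map $(a\text{-part},\,w-w_p,\,\partial_pI(w_p,p))$. The chart $(a,w,p)$ is only a homeomorphism, so $F_R$ need not be identity minus compact in these coordinates. The domain is not a product there. A straight-line homotopy gives no control of zeros at intermediate times, so Step~1 does not keep them off the boundary. Finally, the spectral splitting you quote holds only at $v=1$ with $R\equiv1$: negative on $\mathcal H_{0,0}$, kernel on $\mathcal H_{1,0}\oplus\mathcal H_{0,1}$, positive elsewhere. At $u_p$ the conformal directions are only nearly degenerate and couple to the others, and their sign contribution is exactly what must be computed.

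\textbf{How the paper avoids a global homotopy.}
\begin{enumerate}[(i)]
\item It first perturbs $R$ along finitely many smooth directions, using transversality (which follows from $u>0$) and Sard's theorem, so that all zeros in $\mathcal U(\lam)$ become nondegenerate. Both boundary exclusions persist under small $C^1$ perturbations of $R$, by \eqref{A1 new} and compactness. Homotopy invariance on the fixed domain then returns the identity for the original $R$.
\item At a nondegenerate zero, the compact part of $DF_R(u_p)$ in \eqref{A7 new} is self-adjoint in the energy inner product. Subelliptic regularity transfers its generalized eigenspaces to $\mathcal X$. Hence the local index is $(-1)^{m(J_R,u_p)}$.
\item The Morse index is computed in the constraint coordinates $u=a\,v(b,z)$. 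The amplitude contributes exactly one negative direction by \eqref{A2 new}, and the normal block satisfies $\Phi_{zz}>0$ by \eqref{A4 new}.
\item The Schur complement $\Phi_{bb}-\Phi_{bz}\Phi_{zz}^{-1}\Phi_{zb}=C_pD_b^2[I(w_b,b)]$ gives \eqref{A6 new}, that is, $m(J_R,u_p)=1+m(D_b^2I)$.
\item Each local index is therefore $-\operatorname{sgn}\det D_b^2I=-\operatorname{sgn}\det D_p\Lam_p$, and summing over the zeros gives the formula.
\end{enumerate}
This local Morse-index computation at isolated nondegenerate zeros is the missing idea. It needs only linear algebra at points, so no nonlinear chart has to carry Leray--Schauder structure. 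It should replace your Steps~2 and~3.
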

\begin{proof}
   Subelliptic estimates and compact embedding show that $F_R$ is the identity minus a compact map on $\overline{\mathcal U(\lam)}$, with $\mathcal U(\lam)$ defined in \eqref{eq:degree-domain}. 
   In the coordinates of Lemma~\ref{lempe4}, estimates \eqref{pe4}--\eqref{pe6} give
\begin{align}\label{A1 new}
D_{\widetilde{w}}^2 I(w(\widetilde{w}), p)[h, h] \geq c\|h\|^2, \quad|p| \leq s,
\end{align}
on one fixed coordinate ball containing $\mathcal{N}_1$, after decreasing $\var_3$. Hence,
$$
\begin{aligned}
\langle D_{\widetilde{w}} I(w(\widetilde{w}), p)-D_{\widetilde{w}} I(w(\widetilde{w}_p), p), \widetilde{w}-\widetilde{w}_p\rangle \geq c\|\widetilde{w}-\widetilde{w}_p\|^2.
\end{aligned}
$$
The strict convexity \eqref{A1 new} makes $w_p$ the unique constrained critical point in the coordinate neighborhood.
Therefore, by \eqref{eq:lag-mul} and \eqref{lam p}, the positive zeros in $\overline{\mathcal{N}_3(\lam )}$ are precisely
$$
u_p=a_p T_{\varphi_p}^{-1} w_p, \quad a_p^{4 /(Q-2)}=\frac{4 \lam _p}{n^2}, \quad \quad \Lam _p=0 .
$$

 After decreasing $\var_4$, Lemma~\ref{lempe5} and subelliptic estimates for \eqref{eq:lag-mul} give, for fixed $\lam$,
\begin{equation}\label{eq:reduced-family-bounds}
\sup _{|p| \leq s}\|w_p-1\|<\var_3, \quad \sup _{|p| \leq s}\Big(\|u_p\|_{\mathcal X}+\frac{1}{\min_{\S^{2n+1}}u_p}\Big)<\infty .
\end{equation}
The boundary condition $\Lam_p\neq0$ on $|p|=s$ and \eqref{eq:reduced-family-bounds} give $F_R\neq0$ on $\pa\mathcal U(\lam)$ for all sufficiently large $C$.

Assume first that all zeros in $\mathcal{U}(\lam )$ are nondegenerate. Set
$$
J_R(v)=\frac{1}{2} \dashint_{\S^{2n+1}} v L_{\theta_0} v\,\d\si -\frac{n^2}{4 Q^*} \dashint_{\S^{2n+1}} R|v|^{Q^*}\,\d\si  .
$$
Differentiation in the amplitude variable gives 
\be\label{A2 new}
\frac{\d^2}{\d a^2} J_R(a T_{\varphi_p}^{-1} w_p)\Big|_{a=a_p}=-\frac{4}{Q-2} \dashint_{\S^{2n+1}}
w_p L_{\theta_0} w_p\,\d\si <0,
\ee 
and
\be\label{A3 new}
J_R(u_p)=\frac{1}{Q}\Big(\frac{4}{n^2}\Big)^{(Q-2) / 2} I(w_p, p)^{Q / 2} .
\ee 

We reparametrize $w_p, \Lam _p$, and $I$ by $b$ as in \eqref{varphi new}.
 For the normalized variable $v$, consider the following constraints
$$
\dashint_{\S^{2 n+1}}|v|^{Q^*}\,\d\si =1, \quad \dashint_{\S^{2 n+1}} \varphi_b^{-1}(\zeta)|v|^{Q^*}\,\d\si =0 .
$$
Since $Q^*>2$, these constraints are $C^2$ on $S_1^2(\S^{2 n+1})$, with independent differentials near the positive solution. By Lemma~\ref{positive} and the implicit function theorem, choose local $C^2$ constraint coordinates $v=v(b, z)$, with $z$ in a fixed closed subspace of $S_1^2(\S^{2 n+1})$, and write $u=a v(b, z)$. After eliminating $a>0$, the energy becomes
$$
\Phi(b, z)=\frac{1}{Q}\Big(\frac{4}{n^2}\Big)^{(Q-2) / 2} E_R(v(b, z))^{Q / 2} .
$$

Estimate \eqref{A1 new} and the implicit function theorem give a local graph of normal critical points satisfying
\be\label{A4 new}
\Phi_z(b, z(b))=0, \quad \Phi_{z z}(b, z(b))>0 .
\ee
Uniqueness in \eqref{A1 new} gives $v(b,z(b))=T_{\varphi_b}^{-1}w_b$. Differentiating the first identity in \eqref{A4 new} yields $D_bz(b)=-\Phi_{zz}^{-1}\Phi_{zb}$.
At $u_p$, substitution into \eqref{A3 new} gives
$$
D_b^2[\Phi(b, z(b))] =\Phi_{b b}-\Phi_{b z} \Phi_{z z}^{-1} \Phi_{z b} =C_p D_b^2[I(w_b, b)], \quad C_p>0 .
$$
We write $m(B)$ for the number of negative eigenvalues of a self-adjoint Hessian $B$, counted with multiplicity, and set $m(J_R,u_p):=m(D^2J_R(u_p))$, the Morse index in the energy space $S_1^2(\S^{2n+1})$. The notation $\operatorname{ind}$ in \eqref{5.43 new} denotes the local Brouwer index, not the Morse index. Eliminating the mixed terms and using \eqref{A2 new} and \eqref{A4 new}, we obtain
\be\label{A6 new}
m(J_R, u_p)=1+m(D_b^2[I(w_b, b)]).
\ee

Differentiating the definition of $F_R$ at $u_p>0$, we obtain
\begin{align}\label{A7 new}
D F_R(u_p)=\operatorname{Id}-\frac{n^2}{4}(Q^*-1) L_{\theta_0}^{-1}(R u_p^{Q^*-2} \cdot) .
\end{align}
Its compact part is self-adjoint in the energy inner product and compact on $S_1^2(\S^{2 n+1})$, by the compact embedding $S_1^2(\S^{2n+1}) \hookrightarrow L^2(\S^{2n+1})$ and the boundedness of $L_{\theta_0}^{-1}: L^2(\S^{2n+1}) \to  S_1^2(\S^{2n+1})$. Subelliptic regularity identifies its nonzero generalized eigenspaces in $S_1^2(\S^{2n+1})$ and $\mathcal X$, with the same algebraic multiplicities. Equations \eqref{A6 new} and \eqref{A7 new} therefore give
\begin{align*}
\operatorname{deg}(F_R, B_\delta^{\mathcal X}(u_p), 0) =(-1)^{m(J_R, u_p)} 
 =-\operatorname{sgn} \operatorname{det} D_b^2[I(w_b, b)] 
=-\operatorname{sgn} \operatorname{det} D_b \Lam _b =-\operatorname{sgn} \operatorname{det} D_p \Lam _p .
\end{align*}
The third equality uses \eqref{eq:reduced-first-variation}, Lemma~\ref{positive}, and $(-1)^{\dim_{\R}\C^{n+1}}=(-1)^{2n+2}=1$.
Here $B_\delta^{\mathcal X}(u_p)$ is the norm ball in $\mathcal X$. The last equality uses the orientation-preserving change $p\mapsto b$ in \eqref{varphi new}.
Summing over the solutions yields
$$
\operatorname{deg}(F_R, \mathcal{U}(\lam ), 0)  =-\sum_{\substack{p\in\mathbb B_s^{n+1}\\\Lam_p=0}} \operatorname{sgn} \operatorname{det} D_p \Lam _p  =-\operatorname{deg}(\Lam _p, \mathbb{B}_s^{n+1}, 0) .
$$

Fix $\lam$. Estimate \eqref{A1 new}, the implicit function theorem, and subelliptic estimates make \eqref{eq:reduced-family-bounds} uniform under sufficiently small $C^1$ perturbations of $R$. Choose $C$ larger than the bounds in \eqref{eq:reduced-family-bounds} and keep $\mathcal U(\lam)$ in \eqref{eq:degree-domain} fixed. Compact embedding gives compactness of the corresponding solutions in $\mathcal X$ under these perturbations of $R$. Hence both boundary exclusions persist under sufficiently small perturbations.

The linearization \eqref{A7 new}, self-adjointness in the energy inner product, and subelliptic regularity identify the cokernel with solutions $\psi$ of the linearized equation. If
$$
\dashint_{\S^{2n+1}} h u^{Q^*-1} \psi\,\d\si =0 \quad \text { for every smooth } h,
$$
then $\psi=0$, since $u>0$. A finite cover of the compact zero set therefore provides finitely many smooth perturbation directions for $R$ for which the parameterized equation is transverse to zero.
The parameterized equation is smooth in the unknown positive function and the finite-dimensional perturbation parameters. Finite-dimensional Sard theory gives arbitrarily small regular parameters for which every zero in $\mathcal U(\lam)$ is nondegenerate. Compactness of the solution family excludes additional zeros outside these neighborhoods. Homotopy invariance on the fixed domains proves the identity for the original prescribed curvature $R$.

The proof is complete.
\end{proof}

We now prove the Cayley-coordinate estimates used in Lemma~\ref{lem:3.8}.

\begin{lem}\label{Prop A1}
Let $P_0$ be the north pole, and let $F$ be the inverse Cayley transform defined in \eqref{Cayley}.
Fix $C_3>0$. There exists a constant $\var_0>0$ such that the following holds.

For every $0<\var<\var_0$, let $\lam>0$ be sufficiently large, and suppose that $P\in\S^{2n+1}$, $d_{\theta_0}(P,P_0)\leq C_3\lam^{-1}$. Write $\widetilde{P}=F^{-1}(P)=(\widetilde{P}_0,\widetilde{P}_{n+1})=(\widetilde{P}_1,\cdots, \widetilde{P}_{n+1})$. For $\xi=(z,t)\in \H^n$, $|\xi|\leq \var \lam$, set $\widetilde{\varphi}=F^{-1}\circ \varphi_{P,\lam} \circ F(\xi)$ and $\zeta=F(\xi)$, where $\varphi=\varphi_{P,\lam }$ is defined as in Lemma~\ref{varphi}. The subscript $0$ in $\widetilde P_0$ and $\widetilde\varphi_0$ denotes the first $n$ complex components; their $(n+1)$st components are real.

Then,
\[
\lam  \widetilde{\varphi}_0=z+\lam  \widetilde{P}_0+E_1,\quad \text{ and }\quad 
    \lam ^2\widetilde{\varphi}_{n+1}=t+\lam ^2 \widetilde{P}_{n+1}-2\lam \operatorname{Im}(z\bar{\widetilde{P}}_0)+e_2,
\]
where $E_1\in \C^n$, $e_2\in \R$, 
\be\label{E1}
        |E_1|\leq C[o_\var (1)(1+|\xi|)+\lam ^{-1}(1+|\xi|^2)],
\ee
and 
\be \label{E2}
    |e_2|\leq C\lam ^{-1}(1+|\xi|^3).
\ee
Here, $C$ depends only on $n$ and $C_3$, and $o_\var(1)\to  0$ as $\var\to  0^+$.
\end{lem}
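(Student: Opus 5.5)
The plan is a direct computation from the explicit formula in Lemma~\ref{varphi}. Everything is controlled by the bounds \eqref{6.17 in JDE}--\eqref{6.22 in JDE}, which were derived in the proof of Lemma~\ref{lem:3.8}. Throughout, $\zeta=F(\xi)$ with $|\xi|\le\var\lam$ and $d_{\theta_0}(P,P_0)\le C_3\lam^{-1}$. Set $w:=\langle\zeta,P\rangle$ and $D=\lam^2(1+w)+(1-w)$, and note that $1+\zeta_{n+1}=2/(1+|z|^2-\mathrm{i}t)$.

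\textbf{Horizontal part.} By Lemma~\ref{varphi}, $\varphi_i=D^{-1}\{2\lam(\zeta_i-wP_i)+[\lam^2(1+w)-(1-w)]P_i\}$ and $1+\varphi_{n+1}=D^{-1}\{2\lam(\zeta_{n+1}-wP_{n+1})+\lam^2(1+w)(1+P_{n+1})+(1-w)(1-P_{n+1})\}$. Hence
\[
\widetilde\varphi_i=\frac{2\lam(\zeta_i-wP_i)+[\lam^2(1+w)-(1-w)]P_i}{2\lam(\zeta_{n+1}-wP_{n+1})+\lam^2(1+w)(1+P_{n+1})+(1-w)(1-P_{n+1})}.
\]
I will divide numerator and denominator by $\lam^2(1+w)(1+P_{n+1})$. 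By \eqref{6.17 in JDE} and \eqref{6.18 in JDE}, the denominator becomes $1+O(\lam^{-1}|\zeta_{n+1}-wP_{n+1}|/|1+w|)+O(\lam^{-4}|1+w|^{-1})$. The middle term is $o_\var(1)$ by \eqref{6.18 in JDE}, since $|\zeta_{n+1}-w P_{n+1}|\le C(\lam^{-1}|z|+\lam^{-2})|1+\zeta_{n+1}|^{\cdot}$, which can be made relative through \eqref{6.19 in JDE}.

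Multiplying the numerator by $\lam$, the leading terms are
\[
\frac{2\zeta_i}{(1+w)(1+P_{n+1})}\approx\frac{\zeta_i}{1+\zeta_{n+1}}=z_i
\qquad\text{and}\qquad
\frac{\lam P_i}{1+P_{n+1}}=\lam\widetilde P_i.
\]
The first uses \eqref{6.19 in JDE} together with $P_{n+1}=1+O(\lam^{-2})$. The error in the first is $o_\var(1)|z|$. The remaining pieces are $wP_i/(1+w)$, bounded by \eqref{6.20 in JDE} as $C\lam^{-1}(1+|\xi|^2)$, and $P_i/(\lam^2(1+w))=O(\var^2\lam^{-1})$. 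The correction from the denominator multiplies $z+\lam\widetilde P_0=O(1+|\xi|)$ by $o_\var(1)$. Together these give \eqref{E1}.

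\textbf{Central part.} Here I expect to need a sharper, exact computation rather than absolute bounds, because we need error $O(\lam^{-1})$ after multiplying by $\lam^2$. The natural route is the group structure. The map $\widetilde\varphi$ is the CR automorphism $\xi\mapsto \widetilde P\circ\delta_{\lam^{-1}}$ up to a correction that is conjugated by the Cayley map, and this correction is exactly the discrepancy between $\varphi_{P,\lam}$ (dilation centred at $P$ in the chart $F_P$) and dilation centred at $P$ in the chart $F$ centred at $P_0$. Concretely, $F^{-1}\circ F_P=\tau_{\widetilde P}\circ\Theta$ with $\Theta$ a CR map fixing $0$ whose differential at $0$ is $\delta_{c}\circ U$ and which is $C^\infty$-close to that linear map, deviating by $O(|\widetilde P|)=O(\lam^{-1})$. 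Then
\[
\widetilde\varphi=\tau_{\widetilde P}\circ\Theta\circ\delta_{\lam^{-1}}\circ\Theta^{-1}\circ\tau_{\widetilde P}^{-1}.
\]
Evaluating the $t$-component, $\lam^2\,[\widetilde P\circ\delta_{\lam^{-1}}(\widetilde P^{-1}\circ \xi)\text{-type expression}]_t$ produces exactly $t+\lam^2\widetilde P_{n+1}-2\lam\operatorname{Im}(z\bar{\widetilde P}_0)$. The $\Theta$-nonlinearity contributes terms of homogeneous order at least $3$ in $\delta_{\lam^{-1}}$-scaled variables. This yields $\lam^2\cdot\lam^{-3}(1+|\xi|)^3=\lam^{-1}(1+|\xi|^3)$, which is \eqref{E2}. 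The cleaner alternative, which I will try first, is to compute $\operatorname{Re}(\mathrm{i}(1-\varphi_{n+1})/(1+\varphi_{n+1}))$ directly from Lemma~\ref{varphi} and expand it in $\lam^{-1}$, using the identities $|P|=1$ and $\lam^2|1-P_{n+1}|\le C$.

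\textbf{Main obstacle.} The main obstacle is the central estimate \eqref{E2}. The first-order terms must cancel exactly: the linear terms in $z$ and $\widetilde P$ must reproduce $-2\lam\operatorname{Im}(z\bar{\widetilde P}_0)$ with no $o_\var(1)$ loss, since any such loss would be multiplied by $\lam^2$. I will therefore organize that computation exactly, in terms of the quantity $1-\varphi_{n+1}$ and Taylor expansion in $\lam^{-1}$, keeping the real parts throughout. Constants depend only on $n$ and $C_3$ via \eqref{6.17 in JDE}.
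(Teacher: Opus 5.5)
\textbf{Horizontal component.} Your estimate for \eqref{E1} is correct and is essentially the paper's argument. The paper writes $\lambda\widetilde\varphi_1=V_1/(2H)$ with $s=1+w$, $q=1-w$ (your $w=\langle\zeta,P\rangle$). This differs from your normalization only in dividing by $2$ instead of $1+P_{n+1}$. Here the $o_\varepsilon(1)$ relative error of \eqref{6.19 in JDE} is affordable, because \eqref{E1} allows $o_\varepsilon(1)(1+|\xi|)$. One small slip: the contribution of $-(1-w)P_i$ to $\lambda\widetilde\varphi_i$ is $(1-w)P_i/(\lambda(1+w)(1+P_{n+1}))=O(\varepsilon^2)$, not $O(\varepsilon^2\lambda^{-1})$; this is harmless.

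\textbf{The gap: \eqref{E2}.} This is the actual content of the lemma. You locate the difficulty correctly, but you carry out neither of your two strategies. What has to be done, and what the paper does, is the following.
\begin{enumerate}[(i)]
\item Write $\frac{1-\varphi_{n+1}}{1+\varphi_{n+1}}=H^{-1}B$ with $B=\frac{1-P_{n+1}}{2}+\frac{q(1+P_{n+1})}{2\lambda^2 s}-\frac{\zeta_{n+1}-wP_{n+1}}{\lambda s}$. Since $|B|\le C\lambda^{-2}(1+|\xi|^2)$ and $|H^{-1}-1|\le C\lambda^{-2}(1+|\xi|)+C\lambda^{-3}(1+|\xi|^2)$, replacing $H^{-1}$ by $1$ costs only $O(\lambda^{-1}(1+|\xi|^3))$ after multiplying by $\lambda^2$.
\item The three pieces of $\lambda^2\operatorname{Re}(\mathrm{i}B)$ give $\lambda^2\widetilde P_{n+1}+O(\lambda^{-2})$, then $\operatorname{Re}(\mathrm{i}q/s)+O(\lambda^{-2}|q/s|)$, and finally $\lambda\sum_{i\le n}\operatorname{Re}(\mathrm{i}\zeta_i\bar P_i/s)+O(\lambda^{-1}(1+|\xi|^2))$. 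The last uses the splitting $\zeta_{n+1}-wP_{n+1}=\zeta_{n+1}(1-P_{n+1})-\bigl(\sum_{i\le n}\zeta_i\bar P_i+\zeta_{n+1}(\bar P_{n+1}-1)\bigr)P_{n+1}$.
\item Then $P_i=(1+P_{n+1})\widetilde P_i$ turns $\lambda\sum_i\operatorname{Re}(\mathrm{i}z_i\bar P_i)$ into $-2\lambda\operatorname{Im}(z\bar{\widetilde P}_0)+O(\lambda^{-2}|z|)$.
\end{enumerate}
The step your plan must get right is that \eqref{6.19 in JDE} in its stated form is too weak for the last two pieces. Through $q/s-\frac{1-\zeta_{n+1}}{1+\zeta_{n+1}}=2\bigl(s^{-1}-(1+\zeta_{n+1})^{-1}\bigr)$ it only yields $\operatorname{Re}(\mathrm{i}q/s)=t+o_\varepsilon(1)(1+|\xi|^2)$. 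It also only yields $\lambda\zeta_i\bar P_i/s=\lambda z_i\bar P_i+o_\varepsilon(1)|z|$. Neither error is $O(\lambda^{-1}(1+|\xi|^3))$. You need the quantitative bound from its proof, $|s^{-1}-(1+\zeta_{n+1})^{-1}|\le C|s|^{-1}\bigl(\lambda^{-1}|z|+\lambda^{-2}|1+\zeta_{n+1}|^{-1}\bigr)$. Combine it with $|s|^{-1}+|1+\zeta_{n+1}|^{-1}\le C(1+|\xi|^2)$ and $|\xi|\le\varepsilon\lambda$.

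\textbf{The group-theoretic route.} This could give a cleaner proof, but as stated its decisive claim is unsupported. Even granting that $\Theta$ is $O(\lambda^{-1})$-close to its graded-linear part near $0$, this does not control $\Theta^{-1}$ at $\widetilde P^{-1}\circ\xi$, whose size reaches $\varepsilon\lambda$. The relevant object is the conjugate $\Theta\circ\delta_{\lambda^{-1}}\circ\Theta^{-1}$. It differs from $\delta_{\lambda^{-1}}$ by a CR map tangent to the identity whose coefficients are of size $O(1)$, not $O(\lambda^{-1})$. A rigorous version would proceed as follows.
\begin{enumerate}[(i)]
\item Take $U_P$ close to the identity. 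This is legitimate, since $\varphi_{P,\lambda}$ does not depend on the choice of $U_P$.
\item Write $\Theta=L\circ\iota\tau_a\iota$, with $L$ graded-linear, $\iota$ the CR inversion, and $|a|=O(\lambda^{-1})$ in the homogeneous norm.
\item Use $\iota\tau_a\iota\circ\delta_{\lambda^{-1}}\circ(\iota\tau_a\iota)^{-1}=\iota\tau_b\iota\circ\delta_{\lambda^{-1}}$ with $b=a\circ\delta_\lambda a^{-1}=O(1)$.
\item Homogeneity then shows that the $t$-component of $\iota\tau_b\iota(\eta)$ is $\eta_t+O(|\eta|^3)$ for $|\eta|\lesssim\varepsilon$. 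Evaluating at $\eta=\delta_{\lambda^{-1}}(\widetilde P^{-1}\circ\xi)$ gives \eqref{E2}.
\end{enumerate}
None of these steps is in the proposal, so as it stands \eqref{E2} is unproved.
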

\begin{proof}
We first prove \eqref{E1}. It suffices to consider $E_{11}=\lam\widetilde\varphi_1-z_1-\lam\widetilde P_1$, where $z=(z_1,\ldots,z_n)$; the other components are estimated in the same way.

    For brevity, let $s=1+\langle \zeta,P \rangle $, $q=1-\langle \zeta,P \rangle$ and $A=\frac{\lam ^2 s}{\lam ^2 s+q}$. Set $V_i:=\lam  A^{-1}\varphi_i$ for $1\leq i\leq n+1$, using Lemma~\ref{varphi}; thus $\varphi_i=\lam ^{-1}AV_i$. Here $q$ is a local scalar abbreviation, not the critical exponent $Q^*$.
    Hence
    \be\label{A5}
    E_{11}=\lam  \widetilde{\varphi}_1-z_1-\lam  \widetilde{P}_1=\frac{\lam \varphi_1}{1+\varphi_{n+1}}-z_1-\lam  \widetilde{P}_1= \frac{AV_1}{1+\varphi_{n+1}}-z_1-\lam  \widetilde{P}_1
    = \frac{V_1}{2H}-z_1-\lam  \widetilde{P}_1,
     \ee
     where $H=(1+\varphi_{n+1})/(2A)$ is an auxiliary complex-valued factor in this proof. Its expansion is
     \begin{align*}
         2H=\frac{1}{A}+\frac{V_{n+1}}{\lam }=&\, 1+\frac{q}{\lam ^2 s}+P_{n+1}+\frac{2(\zeta_{n+1}-\langle \zeta,P \rangle P_{n+1})}{s\lam }-\frac{q P_{n+1}}{s\lam ^2}\\
         =&\, 1+P_{n+1}+\frac{q(1-P_{n+1})}{\lam ^2 s}+\frac{2(\zeta_{n+1}-\langle \zeta,P \rangle P_{n+1})}{s\lam }.
          \end{align*}
     Estimates \eqref{6.17 in JDE}--\eqref{6.20 in JDE} give
         \begin{align}
             |H-1|=&\, \Big|\frac{P_{n+1}-1}{2}+\frac{q(1-P_{n+1})}{2\lam ^2 s}+\frac{\zeta_{n+1}-\langle \zeta,P \rangle P_{n+1}}{s\lam }\Big|\notag\\
             \leq&\,  C\Big(\Big|\frac{P_{n+1}-1}{2}\Big|+\Big|\frac{q(1-P_{n+1})}{2\lam ^2 s}\Big|+\frac{|\zeta_{n+1}||1-|P_{n+1}|^2|}{\lam |s|}+\sum^n_{i=1}\frac{|\zeta_i||P_i||P_{n+1}|}{\lam |s|}\Big)\notag\\
             \leq&\,  C\lam ^{-2}(1+|\xi|)+C\lam ^{-3}(1+|\xi|^2).\label{H-1}
         \end{align}
     Estimate \eqref{H-1} bounds $|H|$ away from zero and gives
     \be \label{A6}
         \Big|\frac{1}{H}-1\Big|\leq C|H-1|\leq C\lam^{-2}(1+|\xi|)+C\lam ^{-3}(1+|\xi|^2).
     \ee
    The definition of $V_i$ gives
        \[
            \frac{V_1}{2}=\frac{\zeta_1}{s}+\frac{\lam  P_1}{2}-\frac{\langle \zeta,P \rangle P_1}{s}-\frac{q P_1}{2s \lam }.
        \]
        The inverse Cayley formula and \eqref{6.17 in JDE} give
        $$P_1=\widetilde{P}_1(1+P_{n+1})=\widetilde{P}_1(2-O(\lam ^{-2})),$$
        or equivalently,
        \be \label{A13}
            \frac{\lam  P_1}{2}=\lam  \widetilde{P}_1-O(\lam ^{-1}).
        \ee
    Combining \eqref{6.17 in JDE}--\eqref{6.20 in JDE} with \eqref{A13}, we obtain
    \begin{align}
        \Big|\frac{V_1}{2}-z_1-\lam  \widetilde{P}_1\Big| \leq&\,  \Big|\frac{\zeta_1}{s}-\frac{\zeta_1}{1+\zeta_{n+1}}\Big|+\Big|\frac{\langle \zeta,P \rangle P_1}{s}\Big|+\Big|\frac{qP_1}{2 s\lam }\Big|+O(\lam ^{-1})\nonumber\\
        \leq&\,  o_\var (1)|z_1|+ C\lam ^{-1}(1+|\xi|^2).\label{A7}
        \end{align}
    Combining \eqref{A5}, \eqref{A6}, \eqref{A7} and $|\xi|\leq \var  \lam $, we have
    \begin{align*}
        |E_{11}|=&\, \Big|\Big(\frac{V_1}{2}-z_1-\lam  \widetilde{P}_1\Big)+\frac{V_1}{2}\Big(\frac{1}{H}-1\Big)\Big|\\
        \leq&\,  o_\var (1)|z_1|+ C\lam ^{-1}(1+|\xi|^2)+C(1+|\xi|)(\lam^{-1}(1+|\xi|)+\lam ^{-3}(1+|\xi|^2))\\
        \leq&\,  o_\var(1) (1+|\xi|)+C\lam ^{-1}(1+|\xi|^2).
        \end{align*}

We next prove \eqref{E2}.
    A direct calculation gives
    \begin{align*}
        \frac{1-\varphi_{n+1}}{1+\varphi_{n+1}}=&\, \frac{1-\lam ^{-1}AV_{n+1}}{2AH}
        = \frac{1}{H}\Big(\frac{1}{2A}-\frac{V_{n+1}}{2\lam }\Big)\\
        =&\, \frac{1}{H}\Big(\frac{1-P_{n+1}}{2}+\frac{q(1+P_{n+1})}{2\lam ^2 s}-\frac{\zeta_{n+1}- \langle \zeta,P\rangle P_{n+1}}{s\lam }\Big).
         \end{align*}
    The inverse Cayley formula then gives
    \begin{align}
        \lam ^2 \widetilde{\varphi}_{n+1}=&\, \lam ^2 \operatorname{Re}\Big(\mathrm{i}\frac{1-\varphi_{n+1}}{1+\varphi_{n+1}}\Big)\nonumber\\
        =&\, \lam ^2 \operatorname{Re}\Big(\mathrm{i}\frac{1}{H}\Big(\frac{1-P_{n+1}}{2}+\frac{q(1+P_{n+1})}{2\lam ^2 s}-\frac{\zeta_{n+1}- \langle \zeta,P\rangle P_{n+1}}{s\lam }\Big)\Big).\label{A8}
        \end{align}
    Estimate \eqref{6.17 in JDE} yields
    \[
            \frac{1-P_{n+1}}{1+P_{n+1}}-\frac{1-P_{n+1}}{2}= \frac{(1-P_{n+1})^2}{2(1+P_{n+1})}=O(|1-P_{n+1}|^2)=O(\lam ^{-4}).
    \]
    Substituting $\widetilde P_{n+1}=\operatorname{Re}(\mathrm{i}(1-P_{n+1})/(1+P_{n+1}))$, we obtain
    \be \label{A9}
        \lam ^2 \operatorname{Re}\Big(\mathrm{i}\frac{1-P_{n+1}}{2}\Big)=\lam ^2 \widetilde{P}_{n+1}-O(\lam ^{-2}).
    \ee
    The identity
    $$
    \frac{q}{s}-\frac{1-\zeta_{n+1}}{1+\zeta_{n+1}}=2\Big(\frac{1}{s}-\frac{1}{1+\zeta_{n+1}}\Big),
    $$
    combined with \eqref{6.17 in JDE} and \eqref{6.19 in JDE}, gives
    \begin{align*}
    \Big|\operatorname{Re}\Big(\mathrm{i}\frac{q}{s}\Big)-\operatorname{Re}\Big(\mathrm{i}\frac{1-\zeta_{n+1}}{1+\zeta_{n+1}}\Big)\Big|\leq &\,2|\frac{1}{s}-\frac{1}{1+\zeta_{n+1}}|\\ \leq&\, 2\Big|\frac{1}{s}\Big|\Big|\frac{\langle\zeta, P\rangle-\zeta_{n+1}}{1+\zeta_{n+1}}\Big| \\  \leq&\, C(1+|\xi|^2)[\lam ^{-1}|z|+\lam ^{-2}(1+|\xi|^2)] \\  \leq &\, C \lam ^{-1}(1+|\xi|^3).
     \end{align*}
    Estimate \eqref{6.17 in JDE} then gives
    \be\label{A10}
       \Big| \operatorname{Re}\Big(\mathrm{i}\frac{q(1+P_{n+1})}{2s}\Big)-t\Big|\leq \Big|\operatorname{Re}\Big(\mathrm{i}\frac{q}{s}\Big)-t\Big|+O\Big(\lam ^{-2}\Big|\frac{q}{s}\Big|\Big)\leq C \lam ^{-1}(1+|\xi|^3).
    \ee
    Notice that for each $i=1,\ldots,n$,
        \begin{align*}
            \Big|\lam  \operatorname{Re}\Big(\mathrm{i}\frac{\zeta_i\bar{P}_i}{s}\Big)-\lam  \operatorname{Re}(\mathrm{i}z_i\bar{P}_i)\Big|=&\, \lam \Big|\operatorname{Re}\Big(\mathrm{i}\bar{P}_i\Big(\frac{\zeta_i}{s}-\frac{\zeta_i}{1+\zeta_{n+1}}\Big)\Big)\Big|\\
            \leq&\,  \lam |P_i|\Big|\frac{\zeta_i}{s}-\frac{\zeta_i}{1+\zeta_{n+1}}\Big|\\
            \leq&\,  C\lam^{-1}(1+|\xi|^2),
        \end{align*}
    where we have used \eqref{6.17 in JDE} and \eqref{6.19 in JDE} in the last inequality. Estimate \eqref{6.17 in JDE} then gives
        \begin{align}
            &\,-\lam \operatorname{Re} \Big(\mathrm{i}
                \frac{\zeta_{n+1}-\langle\zeta,P\rangle P_{n+1}}{s}\Big)\nonumber\\
           =&\,-\lam \operatorname{Re}  \Big(\mathrm{i}
                \frac{\zeta_{n+1}(1-P_{n+1})}{s} \Big)
               +\lam \operatorname{Re} \Big(\mathrm{i}
                \frac{(\langle\zeta,P\rangle-\zeta_{n+1})P_{n+1}}{s} \Big)\nonumber\\
          =&\, -\lam \operatorname{Re} \Big(\mathrm{i}\frac{\zeta_{n+1}(1-P_{n+1})}{s} \Big)+\lam \operatorname{Re} \Big(\mathrm{i}\frac{(\sum^n_{i=1}\zeta_i\bar{P}_i+\zeta_{n+1}(\bar{P}_{n+1}-1))P_{n+1}}{s} \Big)\nonumber\\
            =&\,  O(\lam ^{-1}(1+|\xi|^2))+\sum^n_{i=1}\lam  \operatorname{Re}\Big(\mathrm{i}\frac{\zeta_i\bar{P}_i}{s}\Big)\nonumber\\
            =&\,  O(\lam ^{-1}(1+|\xi|^2))+\sum^n_{i=1}\lam \operatorname{Re}(\mathrm{i} z_i\bar{P}_i).\label{A11}
        \end{align}
    Combining \eqref{A9}, \eqref{A10} and \eqref{A11}, we have
        \begin{align}
            M_0:=&\, \lam ^2 \operatorname{Re}\Big(\mathrm{i}\Big(\frac{1-P_{n+1}}{2}+\frac{q(1+P_{n+1})}{2\lam ^2 s}-\frac{\zeta_{n+1}- \langle \zeta,P\rangle P_{n+1}}{s\lam }\Big)\Big)\nonumber\\
            =&\, \lam ^2 \widetilde{P}_{n+1}+t+\sum^n_{i=1}\lam \operatorname{Re}(\mathrm{i}z_i\bar{P}_i)+O_\var ,\label{A12}
        \end{align}
    where $|O_\var |\leq C\lam ^{-1}(1+|\xi|^3)$.
As in the estimate leading to \eqref{H-1}, we have
         \begin{align*}
             &\, \Big|\frac{1-P_{n+1}}{2}+\frac{q(1+P_{n+1})}{2\lam ^2 s}-\frac{\zeta_{n+1}-\langle \zeta,P \rangle P_{n+1}}{s\lam }\Big|\\
          \leq &\,\Big|\frac{1-P_{n+1}}{2}\Big|+\Big|\frac{q(1+P_{n+1})}{2\lam ^2 s}\Big|+\frac{|\zeta_{n+1}||1-|P_{n+1}|^2|}{\lam |s|}+\sum^n_{i=1}\frac{|\zeta_i||P_i||P_{n+1}|}{\lam |s|}\\
            \leq &\, C\lam ^{-2}(1+|\xi|^2).
         \end{align*}
     The bound \eqref{A6} for $H^{-1}-1$ and the identity \eqref{A8} give
    \begin{align*}
        |\lam ^2 \widetilde{\varphi}_{n+1}-M_0|=&\, \lam ^2 \Big|\operatorname{Re}\Big(\mathrm{i}(\frac{1}{H}-1)\Big(\frac{1-P_{n+1}}{2}+\frac{q(1+P_{n+1})}{2\lam ^2 s}-\frac{\zeta_{n+1}- \langle \zeta,P\rangle P_{n+1}}{s\lam }\Big)\Big)\Big|\\
        \leq&\,  \lam ^2 \Big|\frac{1}{H}-1\Big|\Big|\frac{1-P_{n+1}}{2}+\frac{q(1+P_{n+1})}{2\lam ^2 s}-\frac{\zeta_{n+1}- \langle \zeta,P\rangle P_{n+1}}{s\lam }\Big|\\
        \leq&\,  C\lam ^{-1}(1+|\xi|^3).
        \end{align*}
     Finally, \eqref{A12} and the componentwise form of \eqref{A13} give
        \begin{align*}
            \lam ^2 \widetilde{\varphi}_{n+1} =&\, \lam ^2 \widetilde{P}_{n+1}+t+\sum^n_{i=1}\lam \operatorname{Re}(\mathrm{i}z_i\bar{P}_i)+O_\var \\
            =&\, \lam ^2 \widetilde{P}_{n+1}+t-\sum^n_{i=1}2\lam \operatorname{Im}(z_i\bar{\widetilde{P}}_i)+O_\var ,
        \end{align*}
which proves \eqref{E2}.
\end{proof}

We conclude with an Aubin-type inequality. Its critical case is contained in \cite[Corollary~1.2]{Yan2023}. The localization proof below also gives a constant uniform in the subcritical exponent.
\begin{prop}\label{prop:cr-aubin}
For any $\var>0$, there exists $C_\var\ge0$, depending only on $n$ and $\var$, such that, for every $p\in (2,Q^*]$ and every  $v\in S_1^2(\S^{2n+1})$ satisfying
\begin{equation}\label{eq:cr-aubin-balance}
\dashint_{\S^{2n+1}}\zeta|v|^p\,\d\si =0,
\end{equation}
one has
\begin{equation}\label{eq:cr-aubin}
\frac{n^2}{4}\Big(\dashint_{\S^{2n+1}}|v|^p\,\d\si \Big)^{2/p}
\le 2^{2/p-1}(1+\var)\dashint_{\S^{2n+1}}vL_{\theta_0}v\,\d\si 
+C_\var\dashint_{\S^{2n+1}}v^2\,\d\si .
\end{equation}
At $p=Q^*$, the limiting factor $2^{2/p-1}=2^{-2/Q}$ is optimal, even when an arbitrary finite multiple of the $L^2(\S^{2n+1})$ norm squared is allowed on the right-hand side.
\end{prop}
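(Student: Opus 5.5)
We localize with a quadratic partition of unity adapted to the balance condition, following Aubin's argument for the Riemannian sphere. Fix $\delta>0$ small and choose smooth functions $\psi_1,\dots,\psi_N$ on $\S^{2n+1}$ with $\sum_k\psi_k^2=1$, each supported in a contact ball $\mathcal B(P_k,r_\delta)$ small enough that every support lies in an open half-sphere $\{\operatorname{Re}\langle\zeta,e\rangle>1-\delta\}$ for some unit vector $e$. The key geometric input is this. If $\dashint\zeta|v|^p\,\d\si=0$, then the $|v|^p$-mass cannot concentrate in a single small cap. Quantitatively, for every unit vector $e$,
\[
\dashint_{\{\operatorname{Re}\langle \zeta,e\rangle>1-\delta\}}|v|^p\,\d\si\le \frac{1}{2-\delta}\dashint_{\S^{2n+1}}|v|^p\,\d\si,
\]
since $0=\dashint\operatorname{Re}\langle\zeta,e\rangle|v|^p$ and $\operatorname{Re}\langle\zeta,e\rangle\ge-1$. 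Consequently $\dashint|\psi_kv|^p\le(\tfrac12+O(\delta))\dashint|v|^p$ for each $k$.

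Next, write $\|v\|_p^2=\|\sum_k\psi_k^2v^2\|_{p/2}\le\sum_k\|\psi_kv\|_p^2$, using Minkowski and $p/2>1$. Split each term as $\|\psi_kv\|_p^2=\|\psi_kv\|_p^{2-p}\|\psi_kv\|_p^{p}\cdots$, or more simply use the interpolation
\[
\|\psi_kv\|_p^2\le \|\psi_kv\|_p^{p}\,\|\psi_kv\|_p^{2-p}.
\]
Combined with the mass bound, this gives
\[
\sum_k\|\psi_kv\|_p^2\le\Big(\max_k\frac{\|\psi_kv\|_p^p}{\|v\|_p^p}\Big)^{1-2/p}\sum_k\|\psi_kv\|_p^{2}\Big(\frac{\|v\|_p}{\|\psi_kv\|_p}\Big)^{\cdots}.
\]
The cleanest version is the following. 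With $a_k=\|\psi_kv\|_p^p/\|v\|_p^p\le\tfrac12+O(\delta)$ and $\sum_k a_k\le C$ (bounded overlap), the concavity trick $\|\psi_kv\|_p^2=\|v\|_p^2a_k^{2/p}$ is not summable directly. So instead apply Hölder to $\int\psi_k^2v^2|v|^{p-2}$: since $\|v\|_p^p=\sum_k\int\psi_k^2|v|^p$,
\[
\|v\|_p^p\le\sum_k\|\psi_kv\|_p^2\,\|v\|_{L^p(\operatorname{supp}\psi_k)}^{p-2}\le(\tfrac12+C\delta)^{1-2/p}\|v\|_p^{p-2}\sum_k\|\psi_kv\|_p^2 .
\]
Dividing through yields $\|v\|_p^2\le(2^{2/p-1}+C\delta)\sum_k\|\psi_kv\|_p^2$, which produces exactly the factor $2^{2/p-1}$.

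For each $k$ we then apply the sharp Sobolev inequality \eqref{eq:sharp-sphere-sobolev} with Hölder on the normalized measure, $\|\cdot\|_p\le\|\cdot\|_{Q^*}$, which is uniform in $p\le Q^*$. This gives $\tfrac{n^2}{4}\|\psi_kv\|_p^2\le\dashint\psi_kvL_{\theta_0}(\psi_kv)$. Expanding $|\nabla_{\theta_0}(\psi_kv)|^2$, the cross terms integrate by parts to $\dashint v^2|\nabla_{\theta_0}\psi_k|^2$ up to sign, because $\sum_k\psi_k\nabla_{\theta_0}\psi_k=0$. Summing over $k$ gives $\sum_k\dashint\psi_kvL_{\theta_0}(\psi_kv)=\dashint vL_{\theta_0}v+\dashint v^2\sum_k|\nabla_{\theta_0}\psi_k|^2$. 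The last term is bounded by $C_\delta\dashint v^2$. Choosing $\delta$ so that $C\delta\le\var\,2^{2/p-1}$ uniformly in $p\in(2,Q^*]$ proves \eqref{eq:cr-aubin}; uniformity holds because $2^{2/p-1}\ge2^{-2/Q}$. The main technical point is verifying that the partition can be chosen with supports inside thin caps while the error $C\delta$ in the mass bound stays independent of $p$, which is immediate from the displayed cap estimate.

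For optimality at $p=Q^*$, we take the antipodal test functions $v_\beta=v_{1,\beta}+v_{2,\beta}$ from the proof of Proposition~\ref{prop:2.2} with $R\equiv1$. They are balanced by symmetry. As $\beta\to1$, $\dashint v_\beta^2\to0$, while the energy computation there gives the ratio $\tfrac{n^2}{4}\|v_\beta\|_{Q^*}^2/\dashint v_\beta L_{\theta_0}v_\beta\to 2^{-2/Q}$. Any smaller constant therefore fails, whatever finite multiple of the $L^2$ term is added.
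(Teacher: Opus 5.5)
Your proof is correct and is essentially the paper's argument. It uses the cap bound $m\le(2-\delta)^{-1}$ from the balance condition, H\"older on $\int\psi_k^2v^2|v|^{p-2}$ to extract $(2-\delta)^{2/p-1}$, sharp Sobolev with $\|\cdot\|_p\le\|\cdot\|_{Q^*}$, the localization identity from $\sum_k\psi_k\nabla_{\theta_0}\psi_k=0$, and the antipodal two-bubble test for optimality. In a write-up, delete the abandoned Minkowski and interpolation lines. Also note that the cross terms cancel pointwise after summing over $k$, not through an integration by parts of each term.
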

\begin{proof}
Write $\d\mu=\d\si /|\S^{2n+1}|$, $c_0=n^2/4$, and
\[
\mathcal E(v)=\int_{\S^{2n+1}}vL_{\theta_0}v\,\d\mu
=\int_{\S^{2n+1}}(|\nabla_{\theta_0}v|^2+c_0v^2)\,\d\mu.
\]
Homogeneity allows the normalization $\int_{\S^{2n+1}}|v|^p\,\d\mu=1$. For $0<\delta<1$ and $P\in\S^{2n+1}$, set
\[
C(P,\delta)=\{\zeta\in\S^{2n+1}:\operatorname{Re}\langle\zeta,P\rangle>1-\delta\}.
\]
For $m=\int_{C(P,\delta)}|v|^p\,\d\mu$, the balance condition \eqref{eq:cr-aubin-balance} gives
\[
0=\int_{\S^{2n+1}}\operatorname{Re}\langle\zeta,P\rangle|v|^p\,\d\mu
\ge(1-\delta)m-(1-m),\quad m\le(2-\delta)^{-1}.
\]
Choose a finite smooth partition $\{\chi_j\}_{j=1}^{N_\delta}$ with $\sum_{j=1}^{N_\delta}\chi_j^2=1$, each $\chi_j$ supported in such a cap. H\"older's inequality and \eqref{eq:sharp-sphere-sobolev} give
\begin{align*}
1&=\sum_{j=1}^{N_\delta}\int_{\S^{2n+1}}\chi_j^2|v|^p\,\d\mu\\
&\le(2-\delta)^{2/p-1}\sum_{j=1}^{N_\delta}\Big(\int_{\S^{2n+1}}|\chi_jv|^p\,\d\mu\Big)^{2/p}\\
&\le\frac{(2-\delta)^{2/p-1}}{c_0}\sum_{j=1}^{N_\delta}\mathcal E(\chi_jv).
\end{align*}
Here we also used $p\le Q^*$ and $\mu(\S^{2n+1})=1$. Since $\sum_{j=1}^{N_\delta}\chi_j\nabla_{\theta_0}\chi_j=0$, the product rule gives
\[
\sum_{j=1}^{N_\delta}\mathcal E(\chi_jv)
=\mathcal E(v)+\int_{\S^{2n+1}}\Big(\sum_{j=1}^{N_\delta}|\nabla_{\theta_0}\chi_j|^2\Big)v^2\,\d\mu.
\]
Choose $\delta$ so small that $(1-\delta/2)^{-2/Q}\le1+\var$. Then, uniformly for $2<p\le Q^*$, we have
\[
(2-\delta)^{2/p-1}\le2^{2/p-1}(1+\var),\quad
(2-\delta)^{2/p-1}\le1.
\]
The localization identity and the choice of $\delta$ prove \eqref{eq:cr-aubin}, with $C_\var=\|\sum_{j=1}^{N_\delta}|\nabla_{\theta_0}\chi_j|^2\|_{L^\infty(\S^{2n+1})}$.

For optimality at the critical exponent, let $b_\lam $ be a positive conformal image of $1$ concentrating at $P$, normalized by $\int_{\S^{2n+1}} b_\lam ^{Q^*}\,\d\mu\allowbreak=1$, and set
\[
w_\lam (\zeta)=\frac{b_\lam (\zeta)+b_\lam (-\zeta)}
{(\int_{\S^{2n+1}}|b_\lam (\zeta)+b_\lam (-\zeta)|^{Q^*}\,\d\mu)^{1/Q^*}}.
\]
Then $w_\lam $ is antipodally symmetric, so \eqref{eq:cr-aubin-balance} holds. The mixed energy and $L^{Q^*}(\S^{2n+1})$ terms vanish because the bubbles concentrate at distinct points. Since $\mathcal E(b_\lam )=c_0$, optimality follows from
\[
\int_{\S^{2n+1}}|w_\lam |^{Q^*}\,\d\mu=1,\quad
\mathcal E(w_\lam )\to  c_0\,2^{2/Q},\quad
\int_{\S^{2n+1}} w_\lam ^2\,\d\mu\to 0.
\]
\end{proof}

The following result  improves the gradient term while keeping the $L^2(\S^{2n+1})$ term fixed.
\begin{prop}\label{prop:cr-aubin-fixed-mass}
There exist some constants $a_*\in (2^{-2/Q},1)$ and $p_*\in (2,Q^*)$, depending only on $n$, such that, for every $p_*\le p\le Q^*$ and every  $v\in S_1^2(\S^{2n+1})$ satisfying \eqref{eq:cr-aubin-balance}, one has
\begin{align}    
\frac{n^2}{4}\Big(\dashint_{\S^{2n+1}}|v|^p\,\d\si \Big)^{2/p}
\le&\, a_*\dashint_{\S^{2n+1}}vL_{\theta_0}v\,\d\si
+(1-a_*)\frac{n^2}{4}\dashint_{\S^{2n+1}}v^2\,\d\si \nonumber
\\=&\,
a_*\dashint_{\S^{2n+1}}|\nabla_{\theta_0}v|^2\,\d\si
+\frac{n^2}{4}\dashint_{\S^{2n+1}}v^2\,\d\si .\label{eq:cr-aubin-fixed-mass}
\end{align}
\end{prop}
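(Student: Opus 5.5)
We argue by contradiction combined with compactness, relying on Proposition~\ref{prop:cr-aubin} to control concentration. First choose $\var>0$ so small that $2^{-2/Q}(1+\var)$, and hence $2^{2/p-1}(1+\var)$ for $p$ near $Q^*$, lies strictly below $1$. Suppose no pair $(a_*,p_*)$ works. Then there are $a_k\uparrow1$, exponents $p_k\to Q^*$ with $p_k\in(2,Q^*]$, and balanced functions $v_k$ (that is, satisfying \eqref{eq:cr-aubin-balance} with $p=p_k$) that violate \eqref{eq:cr-aubin-fixed-mass}. Normalize $\dashint|v_k|^{p_k}\,\d\si=1$. Since the right-hand side equals $a_k\dashint|\nabla_{\theta_0}v_k|^2+c_0\dashint v_k^2$, the violation gives
\[
c_0\Big(1-\dashint v_k^2\,\d\si\Big)>a_k\dashint|\nabla_{\theta_0}v_k|^2\,\d\si .
\]
In particular $\dashint|\nabla_{\theta_0}v_k|^2$ is bounded, so $\{v_k\}$ is bounded in $S_1^2(\S^{2n+1})$. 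Passing to a subsequence, $v_k\rightharpoonup v$ weakly and $v_k\to v$ strongly in $L^2$.

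\emph{Step 1: the weak limit is nonzero.} Suppose instead that $\dashint v_k^2\to0$. Apply \eqref{eq:cr-aubin} with $p=p_k$. It gives
\[
c_0\le 2^{2/p_k-1}(1+\var)\Big(\dashint|\nabla_{\theta_0}v_k|^2+c_0\dashint v_k^2\Big)+o(1).
\]
Combine this with the violated inequality, which says $\dashint|\nabla_{\theta_0}v_k|^2<c_0/a_k+o(1)$. The result is $c_0\le 2^{-2/Q}(1+\var)\,c_0+o(1)$, which contradicts our choice of $\var$. Hence $\|v\|_{L^2}>0$.

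\emph{Step 2: identify the limit.} Let $m:=\dashint v^2\in(0,1]$. Hölder's inequality on the probability space gives $m\le(\dashint|v_k|^{p_k})^{2/p_k}+o(1)=1$. By weak lower semicontinuity, together with $a_k\to1$,
\[
\dashint|\nabla_{\theta_0}v|^2\le\liminf\dashint|\nabla_{\theta_0}v_k|^2\le c_0(1-m).
\]
Next take $s_k=v_k-v$. By Brezis--Lieb, with care for the varying exponent, we have $1=\dashint|v|^{p_k}+\dashint|s_k|^{p_k}+o(1)$, and the gradient energy splits as $\dashint|\nabla v|^2+\dashint|\nabla s_k|^2+o(1)$. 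Here $s_k\to0$ in $L^2$. The balance condition is not preserved by the splitting, so I would instead apply the plain sharp Sobolev inequality \eqref{eq:sharp-sphere-sobolev} to $s_k$, which gives $c_0(\dashint|s_k|^{p_k})^{2/p_k}\le\dashint|\nabla s_k|^2+o(1)$. Apply it also to $v$, which gives $c_0(\dashint|v|^{Q^*})^{2/Q^*}\le\dashint|\nabla v|^2+c_0m$. Write $\alpha=\dashint|v|^{Q^*}$ and $1-\alpha$ for the mass of $s_k$. The inequalities then combine to
\[
c_0\big(\alpha^{2/Q^*}+(1-\alpha)^{2/Q^*}\big)\le c_0(1-m)+c_0m+(\text{gradient of } v\text{ counted once}).
\]
Because $2/Q^*<1$, we have $\alpha^{2/Q^*}+(1-\alpha)^{2/Q^*}\ge1$, and strict concavity forces equality cases. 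This step is the main obstacle: the bookkeeping must produce a strict contradiction. Concretely, equality requires either $\alpha=1$ with $s_k\to0$ strongly, or $\alpha=0$, which is impossible since $m>0$ and Hölder gives $\alpha\ge m^{Q^*/2}$. In the remaining case, $v_k\to v$ strongly, $v$ is balanced at $p=Q^*$, and $v$ satisfies $c_0(\dashint|v|^{Q^*})^{2/Q^*}\ge\dashint|\nabla v|^2+c_0\dashint v^2$ with $\dashint|v|^{Q^*}=1$. In other words, $v$ attains equality in \eqref{eq:sharp-sphere-sobolev}.

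\emph{Step 3: rigidity.} By the classification of extremals, $v$ is a multiple of a conformal image of $1$. The balance condition forces that conformal image to be the constant $1$, by the uniqueness argument in Lemma~\ref{lem:balanced-decomposition}. So $v\equiv\pm1$, and we must rule out $v_k\to\pm1$. For this, expand around $1$ using the tangent-space analysis in Lemmas~\ref{lempe2}--\ref{lempe4}. Write $v_k=1+\widetilde w_k+\mu_k+2\operatorname{Re}\langle\eta_k,\zeta\rangle$, adapted to the exponent $p_k$, where the correction terms come from the constraint as in \eqref{pe4}. The violated inequality then reads, to second order,
\[
(1-a_k)\dashint|\nabla\widetilde w_k|^2\lesssim (p_k-2)\dashint\widetilde w_k^2-\dashint|\nabla\widetilde w_k|^2+o(\|\widetilde w_k\|^2).
\]
The coercivity \eqref{pe5} on $(\mathcal H_{0,0}\oplus\mathcal H_{1,0}\oplus\mathcal H_{0,1})^\perp$ gives $\dashint|\nabla\widetilde w|^2\ge\lam_{2,0}-c_0$ times the $L^2$ norm squared. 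This exceeds $(Q^*-2)c_0\ge(p_k-2)c_0$, which contradicts the inequality unless $\widetilde w_k=0$. When $\widetilde w_k=0$, the function $v_k$ is constant and equality holds in \eqref{eq:cr-aubin-fixed-mass}, so $v_k$ is not a violator. This contradiction completes the argument.
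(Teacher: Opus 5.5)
Your argument is correct in substance and is the paper's proof with the steps in a different order. Both use a contradiction sequence, Proposition~\ref{prop:cr-aubin} to exclude vanishing, Brezis--Lieb plus strict concavity of $t\mapsto t^{2/Q^*}$ to force strong convergence, rigidity of balanced extremals, and the second-order expansion at $1$ with $\lam_{2,0}-c_0=n>(Q^*-2)c_0=n/2$; the paper proves this local estimate first, as \eqref{eq:cr-aubin-local-expansion}.

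Two of your displays need fixing:
\begin{enumerate}[(i)]
\item In Step~2 there is no extra ``gradient of $v$'' term. The bound $\limsup_k\dashint|\nabla_{\theta_0}v_k|^2\le c_0(1-m)$ already controls $\dashint|\nabla_{\theta_0}v|^2+\dashint|\nabla_{\theta_0}s_k|^2+o(1)$. Summing your two Sobolev inequalities therefore gives exactly $\alpha^{2/Q^*}+(1-\alpha)^{2/Q^*}\le1$, which is \eqref{eq:cr-aubin-mass-splitting}, so the obstacle you flag disappears.
\item In Step~3 the violated inequality should read $a_k\dashint|\nabla_{\theta_0}\widetilde w_k|^2<(p_k-2)c_0\dashint\widetilde w_k^2+o(\|\widetilde w_k\|^2)$. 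The remainder must be uniform in $p_k$; the paper secures this with a uniform Taylor bound for $|1+h|^p$.
\end{enumerate}
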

\begin{proof}
Write $q=Q^*$, $c_0=n^2/4$ and $\d\mu=\d\si /|\S^{2n+1}|$. With the norm in \eqref{norm 4}, set
\[
I_a(v)=a\|v\|^2+(1-a)c_0\int_{\S^{2n+1}}v^2\,\d\mu.
\]
We first establish a local estimate near $1$. Fix $p_0\in(2,q)$. The implicit-function argument of Lemma~\ref{lempe4}, applied to the constraints
\[
\int_{\S^{2n+1}}|v|^p\,\d\mu=1,\quad
\int_{\S^{2n+1}}\zeta|v|^p\,\d\mu=0,
\]
gives, uniformly for $p\in[p_0,q]$, local coordinates
\[
v=1+w+\mu_p(w)+2\operatorname{Re}\langle\eta_p(w),\zeta\rangle,
\quad w\in T_1\M_0,
\]
for constrained functions close to $1$. Indeed, the derivative of the constraint map with respect to $(\mu_p,\eta_p)$ at $1$ is $\operatorname{diag}(p,\frac{p}{n+1}I_{2n+2})$ in real coordinates, and is uniformly invertible. Taylor expansion gives
\[
\mu_p(w)=-\frac{p-1}{2}\int_{\S^{2n+1}}w^2\,\d\mu+o(\|w\|^2),\quad
|\eta_p(w)|=O(\|w\|^2),
\]
uniformly in $p$. To see the uniformity, choose $0<\nu<\min\{1,p_0-2\}$ and use
\[
\Big||1+h|^p-1-ph-\frac{p(p-1)}2h^2\Big|
\le C(n,p_0)(|h|^{2+\nu}+|h|^q),
\]
together with the Sobolev embedding. Thus
\begin{equation}\label{eq:cr-aubin-local-expansion}
I_a(v)=c_0+a\|w\|^2
-(p-2+a)c_0\int_{\S^{2n+1}}w^2\,\d\mu+o(\|w\|^2),
\end{equation}
uniformly for $p\in[p_0,q]$ and $a\in[3/4,1]$. By Lemma~\ref{lempe3} and \eqref{eq:spherical-eigenvalues},
\[
\|w\|^2\ge(c_0+n)\int_{\S^{2n+1}}w^2\,\d\mu.
\]
Since $p\le q$, the quadratic term in \eqref{eq:cr-aubin-local-expansion} is bounded below by
\[
\frac{4a-2}{n+4}\|w\|^2\ge\frac{1}{n+4}\|w\|^2.
\]
Consequently, $I_a(v)\ge c_0$ for all constrained $v$ sufficiently close to $1$, uniformly in these ranges of $p$ and $a$.

Suppose now that \eqref{eq:cr-aubin-fixed-mass} fails for every $a_*<1$ and $p_*<q$. Then there exist $a_i\uparrow1$, $p_i\to q$ with $p_i\le q$, and nonnegative functions $v_i$ satisfying
\[
\int_{\S^{2n+1}}v_i^{p_i}\,\d\mu=1,\quad
\int_{\S^{2n+1}}\zeta v_i^{p_i}\,\d\mu=0,\quad
I_{a_i}(v_i)<c_0.
\]
Here we used homogeneity and replaced $v_i$ by $|v_i|$ if necessary. The sharp inequality \eqref{eq:sharp-sphere-sobolev} and H\"older's inequality yield
\[
c_0\le c_0\|v_i\|_{L^q(\S^{2n+1},\d\mu)}^2\le\|v_i\|^2<\frac{c_0}{a_i}.
\]
Hence $\|v_i\|^2\to c_0$ and $\|v_i\|_{L^q(\S^{2n+1},\d\mu)}\to1$. Choose $\var>0$ so small that $2^{-2/Q}(1+\var)<1$. Proposition~\ref{prop:cr-aubin} then gives $\liminf_i\int_{\S^{2n+1}}v_i^2\,\d\mu>0$.

After passing to a subsequence, $v_i\rightharpoonup v$ in $S_1^2(\S^{2n+1})$, $v_i\to v$ in $L^2(\S^{2n+1},\d\mu)$ and almost everywhere, with $v\ge0$ and $v\ne0$. Set $b=\int_{\S^{2n+1}}v^q\,\d\mu\in(0,1]$. The Brezis--Lieb lemma \cite[Theorem~1]{BL1983}, the energy decomposition, and \eqref{eq:sharp-sphere-sobolev} give
\begin{equation}\label{eq:cr-aubin-mass-splitting}
1=\lim_{i\to\infty}\frac{\|v_i\|^2}{c_0}
\ge b^{2/q}+(1-b)^{2/q}.
\end{equation}
Since $0<2/q<1$ and $b>0$, \eqref{eq:cr-aubin-mass-splitting} gives $b=1$. Equality of the limiting energies then gives $v_i\to v$ strongly in $S_1^2(\S^{2n+1})$ and identifies $v$ as an extremal of \eqref{eq:sharp-sphere-sobolev}. Strong convergence in $L^q(\S^{2n+1},\d\mu)$ and $p_i\to q$ allow passage to the balance constraint. The characterization of balanced extremals recalled after \eqref{eq:energy-quotient} therefore gives $v=1$. The local lower bound obtained from \eqref{eq:cr-aubin-local-expansion} contradicts $I_{a_i}(v_i)<c_0$.

The contradiction proves the existence of $a_*<1$ and $p_*<q$. Increasing $a_*$ if necessary gives $a_*>2^{-2/Q}$, since $I_a(v)$ is nondecreasing in $a$.
\end{proof}
\begin{rem}
    The antipodal two-bubble sequence in the proof of Proposition~\ref{prop:cr-aubin} shows that any admissible value of $a_*$ at $p=Q^*$ must satisfy $a_*\ge2^{-2/Q}$. Proposition~\ref{prop:cr-aubin-fixed-mass} does not determine the least admissible value.
\end{rem}

\medskip

\noindent\textbf{Data availability statement.}~	 Data sharing is not applicable to this article as no datasets were generated or analysed during the current study.

\medskip

\noindent\textbf{Declarations.}~ \textbf{Conflict of interest.} ~ On behalf of all authors, the corresponding author states that there is no conflict of interest.

%\medskip

%\noindent\textbf{AI assistance statement.} ~OpenAI models were used for language editing, %consistency checks, and assistance in drafting mathematical revisions and proof arguments. The %authors are responsible for independently verifying the mathematical content and for the final %version submitted for publication.

%\medskip


{\small
\begin{thebibliography}{99}

%\bibitem{Afeltra2024}
%C.~Afeltra.
%\newblock A compactness result for the {CR} {Y}amabe problem in three
%  dimensions.
%\newblock {\em Commun. Contemp. Math.}, 27(8):Paper No. 2550003, 35, 2025.

\bibitem{Afeltra2024-2}
C.~Afeltra.
\newblock Blow-up analysis and degree theory for the {W}ebster curvature
  prescription problem in three dimensions.
\newblock {\em Calc. Var. Partial Differential Equations}, 64(8):Paper No. 260,
  41, 2025.

  \bibitem{BC1991}
A.~Bahri and J.-M. Coron.
\newblock The scalar-curvature problem on the standard three-dimensional
  sphere.
\newblock {\em J. Funct. Anal.}, 95(1):106--172, 1991.

\bibitem{BFM2013}
T.P. Branson, L.~Fontana, and C.~Morpurgo.
\newblock Moser-{T}rudinger and {B}eckner-{O}nofri's inequalities on the {CR}
  sphere.
\newblock {\em Ann. of Math. (2)}, 177(1):1--52, 2013.

\bibitem{BL1983}
H.~Brezis and E.~Lieb.
\newblock A relation between pointwise convergence of functions and convergence of functionals.
\newblock {\em Proc. Amer. Math. Soc.}, 88(3):486--490, 1983.

\bibitem{CGL1993}
G.~Citti, N.~Garofalo, and E.~Lanconelli.
\newblock Harnack's inequality for sum of squares of vector fields plus a
  potential.
\newblock {\em Amer. J. Math.}, 115(3):699--734, 1993.

\bibitem{CLMR2023}
G.~Catino, Y.~Y. Li, D.~D. Monticelli, and A.~Roncoroni.
\newblock A {L}iouville theorem in the {H}eisenberg group.
\newblock {\em J. Eur. Math. Soc. (JEMS)}.
\newblock \href{https://doi.org/10.4171/JEMS/1705}{DOI:10.4171/JEMS/1705, published online, 2025.}

  \bibitem{CGY1993}
S.-Y. Chang, M.~J. Gursky, and P.~C. Yang.
\newblock The scalar curvature equation on {$2$}- and {$3$}-spheres.
\newblock {\em Calc. Var. Partial Differential Equations}, 1(2):205--229, 1993.

\bibitem{CY1987}
S.-Y. Chang and P.~C. Yang.
\newblock Prescribing {G}aussian curvature on {$\S^2$}.
\newblock {\em Acta Math.}, 159(3-4):215--259, 1987.

\bibitem{CY1991}
S.-Y. Chang and P.~C. Yang.
\newblock A perturbation result in prescribing scalar curvature on {$S^n$}.
\newblock {\em Duke Math. J.}, 64(1):27--69, 1991.

\bibitem{Cheng}
J. Cheng.
\newblock Curvature functions for the sphere in pseudo-Hermitian geometry.
\newblock {\em Tokyo J. Math.}, 14(1):151--163, 1991.

\bibitem{CAY2013}
H.~Chtioui, M.~Ahmedou, and R.~Yacoub.
\newblock Existence and multiplicity results for the prescribed {W}ebster
  scalar curvature problem on three {CR} manifolds.
\newblock {\em J. Geom. Anal.}, 23(2):878--894, 2013.

\bibitem{DT2006}
S.~Dragomir and G.~Tomassini.
\newblock {\em Differential geometry and analysis on {CR} manifolds}, volume
  246 of {\em Progress in Mathematics}.
\newblock Birkh\"{a}user Boston, Inc., Boston, MA, 2006.

\bibitem{ES1986}
J.~F. Escobar and R. Schoen.
\newblock  Conformal metrics with prescribed scalar curvature.
\newblock {\em Invent. Math.}, 86(2): 243--254, 1986.

\bibitem{FU2004}
V.~Felli and F.~Uguzzoni.
\newblock Some existence results for the {W}ebster scalar curvature problem in
  presence of symmetry.
\newblock {\em Ann. Mat. Pura Appl. (4)}, 183(4):469--493, 2004.

\bibitem{F1975}
G.~B. Folland.
\newblock Subelliptic estimates and function spaces on nilpotent {L}ie groups.
\newblock {\em Ark. Mat.}, 13(2):161--207, 1975.

\bibitem{FS1974}
G.~B. Folland and E.~M. Stein.
\newblock Estimates for the {$\bar \pa  _{b}$} complex and analysis on the
  {H}eisenberg group.
\newblock {\em Comm. Pure Appl. Math.}, 27:429--522, 1974.

\bibitem{GH2017}
N.~Gamara and B.~Hafassa.
\newblock The {$\beta$}-flatness condition in {CR} spheres.
\newblock {\em Adv. Nonlinear Stud.}, 17(1):193--213, 2017.

\bibitem{GV2001}
N.~Garofalo and D.~Vassilev.
\newblock Symmetry properties of positive entire solutions of {Y}amabe-type
  equations on groups of {H}eisenberg type.
\newblock {\em Duke Math. J.}, 106(3):411--448, 2001.

\bibitem{G1981}
D.~Geller.
\newblock The {L}aplacian and the {K}ohn {L}aplacian for the sphere.
\newblock {\em J. Differential Geometry}, 15(3):417--435 (1981), 1980.



%\bibitem{H1990}
%E.~Hebey.
%\newblock Changements de m\'{e}triques conformes sur la sph\`ere. {L}e
%  probl\`eme de {N}irenberg.
%\newblock {\em Bull. Sci. Math.}, 114(2):215--242, 1990.

\bibitem{Ho20152}
P.~T. Ho.
\newblock The {W}ebster scalar curvature flow on {CR} sphere. {P}art {II}.
\newblock {\em Adv. Math.}, 268:836--905, 2015.

\bibitem{JL1987}
D.~Jerison and J.~M. Lee.
\newblock The {Y}amabe problem on {CR} manifolds.
\newblock {\em J. Differential Geom.}, 25(2):167--197, 1987.

\bibitem{JL1988}
D.~Jerison and J.~M. Lee.
\newblock Extremals for the {S}obolev inequality on the {H}eisenberg group and
  the {CR} {Y}amabe problem.
\newblock {\em J. Amer. Math. Soc.}, 1(1):1--13, 1988.

\bibitem{JLX2014}
T. Jin, Y. Y. Li and J. Xiong,
\newblock On a fractional Nirenberg problem, Part I: blow up analysis and compactness of solutions,
\newblock {\em J. Eur. Math. Soc.}, 16(6):1111--1171, 2014.

\bibitem{JLX2015}
T. Jin, Y. Y. Li and J. Xiong,
\newblock On a fractional Nirenberg problem, Part II: existence of solutions,
\newblock {\em Int. Math. Res. Not. IMRN}, 6:1555--1589, 2015.

\bibitem{L2026}
J.~Li.
\newblock Classification of positive entire solutions of the {CR} {Y}amabe
  equation on the {H}eisenberg group.
\newblock \href{https://arxiv.org/abs/2608.10642}{arXiv:2608.10642, 2026}.

\bibitem{LLM2026}
J.~Li, Y.~Liu, and X.~Ma.
\newblock Classification of positive entire solutions of the critical {CR}
  {Y}amabe equation on the {H}eisenberg group.
\newblock
  \href{https://doi.org/10.13140/RG.2.2.13416.43527}{DOI:10.13140/RG.2.2.13416.43527,
  2026}.

\bibitem{Li1995}
Y.~Y. Li.
\newblock Prescribing scalar curvature on {$\S^n$} and related problems. {I}.
\newblock {\em J. Differential Equations}, 120(2):319--410, 1995.

\bibitem{Li1996}
Y.~Y. Li.
\newblock Prescribing scalar curvature on {$\S^n$} and related problems. {II}.
  {E}xistence and compactness.
\newblock {\em Comm. Pure Appl. Math.}, 49(6):541--597, 1996.

\bibitem{MU2002}
A.~Malchiodi and F.~Uguzzoni.
\newblock A perturbation result for the {W}ebster scalar curvature problem on
  the {CR} sphere.
\newblock {\em J. Math. Pures Appl. (9)}, 81(10):983--997, 2002.

\bibitem{M1973}
J. Moser.
\newblock {\em On a {N}onlinear {P}roblem in {D}ifferential {G}eometry}.
Dynamical Systems (Proc. Sympos., Univ. Bahia, Salvador, 1971), 273--280, {\em New York: Academic Press}, 1973.

\bibitem{QTWPartI}
J. Qiang, Z. Tang and H. Wang,
\emph{The CR Nirenberg problem, Part I: blow-up analysis and compactness},
preprint, 2026.

\bibitem{SZ1996}
R.~Schoen and D.~Zhang.
\newblock Prescribed scalar curvature on the {$n$}-sphere.
\newblock {\em Calc. Var. Partial Differential Equations}, 4(1):1--25, 1996.

%\bibitem{R2003}
%F.~Robert.
%\newblock Positive solutions for a fourth order equation invariant under
%  isometries.
%\newblock {\em Proc. Amer. Math. Soc.}, 131(5):1423--1431, 2003.

\bibitem{Yan2023}
Z.~Yan.
\newblock Improved higher-order {S}obolev inequalities on {CR} sphere.
\newblock {\em J. Funct. Anal.}, 284(10):Paper No.~109890, 2023.

\end{thebibliography}
}
\end{document}